\newcommand{\R}{{\mathbb R}}
\newcommand{\N}{{\mathbb N}}
\newcommand{\T}{{\mathbb T}}
\newcommand{\be}[1]{\begin{equation*}\label{#1}}
\newtheorem{thm}{Theorem}[section]
\newtheorem{cor}[thm]{Corollary}
\newtheorem{lem}[thm]{Lemma}
\newtheorem{rmk}[thm]{Remark}
\theoremstyle{remark} }
\begin{document}

\title[De Giorgi Argument for non-cutoff Boltzmann equation with soft potentials ]
{ De Giorgi Argument for non-cutoff Boltzmann equation with soft potentials }

\author[C. Cao]{Chuqi Cao}
\address[Chuqi Cao]{Yau Mathematical Science Center and Beijing Institute of Mathematical Sciences and Applications, Tsinghua University\\
Beijing 100084,  P. R.  China.} \email{chuqicao@gmail.com}

\begin{abstract}
In this paper, we consider the global well-posedness to the non-cutoff Boltzmann equation with soft potential in the $L^\infty$ setting. We show that when the initial data is close to equilibrium and the perturbation is small in $L^2 \cap L^\infty$ polynomial weighted space, the Boltzmann equation has a global solution in the weighted $L^2 \cap L^\infty$ space. The ingredients of the proof lie in  strong averaging lemma, new polynomial weighted estimate for the non-cutoff Boltzmann equation and the $L^2$ level set Di Giorgi iteration method developed in \cite{AMSY2}.  The convergence to the equilibrium state in both $L^2$ and $L^\infty$ spaces is also proved.

\end{abstract}

\maketitle

\setcounter{tocdepth}{1}
\tableofcontents

\section{Introduction}
The Boltzmann equation reads
\begin{equation}
\label{Boltzmann equation}
\partial_t F +v \cdot \nabla_x F =Q(F, F), \quad F(0, x, v) =F_0(x, v),
\end{equation}
where $F(t, x, v) \ge 0$ is a distributional functions of colliding particles which, at time $t>0$ and position $x \in \T^3$, move with velocity $v \in \R^3$. We remark that the Boltzmann equation is one of the fundamental equations of mathematical physics and is a cornerstone of statistical physics. The Boltzmann collision operator $Q$ is a bilinear operator which acts only on the velocity variable $v$, that is
\[
 Q(G,F)(v)=\int_{\R^3}\int_{\mathbb{S}^2}B(v-v_*,\sigma)(G'_*F'-G_*F)d\sigma dv_*.
\]
Let us give some explanations on the collision operator.
\begin{enumerate}
\item  We use the standard shorthand $F=F(v),G_*=G(v_*),F'=F(v'),G'_*=G(v'_*)$, where $v',v'_*$ are given by
\[
v'=\frac{v+v_*}{2}+\frac{|v-v_*|}{2}\sigma, \quad v_*'=\frac{v+v_*}{2}-\frac{|v-v_*|}{2}\sigma, \quad \sigma\in\mathbb{S}^2.
\]
 This representation follows the parametrization of set of solutions of the physical law of elastic collision:
\[
v+v_*=v'+v'_*,  \quad  |v|^2+|v_*|^2=|v'|^2+|v'_*|^2.
\]

\item  The nonnegative function $B(v-v_*,\sigma)$ in the collision operator is called the Boltzmann collision kernel. It is always assumed to depend only on $|v-v_*|$ and the deviation angle $\theta$ through $\cos\theta:= \frac{v-v_*}{|v-v_*|}\cdot\sigma$.
\item  In the present work,  our {\bf basic assumptions on the kernel $B$}  can be concluded as follows:
\begin{itemize}
\item[$\mathbf{(A1).}$] The Boltzmann kernel $B$ takes the form: $B(v-v_*,\sigma)=|v-v_*|^\gamma b(\frac{v-v_*}{|v-v_*|}\cdot\sigma)$, where   $b$ is a nonnegative function.

\item[$\mathbf{(A2).}$] The angular function $b(\cos \theta)$ is not locally integrable and it satisfies
\[
\mathcal{K}\theta^{-1-2s}\leq \sin\theta b(\cos\theta)\leq \mathcal{K}^{-1}\theta^{-1-2s},~\mbox{with}~0<s<1,~\mathcal{K}>0.
\]

\item[$\mathbf{(A3).}$]
The parameter $\gamma$ and $s$ satisfy the condition $-3<\gamma  \le 0, s \in (0, 1)$ and $\gamma+2s>-1.$

\item[$\mathbf{(A4).}$]  Without lose of generality, we may assume that $B(v-v_*,\sigma)$ is supported in the set $0\le \theta \le \pi/2$, i.e.$\frac{v-v_*}{|v-v_*|}\cdot\sigma \ge 0$, for otherwise $B$ can be replaced by its symmetrized form:
\[
\overline{B}(v-v_*,\sigma )=|v-v_*|^\gamma\big(b(\frac{v-v_*}{|v-v_*|}\cdot\sigma )+b(\frac{v-v_*}{|v-v_*|}\cdot(-\sigma))\big) \mathrm{1}_{\frac{v-v_*}{|v-v_*|}\cdot\sigma \ge0},
\]
where $\mathrm{1}_A$ is the characteristic function of the set $A$.
\end{itemize}
\end{enumerate}

\begin{rmk} For inverse repulsive potential, it holds that $\gamma = \frac {p-5} {p-1}$ and $s = \frac 1 {p-1}$ with $p > 2$. It is easy to check that $\gamma + 4s = 1$ which means that assumption $\gamma+2s > -1$  is satisfied for the full range of the inverse power law model. Generally, the case $\gamma > 0$,  $\gamma = 0$, and  $\gamma < 0$ correspond to so-called hard, Maxwellian, and soft potentials respectively. Assumption  $\mathbf{(A3)}$ corresponds to soft potential and Maxwellian molecule case. 
\end{rmk}

\subsection{Basic properties and the perturbation equation} We recall some basic facts on the Boltzmann equation.
 \smallskip

\noindent$\bullet$ {\bf Conservation Law.}  Formally if $F$ is the solution to the Boltzmann equation \eqref{Boltzmann equation} with the initial data $F_0$, then it enjoys the conservation of mass, momentum and the energy, that is,
\begin{equation}
\label{conservation law}
\frac {d}{dt}\int_{\T^3\times{\R}^3} F(t, x, v)\varphi(v)dvdx= 0,\quad   \varphi(v)=1, v, |v|^2.
\end{equation}
For simplicity, we introduce   the normalization identities on the initial data $F_0$ which satisfies
\[
\int_{\T^3\times\R^3} F_0(x, v)\, dvdx= 1, \quad \int_{\T^3\times\R^3} F_0(x, v)\, v\, dvdx =0,
\quad \int_{\T^3\times\R^3} F_0(x, v)\, |v|^2 \, dvdx =3.
\]
This means that the  equilibrium associated to \eqref{Boltzmann equation} will be the standard Gaussian function, i.e.
\[
\mu(v) := (2\pi)^{-3/2} e^{-|v|^2/2}, 
\] 
which enjoys the same mass, momentum and energy as $F_0$.
\smallskip

\noindent$\bullet$ {\bf Perturbation Equation.} In the perturbation framework, let $f$ be the perturbation such that
\[
F=\mu+f.
\]
The Boltzmann equation (\ref{Boltzmann equation}) becomes
\[
\partial_t f +v \cdot \nabla_x f= Q(\mu, f)+Q(f,\mu)+Q(f, f):=Lf +Q(f, f)
\]
with the linearized operator is defined  by $L: =Q(\mu,\cdot)+Q(\cdot,\mu) - v \cdot \nabla_x f$.

\subsection{Brief review of previous results}

In what follows we recall some known results on the Landau and Boltzmann equations with a focus on the topics under consideration in this paper, particularly on global existence and large-time behavior of solutions to the spatially inhomogeneous equations in the perturbation framework. For global solutions to the renormalized equation with large initial data, we mention the classical works  \cite{DL, DL2, L, V2, V3, DV, AV}. We mention \cite{CH, CH2} for the best regularity results available for the Boltzmann equation without cut-off. For the stability of vacuum, see \cite{L2, G5, C2}  for the Landau, cutoff  and non-cutoff Boltzmann equation with moderate soft potential  respectively.

In the near Maxwellian framework, global existence and large-time behavior of solutions to the spatially inhomogeneous equations is proved   in \cite{G2, G3, SG, SG2} for the cutoff Boltzmann equation and  in \cite{G} for the Landau equation.  For the non-cutoff Boltzmann equation it is first proved in  \cite{GS, GS2, AMUXY, AMUXY2, AMUXY3, AMUXY4}, see also \cite{DLSS} for a recent work on such topic. 
We also refer to \cite{G6, G7, G8, DL3, XXZ, DLYZ, DLYZ2} for the former works on the Vlasov-Poisson/Maxwell-Boltzmann/Landau equation near Maxwellian.    We remark here all these works above are base on the following decomposition 
\[
\partial_t f  + v \cdot \nabla_x f =L_{\mu} f + \Gamma(f, f), \quad L_{\mu} f  = \frac {1} {\sqrt{\mu}}  (Q(\mu, \sqrt{\mu } f) + Q(\sqrt{\mu} f, \mu)  ) ,\quad \Gamma(f, f) = \frac 1 { \sqrt{\mu}} Q(\sqrt{\mu} f, \sqrt{\mu}  f), 
\]
which means the result are in $\mu^{-1/2}$ weighted space.

For the inhomogeneous equations  with  polynomial weight perturbation near Maxwellian, in Gualdani-Mischler-Mouhot \cite{GMM} the authors first  prove the global existence and large time behavior of solutions with polynomial velocity weight for the cutoff Boltzmann equation with hard potential.  This  method is generalized to the Landau equation in \cite{CTW, CM} and the cutoff Boltzmann with soft potential in \cite{C}. The non-cutoff Boltzmann equation with hard potential is proved in \cite{HTT, AMSY}, and the soft potential case is proved in \cite{CHJ}.

For the non-cutoff Boltzmann equation,  \cite{IMS, IMS2, IS, IS2, IS3, S}  obtains global regularity and long time behavior by  assuming a uniformly bound in $t, x$ such that
\[
0<m_0 \le M(t, x) \le M_0, \quad E(t, x) \le E_0, \quad  H(t, x) \le H_0,
\]
for some constant $m_0, M_0, E_0, H_0$, where
\[
M(t, x) =\int_{\R^3} f(t, x, v) dv, \quad E(t, x) = \int_{\R^3} f(t, x, v) |v|^2 dv, \quad H(t, x) = \int_{\R^3} f(t, x, v) \ln f(t, x ,v) dv.
\]
For the Landau equation the local H\"older estimate is proved in \cite{GIMV} for  and higher regularity of solutions is studied in \cite{HS} by applying a kinetic version of Schauder estimates.  These papers can be seen as under conditional regularity. More specifically, solutions with properties which remain to be justified in general are shown to have quantitative regularization. In this respect, these works are mainly concerned with the regularization mechanism of kinetic equations while our goal is to establish a self-contained well-posedness result.

In the near Maxwellian framework, former mentioned works mainly focus on the $L^2$ well-posedness, there are also several results for $L^\infty_{x, v}$ well-posedness result near Maxwellian. For the cutoff Boltzmann equation near equilibrium, the $L^2-L^\infty$ approach has been introduced in  \cite{UY, G4} and apply to various contexts, see  \cite{K, GKTT} and the reference therein for example. Also see \cite{DHWY} for the solution with large amplitude initial data under the assumption of small  entropy. For the Landau equation, in \cite{KGH}  the authors proved a global $L^\infty_{x, v}$ well-posedness result near Maxwellian by using strategies inspired by \cite{GIMV}. However, it is not clear to us how to extend the argument in \cite{KGH} to the non-cutoff Boltzmann equation since the Landau operator  is closer to classical nonlinear parabolic operators. For the non-cutoff Boltzmann equation, \cite{AMSY2} proves a $L^\infty_{x, v}$ well-posedness result for the hard potential $\gamma>0$  by using  Di Giorgi iteration developed in \cite{A}.  Another  $L^\infty_{x, v}$ well-posedness result is obtained in \cite{SS} by using the result in \cite{IS, IS2, IS3}  for moderate soft potential $\gamma+2s \ge 0$. In this paper we will use the Di Giorgi iteration method developed in \cite{A, AMSY2} together with polynomial weighted estimate for the non-cutoff Boltzmann equation  developed in \cite{CHJ} to prove the well-posedness for the case $-3 < \gamma \le 0$. 

Recently in \cite{HST}, the authors proved independently the global well-posedness $L^\infty_{x, v}$ near Maxwellian by using the result in \cite{IS, IS2, IS3}, but their proof requires the lower bound assumption on the initial data
\[
f_0(x, v) \ge \delta \quad \hbox{for} \quad x \in B_r(x, v)
\]
for some constant $(x, v)$ and $\delta, r>0$. And in our paper we also obtained the rate of convergence to the equilibrium. 
\subsection{ Main results and notations}  Let us first introduce the function spaces and notations.

  \noindent $\bullet$  For any $p \in [1, +\infty)$, $q \in \R$ the $L^p_{q}$ norm is defined by
\[
\| f \|_{L^p_{q}}^p :  = \int_{\R^3} |f(v)|^p   \langle v \rangle^{pq} dv,
\]
where the Japanese bracket $\langle v \rangle$ is defined as $\langle v \rangle := (1+|v|^2)^{1/2}$.

\noindent $\bullet$  For real numbers $m, l$, we define the weighted Sobolev space $H_l^m$ by
\[
H^m_l:= \{ f(v) |  |f|_{ H^m_l}=|\langle \cdot\rangle^l \langle D\rangle^mf|_{L^2}< +\infty \},
\]
where  $a(D)$ is a pseudo-differential operator with the symbol $a(\xi)$ and it is defined as
\[
(a(D)f)(v):= \frac{1}{(2\pi)^3}  \int_{\R^3}\int_{\R^3}  e^{i(v-u)\xi}  a(\xi)  f(u)  du d\xi.
\]
and we denote $H^m := H^m_0$.

\noindent $\bullet$ For function $f(x, v),x\in\T^3,v\in\R^3$, the norm $\|\cdot\|_{H^\alpha_xH^m_l}$ is defined as
\[
\|f\|_{H^\alpha_xH^m_l}:=\left(\int_{\T^3}\|\langle D_x\rangle^\alpha f(x,\cdot)\|^2_{H^m_l}dx\right)^{1/2}.
\]
If $\alpha=0$, $H^0_xH^m_l=L^2_xH^m_l$.

\noindent $\bullet$ The $L\log L$ space is defined as
\[
L\log L:=\Big\{f(v)|\|f\|_{L\log L}=\int_{\R^3}|f|\log(1+|f|)dv\Big\}.
\]
The $L\log L$ norm is defined by   
\[
\|f\|_{L\log L} := \int_{\R^3} |f| \log (|f|+1)dv.
\]

\noindent $\bullet$ We write $a \lesssim b$ indicate that there is a uniform constant $C$, which may be different on different lines, such that $a\le C b $. We use the notation $a\sim b$ whenever $a\lesssim b$ and $b\lesssim a$.  We denote $C_{a_1,a_2, \cdots, a_n}$ by a constant depending on parameters $a_1,a_2,\cdots, a_n$. Moreover, we use parameter $\epsilon$ to represent different positive numbers much less than 1 and determined in different cases.

\noindent $\bullet$ We use $(f, g)$ to denote the inner product of $f, g$ in the $v$ variable $(f, g)_{L^2_v}$ for short and we use $(f, g)_{L^2_k}$ to denote $(f, g  \langle v \rangle^{2k})$.

\noindent $\bullet$  For any function $f$ we define
\[
\|f(\theta)\|_{L^1_\theta} : =\int_{\S^2}f(\theta)d\sigma = 2\pi\int_0^{\pi}f(\theta)\sin\theta d\theta.
\]

\noindent $\bullet$  For $ p \in [1, \infty)$ and $\beta \in \R$ we define  the Bessel potential space as
\[
H^{\beta, p}(\R^d):=\{f(v) | |\langle  D\rangle^\beta   f|_{L^p(\R^d)}< + \infty\},
\]
and the associated norm is defined by
\[
\Vert  u \Vert_{H^{\beta, p}(\R^d)} : = \Vert \langle D \rangle^\beta u \Vert_{L^p(\R^d)}.
\]

\noindent $\bullet$ For any $ p \in [1.\infty), \beta \in \R$, Sobolev-Slobodeckij space is  defined as
\[
W^{\beta, p}(\R^d) : = \left  \{ f \in L^p(\R^d) | \int_{\R^d}  \int_{\R^d }  \frac {|  f(x)  - f (y)|^p } {|x-y|^{d+\beta p}}  dx dy   \right \},
\]
and the associated norm is defined by
\[
\Vert  u \Vert_{W^{\beta, p}(\R^d)} : = \left(\int_{\R^d} | f (x) |^p dx + \int_{\R^3}  \int_{\R^3 }  \frac {|f (x)  - f (y) |^p } {|x-y|^{d+\beta p}}  dx dy  \right)^{\frac 1 p} .
\]
The Bessel potential space and Sobolev-Slobodeckij space agree for $p = 2$. More generally, the following condition holds: \\
(i)For all $p \in (1, 2]$, $\beta \in (0, 1)$  it holds that 
\[
\Vert  u \Vert_{H^{\beta, p}(\R^d)} \le C  \Vert  u \Vert_{W^{\beta, p}(\R^d)}.
\]
(ii)For all $p \in [2, + \infty)$, $\beta \in (0, 1)$  it holds that 
\[
\Vert  u \Vert_{W^{\beta, p}(\R^d)} \le C  \Vert  u \Vert_{H^{\beta, p}(\R^d)}.
\]
The proof can be found in \cite{S2}, Chapter V. 

\noindent $\bullet$ For the linearized operator $L$ we have
\[
\ker(L) = \text{span} \{ \mu, v_1\mu, v_2 \mu, v_3 \mu, |v|^2 \mu \}.
\]
We define the projection onto $\ker(L)$ by
\begin{equation}
\label{projection}
P f : = \left(  \int_{\T^3}   \int_{\R^3} f dv dx \right) \mu + \sum_{i=1}^3 \left(   \int_{\T^3}  \int_{\R^3} v_i f dv dx \right) v_i \mu + \left(   \int_{\T^3}   \int_{\R^3} \frac {|v|^2-3}{\sqrt{6} } f dv dx \right) \frac  {|v|^2 -3} {\sqrt{6}} \mu.
\end{equation}
For any space $X$, define the  subspace $\Pi X$ by
\begin{equation}
\label{Pi X}
\Pi X : =\{  f \in X|  P f=0\}. 
\end{equation}

\noindent $\bullet$  For any $ k \in  \R, \gamma \in (-3, 1]$, we define
\[
\Vert f \Vert_{L^2_{k+\gamma/2, *}}^2 :=  \int_{\R^3} \int_{\R^3} \mu (v_*) |v-v_*|^\gamma |f(v)|^2 \langle v \rangle^{2k} dv dv_*.
\]
It is easily seen that $\Vert f \Vert_{L^2_{k+\gamma/2, *}} \sim \Vert f \Vert_{L^2_{k+\gamma/2}}$. 

\noindent $\bullet$ For any $l \ge 0, K \ge 0$, define the level set function
\begin{equation}
\label{level set function}
f_{K}^l := f \langle v \rangle^l - K, \quad f_{K, +}^l := f_{K}^l 1_{ \{f_K^l \ge 0\} }, \quad  f_{K, -}^l := f_{K}^{l} 1_{ \{f_K^l < 0\} }.
\end{equation}

\noindent $\bullet$. For some $\alpha\ge 0$, we introduce a regularizing linear operator defined by
\begin{equation}
\label{definition L alpha}
L_\alpha \phi(v) = -(  \langle v \rangle^{2\alpha} \phi -\nabla_v \cdot (\langle v \rangle^{2\alpha} \nabla_v \phi  )) ,\quad \alpha \ge 0.
\end{equation}

\noindent $\bullet$ In the whole paper, we define the cutoff function $\chi \in C^\infty$ satisfies $0 \le  \chi \le 1$, $|\nabla\chi| \le 4/\delta_0$  and 
\begin{align}
\label{cutoff function}
\mathcal \chi(x):=\left\{
\begin{aligned}
  & 1,  \quad |x| \le \delta_0\\
 & 0, \quad   |x| \ge 2\delta_0,
\end{aligned}
\right.
\end{align}
for some small constant $\delta_0 >0$.
\medskip

\subsection{Main results} We may now state our main results. 
\begin{thm}\label{T11}Assume that $\gamma \in (-3, 0], s \in (0, 1), \gamma+2s>-1$, for any smooth function $f$, consider the Cauchy problem
\[
\partial_t f = L f +Q(f, f), \quad \mu+f\ge 0,\quad f(0) =f_0, \quad P  f_0 = 0.
\]
 Suppose the kernel $B$ verifying \big($\mathbf{(A1)}$-$\mathbf{(A4)}$\big). Then for any $k_0 \ge 14$ large, there exist   $k>k_0 \ge 14$ large and  $\epsilon>0$ small such that if
\[
\Vert \langle v \rangle^{k_0} f_0 \Vert_{L^2_{x, v} \cap L^\infty_{x, v}} \le \epsilon, \quad \Vert \langle v \rangle^{k} f_0 \Vert_{L^2_{x, v}} < +\infty,
\]
then there exist a nonnegative solution $F = \mu +f \ge 0$, $F \in L^\infty( [0,\infty), L^2_xL^2_k(\T^3 \times \R^3))$ to the Boltzmann equation \eqref{Boltzmann equation}. Moreover, if $\gamma =0$, we have
\[
\Vert \langle v \rangle^{k_0} f(t) \Vert_{ L^\infty_{x, v}} \le \delta_1, \quad \Vert \langle v \rangle^{k} f(t) \Vert_{L^2_{x, v} } \le Ce^{-\lambda t}, \quad  \Vert \langle v \rangle^{k_0}  f(t) \Vert_{ L^\infty_{x, v}} \le  Ce^{-\lambda_1 t},
\]
for some constant $\delta_1, C, \lambda, \lambda_1>0$. If $\gamma <0$, for any $14 \le k_1 <k$ we have
\[
\Vert \langle v \rangle^{k_0} f(t) \Vert_{ L^\infty_{x, v}} \le \delta_1, \quad \Vert \langle v \rangle^{k_1} f(t) \Vert_{L^2_{x, v} } \le C (1+t)^{- \frac { k -k_1}  {|\gamma|}}, \quad \Vert \langle v \rangle^{k_0}  f(t)  \Vert_{ L^\infty_{x, v}} \le C (1+t)^{-\alpha_1},
\]
for some constant $C, \delta_1, \alpha >0$.
\end{thm}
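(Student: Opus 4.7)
The plan is to combine three ingredients already flagged in the abstract: (i) a polynomial-weighted $L^2_{x,v}$ coercive/dissipative theory for the linearized operator, built on \cite{CHJ}; (ii) a strong averaging lemma that converts the fractional velocity dissipation of the non-cutoff operator plus transport into a gain of $L^p_{t,x,v}$ integrability; and (iii) a De Giorgi iteration on the level sets $f_K^l$ defined in \eqref{level set function}, following the hard-potential scheme of \cite{AMSY2}.

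The first step is to establish an a priori estimate in $L^\infty_t L^2_x L^2_k$. Taking the $L^2_k$-inner product of the perturbation equation against $f$ and using the polynomial-weighted coercivity of $L$, namely an inequality of the form $(Lf,f)_{L^2_k} \le -c\|(I-P)f\|_{L^2_{k+\gamma/2,*}}^2 + C\|f\|_{L^2_{k'}}^2$ for some lower weight $k'<k$, together with the trilinear bound $(Q(f,f),f)_{L^2_k}\lesssim \|f\|_{L^\infty_{x,v}}\|f\|_{L^2_{k+\gamma/2,*}}^2$ that uses the $L^\infty$ smallness of the perturbation, one closes a Gronwall argument. The assumption $Pf_0=0$ together with the conservation laws \eqref{conservation law} propagates $Pf(t)=0$ and plugs the hydrodynamic gap, producing exponential decay when $\gamma=0$ and polynomial decay with moment loss $k\to k_1$ of rate $(1+t)^{-(k-k_1)/|\gamma|}$ when $\gamma<0$ by the standard interpolation between two weighted $L^2$ norms.

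The second step is the De Giorgi iteration that upgrades smallness from $L^2$ to $L^\infty$. Writing the equation for $g=f\langle v\rangle^l$, commuting the weight through the collision operator (which is the content of the new polynomial weighted estimate) and testing against $g_{K,+}=g_K^l\,\mathbf 1_{g_K^l\ge 0}$, one derives an energy inequality of the form
\[
\frac{1}{2}\partial_t\|g_{K,+}\|_{L^2_{x,v}}^2+c\|g_{K,+}\|_{L^2_xH^s_{v,*}}^2 \le C_l K\|g_{K,+}\|_{L^2_{x,v}}^2 + \mathcal R_K(f),
\]
where the dissipation carries the non-cutoff fractional velocity derivative with a $\langle v\rangle^{\gamma/2}$ weight, and $\mathcal R_K$ collects commutator remainders controlled through $\|f\|_{L^\infty}\le\epsilon$ and the $L^2$ smallness from step one. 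Combining this dissipation with the transport term through the strong averaging lemma gives a gain from $L^2_{t,x,v}$ to $L^p_{t,x,v}$ for some $p>2$, which is the nonlinear ingredient of De Giorgi. Choosing level sequences $K_n=\bar K(1-2^{-n})$ and $l_n$ converging to $l$ and chaining the resulting inequalities produces the recursive bound $\|g_{K_{n+1},+}\|^2 \le C M^n\|g_{K_n,+}\|^{1+\kappa}$ for some $\kappa>0$; provided $\bar K$ is slightly larger than the initial $L^\infty_{x,v}$ size, the sequence goes to zero, yielding $\|f(t)\langle v\rangle^{k_0}\|_{L^\infty_{x,v}}\le\delta_1$.

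The third step combines decay from steps one and two: once $\|f(t)\|_{L^2_{k}}$ decays (exponentially for $\gamma=0$, polynomially for $\gamma<0$) and $\|f(t)\langle v\rangle^{k_0}\|_{L^\infty_{x,v}}$ is uniformly bounded, a second De Giorgi run on $f(t+\tau)$ for increasing $\tau$, or equivalently an interpolation between the two norms with the gap $k-k_0$, transfers the $L^2$ decay rate into an $L^\infty$ decay rate $e^{-\lambda_1 t}$ or $(1+t)^{-\alpha_1}$. The main obstacle I expect is the derivation of the level-set energy inequality in the non-cutoff, soft-potential regime: the truncation $f_K^l$ interacts poorly with the nonlocal singular kernel, so the commutator between the cutoff $\mathbf 1_{\{f\langle v\rangle^l\ge K\}}$ and $Q$ must be controlled via a fractional Hardy-type inequality together with the cancellation produced by the constant $K$ term, and the weight $\langle v\rangle^l$ has to be propagated through $Q$ uniformly in $K$; this is precisely where the polynomial-weighted estimates of \cite{CHJ} are pushed into the level-set framework of \cite{AMSY2}, and the constraint $\gamma+2s>-1$ is used to keep the dissipation strong enough to beat the soft-potential degeneracy at large $|v|$.
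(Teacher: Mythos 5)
Your outline captures the \emph{a priori} estimate strategy accurately and agrees in spirit with the paper: weighted $L^2$ coercivity from the polynomial-weight theory of \cite{CHJ}, level-set energy inequalities, the time-averaging lemma to gain $L^p_{t,x,v}$ integrability, and a geometric De Giorgi recursion $\mathcal{E}_k\le C\sum_i 2^{k(a_i+1)}\mathcal{E}_{k-1}^{\beta_i}/K_0^{a_i}$. Your description of the second De Giorgi run on shifted time intervals to transfer $L^2$ decay into $L^\infty$ decay also matches what is actually done (the parenthetical ``or equivalently an interpolation'' is not quite right --- mere interpolation between $L^2$ decay and $L^\infty$ boundedness does not produce $L^\infty$ decay; the iterated level-set argument on $[T_k,\infty)$ is genuinely needed). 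However, there are two substantive gaps.

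First, you have only sketched \emph{a priori} estimates, not an existence proof. In the non-cutoff, soft-potential, polynomial-weight, $L^\infty$ setting the construction of the solution is nontrivial and occupies a large part of the paper: one cannot simply run Gronwall on the nonlinear equation. The paper builds the solution by regularizing the equation with the elliptic term $\epsilon L_\alpha(\mu+f)$ and by replacing the first slot of $Q$ with a cut-off $\mu + f\chi(\langle v\rangle^{k_0}f)$ so that the nonnegativity and the $L\log L$/mass bounds required for coercivity hold automatically; one then solves the resulting \emph{linearized} problem by a Hahn--Banach duality argument, obtains the nonlinear solution by a fixed-point contraction in $L^\infty_tL^2_xL^2_k$, proves the $L^\infty$ bound via De Giorgi so that the cutoff is inactive, and finally passes $\epsilon\to 0$. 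All of the De Giorgi constants must be checked to be uniform in $\epsilon$, which forces the specific choice $\alpha=5$ and the weight bookkeeping $k\ge 14+l_0(14)$, etc. Your proposal omits this scheme entirely, so as written it does not produce a solution to the Cauchy problem.

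Second, the contraction step uses a trilinear bound of the type $|(Q(g-h,f_h),f_g-f_h)_{L^2_k}|\lesssim \|g-h\|_{L^2_k}\|f_g-f_h\|_{H^{2s}_{k+\alpha}}$ which has to be absorbed by the regularizer $\epsilon\|\cdot\|_{H^1}^2$. Since the regularizer only controls one derivative, this only works when $2s\le 1$. For $s\in(1/2,1)$ (strong singularity, which the theorem includes) the paper introduces a second approximation of the collision kernel, $b\to b_\eta$ with effective singularity index $s_*\le 1/2$, shows all estimates are uniform in $\eta$ (because the upper-bound lemmas for $Q$ only use the upper bound of $b$ and $b_\eta\le b$), and then passes $\eta\to 0$. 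Without this step your argument only covers $s\le 1/2$ and does not prove the theorem as stated. You should add both the $\epsilon$-regularized/cutoff construction and the $\eta$-kernel approximation to the plan before the De Giorgi/decay machinery can be invoked.
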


\begin{rmk}
We only focus on the  Maxwellian molecule and soft potential case $-3<\gamma \le 0$ since the hard potential case is already proved in \cite{AMSY2}.
\end{rmk}

%

\noindent$\bullet$ \underline{\it Comment on the solutions.} 

The spectral gap  for the polynomial weight $\langle v \rangle^k$ is $\gamma \ge 0$ instead of $\gamma+2s \ge 0$. which is  proved in  \cite{CHJ}. In fact it is proved in \cite{CHJ} such that the spectral gap for $m =e^{k\langle  v \rangle^\beta},  k >0, \beta \in (0, 2] $ is $\gamma+\beta s \ge 0$.

 \subsection{Strategies and ideas of the proof} In this subsection, we will explain main strategies and ideas of the proof for our results. First observe the fact that if we assume the smallness of
\[
\sup_{t, x} \Vert f \Vert_{L^\infty_{|\gamma|+9}} \le \delta_0,
\]
for some $\delta_0>0$ small, then we can obtain $L^2_{x} H^s_v$ energy estimates with general weights. Using classical velocity averaging lemma we can transfer the regularity from the velocity to the spatial variable to obtain regularization in the spatial variable $H^{s'}_xL^2_x$ for any $s' \in (0. \frac {s} {2(s+3)})$. This Hypoellipticity allows us to apply the De Giorgi argument through embeddings of Sobolev spaces to $L^p$ spaces. 

Since the time averaging lemma requires $p >1$, when applying the time averaging lemma to $(f_{K, +}^l)^2$, we need the estimate  of $ \Vert f_{K, +}^l \Vert_{L^{2p}_{x, v}}$, we take $p$ very close to 1 such that the $L^{2p}$ norm can be estimated by $H^{s'}_xL^2_x$  and $L^2_{x} H^s_v$ norm using Sobolev embedding.

More precisely, we construct the following energy functional:
\begin{align}
\nonumber
\mathcal{E}_{p, s''} (K, T_1, T_2) :=& \sup_{t \in [T_1, T_2] } \Vert f_{K, +}^l \Vert_{L^2_{x, v}}^2   + \int_{T_1}^{T_2} \int_{\T^3} \Vert \langle  v \rangle^{\gamma/2} f_{K, +}^l \Vert_{H^s_v}^2 dx d\tau 
\\ \nonumber
&+\frac 1 {C_0} \left( \int_{T_1}^{T_2} \Vert (1-\Delta_x)^{s''/2} ( \langle v \rangle^{-2+ \gamma/2} f_{K, +}^l)^2\Vert_{L^p_{x, v}}^p d\tau \right)^{1/p}.
\end{align}
The main step in the De Giorgi argument is to  prove that there exist a constant $K_0 \ge 0$ such that
\[
M_k : = K_0 (1-\frac 1 {2^k}), \quad \mathcal{E}_k := \mathcal{E}_{p, s''}(M_k, 0, T), \quad \mathcal{E}_k \le C \sum_{i=1}^4 \frac {2^{k(a_i +1)}   \mathcal{E}_{k-1}^{\beta_i }  } { K_0^{a_i} }.
\]
Using Di Giorgi iteration we can prove there exist a constant $K_0'$ such that if $K_0 \ge K_0'$, then  $\mathcal{E}_k \to 0$, which implies
\[
\sup_{t \in [0, T] } \Vert f_{K_0, +}^l(t, \cdot, \cdot ) \Vert_{L^2_{x, v}}  = 0. 
\]
similar results is proved for $f_-$. Thus instead of directly proving $\Vert \langle  v \rangle^l f \Vert_{L^\infty_{x, v}} \le \delta $ for some $\delta >0$, we prove
\[
\sup_{t \in [0, T] } \Vert f_{K_0, +}^l(t, \cdot, \cdot ) \Vert_{L^2_{x, v}}  = 0, \quad \sup_{t \in [0, T] } \Vert f_{K_0, -}^l(t, \cdot, \cdot ) \Vert_{L^2_{x, v}}  = 0,
\]
for some constant $K_0 >0$, which could implies 
\[
\sup_{t \in [0, T]} \Vert \langle v \rangle^l f (t, \cdot ,\cdot)\Vert_{L^\infty_{x, v}} \le K_0,
\] 
See also Figure 1 in \cite{AMSY2} for a better understanding for the structure of the proof.

The strategy described above is applied first to the linearized equation and then to the nonlinear equation to obtain local solutions with $L^\infty$-bounds to the original Boltzmann equation. Although obtaining a solution to the linearized equation is fairly straight-forward, significant effort has been carried out to show the $L^\infty$-bounds of the solution.  More precisely, we first prove the local existence of solutions for the linearized equation
\[
\partial_t f +v \cdot \nabla_x f = \epsilon L_\alpha(\mu + f) + Q(\mu + g \chi(\langle v \rangle^{k_0}  g), u+ f),
\]
where $L_\alpha$ is a regularizing term defined in \eqref{definition L alpha} and $\chi$ is the cutoff function  defined in \eqref{cutoff function}. Using fixed point theorem we obtained a local solution for the nonlinear equation
\[
\partial_t f+ v \cdot \nabla_x f = \epsilon L_\alpha(\mu +f) + Q(\mu + f \chi(\langle v \rangle^{k_0} f), \mu + f),
\]
then we prove that the  solution satisfies
\[
\Vert f \Vert_{L^\infty ([0, T] \times \T^3 \times \R^3  )} \le \delta_0,
\]
hence the solution becomes a solution to 
\begin{equation}
\label{Boltzmann equation epsilon}
\partial_t f+ v \cdot \nabla_x f =\epsilon L_\alpha(\mu +f)  + Q(\mu + f , \mu + f).
\end{equation}
Finally we prove that the priori estimates of \eqref{Boltzmann equation epsilon} are independent of $\epsilon$, thus we can pass the limit in $\epsilon$ to prove the local existence of the Boltzmann equation \eqref{Boltzmann equation}. Finally, combining the local existence with the global estimate obtained in \cite{CHJ} we obtain a global solution to the original Boltzmann equation.

 \subsection{Organization of the paper}

After introducing a technical toolbox in Section \ref{section 2}, Section \ref{section 3} is devoted to the upper bounds  and coercivity estimate on collision operator $Q$. In Section \ref{section 4} we establish the estimates for the $L^2$ level set function $f_{K, +}^l$ and we establish the $L^\infty$ bound for the linearized equation. These estimates are used in Section \ref{section 5} to show the existence of solution to the linear equation. In Section \ref{section 6} we establish the nonlinear counterparts of the estimates of those in Section \ref{section 3} and Section \ref{section 4}, then apply them to establish the local well-posedness of the Boltzmann equation. In Section \ref{section 7} we combine the results in Section \ref{section 6} and global estimate of the  Boltzmann operator to establish the global well-posedness of the nonlinear Boltzmann equation. The well-posedness proved in Section \ref{section 7} is only for weak singularity kernels, we extend the result to the strong singularity case in Section \ref{section 8}.

\section{Preliminaries}\label{section 2}

In this section we recall several lemmas which is useful in later proof, some of them may be elementary  but we still write it for completeness. 

\begin{lem} (\cite{ADVW}) For any smooth function $f, g, b $, we have \\
(1) (Regular change of variables)
\[
\int_{\R^3} \int_{\mathbb{S}^2} b(\cos \theta) |v-v_*|^\gamma f(v') d \sigma dv= \int_{\R^3} \int_{\mathbb{S}^2} b(\cos \theta)\frac 1 {\cos^{3+\gamma} (\theta/2)} |v-v_*|^\gamma f(v) d\sigma dv.
\]
(2) (Singular change of variables)
\[
\int_{\R^3} \int_{\mathbb{S}^2} b(\cos \theta) |v-v_*|^\gamma f(v') d \sigma dv_* = \int_{\R^3} \int_{\mathbb{S}^2} b(\cos \theta)\frac 1 {\sin^{3+\gamma} (\theta/2)} |v-v_*|^\gamma f(v_*) d\sigma dv_* .
\]
\end{lem}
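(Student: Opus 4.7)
The plan is to prove both identities by a direct change of variables. Both sides are phrased as integrals using the scattering angle notation $\theta$; the key observation is that under the substitution $v\mapsto v'$ (for (1)) or $v_*\mapsto v'$ (for (2)), the kinematic length $|v-v_*|$ and the Jacobian pick up powers of $\cos(\theta/2)$ (respectively $\sin(\theta/2)$) which combine into the exponents $3+\gamma$ of the statement.

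First I would record the two elementary length identities which underpin both parts. Writing $v'-v_*=\tfrac12((v-v_*)+|v-v_*|\sigma)$ and $v-v'=\tfrac12((v-v_*)-|v-v_*|\sigma)$ and squaring yields
\[
|v'-v_*|=|v-v_*|\cos(\theta/2),\qquad |v-v'|=|v-v_*|\sin(\theta/2),
\]
and hence $|v-v_*|^\gamma=\cos^{-\gamma}(\theta/2)\,|v'-v_*|^\gamma=\sin^{-\gamma}(\theta/2)\,|v-v'|^\gamma$. These identities are the source of the $\gamma$ part of the final exponent.

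For (1), I would hold $v_*$ and $\sigma$ fixed and substitute $v\mapsto w:=v'$ in the $v$-integral. A direct computation gives $\partial v'/\partial v=\tfrac12(I+\sigma\otimes\hat u)$ with $\hat u=(v-v_*)/|v-v_*|$, a rank-one perturbation of $\tfrac12 I$ whose determinant equals $\tfrac18(1+\hat u\cdot\sigma)=\tfrac14\cos^2(\theta/2)$. Combined with the $\gamma$-factor above, and after reparametrizing $d\sigma$ with respect to the new reference direction $\widehat{w-v_*}$ (a standard computation on $S^2$), the integrand picks up exactly the factor $\cos^{-(3+\gamma)}(\theta/2)$. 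For (2), the computation is entirely parallel with $v_*\mapsto w:=v'$ in the $v_*$-integral at fixed $v$ and $\sigma$: the determinant of $\partial v'/\partial v_*=\tfrac12(I-\sigma\otimes\hat u)$ is $\tfrac14\sin^2(\theta/2)$, and the sine length identity produces the $\sin^{-(3+\gamma)}(\theta/2)$ factor.

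The main technical point, and the bulk of the bookkeeping, is handling the angular variable $\sigma$ under the substitution: at fixed $\sigma$, the angle between $\widehat{v'-v_*}$ and $\sigma$ equals $\theta/2$ rather than $\theta$, so to recast the transformed integral back in the original form we must reparametrize $\sigma$ about the new reference direction. This spherical reparametrization absorbs the numerical constant from the rank-one determinant and upgrades the exponent from $2+\gamma$ to $3+\gamma$. Once this is done, both identities reduce to elementary applications of the change-of-variable formula, of the kind used in the cancellation lemma of Alexandre--Desvillettes--Villani--Wennberg \cite{ADVW}.
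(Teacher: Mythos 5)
The paper does not prove this lemma at all --- it is quoted verbatim from \cite{ADVW} --- so there is no in-paper argument to compare with; judged on its own, your sketch is the standard computation and its main ingredients are correct: the length identities $|v'-v_*|=|v-v_*|\cos(\theta/2)$, $|v-v'|=|v-v_*|\sin(\theta/2)$, the Jacobians $\det\bigl(\tfrac12(I+\sigma\otimes\hat u)\bigr)=\tfrac14\cos^2(\theta/2)$ and $\det\bigl(\tfrac12(I-\sigma\otimes\hat u)\bigr)=\tfrac14\sin^2(\theta/2)$, and the observation that a reparametrization of $\sigma$ about the new relative-velocity direction is what turns $4\cos^{-(2+\gamma)}(\theta/2)$ into $\cos^{-(3+\gamma)}(\theta/2)$ (indeed $d\sigma_{\mathrm{old}}=\tfrac1{4\cos(\theta/2)}\,d\sigma_{\mathrm{new}}$ when the polar angle is doubled). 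One caveat: part (2) is not ``entirely parallel'' in the way your description of the angular step suggests. There the new reference direction is $\widehat{v-v'}$, and at fixed $\sigma$ the angle between $\widehat{v-v'}$ and $\sigma$ is $\tfrac\pi2+\tfrac\theta2$ (not $\tfrac\theta2$), since $(\hat u-\sigma)\cdot\sigma=\cos\theta-1\le 0$; the reparametrization must therefore send this obtuse angle to $\theta$, which gives $d\sigma_{\mathrm{old}}=\tfrac{\cos(\theta/2)}{2\sin\theta}\,d\sigma_{\mathrm{new}}=\tfrac1{4\sin(\theta/2)}\,d\sigma_{\mathrm{new}}$, and the arithmetic still closes: $\tfrac{4}{\sin^{2+\gamma}(\theta/2)}\cdot\tfrac1{4\sin(\theta/2)}=\sin^{-(3+\gamma)}(\theta/2)$. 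If you want to avoid tracking the rank-one determinant, the constant $4$, and this angle bookkeeping altogether, a cleaner route is polar coordinates: write $dv\,d\sigma$ (or $dv_*\,d\sigma$) as $r^2\,dr\,d\hat u\,\sin\theta\,d\theta\,d\phi$ with $r=|v-v_*|$, substitute $\rho=r\cos(\theta/2)$ (resp.\ $\rho=r\sin(\theta/2)$) keeping $\theta$ as the polar angle about the new direction; then $r^2\,dr=\rho^2\,d\rho/\cos^3(\theta/2)$ (resp.\ $/\sin^3(\theta/2)$) produces the exponent $3$ directly, and $r^\gamma=\rho^\gamma\cos^{-\gamma}(\theta/2)$ (resp.\ $\sin^{-\gamma}$) the rest. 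Finally, you should note where the support restriction $\theta\in[0,\pi/2]$ from $\mathbf{(A4)}$ enters: it guarantees injectivity of the maps at fixed $\sigma$, and in (2) the degenerating Jacobian $\sim\theta^2$ as $\theta\to0$ is precisely why this change of variables is called singular.
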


\begin{lem}\label{L22} For any smooth function $f, g, h, b$, for any $\gamma \in\R$ we have
\begin{equation*}
\begin{aligned}
&\left(\int_{\R^3}\int_{\R^3} \int_{\mathbb{S}^2}b(\cos \theta) |v-v_*|^\gamma f_* g h' dv dv_* d\sigma  \right)^2
\\
\le& \left( \int_{\mathbb{S}^2} b( \cos \theta) \sin^{-\frac 3 2- \frac  \gamma 2 } \frac \theta 2   d\sigma \right)^2 \int_{\R^3}\int_{\R^3}  |v-v_*|^\gamma |f_*|^2 |g| dv dv_* \int_{\R^3}\int_{\R^3}  |v-v'|^\gamma |g| |h'|^2dv dv'  .
\end{aligned}
\end{equation*}
and 
\begin{equation*}
\begin{aligned}
&\left(\int_{\R^3}\int_{\R^3} \int_{\mathbb{S}^2}b(\cos \theta) |v-v_*|^\gamma f_* g h' dv dv_* d\sigma  \right)^2
\\
\le&\left( \int_{\mathbb{S}^2} b(\cos \theta) \cos^{-\frac 3 2- \frac  \gamma 2} \frac \theta 2   d\sigma  \right)^2  \int_{\R^3}\int_{\R^3}  |v-v_*|^\gamma |f_*| |g|^2 dv dv_*  \int_{\R^3}\int_{\R^3}  |v_*-v'|^\gamma |f_*| |h'|^2dv_* dv'.
\end{aligned}
\end{equation*}
\end{lem}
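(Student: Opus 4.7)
The plan is to prove both inequalities by Cauchy--Schwarz in the joint variables $(v,v_*,\sigma)$ with a carefully chosen $\sigma$-weight, and then to reorganize each of the two resulting factors so that the $\sigma$-integral decouples from the $(v,v_*)$- or $(v,v')$-integral. The only non-trivial step is to eliminate $h'$ (a function of the post-collisional variable) from one of the two factors using Lemma 2.1.

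For the first inequality, I would split the integrand symmetrically between $f_*$ and $h'$. Introducing an auxiliary positive weight $W(\theta)$ on $\S^2$, write
\[
b^{1/2}|v-v_*|^{\gamma/2} f_* |g|^{1/2}\,W(\theta) \; \cdot \; b^{1/2}|v-v_*|^{\gamma/2} h' |g|^{1/2}\,W(\theta)^{-1}\operatorname{sgn}(\cdots)
\]
and apply Cauchy--Schwarz in $dv\,dv_*\,d\sigma$. The first factor becomes
\[
\Bigl(\int_{\S^2} b\,W^{2}\,d\sigma\Bigr)\int_{\R^3}\!\int_{\R^3}|v-v_*|^{\gamma}|f_*|^{2}|g|\,dv\,dv_*,
\]
since the spatial integrand has no $\sigma$-dependence. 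For the second factor, I freeze $v$ and $\sigma$ and perform the singular change of variables $v_*\mapsto v'$ via Lemma 2.1(2), which replaces $\int|v-v_*|^{\gamma}|h(v')|^{2}\,dv_*$ by $\sin^{-3-\gamma}(\theta/2)\int|v-v_*|^{\gamma}|h(v_*)|^{2}\,dv_*$; relabelling the dummy variable $v_*\to v'$ decouples the $\sigma$-integration and yields
\[
\Bigl(\int_{\S^2} b\,W^{-2}\sin^{-3-\gamma}(\theta/2)\,d\sigma\Bigr)\int_{\R^3}\!\int_{\R^3}|v-v'|^{\gamma}|g|\,|h'|^{2}\,dv\,dv'.
\]
Choosing $W(\theta)^{2}=\sin^{-(3+\gamma)/2}(\theta/2)$ equalises both $\sigma$-integrals to $\int_{\S^2} b\sin^{-3/2-\gamma/2}(\theta/2)\,d\sigma$, whose square is exactly the angular factor claimed.

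For the second inequality, I split the integrand instead between $g$ and $h'$, pairing $|f_*|^{1/2}$ on each side. Cauchy--Schwarz then produces one factor involving $|g|^{2}|f_*|$ (with integrand $\sigma$-free, hence reducing to $\int bW'{}^{2}d\sigma$ times $\int\!\int|v-v_*|^\gamma|f_*||g|^2\,dvdv_*$), and another factor involving $|h'|^{2}|f_*|$. For the second, I freeze $v_*$ and $\sigma$ and use the regular change of variables $v\mapsto v'$ of Lemma 2.1(1); this replaces the $v$-integral by a $v'$-integral with Jacobian $\cos^{-3-\gamma}(\theta/2)$ and turns $|v-v_*|^\gamma$ into $|v_*-v'|^\gamma$ up to the same cosine factor. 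Choosing $W'(\theta)^{2}=\cos^{-(3+\gamma)/2}(\theta/2)$ equalises both $\sigma$-integrals to $\int_{\S^2} b\cos^{-3/2-\gamma/2}(\theta/2)\,d\sigma$, giving the claimed bound.

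The main technical point is the observation that $h'$, which a priori depends on $(v,v_*,\sigma)$ jointly, can be converted into a function of a single decoupled variable via Lemma 2.1, at the price of a $\sigma$-dependent Jacobian $\sin^{-3-\gamma}(\theta/2)$ (resp.\ $\cos^{-3-\gamma}(\theta/2)$); this is what forces the specific exponent $-3/2-\gamma/2$ in the angular integral, and the optimal weight $W(\theta)$ is precisely the geometric mean that equalises both $\sigma$-factors in the Cauchy--Schwarz split. Everything else is routine bookkeeping.
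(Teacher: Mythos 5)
Your proof is correct and follows essentially the same route the paper indicates: Cauchy--Schwarz with a $\theta$-dependent splitting weight, followed by the regular (resp.\ singular) change of variables to decouple the post-collisional argument, with the geometric-mean choice of weight making both $\sigma$-factors coincide. The paper states this in one sentence without the explicit weight computation, so your write-up is simply a fleshed-out version of the intended argument.
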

\begin{proof}
 The proof is  the combination of Cauchy-Schwarz inequality and regular/singular  change of variable.
\end{proof}

Then we recall the upper and lower bound for the Boltzmann operator

\begin{lem}\label{L23}(\cite{H} Theorem 1.1) 
Let $w_1, w_2 \in \R$, $a, b \in[0, 2s]$ with $w_1+w_2 =\gamma+2s$  and $a+b =2s$. Then for any smooth functions $g, h, f$ we have \\
(1) if $\gamma + 2s > 0$ we have
\[
|(Q(g, h), f)_{L^2_v}| \lesssim \Vert(\Vert g \Vert_{L^1_{\gamma+2s +(-w_1)^+(-w_2)^+}}  +\Vert g \Vert_{L^2} ) \Vert h \Vert_{H^a_{w_1}}   \Vert f \Vert_{H^b_{w_2}}  .
\]
(2) if $\gamma + 2s = 0$ we have
\[
|(Q(g, h), f)_{L^2_v}|\lesssim (\Vert g \Vert_{L^1_{w_3}} + \Vert g \Vert_{L^2}) \Vert h \Vert_{H^a_{w_1}}   \Vert f \Vert_{H^b_{w_2}} ,
\]
where $w_3 = \max\{\delta,(-w_1)^+ +(-w_2)^+ \}$, with $\delta>0$ sufficiently small.\\
(3) if $\gamma + 2s < 0$ we have
\[
|(Q(g, h), f)_{L^2_v}|\lesssim  (\Vert g \Vert_{L^1_{w_4}} + \Vert g \Vert_{L^2_{-(\gamma+2s)}}) \Vert h \Vert_{H^a_{w_1}}   \Vert f \Vert_{H^b_{w_2}}  ,
\]
where $w_3 = \max\{-(\gamma+2s), \gamma+2s +(-w_1)^+ +(-w_2)^+ \}$.
\end{lem}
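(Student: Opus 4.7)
This upper bound is essentially Theorem 1.1 of \cite{H}, and my plan is to sketch its strategy, which combines a dyadic decomposition of the angular singularity with Bobylev's Fourier representation and a careful redistribution of the polynomial weights. Observe first that the condition $a+b=2s$ is designed to split the angular singularity of total order $2s$ between $h$ and $f$, while the condition $w_1+w_2=\gamma+2s$ is designed to split the polynomial weight $\langle v\rangle^{\gamma+2s}$ produced naturally by the collision kernel between the same two functions. The function $g$ should then appear only in $L^1$-type or $L^2$-type norms, as in the conclusion.

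\emph{Step 1 (dyadic decomposition in the angle).} Write $b(\cos\theta)=\sum_{k\ge 0} b_k(\cos\theta)$ where $b_k$ is supported on $\theta\sim 2^{-k}$, with $\|b_k\|_{L^1_\theta}\lesssim 2^{2ks}$ by assumption $\mathbf{(A2)}$. This decomposes $Q(g,h)=\sum_k Q_k(g,h)$, and one estimates each piece and then sums. For the largest angles ($k$ small) one may use Cauchy-Schwarz and the change of variable of Lemma~\ref{L22} directly; the difficulty is concentrated in the small-angle regime.

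\emph{Step 2 (Bobylev formula and Sobolev splitting).} Applying Bobylev's identity to each $Q_k$ gives
\[
\widehat{Q_k(g,h)}(\xi)=\int_{\mathbb S^2}b_k\!\left(\tfrac{\xi}{|\xi|}\cdot\sigma\right)\bigl(\widehat g(\xi^-)\widehat h(\xi^+)-\widehat g(0)\widehat h(\xi)\bigr)\,d\sigma,\qquad \xi^\pm=\tfrac{\xi\pm|\xi|\sigma}{2}.
\]
On the support of $b_k$ one has $|\xi^-|\sim 2^{-k}|\xi|$ and $|\xi^+-\xi|\sim 2^{-2k}|\xi|$, so Taylor expansion of $\widehat h$ combined with the cancellation produces a gain of order $|\xi^+-\xi|^{2s}$ that exactly compensates the $\|b_k\|_{L^1_\theta}$ factor. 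Pairing against $\widehat f$, Plancherel and the split $|\xi|^{2s}=|\xi|^a\cdot|\xi|^b$ yield the $\Vert h\Vert_{H^a}\Vert f\Vert_{H^b}$ structure, with the $\widehat g$ factor collected in an $L^\infty_\xi$ norm controlled by $\Vert g\Vert_{L^1}$ (and, for the Taylor remainder, by $\Vert g\Vert_{L^2}$ via Cauchy-Schwarz in $\xi$).

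\emph{Step 3 (weight distribution).} To insert the weights, use $\langle v\rangle^{\gamma+2s}=\langle v\rangle^{w_1}\langle v\rangle^{w_2}$ and the elementary inequalities $\langle v\rangle\lesssim\langle v'\rangle\langle v_*\rangle$ and $\langle v\rangle\lesssim\langle v_*\rangle\langle v'_*\rangle$, which let one move the weight factors onto $h$ and $f$ at the cost of a factor $\langle v_*\rangle^{(-w_1)^++(-w_2)^+}$ that is absorbed into the $L^1$ norm on $g$. This explains the $L^1_{\gamma+2s+(-w_1)^++(-w_2)^+}$ index in case (1).

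\emph{Main obstacle and cases (2)--(3).} The principal difficulty is the low-velocity singularity of $|v-v_*|^\gamma$ when $\gamma+2s\le 0$: the bare $L^1_*$ bound on $g$ no longer suffices, since $g$ could concentrate. One handles this by splitting $|v-v_*|\ge 1$ from $|v-v_*|\le 1$; on the first piece the $L^1$ estimate applies, while on the near-diagonal piece one applies Hardy--Littlewood--Sobolev (or a direct Sobolev embedding) to absorb the local singularity into the norm $\Vert g\Vert_{L^2_{-(\gamma+2s)}}$, yielding case (3). Case (2) is the borderline $\gamma+2s=0$, where an arbitrarily small polynomial weight $\delta>0$ suffices to close the $L^1$ estimate, with $w_3=\max\{\delta,(-w_1)^++(-w_2)^+\}$. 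In the end the final weight index on $g$ is always $\max\{-(\gamma+2s),\,\gamma+2s+(-w_1)^++(-w_2)^+\}$, reflecting whichever source of difficulty is dominant.
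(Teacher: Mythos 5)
The paper offers no proof of this lemma at all: it is quoted verbatim from the cited reference (\cite{H}, Theorem 1.1), so there is nothing internal to compare your argument against. Your sketch is a faithful outline of the strategy used in that reference (dyadic decomposition of the angular singularity, Bobylev/Fourier cancellation paired with the splitting $a+b=2s$, redistribution of the weight $\langle v\rangle^{\gamma+2s}$ via $\langle v\rangle\lesssim\langle v'\rangle\langle v_*\rangle$, and a separate treatment of the near-diagonal singularity for $\gamma+2s\le 0$ giving the $L^2_{-(\gamma+2s)}$ norm on $g$), with the hard quantitative work (summability over the dyadic pieces and the commutator estimates) deferred to \cite{H}, which is appropriate here.
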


\begin{lem}\label{L24}(\cite{H}, Theorem 1.2)
Suppose that $g$ is a non-negative and smooth function verifying that
\[
\Vert g \Vert_{L^1} \ge \delta, \quad \Vert g \Vert_{L^1_2} + \Vert g \Vert_{L \log L} < \lambda,
\]
Then there exist $C_1(\lambda, \delta)$ and $C_2(\lambda, \delta)$ such that\\
(1) If $\gamma +2s \ge 0$, then we have
\[
( -Q(g, f) , f)_{L^2_v} \ge C_1(\lambda, \delta)\Vert f \Vert_{H^s_{\gamma/2}}^2 - C_2(\lambda, \delta) \Vert f \Vert_{L^2_{\gamma/2}}^2.
\]
(2) If $-1-2s < \gamma < -2s$
\[
( -Q(g, f) , f)_{L^2_v}  \ge C_1(\lambda, \delta)\Vert f \Vert_{H^s_{\gamma/2}}^2 - C_2(\lambda, \delta)(1+\Vert g \Vert_{L^p_{|\gamma|}}^{\frac {(\gamma+2s+3)p } {(\gamma+2s+3)p -3 } }) \Vert f \Vert_{L^2_{\gamma/2}}^2,
\]
with $p>\frac {3} {3+\gamma+2s}$.
\end{lem}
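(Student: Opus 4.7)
The plan is to prove the coercivity estimate by combining the pre/post-collisional symmetrization of the non-cutoff bilinear form with a Povzner-type lower bound derived from the mass, energy and entropy assumptions on $g$, following the framework established by Alexandre--Desvillettes--Villani--Wennberg and adapted to the soft-potential regime in the work of He.

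First I would symmetrize. Using the substitution $v \leftrightarrow v'$ together with the cancellation lemma, one can rewrite
$$
(-Q(g, f), f)_{L^2_v} = \frac{1}{2} \int_{\R^3} \int_{\R^3} \int_{\mathbb{S}^2} b(\cos\theta)\, |v-v_*|^\gamma\, g(v_*)\, (f'-f)^2 \, d\sigma\, dv\, dv_* + \mathcal{R}(g, f),
$$
where $\mathcal{R}(g, f)$ is controlled by $\Vert g\Vert_{L^1_{|\gamma|}} \Vert f\Vert_{L^2_{\gamma/2}}^2$ (absorbable into the negative right-hand side) via Taylor expansion in $\sigma$ against the angular singularity $\theta^{-1-2s}$.

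Second, I would extract a coercive lower bound from the principal quadratic term. For fixed $v_*$, a Littlewood--Paley / semi-classical analysis in the variable $v-v_*$ (as in ADVW) gives heuristically
$$
\int_{\R^3}\int_{\mathbb{S}^2} b(\cos\theta)|v-v_*|^\gamma (f'-f)^2 \, d\sigma\, dv \gtrsim \int_{\R^3} |v-v_*|^{\gamma+2s} \bigl|(-\Delta_v)^{s/2} f(v)\bigr|^2\, dv - C \int_{\R^3} |v-v_*|^\gamma f(v)^2\, dv .
$$
Integrating in $v_*$ against $g$ and invoking a Povzner-type lemma, I would use $\Vert g\Vert_{L^1} \ge \delta$ together with $\Vert g\Vert_{L^1_2}+\Vert g\Vert_{L\log L}\le \lambda$ to produce a uniform-in-$v$ lower bound $\int g(v_*) |v-v_*|^{\gamma+2s}\,dv_* \gtrsim \langle v\rangle^{\gamma+2s}$. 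The entropy assumption prevents $g$ from concentrating into a singular measure, so by de~la Vallée-Poussin/concentration-compactness arguments there is a non-degenerate ball on which $g$ carries mass bounded below depending only on $\delta$ and $\lambda$. This yields the claim in the regime $\gamma+2s\ge 0$.

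The main obstacle lies in the strong soft-potential case $-1-2s < \gamma < -2s$, where $\gamma+2s < 0$ and the weight $|v-v_*|^\gamma$ is no longer locally integrable in $v_*$. Splitting $|v-v_*|^\gamma = |v-v_*|^\gamma \mathbf{1}_{|v-v_*|\le 1} + |v-v_*|^\gamma \mathbf{1}_{|v-v_*|> 1}$, the long-range part is handled as above, while the short-range singular part must be dominated by H\"older's inequality. The constraint $p > 3/(3+\gamma+2s)$ is precisely the sharp condition that $|v-v_*|^\gamma \in L^{p'}_{\mathrm{loc}}$, and it yields a bound of the form $\Vert g\Vert_{L^p_{|\gamma|}}^{\alpha} \Vert f\Vert_{L^2_{\gamma/2}}^2$ for the short-range contribution; the exponent $\alpha = (\gamma+2s+3)p/\bigl((\gamma+2s+3)p - 3\bigr)$ is then extracted by interpolation with the principal coercive $H^s_{\gamma/2}$ norm so as to absorb any $L^2$ gain by Young's inequality. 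This singular-region estimate, together with the verification that the constants produced by the Povzner-type argument depend only on $\delta,\lambda$, is the technical heart of the proof.
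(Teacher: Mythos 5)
This statement is cited by the paper from \cite{H} (Theorem 1.2) and is not proved there, so there is no in-paper proof to compare against; I will instead assess your sketch on its own merits.

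Your overall strategy --- symmetrize $(-Q(g,f),f)$ into a principal quadratic Dirichlet-type form plus a cancellation remainder, extract coercivity from the principal term, and lower-bound $\int g(v_*)\,|v-v_*|^{\gamma+2s}\,dv_*$ using mass, energy and entropy via a non-concentration argument --- is indeed the standard template used in this literature. However, two steps as written are not right and one of them hides the technical heart of the result. First, after the symmetrization the remainder $\mathcal R(g,f)\sim \int\!\!\int g_*\,|v-v_*|^\gamma f^2\,dv\,dv_*$ is \emph{not} bounded by $\Vert g\Vert_{L^1_{|\gamma|}}\Vert f\Vert_{L^2_{\gamma/2}}^2$ once $\gamma<0$, because $\int |v-v_*|^\gamma g_*\,dv_*$ cannot be controlled by an $L^1$ norm of $g$ near the diagonal; one needs an $L^p$ (or $L^\infty$) norm as in the paper's own Lemma~\ref{L212}. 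In fact, taming this remainder is exactly where the soft-potential case becomes delicate. Second, your interpretation that the threshold $p>3/(3+\gamma+2s)$ is ``precisely the sharp condition that $|v-v_*|^\gamma\in L^{p'}_{\mathrm{loc}}$'' is off: that would give $p<3/(3+\gamma)$, a different exponent. The stated threshold is the condition $|v-v_*|^{\gamma+2s}\in L^{p'}_{\mathrm{loc}}$, i.e. $(\gamma+2s)p'>-3$, which arises only \emph{after} factoring $|v-v_*|^{\gamma}=|v-v_*|^{\gamma+2s}\,|v-v_*|^{-2s}$ and controlling $\int |v-v_*|^{-2s}f^2\,dv$ by the homogeneous $H^s$ seminorm via a fractional Hardy inequality. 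That factorization and Hardy step --- which converts $2s$ orders of the near-diagonal singularity into the coercive $H^s_{\gamma/2}$ norm and is then absorbed by Young's inequality into the left-hand side --- is the mechanism that actually produces both the sharp $p$-condition and the exponent $(\gamma+2s+3)p/\bigl((\gamma+2s+3)p-3\bigr)$. Your sketch gestures at H\"older and absorption but does not identify this split, so as written the soft-potential estimate does not close.

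A smaller point: the ``heuristic'' per-$v_*$ coercivity inequality you write down, with the factor $|v-v_*|^{\gamma+2s}$ sitting in front of $|(-\Delta_v)^{s/2}f|^2$, is morally correct but is really a $v_*$-uniform statement about the collision symbol, not a direct consequence of the ADVW lower bound; justifying it requires the global symbolic/anisotropic analysis in \cite{H}, which is nontrivial. For the $\gamma+2s\ge 0$ case the rest of your plan (Peetre inequality $|v-v_*|^{\gamma+2s}\gtrsim\langle v\rangle^{\gamma+2s}\langle v_*\rangle^{-(\gamma+2s)}$ on a region where $g$ carries mass, plus de~la~Vall\'ee-Poussin non-concentration) is sound and matches the usual argument.
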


\begin{lem} (Cancellation Lemma) (\cite{ADVW}, Lemma 1) For any smooth function $f$ we have
\[
\int_{\R^3} \int_{\mathbb{S}^{2}} B(v-v_*, \sigma) (f'-f) dv d\sigma= (f*S )(v_*),
\]
where
\[
S(z) = |\mathbb{S}^1| \int_{0}^{\frac \pi 2} \sin \theta \left[ \frac 1 {\cos^3(\theta/2)} B\left(\frac {|z|}  {\cos (\theta/2)} , \cos \theta \right) - B(|z|, cos \theta) \right].
\]
\end{lem}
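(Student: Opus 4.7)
The plan is to split the integrand $f'-f$, transform the $f'$ piece by a change of variables $v \mapsto v'$ with $(v_*,\sigma)$ held fixed, and recognize the remaining difference as a convolution with a radial kernel.

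First I would write
\[
\int_{\R^3}\int_{\mathbb{S}^2} B(v-v_*,\sigma)(f'-f)\, d\sigma dv = I_1 - I_2,
\]
where $I_1$ and $I_2$ denote the integrals of $Bf'$ and $Bf$ respectively. For $I_1$, for each fixed $(v_*,\sigma)$ I apply the substitution $u = v'$. A short computation starting from $v' - v_* = \frac{1}{2}(v-v_*) + \frac{1}{2}|v-v_*|\sigma$ gives $|v'-v_*| = |v-v_*|\cos(\theta/2)$ and Jacobian $|\partial v'/\partial v| = \frac{1}{4}\cos^2(\theta/2)$. Combined with $B(v-v_*,\sigma) = |v-v_*|^\gamma b(\cos\theta)$ from $\mathbf{(A1)}$, this gives
\[
I_1 = \int_{\R^3}\int_{\mathbb{S}^2} \frac{1}{\cos^3(\theta/2)} B\!\left(\frac{|v-v_*|}{\cos(\theta/2)},\cos\theta\right) f(v)\, dv d\sigma,
\]
which is essentially the regular change of variables from the first lemma of Section~\ref{section 2}, rewritten after absorbing the $\cos^{-\gamma}(\theta/2)$ factor into the first argument of $B$.

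Subtracting $I_2$ then yields
\[
I_1 - I_2 = \int_{\R^3} f(v)\left[\int_{\mathbb{S}^2}\!\left(\frac{1}{\cos^3(\theta/2)} B\!\left(\frac{|v-v_*|}{\cos(\theta/2)},\cos\theta\right) - B(|v-v_*|,\cos\theta)\right) d\sigma\right] dv.
\]
The bracketed $\sigma$-integral is a function of $|v-v_*|$ alone, so I would parametrize $\sigma$ by spherical coordinates with polar axis $\widehat{v-v_*}$, writing $d\sigma = \sin\theta\, d\theta\, d\varphi$. The azimuthal integration contributes $|\mathbb{S}^1| = 2\pi$, and assumption $\mathbf{(A4)}$ restricts $\theta$ to $[0,\pi/2]$, so the bracket is exactly $S(v-v_*)$.

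Finally, since $S$ depends only on $|z|$, we have $S(v-v_*) = S(v_*-v)$, and therefore
\[
I_1 - I_2 = \int_{\R^3} f(v)\,S(v_*-v)\, dv = (f * S)(v_*),
\]
as claimed. The only nontrivial ingredient is the Jacobian for $v \mapsto v'$, which is already packaged by the regular change of variables lemma; the rest is rotational symmetry and bookkeeping, so I anticipate no serious obstacle.
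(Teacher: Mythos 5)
Your proposal is correct in structure and reaches the right conclusion by the standard route (the paper itself only cites \cite{ADVW}, so there is no in-paper proof to compare against): split $f'-f$, transform the gain piece by the regular change of variables, recognize the bracket as a radial function of $v-v_*$, and conclude by convolution symmetry.

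There is, however, an imprecision in the middle step worth flagging. You write that the Jacobian $|\partial v'/\partial v|=\tfrac14\cos^2(\theta/2)$, combined with $B=|v-v_*|^\gamma b(\cos\theta)$, ``gives'' the displayed formula for $I_1$. That is not what the $v\mapsto v'$ substitution alone produces. For fixed $\sigma$, the substitution yields the factor $4/\cos^2(\theta/2)$ (reciprocal Jacobian), maps the half-space $\{\theta\in[0,\pi/2]\}$ onto the narrower cone $\{\theta'\in[0,\pi/4]\}$ (where $\theta'$ is the angle between $v'-v_*$ and $\sigma$, and $\theta=2\theta'$), and replaces $b(\cos\theta)$ by $b(\cos 2\theta')$. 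To recover the stated formula, with the single factor $\cos^{-3}(\theta/2)$ and $\theta$ again ranging over $[0,\pi/2]$, one must also reparametrize the angular integral, replacing $\theta'$ by $\theta/2$; the accompanying change $\sin\theta'\,d\theta' = \tfrac12\sin(\theta/2)\,d\theta$ together with $\sin\theta=2\sin(\theta/2)\cos(\theta/2)$ is exactly what converts $4/\cos^2(\theta/2)$ into $1/\cos^3(\theta/2)$. Since you also cite the regular change of variables lemma of Section~\ref{section 2}, which already packages the combined $(v,\theta)$ substitution correctly, your final formula for $I_1$ and the conclusion are right; but the reader should not be left with the impression that the $v\mapsto v'$ Jacobian by itself produces the $\cos^{-3}(\theta/2)$ factor. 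The remaining steps (rotational symmetry of the bracketed kernel, restriction to $\theta\in[0,\pi/2]$ via $\mathbf{(A4)}$, and $S(v-v_*)=S(v_*-v)$ because $S$ is radial) are all fine.
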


\begin{lem}(\cite{V}, Section 1.4)(Pre-post collisional change of variable) For any function $F$ smooth enough we have
\[
\int_{\R^3}\int_{\R^3} \int_{\mathbb{S}^2 }F(v, v_*, v', v_*') B(|v-v_*|, \cos \theta) dv dv_* d \sigma =\int_{\R^3}\int_{\R^3} \int_{\mathbb{S}^2 }F(v', v_*', v, v_*) B(|v-v_*|, \cos \theta) dv dv_* d \sigma.
\]
\end{lem}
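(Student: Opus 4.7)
I would prove the identity by exhibiting an involutive, measure-preserving change of variables $\Phi: (v,v_*,\sigma)\mapsto (v',v_*',\sigma')$ on $\R^3\times\R^3\times\mathbb{S}^2$, where $\sigma':=(v-v_*)/|v-v_*|$, and then verifying that the integrand on the left is sent to the integrand on the right. Since $\Phi$ reorders the four velocities appearing as arguments of $F$, the identity then reduces to change-of-variables plus the invariance of $B$ and of the measure.

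The first step is to check that $\Phi$ is an involution. Using $v'+v_*'=v+v_*$ and $|v'-v_*'|=|v-v_*|$, a direct substitution shows that the post-collisional pair associated with $(v',v_*',\sigma')$ via the $\sigma$-parameterization of Section 1 is exactly $(v,v_*)$, while the new $\sigma$-variable is $(v'-v_*')/|v'-v_*'|=\sigma$. In particular $|v-v_*|=|v'-v_*'|$ and
\[
\cos\theta' \;=\; \sigma' \cdot \frac{v'-v_*'}{|v'-v_*'|} \;=\; \sigma\cdot\frac{v-v_*}{|v-v_*|} \;=\; \cos\theta,
\]
so $B(|v-v_*|,\cos\theta)$ is $\Phi$-invariant. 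These are purely algebraic computations from the parameterization.

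The main step, and principal obstacle, is verifying that $\Phi$ preserves the product measure $dv\,dv_*\,d\sigma$. The cleanest route is to switch to the $\omega$-representation, writing $v'=v-((v-v_*)\cdot\omega)\omega$ and $v_*'=v_*+((v-v_*)\cdot\omega)\omega$ for $\omega\in\mathbb{S}^2$: in this form the map $(v,v_*)\leftrightarrow(v',v_*')$ at fixed $\omega$ is a linear reflection of Jacobian one, and the Jacobian factor between $d\sigma$ and $d\omega$ depends only on $|v-v_*|$ and the angle between $v-v_*$ and $\omega$, both of which are preserved under the swap. A more intrinsic alternative is to view both sides as integrals over the twelve-dimensional collision manifold cut out by $v+v_*=v'+v_*'$ and $|v|^2+|v_*|^2=|v'|^2+|v_*'|^2$, which carries a natural surface measure manifestly symmetric in $(v,v_*)$ and $(v',v_*')$. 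Once measure-invariance is in hand, applying $\Phi$ to the left-hand integral sends $F(v,v_*,v',v_*')$ to $F(v',v_*',v,v_*)$ while leaving $B$ and the volume element unchanged, yielding the claim.
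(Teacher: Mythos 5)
Your proposal is correct and is exactly the standard argument (the paper itself does not prove this lemma but only cites Villani, Section 1.4, where the proof is of the same flavor: exhibit the involution $(v,v_*,\sigma)\mapsto(v',v_*',\sigma')$ with $\sigma'=(v-v_*)/|v-v_*|$, note that it fixes $|v-v_*|$ and $\cos\theta$ hence the kernel $B$, and verify measure-preservation, most cleanly by passing to the $\omega$-representation where the velocity map at fixed $\omega$ is a linear reflection of unit Jacobian modulus). The only cosmetic point is that the reflection $(u,U)\mapsto(u-2(u\cdot\omega)\omega,\,U)$ has determinant $-1$, not $+1$, so strictly one should say ``Jacobian of absolute value one''; this does not affect the conclusion.
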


\begin{lem} (Hardy-Littlewood-Sobolev inequality) (\cite{LL}, Chapter 4)
Let $p, r>1$ and $0<\lambda<N$ with $1/p+\lambda/n+1/r = 2$. Let $f \in L^p(\R^n)$ and $h \in L^r(\R^N)$. Then there exist a constant $C(n,\lambda, p)$, independent of $f$ and $h$, such that
\[
\int_{\R^N} \int_{\R^N} f(x) |x-y|^{-\lambda} h(y) dx dy \le C(n,\lambda, p) \Vert f \Vert_{L^p} \Vert h \Vert_{L^r}.
\]
\end{lem}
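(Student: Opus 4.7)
I would follow the standard Hedberg-style route, which controls the Riesz potential pointwise by the Hardy--Littlewood maximal function and then invokes the maximal inequality. (The statement contains a harmless typo: $N$ and $n$ denote the same dimension, and I take $N=n$ throughout.) First, by Hölder's inequality applied in the $x$ variable, the bilinear estimate reduces to the boundedness of the Riesz potential
\[
I_\lambda h(x) := \int_{\R^n} |x-y|^{-\lambda} h(y)\, dy
\]
from $L^r(\R^n)$ to $L^{p'}(\R^n)$, where $1/p' = 1 - 1/p$. The HLS balance $1/p + 1/r + \lambda/n = 2$ rearranges exactly to $1/p' = 1/r - (n-\lambda)/n$, so the target exponents match the Riesz scaling. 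Moreover $p,r>1$ together with $\lambda\in(0,n)$ place both $r$ and $p'$ in $(1,\infty)$.

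\textbf{Pointwise control.} Fix $R>0$ and split $I_\lambda h(x) = A_R(x) + B_R(x)$ according to $|x-y|<R$ and $|x-y|\ge R$. For $A_R$ I decompose the ball $\{|x-y|<R\}$ into dyadic annuli $2^{-k-1}R \le |x-y| < 2^{-k}R$, bound $|x-y|^{-\lambda}$ by its value at the inner radius on each annulus, and recognize each resulting average as $\lesssim Mh(x)$; summing the geometric series in $k$ (which converges precisely because $\lambda<n$) yields $|A_R(x)| \le C R^{n-\lambda} Mh(x)$. For $B_R$ I apply Hölder in $y$ with exponents $(r,r')$: the integral $\int_{|z|\ge R}|z|^{-\lambda r'}dz$ is finite, equal to a constant times $R^{n-\lambda r'}$, exactly because $\lambda r'>n$; this last inequality is not an extra assumption but a rearrangement of the HLS balance using $p>1$. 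Hence $|B_R(x)| \le C R^{n/r'-\lambda}\Vert h\Vert_{L^r}$. Optimizing $R$ to balance the two contributions, namely $R = (\Vert h\Vert_{L^r}/Mh(x))^{r/n}$, produces the Hedberg pointwise inequality
\[
|I_\lambda h(x)| \le C\, \Vert h\Vert_{L^r}^{\theta}\, (Mh(x))^{1-\theta}, \qquad \theta := \frac{r(n-\lambda)}{n} = 1 - \frac{r}{p'}.
\]

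\textbf{Conclusion and main obstacle.} Raising this to the $p'$-th power, integrating in $x$, and applying the Hardy--Littlewood maximal theorem (which uses precisely $r>1$) gives
\[
\Vert I_\lambda h\Vert_{L^{p'}}^{p'} \le C\, \Vert h\Vert_{L^r}^{\theta p'}\, \Vert Mh\Vert_{L^r}^{r} \le C'\, \Vert h\Vert_{L^r}^{\theta p' + r} = C'\, \Vert h\Vert_{L^r}^{p'},
\]
where the last identity uses $(1-\theta)p' = r$, i.e.\ the same Riesz scaling relation that fixed $p'$ in the first place. The only obstacle is purely bookkeeping: one must verify that the exponent algebra is consistent at three points, namely the convergence $\lambda r' > n$ of the far-field integral, the convergence of the dyadic tail $\sum_k 2^{-k(n-\lambda)}$ in the near-field bound, and the identity $(1-\theta)p'=r$ used in the final contraction. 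All three are equivalent reformulations of the single HLS balance condition $1/p + 1/r + \lambda/n = 2$, which is what makes this scaling so rigid. A parallel route would be to establish two endpoint weak-type bounds and interpolate by Marcinkiewicz, but the Hedberg argument above is slightly more direct and avoids any extrapolation step.
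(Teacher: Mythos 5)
Your Hedberg argument is correct: the duality reduction to $\Vert I_\lambda h\Vert_{L^{p'}}\le C\Vert h\Vert_{L^r}$, the dyadic near-field bound by $R^{n-\lambda}Mh(x)$, the far-field Hölder bound with $\lambda r'>n$ (which is indeed just $2-1/p>1$ rewritten), the optimization in $R$, and the final contraction $(1-\theta)p'=r$ all check out, and the maximal theorem is legitimately available since $r>1$. Note, however, that the paper does not prove this lemma at all; it is quoted from Lieb--Loss (\cite{LL}, Chapter 4), where the inequality is established by a quite different route: a layer-cake decomposition of $f$, $h$ and the kernel combined with rearrangement arguments, which yields the sharp constant (attained by conformally related maximizers), whereas your maximal-function proof is shorter and more robust but gives only an unspecified constant $C(n,\lambda,p)$ — which is all the paper needs, since the lemma is only used qualitatively (e.g.\ in the $\Lambda_2$ estimate of Lemma 3.2). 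One small overstatement: the convergence of the dyadic tail $\sum_k 2^{-k(n-\lambda)}$ uses only $\lambda<n$, not the HLS balance, so your three "bookkeeping" checks are not literally three reformulations of the same identity; this does not affect the validity of the proof. You should also record the trivial edge cases $Mh(x)=0$ or $Mh(x)=\infty$ before optimizing $R$, but for $h\in L^r$ these occur only on a null set or force $h=0$.
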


In this paper we will use the following representation of $v'$ which can be proved directly. We have
\[
\langle v' \rangle^2 = \langle v \rangle^2 \cos^2 \frac \theta 2 + \langle v_*\rangle^2 \sin^2 \frac \theta 2+ 2 \cos\frac \theta 2 \sin \frac \theta 2 |v-v_*| v \cdot \omega,   \quad \omega \perp (v-v_*), \quad v \cdot w = v_* \cdot w,
\]
where $\omega = \frac {\sigma - (\sigma \cdot k)k } {|\sigma - (\sigma \cdot k)k |}$ with $k = \frac {v-v_*} {|v-v_*|}$. We can further decompose $\omega$ by
\begin{equation}
\label{decomposition omega}
\omega = \tilde{\omega} \cos \frac \theta 2  + \frac {v'-v_*} {|v'-v_*|} \sin \frac \theta 2,\quad \tilde{\omega} = \frac {v'-v} {|v'-v|}, \quad \tilde{\omega} \perp (v'-v_*). 
\end{equation}
For the $\langle v '\rangle^k$ we have

\begin{lem}\label{L28} (\cite{C}, Lemma 2.7)
For any constant $k \ge 4$ we have
\[
\langle v' \rangle^{k} -\langle v \rangle^{k} \cos^{k} \frac \theta 2 =  k  \langle v \rangle^{k-2} \cos^{k-1} \frac \theta 2 \sin \frac \theta 2 |v-v_*| (v_* \cdot \omega) + L_1 +L_2,
\]
with 
\[
|L_1 |  \le C_k\sin^{k-2} \frac \theta 2 \langle  v_* \rangle^{k} \langle v \rangle^2 ,\quad | L_2 | \le C_k \langle v \rangle^{k-2}  \langle v_* \rangle^4 \sin^2 \frac \theta 2,
\]
for some constant $C_k>0$. 
\end{lem}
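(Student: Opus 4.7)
The plan is to apply Taylor's theorem with integral remainder to the function $x \mapsto x^{k/2}$ at the base point $x_0 := \langle v\rangle^2\cos^2(\theta/2)$, exploiting the decomposition $\langle v'\rangle^2 = x_0 + Y + Z$ stated just before the lemma, where $Y := \langle v_*\rangle^2\sin^2(\theta/2)$ and $Z := 2\cos(\theta/2)\sin(\theta/2)|v-v_*|(v_*\cdot\omega)$. This gives
\[
\langle v'\rangle^k = x_0^{k/2} + \tfrac{k}{2}\,x_0^{k/2-1}(Y+Z) + R,
\]
with $R$ the second-order remainder $(k/2)(k/2-1)\int_0^1 (1-t)G(t)^{k/2-2}(Y+Z)^2\,dt$ and $G(t) := (1-t)x_0 + t\langle v'\rangle^2 \ge 0$. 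Note that $k\ge 4$ makes $f''(x)=(k/2)(k/2-1)x^{k/2-2}$ continuous up to $x=0$. The $Z$-piece of the linear term reproduces exactly the asserted leading correction $k\langle v\rangle^{k-2}\cos^{k-1}(\theta/2)\sin(\theta/2)|v-v_*|(v_*\cdot\omega)$, while the $Y$-piece equals $(k/2)\langle v\rangle^{k-2}\cos^{k-2}(\theta/2)\sin^2(\theta/2)\langle v_*\rangle^2$ and is trivially bounded by $C_k\sin^2(\theta/2)\langle v\rangle^{k-2}\langle v_*\rangle^4$, so it can be absorbed into $L_2$.

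The heart of the argument is a sharp estimate on $R$. The key observation is that $\omega \perp (v-v_*)$ forces $v_*\cdot\omega = v\cdot\omega$, giving the improved bound $|v_*\cdot\omega| \le \min(|v|,|v_*|)$. Combined with $|v-v_*|^2 \le 2(\langle v\rangle^2+\langle v_*\rangle^2)$ and the elementary inequality $(a+b)\min(a,b)\le 2ab$, this yields $|Z|^2 \le C\sin^2(\theta/2)\langle v\rangle^2\langle v_*\rangle^2$, whence
\[
(Y+Z)^2 \le C\sin^2(\theta/2)\langle v_*\rangle^2\bigl(\langle v\rangle^2+\sin^2(\theta/2)\langle v_*\rangle^2\bigr).
\]
Reapplying AM-GM inside $\langle v'\rangle^2 = x_0 + Y + Z$ gives the complementary bound $\langle v'\rangle^2 \le 3(\langle v\rangle^2 + \sin^2(\theta/2)\langle v_*\rangle^2)$ on the whole segment, and hence $G(t)^{k/2-2} \le C_k(\langle v\rangle^2+\sin^2(\theta/2)\langle v_*\rangle^2)^{k/2-2}$.

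Putting these into the integral remainder and using $(a+b)^p \le C_p(a^p+b^p)$ for $a,b\ge 0$ and $p = k/2-1 \ge 1$, I expect
\[
|R| \le C_k\sin^2(\theta/2)\langle v_*\rangle^2\bigl(\langle v\rangle^2+\sin^2(\theta/2)\langle v_*\rangle^2\bigr)^{k/2-1} \le C_k\bigl(\sin^2(\theta/2)\langle v\rangle^{k-2}\langle v_*\rangle^2 + \sin^k(\theta/2)\langle v_*\rangle^k\bigr).
\]
The first summand fits into $L_2$ after using $\langle v_*\rangle^2 \le \langle v_*\rangle^4$, and the second fits into $L_1$ after using $\sin^k\le \sin^{k-2}$ and $1\le \langle v\rangle^2$. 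Assigning these two pieces of $R$ together with the first-order $Y$-contribution to $L_2$ and $L_1$ respectively yields the claimed decomposition with the stated bounds.

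The main obstacle is the sharp control of $R$: the naive bounds $G(t)\le \langle v\rangle^2+\langle v_*\rangle^2$ and $|Z|^2\le C\sin^2(\theta/2)(\langle v\rangle^2+\langle v_*\rangle^2)\langle v_*\rangle^2$ (using only $|v_*\cdot\omega|\le |v_*|$) would produce leftover terms like $\sin^2(\theta/2)\langle v_*\rangle^k$ that are dominated by neither $L_1$ nor $L_2$ in the regime $\sin(\theta/2)$ small and $\langle v_*\rangle$ large. The gain comes from the perpendicularity relation $v_*\cdot\omega=v\cdot\omega$, which effectively promotes every bare $\langle v_*\rangle^2$-factor in the remainder into $\sin^2(\theta/2)\langle v_*\rangle^2$, and this is precisely what makes $R$ fit into $L_1+L_2$.
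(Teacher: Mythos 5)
Your proof is correct. The paper itself gives no argument for this lemma (it simply cites Lemma~2.7 of~\cite{C}), so there is nothing to compare against, but your Taylor expansion of $x\mapsto x^{k/2}$ at $x_0=\langle v\rangle^2\cos^2(\theta/2)$ with integral remainder, built on the identity for $\langle v'\rangle^2$ displayed just before the lemma, is precisely the natural route and all the steps check out: the linear-in-$Z$ term reproduces the leading correction exactly, the linear-in-$Y$ term lands in $L_2$, and the remainder is controlled by the sharp bound $|v_*\cdot\omega|\le\min(|v|,|v_*|)$ (coming from $v\cdot\omega=v_*\cdot\omega$), which yields $|Z|^2\lesssim\sin^2(\theta/2)\langle v\rangle^2\langle v_*\rangle^2$ and, applied once more to $\langle v'\rangle^2$ itself, gives $G(t)\lesssim\langle v\rangle^2+\sin^2(\theta/2)\langle v_*\rangle^2$; the resulting two pieces of the remainder fit into $L_2$ and $L_1$ respectively. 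Your closing remark is also accurate: without promoting the bare $\langle v_*\rangle^2$ factors into $\sin^2(\theta/2)\langle v_*\rangle^2$ via the orthogonality relation, the remainder produces terms like $\sin^2(\theta/2)\langle v_*\rangle^{k-2}\langle v\rangle^2$ that escape both $L_1$ and $L_2$.
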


By symmetry we have

\begin{lem}\label{L29}
 For smooth function $f, g$ and for any constant $k$, we have
\[
\Gamma:=\int_{\R^3}\int_{\R^3} \int_{\mathbb{S}^2} b(\cos \theta) |v-v_*|^{1+\gamma} (v_*\cdot \tilde{\omega}) \cos^k \frac \theta 2 \sin \frac \theta 2 f_* g' dv dv_* d\sigma =0.
\] 
\end{lem}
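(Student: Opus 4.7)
The plan is to exhibit a hidden antisymmetry that collapses the integral. Writing $\hat n := (v-v_*)/|v-v_*|$, a direct computation gives
\[
v'-v = \tfrac{1}{2}|v-v_*|(\sigma - \hat n), \qquad |v'-v| = |v-v_*|\sin(\theta/2) = \tfrac{1}{2}|v-v_*|\,|\sigma - \hat n|,
\]
so the unit vector $\tilde\omega = (v'-v)/|v'-v|$ admits the compact form
\[
\tilde\omega = \frac{\sigma - \hat n}{|\sigma - \hat n|}, \qquad (v_*\cdot\tilde\omega)\sin(\theta/2) = \tfrac{1}{2}\, v_*\cdot(\sigma - \hat n).
\]
As a sanity check, the orthogonality $\tilde\omega \perp (v'-v_*)$ noted in the excerpt follows instantly from $(\sigma-\hat n)\cdot(\sigma+\hat n)=0$ together with $v'-v_* = \tfrac{1}{2}|v-v_*|(\hat n+\sigma)$.

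For each fixed $v_*$, I would next substitute $v = v_* + r\hat n$ with $r = |v-v_*|$, $\hat n\in\mathbb{S}^2$, so that $dv = r^2\,dr\,d\hat n$, and relabel the scattering variable $\sigma$ as $\hat n'$. Then $\cos\theta = \hat n\cdot\hat n'$ and $v' = v_* + \tfrac{r}{2}(\hat n + \hat n')$, so the integrand of $\Gamma$ becomes
\[
\tfrac{1}{2}\, r^{3+\gamma}\, b(\hat n\cdot\hat n')\,\cos^k(\theta/2)\,\bigl[v_*\cdot(\hat n' - \hat n)\bigr]\, f(v_*)\, g\!\left(v_* + \tfrac{r(\hat n + \hat n')}{2}\right).
\]

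Finally, I would invoke antisymmetry under $\hat n \leftrightarrow \hat n'$. All factors above except $v_*\cdot(\hat n'-\hat n)$ are symmetric in this swap: $b(\hat n\cdot\hat n')$ and $\cos^k(\theta/2) = ((1+\hat n\cdot\hat n')/2)^{k/2}$ depend only on the symmetric inner product; the argument $v_*+\tfrac{r}{2}(\hat n+\hat n')$ of $g$ is symmetric; and $r^{3+\gamma}$ and $f(v_*)$ are independent of $(\hat n,\hat n')$. Since $v_*\cdot(\hat n'-\hat n)$ changes sign under the swap, the integrand on $\mathbb{S}^2\times\mathbb{S}^2$ is antisymmetric, so the $(\hat n,\hat n')$ integral vanishes for each $(v_*,r)$; integrating then in $r^{3+\gamma}\,dr$ and $f(v_*)\,dv_*$ gives $\Gamma = 0$. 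The only real obstacle is to spot the compact form $\tilde\omega = (\sigma-\hat n)/|\sigma-\hat n|$; once available, the antisymmetry on $\mathbb{S}^2\times\mathbb{S}^2$ is transparent.
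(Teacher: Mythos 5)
Your proof is correct, and it takes a genuinely different route from the paper's. The paper first applies the regular change of variable $v\to v'$ (picking up the Jacobian factor $\cos^{-(4+\gamma)}(\theta/2)$ and replacing $|v-v_*|^{1+\gamma}$ by $|v'-v_*|^{1+\gamma}$), then uses the geometric fact $\tilde\omega\perp(v'-v_*)$ to conclude that, in spherical coordinates for $\sigma$ with $v'-v_*$ as the polar axis, one has $\tilde\omega=(\cos\phi,\sin\phi,0)$, so the azimuthal integral of $v_*\cdot\tilde\omega$ vanishes. You instead stay in the original variables, write $v=v_*+r\hat n$, and exploit the exact algebraic identity $(v_*\cdot\tilde\omega)\sin(\theta/2)=\tfrac12\,v_*\cdot(\sigma-\hat n)$; the global antisymmetry of the integrand under the swap $\hat n\leftrightarrow\sigma$ on $\mathbb{S}^2\times\mathbb{S}^2$ (which also preserves the support restriction $\hat n\cdot\sigma\ge 0$ from assumption (A4)) then kills the integral. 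The two arguments are of course related --- after the paper's change of variables, your swap becomes the azimuthal half-rotation $\phi\mapsto\phi+\pi$ --- but your version avoids the change of variables and its Jacobian bookkeeping entirely, and replaces the choice of spherical coordinates by a coordinate-free identity, which makes the mechanism of cancellation more transparent. It also makes the role of the compact form $\tilde\omega=(\sigma-\hat n)/|\sigma-\hat n|$ explicit, which the paper only uses implicitly.
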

\begin{proof}
By the regular change of variable $v \to v'$ and take the new $v'-v_*$ as the north pole, recall $\tilde{\omega} \perp (v'-v_*)$. Then we have 
\[
 \Gamma=\int_{\R^3}\int_{\R^3} \int_{\mathbb{S}^2} b(\cos \theta) \frac 1 {\cos^{4+\gamma} \frac \theta 2} |v'-v_*|^{1+\gamma} (v_*\cdot \tilde{\omega}) \cos^k \frac \theta 2 \sin \frac \theta 2 f_* g' dv' dv_* \sin \theta d\theta d\phi,
\]
where $\tilde{\omega} = (\cos \phi, \sin \phi, 0)$. It's easily seen that the integration in $\phi$ gives that $\Gamma =0$.
\end{proof}

We introduce some $L^p$ inequalities related to the singularity of the kernel.

\begin{lem}\label{L210}(\cite{CHJ}, Lemma 2.6)
Suppose $\gamma \in (-3, 1], s \in (0, 1), \gamma+2s >-1$. For any smooth function $g$ and $f$ we have
\[
\mathcal{R}  := \int_{\R^3} \int_{\R^3} |v-v_*|^\gamma g_* f^2 dv_* dv\lesssim \Vert g \Vert_{L^2_{|\gamma|+2 }} \Vert f \Vert_{H^s_{ \gamma/2}}^2 .
\]
\end{lem}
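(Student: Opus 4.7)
The plan is to split the domain of integration at the scale $|v-v_*|=1$, writing $\mathcal{R}=\mathcal{R}_1+\mathcal{R}_2$ according to whether $|v-v_*|>1$ or $|v-v_*|\leq 1$, and treat the two pieces by different tools. The case $\gamma\geq 0$ is immediate: the crude bound $|v-v_*|^\gamma\lesssim\langle v\rangle^\gamma\langle v_*\rangle^\gamma$ together with Cauchy--Schwarz in $v_*$ (absorbing the excess $\langle v_*\rangle^{-2}\in L^2$) already yields $\mathcal{R}\lesssim\Vert g\Vert_{L^2_{\gamma+2}}\Vert f\Vert_{L^2_{\gamma/2}}^2$, which is controlled by the claim. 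I therefore focus on $\gamma\in(-3,0)$.

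For the tame piece $\mathcal{R}_1$, I would invoke the Peetre-type inequality $\langle v-v_*\rangle^\gamma\lesssim\langle v\rangle^\gamma\langle v_*\rangle^{|\gamma|}$ (valid since $\gamma<0$) together with $|v-v_*|^\gamma\mathbf{1}_{|v-v_*|>1}\lesssim\langle v-v_*\rangle^\gamma$. This decouples the two integrations and gives
\begin{equation*}
\mathcal{R}_1\lesssim\Vert g\Vert_{L^1_{|\gamma|}}\Vert f\Vert_{L^2_{\gamma/2}}^2\lesssim\Vert g\Vert_{L^2_{|\gamma|+2}}\Vert f\Vert_{L^2_{\gamma/2}}^2,
\end{equation*}
the last step being a standard Cauchy--Schwarz using $\langle v_*\rangle^{-2}\in L^2(\R^3)$. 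Since $L^2_{\gamma/2}\hookrightarrow H^s_{\gamma/2}$ trivially, this piece fits inside the target.

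For the singular piece $\mathcal{R}_2$, the key observation is that $\langle v\rangle\sim\langle v_*\rangle$ on $\{|v-v_*|\leq 1\}$, which lets me freely transfer the weight $\langle v\rangle^{-\gamma}$ from $f^2$ onto $g$. Setting $F:=\langle v\rangle^{\gamma/2}f$ and $\tilde g(v_*):=g(v_*)\langle v_*\rangle^{|\gamma|}$, the task reduces to bounding
\begin{equation*}
\int_{\R^3}\int_{\R^3}|v-v_*|^\gamma\mathbf{1}_{|v-v_*|\leq 1}\,\tilde g(v_*)\,F(v)^2\,dv\,dv_*
\end{equation*}
by $\Vert g\Vert_{L^2_{|\gamma|+2}}\Vert F\Vert_{H^s}^2$. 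In the easy regime $\gamma+2s\geq 0$, the fractional Hardy inequality (applicable since $|\gamma|<3$) delivers $\int|v-v_*|^\gamma F(v)^2\,dv\lesssim\Vert F\Vert_{H^{|\gamma|/2}}^2\leq\Vert F\Vert_{H^s}^2$ uniformly in $v_*$, and integrating against $\tilde g\in L^1$ (controlled by $\Vert g\Vert_{L^2_{|\gamma|+2}}$) closes this sub-case.

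The main obstacle is the range $-1<\gamma+2s<0$, where Hardy alone is insufficient because $s<|\gamma|/2$. Here I would invoke Hardy--Littlewood--Sobolev with $\lambda=|\gamma|$: choosing $q=3/(3-2s)$ so that the critical Sobolev embedding $H^s\hookrightarrow L^{2q}$ yields $\Vert F^2\Vert_{L^q}\lesssim\Vert F\Vert_{H^s}^2$, the conjugate HLS exponent is $1/p=1+(\gamma+2s)/3\in(2/3,1)$. The hypothesis $\gamma+2s<0$ forces $p>1$, legitimizing HLS, while the structural assumption $\gamma+2s>-1$ forces $p<3/2<2$, which permits a Hölder split
\begin{equation*}
\Vert\tilde g\Vert_{L^p}\leq\Vert g\langle v\rangle^{|\gamma|+2}\Vert_{L^2}\,\Vert\langle v\rangle^{-2}\Vert_{L^{2p/(2-p)}}\lesssim\Vert g\Vert_{L^2_{|\gamma|+2}},
\end{equation*}
the second factor being finite because $4p/(2-p)>3$ for $p>6/7$. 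Assembling the two sub-cases of $\mathcal{R}_2$ with $\mathcal{R}_1$ gives the claimed estimate.
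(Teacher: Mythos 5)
This lemma is imported from \cite{CHJ} (their Lemma 2.6) and is stated without proof in the present paper, so there is no in-paper argument to compare your proposal against; I therefore assess it on its own merits. Your argument is correct. The split at $|v-v_*|=1$, the Peetre decoupling $\langle v-v_*\rangle^\gamma\lesssim\langle v\rangle^\gamma\langle v_*\rangle^{|\gamma|}$ for the far piece, and the dichotomy on the sign of $\gamma+2s$ for the near piece — fractional Hardy when $s\geq|\gamma|/2$ (legitimate since $|\gamma|/2<3/2$), Hardy--Littlewood--Sobolev paired with the critical embedding $H^s(\R^3)\hookrightarrow L^{6/(3-2s)}$ when $s<|\gamma|/2$ — all close, and the numerology you check ($p\in(1,3/2)$ exactly because $-1<\gamma+2s<0$, and $\langle\cdot\rangle^{-2}\in L^{2p/(2-p)}$ iff $p>6/7$) is right. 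Two small points you should make explicit. First, Hardy and Sobolev control $\Vert\langle D\rangle^{s}(\langle v\rangle^{\gamma/2}f)\Vert_{L^2}$, whereas $\Vert f\Vert_{H^s_{\gamma/2}}$ is defined in Section~1 as $\Vert\langle v\rangle^{\gamma/2}\langle D\rangle^{s}f\Vert_{L^2}$; the equivalence of the two for real weight and order is standard but rests on a commutator bound for $[\langle D\rangle^{s},\langle v\rangle^{\gamma/2}]$ (a pseudodifferential operator of negative order $s-1$) and deserves a citation rather than a silent identification. Second, since $g$ need not be signed, you should replace $g$ by $|g|$ at the outset — the monotone manipulations (dropping the indicator, transferring weights on $\{|v-v_*|\leq1\}$) need a nonnegative integrand, and this costs nothing since the target only involves $\Vert g\Vert_{L^2_{|\gamma|+2}}$.
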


\begin{lem}\label{L211} (\cite{C}, Lemma 2.5) Suppose $\gamma \in(-3, 0)$.  For any smooth function $f$  we have
\[
\sup_{v \in \R^3}\int_{\R^3} |v-v_*|^{\gamma} |f| (v_*) dv_* \lesssim \Vert f \Vert_{L^1}^{1 + \frac \gamma 3} \Vert f \Vert_{L^\infty}^{ - \frac \gamma 3} \lesssim \Vert f \Vert_{L^\infty_4}.
\]
\end{lem}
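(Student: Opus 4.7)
The plan is a standard interpolation by splitting the $v_*$-integral into a near-singularity piece and a far-from-singularity piece, then optimizing in the cutoff radius.

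Fix $v \in \R^3$ and a radius $R > 0$ to be chosen. I would split
\[
\int_{\R^3} |v-v_*|^\gamma |f(v_*)|\, dv_* = \int_{|v-v_*| \le R} + \int_{|v-v_*| > R}.
\]
For the inner region, since $\gamma > -3$, the weight $|v-v_*|^\gamma$ is locally integrable and I would bound
\[
\int_{|v-v_*| \le R} |v-v_*|^\gamma |f(v_*)|\, dv_* \le \Vert f \Vert_{L^\infty} \int_{|w|\le R} |w|^\gamma dw \lesssim R^{3+\gamma}\Vert f \Vert_{L^\infty}.
\]
For the outer region, since $\gamma < 0$, the weight is bounded by $R^\gamma$ and I would bound
\[
\int_{|v-v_*| > R} |v-v_*|^\gamma |f(v_*)|\, dv_* \le R^\gamma \Vert f \Vert_{L^1}.
\]

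Next I would optimize by choosing $R$ so that the two contributions balance, namely $R^{3+\gamma}\Vert f\Vert_{L^\infty} = R^\gamma\Vert f\Vert_{L^1}$, i.e.\ $R = (\Vert f\Vert_{L^1}/\Vert f\Vert_{L^\infty})^{1/3}$. Substituting this back yields
\[
\sup_{v \in \R^3} \int_{\R^3} |v-v_*|^\gamma |f(v_*)|\, dv_* \lesssim \Vert f\Vert_{L^1}^{1+\gamma/3}\Vert f\Vert_{L^\infty}^{-\gamma/3},
\]
which is the first inequality. Note $1+\gamma/3 > 0$ because $\gamma > -3$ and $-\gamma/3 > 0$ because $\gamma < 0$, so both exponents are positive.

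For the second inequality I would simply observe that
\[
\Vert f \Vert_{L^1} = \int_{\R^3} \langle v \rangle^{-4} (\langle v\rangle^4 |f|)\, dv \le \Vert f \Vert_{L^\infty_4} \int_{\R^3} \langle v \rangle^{-4}\, dv \lesssim \Vert f \Vert_{L^\infty_4},
\]
using that $\langle v\rangle^{-4}$ is integrable on $\R^3$, together with the obvious bound $\Vert f\Vert_{L^\infty} \le \Vert f\Vert_{L^\infty_4}$. Plugging these into the first inequality, and using that the two positive exponents $1+\gamma/3$ and $-\gamma/3$ sum to $1$, concludes the proof. There is no real obstacle here; the only mild subtlety is keeping track of the sign conditions on $\gamma$ to make sure both the truncation argument and the final exponent manipulation are valid.
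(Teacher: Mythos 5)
Your proof is correct, and it is the standard split-and-optimize argument that underlies this kind of $L^1$–$L^\infty$ interpolation bound for Riesz-type kernels; the paper itself does not reprove the lemma but cites \cite{C}, Lemma 2.5, where essentially this computation appears. All the sign conditions on $\gamma$ are used in the right places (local integrability for the inner ball, monotonicity for the outer region, positivity and summability of the exponents at the end), so there is nothing to correct.
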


\begin{lem}\label{L212}
If $-3 <\gamma \le 1$, then for any smooth function $g$ and $f$, we have
\[
\mathcal{R}  := \int_{\R^3} \int_{\R^3} |v-v_*|^\gamma g_* |f|^2 dv_* dv \lesssim \Vert g \Vert_{L^\infty_{|\gamma|+4 }} \Vert f \Vert_{L^2_{ \gamma/2}}^2. 
\]
\end{lem}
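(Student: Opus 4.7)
\medskip

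The plan is to reduce the double integral to a one-variable bound by showing that, uniformly in $v$,
\[
\int_{\R^3} |v-v_*|^\gamma \, g(v_*) \, dv_* \lesssim \|g\|_{L^\infty_{|\gamma|+4}} \, \langle v \rangle^\gamma.
\]
Once this pointwise estimate is established, Fubini immediately gives
\[
\mathcal{R} \le \|g\|_{L^\infty_{|\gamma|+4}} \int_{\R^3} |f(v)|^2 \, \langle v \rangle^\gamma \, dv = \|g\|_{L^\infty_{|\gamma|+4}} \, \|f\|_{L^2_{\gamma/2}}^2,
\]
which is exactly the desired conclusion.

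To prove the pointwise bound, I would split by the sign of $\gamma$. When $0 \le \gamma \le 1$, use $|v-v_*|^\gamma \le C(\langle v \rangle^\gamma + \langle v_* \rangle^\gamma)$. The first piece pulls $\langle v\rangle^\gamma$ out, leaving $\int g_* \, dv_*$ which is bounded via $g_* \le \|g\|_{L^\infty_{|\gamma|+4}} \langle v_*\rangle^{-\gamma-4}$ (integrable since the exponent is less than $-3$). The second piece gives $\int \langle v_*\rangle^\gamma g_* \, dv_* \lesssim \|g\|_{L^\infty_{|\gamma|+4}} \int \langle v_*\rangle^{-4} dv_* \lesssim \|g\|_{L^\infty_{|\gamma|+4}}$, which is dominated by $\|g\|_{L^\infty_{|\gamma|+4}} \langle v\rangle^\gamma$ since $\langle v\rangle^\gamma \ge 1$ for $\gamma \ge 0$.

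When $-3 < \gamma < 0$ the kernel is singular, so I would cut the $v_*$-integral at $|v_*| = |v|/2$. On $\{|v_*| \le |v|/2\}$ we have $|v-v_*| \ge |v|/2$, so $|v-v_*|^\gamma \lesssim \langle v\rangle^\gamma$ (for $|v| \ge 2$; for $|v| \le 2$ the estimate follows directly from Lemma~\ref{L211} since $\langle v\rangle^\gamma \sim 1$ there), and $\int g_* \, dv_*$ is again controlled by $\|g\|_{L^\infty_{|\gamma|+4}}$. On $\{|v_*| > |v|/2\}$ we have $\langle v_*\rangle \gtrsim \langle v\rangle$, so the weight decomposes as $\langle v_*\rangle^{-|\gamma|-4} \lesssim \langle v\rangle^\gamma \langle v_*\rangle^{-4}$, which yields
\[
\int_{|v_*|>|v|/2} |v-v_*|^\gamma g_* \, dv_* \lesssim \|g\|_{L^\infty_{|\gamma|+4}} \, \langle v \rangle^\gamma \int_{\R^3} |v-v_*|^\gamma \langle v_*\rangle^{-4} \, dv_*.
\]
The remaining integral is uniformly bounded in $v$: split at $|v-v_*|=1$, using that $\gamma > -3$ makes $|v-v_*|^\gamma$ locally integrable near zero, while on $|v-v_*|\ge 1$ the factor $|v-v_*|^\gamma \le 1$ leaves $\langle v_*\rangle^{-4} \in L^1$. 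Alternatively this is a direct application of Lemma~\ref{L211} with $f(v_*) = \langle v_*\rangle^{-4}$.

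There is no serious obstacle here; the only delicate point is the singular case $\gamma < 0$, where one must carefully extract $\langle v\rangle^\gamma$ in both subregions of the $v_*$ split, and the split $|v_*| \lessgtr |v|/2$ combined with the previously established Lemma~\ref{L211} handles this cleanly.
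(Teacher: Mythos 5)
Your proof is correct. It differs from the paper's in the mechanism used to extract the factor $\langle v\rangle^\gamma$ from the $v_*$-integral, though both ultimately hinge on Lemma~\ref{L211}. You prove the pointwise bound $\int |v-v_*|^\gamma g_*\,dv_* \lesssim \|g\|_{L^\infty_{|\gamma|+4}}\langle v\rangle^\gamma$ via a geometric dichotomy $|v_*|\lessgtr |v|/2$: on the first region the kernel itself is $\lesssim \langle v\rangle^\gamma$, and on the second the weight $\langle v_*\rangle^{-|\gamma|}$ supplies $\langle v\rangle^\gamma$. The paper instead works algebraically: it inserts the weight-transfer inequality $\langle v\rangle^{|\gamma|}\lesssim \langle v_*\rangle^{|\gamma|}\langle v-v_*\rangle^{|\gamma|}$ into the integrand to detach $\langle v\rangle^\gamma$ onto $f$, then uses $\langle v-v_*\rangle^{|\gamma|}/|v-v_*|^{|\gamma|}\lesssim 1+|v-v_*|^\gamma$ to split into a regular part (controlled by $\|g\|_{L^1_{|\gamma|}}$) and a singular part (controlled via Lemma~\ref{L211}). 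Your version is a bit more geometric and produces the pointwise kernel estimate explicitly, which has the minor virtue of being reusable; the paper's version is more compact since the weight inequality does the regional bookkeeping implicitly. Either works, and the amount of technicality is essentially the same.
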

\begin{proof}
The case $\gamma \ge 0 $ is obvious so we focus on the case $\gamma<0$. Since $ \langle v \rangle^{|\gamma|} \lesssim  \langle v_* \rangle^{|\gamma|}  \langle v-v_* \rangle^{|\gamma|}$, together with Lemma \ref{L211} we have
\begin{equation*}
\begin{aligned} 
\mathcal{R} \lesssim& \int_{\R^3} \int_{\R^3} \frac {\langle v-v_*\rangle^{|\gamma|}} {|v-v_*|^{|\gamma|}} g_* \langle v_* \rangle^{|\gamma|}  |f|^2\langle v \rangle^{\gamma} dv_* dv
\\
\lesssim&\int_{\R^3} \int_{\R^3}  (1+|v-v_*|^\gamma) (g_* \langle v_* \rangle^{|\gamma|})  (f\langle v \rangle^{\gamma/2})^2 dv_* dv
\\
\lesssim& (\Vert g\Vert_{L^1_{|\gamma|}} + \Vert g \Vert_{L^\infty_{|\gamma|}}) \Vert f\Vert_{L^2_{\gamma/2}}\lesssim \Vert g \Vert_{L^\infty_{|\gamma|+4 }} \Vert f \Vert_{L^2_{ \gamma/2}}^2,
\end{aligned}
\end{equation*}
so the lemma is thus proved.
\end{proof}

\begin{lem}\label{L213}
Suppose $\gamma+2s>-1, \gamma >-3, s \in (0, 1)$. For any smooth function $g, f$ and $h$, let
\[
\mathcal{R}  := \int_{\R^3} \int_{\R^3} \int_{\mathbb{S}^2}b(\cos \theta) \sin^{k} \frac \theta 2 |v-v_*|^\gamma g_* f h' dv_* dv d \sigma,
\]
then we have
\begin{equation}
\label{estimate regular}
\mathcal{R} \lesssim  \Vert b (\cos \theta) \sin^k \frac  \theta 2\Vert_{L^1_\theta} \min \{  \Vert g \Vert_{L^2_{|\gamma| +2}}\Vert f \Vert_{H^s_{ \gamma/2}}  \Vert h \Vert_{H^s_{ \gamma/2}} , \Vert g \Vert_{L^\infty_{|\gamma| + 4}}\Vert f \Vert_{L^2_{ \gamma/2}}  \Vert h \Vert_{L^2_{ \gamma/2}}  \},
\end{equation}
and
\begin{equation}
\label{estimate singular}
\mathcal{R} \lesssim \Vert  b (\cos \theta) \sin^{k-\gamma/2-3/2} \frac  \theta 2 \Vert_{L^1_\theta}  \min \{ \Vert f \Vert_{L^2_{|\gamma| +2}}\Vert g \Vert_{H^s_{ \gamma/2}}  \Vert h \Vert_{H^s_{ \gamma/2}} ,  \Vert f  \Vert_{L^\infty_{|\gamma| + 4 }}\Vert g \Vert_{L^2_{ \gamma/2}}  \Vert h \Vert_{L^2_{ \gamma/2}}  \}.
\end{equation}
\end{lem}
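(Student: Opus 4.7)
The plan is to treat $\tilde b(\cos\theta):=b(\cos\theta)\sin^k(\theta/2)$ as a single effective angular kernel and to invoke the two forms of Lemma~\ref{L22} directly; the six-fold integrals produced there then factor into two double integrals in $(v,v_*)$-type variables that fall under Lemma~\ref{L210} (for the $H^s_{\gamma/2}$-weighted bound) or Lemma~\ref{L212} (for the $L^\infty$-weighted bound). Which form of Lemma~\ref{L22} one uses dictates whether $g$ or $f$ carries the outer weight, producing \eqref{estimate regular} in the first case and \eqref{estimate singular} in the second. Throughout, assumption $\mathbf{(A4)}$ restricts $\theta$ to $[0,\pi/2]$, so $\cos(\theta/2)\ge\sqrt 2/2$ and any bounded power of $\cos(\theta/2)$ can be absorbed into the constants.

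For \eqref{estimate regular}, I would apply the second inequality of Lemma~\ref{L22} with $g_*$ in the role of $f_*$ and $f$ in the role of $g$, obtaining
\[
\mathcal R^{2}\le \Bigl(\int_{\mathbb{S}^2}\tilde b(\cos\theta)\cos^{-3/2-\gamma/2}(\theta/2)\,d\sigma\Bigr)^{2} \int_{\R^3}\int_{\R^3}|v-v_*|^\gamma|g_*|f^2\,dv\,dv_* \int_{\R^3}\int_{\R^3}|v_*-v'|^\gamma|g_*|(h')^2\,dv_*\,dv'.
\]
The prefactor is $\lesssim \|b(\cos\theta)\sin^k(\theta/2)\|_{L^1_\theta}^{2}$, and Lemma~\ref{L210} (respectively Lemma~\ref{L212}) bounds the first double integral by $\|g\|_{L^2_{|\gamma|+2}}\|f\|_{H^s_{\gamma/2}}^{2}$ (respectively $\|g\|_{L^\infty_{|\gamma|+4}}\|f\|_{L^2_{\gamma/2}}^{2}$). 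After relabelling $v_*\to u,\,v'\to w$, the second double integral has the identical form with $h$ in place of $f$ and gives the analogous bound. Taking a square root produces \eqref{estimate regular}.

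For \eqref{estimate singular}, I would apply the first inequality of Lemma~\ref{L22} with the same identification; the prefactor now equals
\[
\Bigl(\int_{\mathbb{S}^2}\tilde b(\cos\theta)\sin^{-3/2-\gamma/2}(\theta/2)\,d\sigma\Bigr)^{2} = \|b(\cos\theta)\sin^{k-3/2-\gamma/2}(\theta/2)\|_{L^1_\theta}^{2},
\]
and the two resulting double integrals take the form $\int_{\R^3}\int_{\R^3}|v-v_*|^\gamma g_*^{2}|f|\,dv\,dv_*$ and $\int_{\R^3}\int_{\R^3}|v-v'|^\gamma|f|(h')^{2}\,dv\,dv'$. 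These again fit Lemma~\ref{L210}/Lemma~\ref{L212}, but now with $f$ rather than $g$ playing the role of the outer $L^2_{|\gamma|+2}$ (or $L^\infty_{|\gamma|+4}$) weight, and $g,h$ estimated in $H^s_{\gamma/2}$ (or $L^2_{\gamma/2}$); taking a square root yields \eqref{estimate singular}. No step is substantial in itself: the only care needed is matching the two forms of Lemma~\ref{L22} to the two targeted prefactors, and recognizing that the secondary double integrals of Lemma~\ref{L22}, although written over $(v_*,v')$ or $(v,v')$, have the product structure on $\R^3\times\R^3$ required by Lemma~\ref{L210}/Lemma~\ref{L212}.
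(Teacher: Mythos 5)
Your proof is correct and follows exactly the approach the paper intends: the paper's own proof is a one-line citation of Lemmas~\ref{L22}, \ref{L210}, and \ref{L212}, and you have simply supplied the bookkeeping that citation leaves implicit (absorbing $\sin^k(\theta/2)$ into the angular kernel, matching the $\cos$- and $\sin$-weighted forms of Lemma~\ref{L22} to the two prefactors, and applying Lemma~\ref{L210} or \ref{L212} to each resulting double integral). The only detail worth flagging explicitly is the one you noted in passing, that $\cos(\theta/2)\ge\sqrt 2/2$ on the support of $B$ so the $\cos^{-3/2-\gamma/2}(\theta/2)$ factor is harmless; with that observation the argument is complete.
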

\begin{proof}The proof is just the combination of Lemma \ref{L22},  Lemma \ref{L210} and Lemma {L212}.
\end{proof}

\begin{lem}\label{L214}(\cite{AMSY2}, Lemma 2.2)  Suppose $\alpha \in (0, 1)$ and $f \in H_v^\alpha(\R^3)$ smooth, then we have
\[
\Vert (-\Delta_v)^\alpha (  \langle v \rangle^{-2} f ) \Vert_{L^2_v( \R^3) }  \le C \Vert (-\Delta_v)^\alpha f  \Vert_{L^2_v( \R^3) }.
\]
\end{lem}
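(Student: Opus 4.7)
The plan is to split multiplication by $\phi(v):=\langle v\rangle^{-2}$ into a pointwise piece and a commutator with $(-\Delta_v)^\alpha$. Writing
\[
(-\Delta_v)^\alpha(\phi f) \;=\; \phi\,(-\Delta_v)^\alpha f + \big[(-\Delta_v)^\alpha,\phi\big] f,
\]
the first summand is handled at once, because $\|\phi\|_{L^\infty}\le 1$ gives
$\|\phi\,(-\Delta_v)^\alpha f\|_{L^2_v}\le \|(-\Delta_v)^\alpha f\|_{L^2_v}$. Everything therefore reduces to controlling the commutator. Since $\phi\in C^\infty_b(\R^3)$ with $|\nabla^j\phi(v)|\lesssim \langle v\rangle^{-2-j}$ for every $j\ge 0$, the commutator is morally a pseudodifferential operator of order $2\alpha-1<2\alpha$, i.e.\ strictly lower order than $(-\Delta_v)^\alpha$ itself, so the desired bound is consistent with standard commutator heuristics.

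To make this quantitative I would use the singular integral representation
\[
(-\Delta_v)^\alpha g(v)=c_{3,\alpha}\,\mathrm{p.v.}\int_{\R^3}\frac{g(v)-g(u)}{|v-u|^{3+2\alpha}}\,du,
\]
which yields
\[
[(-\Delta_v)^\alpha,\phi]f(v)=c_{3,\alpha}\int_{\R^3}\frac{(\phi(v)-\phi(u))\,f(u)}{|v-u|^{3+2\alpha}}\,du.
\]
I would split the $u$-integration into $|v-u|\le 1$ and $|v-u|>1$: on the near piece the mean value theorem gives $|\phi(v)-\phi(u)|\lesssim|v-u|$, reducing the kernel to $|v-u|^{-2-2\alpha}$; on the far piece $|\phi(v)-\phi(u)|\lesssim \langle v\rangle^{-2}+\langle u\rangle^{-2}$ gives an integrable $L^1$-convolution kernel. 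The far piece then reduces to a Young-type convolution bounded by $\|f\|_{L^2_v}$, while the near piece is a Riesz-like convolution of order $2\alpha-1$ whose $L^2_v$ mapping properties follow from Hardy--Littlewood--Sobolev or Young, absorbable into $\|(-\Delta_v)^\alpha f\|_{L^2_v}$ by interpolation.

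Alternatively, and more cleanly, one can organize the proof around the Gagliardo--Slobodeckij identity
\[
\|(-\Delta_v)^\alpha g\|_{L^2_v}^2 \;=\; c\iint_{\R^3\times\R^3}\frac{|g(v)-g(u)|^2}{|v-u|^{3+4\alpha}}\,dv\,du,
\]
valid when $2\alpha\in(0,1)$ (with a second-order difference substitute when $2\alpha\in[1,2)$). Applied to $g=\phi f$ after the algebraic identity
\[
\phi(v)f(v)-\phi(u)f(u)=\phi(v)(f(v)-f(u))+(\phi(v)-\phi(u))f(u),
\]
the first summand gives $\|\phi\|_{L^\infty}^2\|(-\Delta_v)^\alpha f\|_{L^2_v}^2$, while the second summand is controlled by $\|f\|_{L^2_v}^2\,\sup_u\!\int\!|\phi(v)-\phi(u)|^2|v-u|^{-3-4\alpha}\,dv$, the latter supremum being finite by the near/far split above. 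The main obstacle is extending the argument across $\alpha=1/2$: for $\alpha\ge 1/2$ the naive pointwise bound $|\phi(v)-\phi(u)|\lesssim|v-u|$ no longer makes the near-diagonal kernel integrable, and one has to exploit either the principal-value cancellation (antisymmetrization of $\phi(v)-\phi(u)$) or a second-order Taylor expansion $\phi(v)-\phi(u)=\nabla\phi(v)\cdot(v-u)+O(|v-u|^2)$ to extract an additional power of $|v-u|$. Once this extra cancellation is installed, the rest of the argument is routine and delivers the claimed estimate.
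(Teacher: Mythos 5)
The paper does not prove this lemma; it cites \cite{AMSY2}, Lemma~2.2, so there is no in-text argument to compare against. Judged on its own merits, your outline has a genuine gap you do not flag: both of your routes deposit a residual term $\Vert f\Vert_{L^2_v}$ on the right-hand side, and that quantity is absent from the estimate you are trying to prove. In the commutator route the far piece (and, after the Taylor/antisymmetrization step, the surviving near piece) reduces via Young's inequality to $\Vert f\Vert_{L^2_v}$; in the Gagliardo route you write explicitly that the cross term is ``controlled by $\Vert f\Vert_{L^2_v}^2\,\sup_u\!\int\!|\phi(v)-\phi(u)|^2|v-u|^{-3-4\alpha}\,dv$''. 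Neither route therefore yields the stated bound, because $\Vert f\Vert_{L^2_v}$ is not dominated by the homogeneous seminorm $\Vert(-\Delta_v)^\alpha f\Vert_{L^2_v}$ on $\R^3$.

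To see that this is not cosmetic, fix a smooth bump $f_1$ with $f_1\equiv 1$ on $\{|v|\le 1\}$ and compact support, and set $f_R(v)=f_1(v/R)$. Scaling gives $\Vert(-\Delta_v)^\alpha f_R\Vert_{L^2_v}=R^{3/2-2\alpha}\Vert(-\Delta_v)^\alpha f_1\Vert_{L^2_v}$, while $\langle v\rangle^{-2}f_R\to\langle v\rangle^{-2}$ in $H^2_v$ as $R\to\infty$, so $\Vert(-\Delta_v)^\alpha(\langle v\rangle^{-2}f_R)\Vert_{L^2_v}\to\Vert(-\Delta_v)^\alpha\langle\cdot\rangle^{-2}\Vert_{L^2_v}>0$. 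For $\alpha>3/4$ the right-hand side of the lemma thus tends to zero while the left stays bounded below, so an estimate with only the homogeneous quantity $\Vert(-\Delta_v)^\alpha f\Vert_{L^2_v}$ on the right cannot hold across all of $\alpha\in(0,1)$. The commutator reduction you propose is therefore not going to close the estimate as stated: the extra $\Vert f\Vert_{L^2_v}$ (equivalently, the inhomogeneous $H^{2\alpha}$ norm) is unavoidable. You should either carry that term explicitly and prove the corresponding inhomogeneous estimate, or record that the statement as transcribed here appears to be missing it.
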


\begin{lem} (\cite{CCL}, Lemma 2.1)\label{L215} Suppose $H \in W^{2 , \infty} (\R^3)$. Then for any $s \in (0, 1)$, it holds that
\[
\int_{\mathbb{S}^2} (H' -H) b (\cos \theta) d \sigma \le C \left(  \sup_{|u| \le |v_*| + |v|} |\nabla H(u)|  +  \sup_{|u| \le |v_*| + |v|} |\nabla^2 H(u)| \right) |v-v_*|^2.
\]
\end{lem}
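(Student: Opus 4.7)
The plan is to Taylor-expand $H(v')$ around $v$ to second order and to exploit the rotational symmetry of $b(\cos\theta)$ about the collision axis. Setting $k := (v-v_*)/|v-v_*|$, we recall $v' - v = \tfrac{|v-v_*|}{2}(\sigma - k)$, so $|v' - v| = |v-v_*|\sin(\theta/2)$, and any point $\eta$ on the segment $[v,v']$ satisfies $|\eta| \le |v|+|v_*|$, which is the reason for the supremum range in the statement. Taylor's theorem then yields
\[
H(v') - H(v) = \nabla H(v)\cdot(v'-v) + \tfrac{1}{2}(v'-v)^{T} \nabla^{2} H(\eta)(v'-v),
\]
and the two pieces will be estimated separately.

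For the quadratic remainder, I would bound pointwise by $\tfrac{1}{2}\sup_{|u|\le|v|+|v_*|}|\nabla^{2} H(u)|\cdot|v-v_*|^{2} \sin^{2}(\theta/2)$; by assumption $(\mathbf{A2})$,
\[
\int_{\mathbb{S}^{2}} \sin^{2}(\theta/2)\, b(\cos\theta)\, d\sigma \;\lesssim\; \int_{0}^{\pi/2} \theta^{1-2s}\,d\theta \;<\; +\infty
\]
for any $s \in (0,1)$, producing the $\sup|\nabla^{2} H|\cdot|v-v_*|^{2}$ contribution. For the linear term, I would parametrize $\sigma = \cos\theta\, k + \sin\theta\, \omega$ with $\omega\in\mathbb{S}^{1}\cap k^{\perp}$. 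Since $b$ depends only on $\cos\theta$, integrating over $\omega$ kills the transverse piece and leaves
\[
\int_{\mathbb{S}^{2}}(v'-v)\,b(\cos\theta)\,d\sigma \;=\; \tfrac{1}{2}(v-v_*)\int_{\mathbb{S}^{2}}(\cos\theta-1)\,b(\cos\theta)\,d\sigma,
\]
where the scalar factor is finite because $|(\cos\theta-1)\,b(\cos\theta)\sin\theta| \lesssim \theta^{1-2s}$ is integrable. This contributes $\lesssim \sup|\nabla H|\cdot|v-v_*|$, which is absorbed into $\sup|\nabla H|\cdot(1+|v-v_*|^{2})$ and hence into the stated bound.

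The proof therefore reduces to two angular-integrability estimates on $\mathbb{S}^{2}$, both enabled by $s<1$ via $(\mathbf{A2})$; the integrand $\sin^{2}(\theta/2)\,b(\cos\theta)\sin\theta$ behaves like $\theta^{1-2s}$ near $0$, and the same is true for $(\cos\theta-1)\,b(\cos\theta)\sin\theta$. The only mildly delicate point I expect is that the rotational symmetry of $b$ about $k$ removes only the transverse part of $\int (v'-v)\,b\,d\sigma$, leaving a longitudinal piece proportional to $(v-v_*)$ that is linear in $|v-v_*|$; this is precisely why the coefficient in the stated bound must include $\sup|\nabla H|$ alongside $\sup|\nabla^{2} H|$. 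No cancellation beyond this rotational symmetry is needed, so the argument is self-contained given the angular assumption $(\mathbf{A2})$.
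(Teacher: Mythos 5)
Your approach is the natural one and matches what one expects from \cite{CCL}: Taylor--expand $H(v')$ around $v$, bound the quadratic remainder by $\sup|\nabla^2H|\,|v'-v|^2=\sup|\nabla^2H|\,|v-v_*|^2\sin^2(\theta/2)$ with the angular factor $\int_{\mathbb{S}^2}\sin^2(\theta/2)\,b(\cos\theta)\,d\sigma<\infty$ thanks to $(\mathbf{A2})$ with $s<1$, and for the first-order term exploit the azimuthal symmetry of $b(\cos\theta)$ around $k=(v-v_*)/|v-v_*|$ to kill the transverse piece, leaving
\[
\nabla H(v)\cdot\int_{\mathbb{S}^2}(v'-v)\,b(\cos\theta)\,d\sigma \;=\; -\bigl(\nabla H(v)\cdot(v-v_*)\bigr)\int_{\mathbb{S}^2}\sin^2\tfrac{\theta}{2}\,b(\cos\theta)\,d\sigma.
\]
All of this is correct. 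Since the paper only cites this lemma from \cite{CCL} and does not reprove it, there is no internal proof to compare against; your argument is the standard one.

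The one genuine gap is the very last step. You correctly find that the longitudinal residual contributes $\lesssim \sup|\nabla H|\cdot|v-v_*|$, with a single power of $|v-v_*|$ (since $(v'-v)\cdot k=(\cos\theta-1)|v-v_*|/2$ and the angular integral is a pure constant). You then claim this is ``absorbed into $\sup|\nabla H|\cdot(1+|v-v_*|^2)$ and hence into the stated bound.'' That does not follow: the statement has $|v-v_*|^2$ \emph{without} the $+1$, so for $|v-v_*|<1$ the bound you prove, namely $C(\sup|\nabla H|\,|v-v_*|+\sup|\nabla^2H|\,|v-v_*|^2)$, is strictly weaker than what is claimed. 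A linear test function $H(u)=a\cdot u$ (with $\nabla^2H\equiv0$) makes the discrepancy explicit: the left side equals $a\cdot\tfrac{v-v_*}{2}\int(\cos\theta-1)b\,d\sigma$, which scales like $|v-v_*|$ and is not $O(|v-v_*|^2)$ near $v=v_*$. So either the displayed statement has lost a factor (it should read $|v-v_*|+|v-v_*|^2$, or $(1+|v-v_*|)|v-v_*|$, or carry the $+1$ you introduced) or the original hypotheses in \cite{CCL} preclude the small-$|v-v_*|$ regime. In any case you should state the bound you actually proved, $\sup|\nabla H|\,|v-v_*|+\sup|\nabla^2 H|\,|v-v_*|^2$, and not assert the quadratic form as written; all the applications in this paper (Lemma~\ref{L43}, and the compactness argument in Section~\ref{section 6}) remain valid with that weaker, correct bound, since they only need the result to be controlled by $\langle v\rangle^{c}\langle v_*\rangle^{c}$ for some $c$.
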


\begin{lem}(\cite{AMSY2}, Proposition 2.11)\label{L216} Let $\eta, \eta' \in (0, 1)$, then for some $r = r(\eta, \eta', d ) > 2$ and $\alpha = \alpha(\eta, \eta', d ) \in (0, 1)$, for any smooth function $f$ we have
\[
\Vert f  \Vert_{L^r_{x, v}}  \le C \left( \int_{\T^d}   \Vert (-\Delta_v )^{\frac {\eta} 2} f \Vert_{L^2_v}^2 dx  \right)^\frac {\alpha} 2 \left( \int_{\R^d}   \Vert (1 -\Delta_x )^{\frac {\eta'} 2} f \Vert_{L^2_x}^2 dv \right)^\frac {1-\alpha} 2,
\]
where the constant $C$ is independent of $f$.
\end{lem}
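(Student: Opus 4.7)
The plan is to derive this mixed Sobolev embedding by combining the fractional Sobolev embeddings in $v$ and in $x$ separately with a H\"older interpolation argument. Throughout, I work with generic dimension $d$ (the application has $d=3$) and use that $\eta,\eta'\in(0,1)\subset(0,d/2)$.

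First, applied slicewise in $x$, the embedding $H^\eta(\R^d)\hookrightarrow L^{p_v}(\R^d)$ with $p_v:=2d/(d-2\eta)>2$ gives, after squaring and integrating in $x$,
\begin{equation*}
\|f\|_{L^2_x L^{p_v}_v}^2 \le C \int_{\T^d} \|(-\Delta_v)^{\eta/2} f(x,\cdot)\|_{L^2_v}^2\, dx =: CA^2.
\end{equation*}
Analogously, $H^{\eta'}(\T^d)\hookrightarrow L^{p_x}(\T^d)$ with $p_x:=2d/(d-2\eta')>2$ gives $\|f\|_{L^2_v L^{p_x}_x}\le CB$, where $B^2:=\int_{\R^d}\|(1-\Delta_x)^{\eta'/2}f(\cdot,v)\|_{L^2_x}^2\,dv$. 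Since $p_x\ge 2$, Minkowski's integral inequality yields the reversed-ordering bound $\|f\|_{L^{p_x}_x L^2_v}\le \|f\|_{L^2_v L^{p_x}_x}\le CB$.

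Next, I pick $\alpha\in(0,1)$ and $r>1$ satisfying simultaneously
\begin{equation*}
\frac{1}{r} = \frac{\alpha}{p_v} + \frac{1-\alpha}{2} = \frac{\alpha}{2} + \frac{1-\alpha}{p_x}.
\end{equation*}
Equality of the two expressions amounts to $\alpha(\tfrac12-\tfrac1{p_v})=(1-\alpha)(\tfrac12-\tfrac1{p_x})$, which uniquely determines $\alpha=\alpha(\eta,\eta',d)\in(0,1)$ since both factors in parentheses are strictly positive. Moreover $1/r$ is a convex combination of quantities strictly less than $1/2$, so $r>2$. Applying H\"older in $v$ with exponents $p_v/(r\alpha)$ and $2/(r(1-\alpha))$ (their conjugacy is the first equality above), and then H\"older in $x$ with exponents $2/(r\alpha)$ and $p_x/(r(1-\alpha))$ (their conjugacy is the second equality), produces
\begin{equation*}
\|f\|_{L^r_{x,v}}^r \le \|f\|_{L^2_x L^{p_v}_v}^{r\alpha} \|f\|_{L^{p_x}_x L^2_v}^{r(1-\alpha)} \le C A^{r\alpha} B^{r(1-\alpha)},
\end{equation*}
and so $\|f\|_{L^r_{x,v}}\le C A^\alpha B^{1-\alpha}$, which is the claimed estimate.

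The main technical point is the bookkeeping of interpolation exponents: the parameter $\alpha$ must be chosen so that the two H\"older applications in $v$ and $x$ are simultaneously consistent, and one then needs to verify $r>2$. Both follow automatically once $p_v,p_x>2$ is secured, which is in turn a consequence of $\eta,\eta'>0$.
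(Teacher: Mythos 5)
Your proof is correct and follows essentially the standard route that underlies the cited result: slicewise fractional Sobolev embedding in each variable, followed by a two-step H\"older interpolation in $v$ and then in $x$, with the interpolation parameter $\alpha$ fixed by requiring the two H\"older conjugacy conditions to hold simultaneously. The exponent bookkeeping is right (the two conditions on $1/r$ do determine $\alpha\in(0,1)$ uniquely because $1/2-1/p_v$ and $1/2-1/p_x$ are strictly positive, and $r>2$ follows immediately), the Minkowski step correctly converts $L^2_vL^{p_x}_x$ into $L^{p_x}_xL^2_v$ since $p_x>2$, and the constraint $\eta,\eta'<d/2$ needed for the Sobolev exponents to be finite is satisfied in the relevant case $d=3$. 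This matches the argument in \cite{AMSY2}.
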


\begin{lem}(\cite{AMSY2}, Proposition 2.12)\label{L217} Let $\eta, \eta' \in (0, 1), m \ge 1$, then for some $r = \tilde{r}(\eta, \eta', m,  d ) >2$ and $\alpha = \tilde{\alpha}(\eta, \eta', m,  d ) \in (0, 1)$, for any smooth function $f$ we have
\[
\Vert f  \Vert_{L^r_{x, v}}  \le C \left( \int_{\T^d}   \Vert (-\Delta_v )^{\frac {\eta} 2} f \Vert_{L^2_v}^2 dx  \right)^\frac {\alpha} 2 \left( \int_{\R^d}   \Vert (1 -\Delta_x )^{\frac {\eta'} 2} f^2 \Vert_{L^m_x}^2 dv  \right)^\frac {1-\alpha} 2,
\]
where the constant $C$ is independent of $f$ and $\alpha$ is continuous with $m$.
\end{lem}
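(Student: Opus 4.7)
\smallskip

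The plan is to combine two fractional Sobolev embeddings---one in $v$ giving a gain in velocity integrability, and one in $x$ applied to $f^2$ giving a gain in spatial integrability---and then to interpolate the two resulting mixed-norm bounds diagonally to land in $L^r_{x,v}$ for some $r>2$.

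\smallskip

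First, pointwise in $v$, apply the fractional Sobolev embedding $W^{\eta',m}(\T^3) \hookrightarrow L^{m^*}(\T^3)$ with $m^* = \tfrac{3m}{3-m\eta'}$ (assuming $m\eta'<3$, otherwise $m^*$ can be taken arbitrarily large), to the function $f^2(\cdot,v)$:
\[
\Vert f^2(\cdot,v)\Vert_{L^{m^*}_x} \le C \Vert (1-\Delta_x)^{\eta'/2} f^2(\cdot,v)\Vert_{L^m_x}.
\]
Since $\Vert f^2\Vert_{L^{m^*}_x} = \Vert f\Vert_{L^{2m^*}_x}^2$, squaring and integrating in $v$ yields
\[
\Vert f\Vert_{L^4_v L^{2m^*}_x}^{4} \le C \int_{\R^d} \Vert (1-\Delta_x)^{\eta'/2} f^2\Vert_{L^m_x}^2 \, dv =: CB.
\]
Next, apply the fractional Sobolev embedding $\dot H^{\eta}(\R^d)\hookrightarrow L^{p}(\R^d)$ with $\tfrac{1}{p}=\tfrac{1}{2}-\tfrac{\eta}{d}$ pointwise in $x$, square, integrate in $x$, and invoke Minkowski (valid since $p\ge 2$) to obtain
\[
\Vert f\Vert_{L^p_v L^2_x}^{2} \le \Vert f\Vert_{L^2_x L^p_v}^{2} \le C \int_{\T^d} \Vert (-\Delta_v)^{\eta/2} f\Vert_{L^2_v}^2\, dx =: CA.
\]

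\smallskip

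The final step is to interpolate the mixed-norm inclusions $f\in L^p_v L^2_x$ and $f\in L^4_v L^{2m^*}_x$. By complex interpolation of mixed Lebesgue spaces, for any $\theta\in(0,1)$ one has
\[
\Vert f\Vert_{L^{p_\theta}_v L^{q_\theta}_x} \le \Vert f\Vert_{L^p_v L^2_x}^{1-\theta}\, \Vert f\Vert_{L^4_v L^{2m^*}_x}^{\theta},\quad \tfrac{1}{p_\theta}=\tfrac{1-\theta}{p}+\tfrac{\theta}{4},\quad \tfrac{1}{q_\theta}=\tfrac{1-\theta}{2}+\tfrac{\theta}{2m^*}.
\]
Imposing $p_\theta=q_\theta=r$ selects a unique $\theta\in(0,1)$, after which $r>2$ follows from the strict gains $p>2$ and $2m^*>2$ (the latter holds whenever $m\ge 1$, $\eta'>0$; if $2m^*\le 4$ we supplement the argument with a standard Gagliardo--Nirenberg interpolation between $\Vert f\Vert_{L^{2m^*}_x}$, $\Vert f\Vert_{L^2_x}$ on the bounded torus to raise the spatial exponent). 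Setting $\alpha:=1-\theta$ and tracking the explicit formulas shows that both $r$ and $\alpha$ depend continuously on $(\eta,\eta',m,d)$, in particular on $m$.

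\smallskip

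The main technical obstacle is the interpolation step: since $f^2$ rather than $f$ carries the spatial regularity, the spatial Sobolev gain is only available at the level of a squared quantity and in the weak norm $L^m_x$ with possibly $m=1$. One must therefore verify that the diagonal $\{(1/r,1/r)\}$ truly crosses the segment from $(1/p,1/2)$ to $(1/4,1/(2m^*))$ strictly above the line $1/r = 1/2$, which is where the precise use of $p>2$ (velocity Sobolev gain) becomes essential. Once this is arranged, the continuity of $\alpha$ in $m$ is immediate from the closed-form expression for $\theta$ in terms of $p$ and $m^*$.
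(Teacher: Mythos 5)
Your two preliminary steps are sound, and the mixed-norm H\"older interpolation inequality you invoke for $L^{p_\theta}_vL^{q_\theta}_x$ is valid. The gap is in the crossing of the diagonal. To descend from $\Vert f\Vert_{L^{p_\theta}_vL^{q_\theta}_x}$ to $\Vert f\Vert_{L^r_{x,v}}$ you must take $r=p_\theta$ (there is no inclusion between distinct $L^a_v$ spaces on the unbounded $\R^d$) and then apply H\"older in $x$ on the bounded torus, which requires $q_\theta\ge r$; equivalently the interpolated point $(1/p_\theta,1/q_\theta)$ must lie on or below the diagonal $1/q_x=1/p_v$. Written out, this reads
\[
(1-\theta)\Bigl(\tfrac12-\tfrac1p\Bigr)\ \le\ \theta\Bigl(\tfrac14-\tfrac1{2m^*}\Bigr).
\]
The left side is nonnegative, but the right side is nonpositive whenever $m^*\le 2$, i.e.\ whenever $m\le\tfrac{2d}{d+2\eta'}$; for $d=3$ and $m=1$ (exactly the regime used in Lemma~\ref{L44}, where $m=p$ close to $1$) this holds for every $\eta'\in(0,1)$. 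So the whole segment from $(1/p,1/2)$ to $(1/4,1/(2m^*))$ stays strictly above the diagonal and no $r>2$ is reached. Testing on tensor products $g(x)h(v)$ with $g$ and $h$ concentrating shows this is a genuine failure, not merely a missed route. Your Gagliardo--Nirenberg fallback cannot repair it either: on a bounded domain interpolation only lowers the spatial exponent, while the obstruction sits in the velocity index $4$ attached to the second endpoint, which that manipulation leaves untouched.

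That index $4$ arose because you squared $\Vert f(\cdot,v)\Vert_{L^{2m^*}_x}^2\le C\Vert(1-\Delta_x)^{\eta'/2}f^2(\cdot,v)\Vert_{L^m_x}$ before integrating in $v$, following the $\Vert\cdot\Vert_{L^m_x}^2$ displayed in the statement. But with that inner square the stated bound is not scale-invariant: $f\mapsto\lambda f$ multiplies the left side by $\lambda$ and the right side by $\lambda^{2-\alpha}$, forcing $\alpha=1$, which contradicts $\alpha\in(0,1)$. The consistent form, and the one actually reflected downstream in the chain in Lemma~\ref{L44} (where the second factor carries the $L^1_vL^p_x$ norm with exponent $\tfrac{1-\zeta}{2}\xi_*\beta_*$), is $\bigl(\int_{\R^d}\Vert(1-\Delta_x)^{\eta'/2}f^2\Vert_{L^m_x}\,dv\bigr)^{(1-\alpha)/2}$ with no inner square. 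Integrating your pointwise Sobolev bound once (not twice) in $v$ gives $\Vert f\Vert_{L^2_vL^{2m^*}_x}^2\le CB$, so the second endpoint becomes $(1/2,1/(2m^*))$. Since $1/p<1/2$ and $1/(2m^*)<1/2$, the segment from $(1/p,1/2)$ to $(1/2,1/(2m^*))$ always crosses the diagonal strictly below $1/2$, exactly as in Lemma~\ref{L216}; the interpolation then yields $A^{(1-\theta)/2}B^{\theta/2}$, matching the claimed exponents with $\alpha=1-\theta\in(0,1)$, and $r,\alpha$ depend continuously on $m$ through $m^*$.
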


\begin{lem}(\cite{AMSY2}, Proposition 2.13)\label{L218} Let $p \in (1, 2)$, $0 \le \beta' < \beta \in (0, 1)$, $p' = \frac p {2-p}$, then for any smooth function $f$ we have
\[
\Vert (-\Delta)^{\frac {\beta'} 2} f^2 \Vert_{L^p(\R^d)} \le C (\Vert f \Vert_{H^\beta(\R^d) }  \Vert f^2 \Vert_{L^{p'}(\R^d)}^{\frac 1 2 } + \Vert f^2 \Vert_{L^p(\R^d)} ),
\]
where the constant C is independent of $f$. 
\end{lem}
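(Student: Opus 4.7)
The plan is to combine the Sobolev--Slobodeckij characterization of fractional differentiability with a Kato--Ponce-type splitting of the product $f\cdot f$. Since $p\in(1,2)$, the embedding (i) stated earlier in the preliminaries gives
\[
\Vert (-\Delta)^{\beta'/2} f^2\Vert_{L^p} \lesssim \Vert f^2\Vert_{H^{\beta',p}}\lesssim \Vert f^2\Vert_{W^{\beta',p}},
\]
so it suffices to control the Gagliardo seminorm
\[
[f^2]_{W^{\beta',p}}^p \;=\; \int_{\R^d}\!\!\int_{\R^d} \frac{|f^2(x)-f^2(y)|^p}{|x-y|^{d+\beta' p}}\,dx\,dy
\]
by the right-hand side of the claimed inequality (the $L^p$ part of the $W^{\beta',p}$-norm is already present there).

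The next step is to split the integration domain into the \emph{far region} $\{|x-y|>1\}$ and the \emph{near region} $\{|x-y|\le 1\}$. On the far region, the triangle inequality $|f^2(x)-f^2(y)|^p\lesssim |f^2(x)|^p+|f^2(y)|^p$ together with the integrability $\int_{|z|>1}|z|^{-d-\beta' p}dz<\infty$ (which uses $\beta'>0$; the case $\beta'=0$ is trivial) gives a bound by $C\Vert f^2\Vert_{L^p}^p$.

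On the near region I would factor $f^2(x)-f^2(y)=(f(x)+f(y))(f(x)-f(y))$ and apply H\"older in $(x,y)$ with conjugate exponents $A=2p'/p$ and $B=2/p$; both exceed $1$ because $p<2$, and $pA=2p'$, $pB=2$. Split the kernel as
\[
|x-y|^{-d-\beta' p} \;=\; |x-y|^{-d/A-p(\beta'-\beta)}\cdot|x-y|^{-d/B-p\beta}.
\]
Then the second factor in H\"older, raised to $1/B=p/2$, reproduces exactly the Gagliardo seminorm of $H^\beta=W^{\beta,2}$, yielding $\lesssim \Vert f\Vert_{H^\beta}^p$. The first factor becomes
\[
\Bigl(\int\!\!\int_{|x-y|\le 1} |f(x)+f(y)|^{2p'}\,|x-y|^{-d+2p'(\beta-\beta')}\,dx\,dy\Bigr)^{1/A}.
\]
Because $\beta>\beta'$ the weight $|x-y|^{-d+2p'(\beta-\beta')}$ is integrable at the origin, so freezing $x$ and integrating in $y$ gives a finite constant; the crude bound $|f(x)+f(y)|^{2p'}\lesssim |f(x)|^{2p'}+|f(y)|^{2p'}$ then produces $\lesssim \Vert f\Vert_{L^{2p'}}^{2p'}$, i.e.\ $\Vert f\Vert_{L^{2p'}}^p$ after the $1/A$-th root. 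Finally, $\Vert f\Vert_{L^{2p'}}^p=\Vert f^2\Vert_{L^{p'}}^{p/2}$, and combining the near and far estimates and taking $p$-th roots yields the claimed inequality.

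The main obstacle is the selection of the H\"older exponents $(A,B)$ and of the splitting $\gamma_1+\gamma_2=\beta' p$ with $\gamma_1=p(\beta'-\beta),\;\gamma_2=p\beta$, arranged so that the $H^\beta$-seminorm is reproduced exactly by the second factor \emph{and} the first factor's singular weight is integrable on a neighborhood of the diagonal. This is precisely where the strict inequality $\beta>\beta'$ is used; everything else is bookkeeping with H\"older and the Bessel potential--Sobolev--Slobodeckij identifications already recorded in the preliminaries.
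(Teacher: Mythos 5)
Your argument is correct: the reduction to the Gagliardo seminorm of $f^2$ via the Bessel–Slobodeckij comparison (i), the near/far splitting of the double integral, the factorization $f^2(x)-f^2(y)=(f(x)+f(y))(f(x)-f(y))$, and the Hölder split with $A=2/(2-p)$, $B=2/p$ and kernel exponents chosen so that the $B$-factor reproduces $[f]_{H^\beta}^2$ while the $A$-factor has an integrable singularity precisely because $\beta>\beta'$ — all of this reproduces the estimate with the stated exponents, including $(\|f\|_{L^{2p'}}^{2p'})^{1/A}=\|f^2\|_{L^{p'}}^{1/2\cdot p}$. This is the same Sobolev--Slobodeckij/near--far/Hölder mechanism used for \cite{AMSY2} Proposition 2.13, so no further comment is needed.
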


We then introduce the time averaging lemma. 

\begin{lem}(\cite{AMSY2}, Proposition 2.14)\label{L219} Fix $0 \le T_1 \le T_2, p \in (1, \infty), \beta > 0$ and assume that $f \in C([T_1, T_2], L^p_{x, v})$ with $\Delta_v^{\beta/2} \in L^{p}_{t, x, v}$ satisfies 
\[
\partial_t f + v \cdot \nabla_x f =  F, \quad  t \in (0, +\infty),
\]
Then for any $r \in [0, \frac 1 p], m \in \N, \beta_- \in [0, \beta)$ if we define
\[
s^b =\frac {(1 - r p)\beta_-} {p (1+ m +\beta)}, \quad \tilde{f} = f 1_{(T_1, T_2) }(t) , \quad \tilde{F} = F 1_{(T_1, T_2) }(t),
\] 
then it follows that
\begin{equation*}
\begin{aligned} 
\Vert (-\Delta_x)^{\frac {s_b} 2} \tilde{f} \Vert_{L^p_{t, x, v}} \le& C (\Vert \langle v \rangle^{1+m}  (1-\Delta_x )^{-\frac r 2}  (1-\Delta_v)^{-\frac m 2} f (T_1) \Vert_{L^p_{x, v}} + \Vert \langle v \rangle^{1+m}  (1-\Delta_x)^{-\frac r 2}  (1-\Delta_v)^{-\frac m 2} f (T_2) \Vert_{L^p_{x, v}} 
\\
&+ \Vert \langle v \rangle^{1+m}  (1-\Delta_x- \partial_t^2 )^{-\frac r 2}  (1-\Delta_v  )^{-\frac m 2} \tilde{F}  \Vert_{L^p_{t, x, v}}  + \Vert  (-\Delta_v)^{\frac \beta 2}  \tilde{f} \Vert_{L^p_{t, x, v}}  +  \Vert   \tilde{f} \Vert_{L^p_{t, x, v}} ,
\end{aligned}
\end{equation*}
where the constant $C$ is independent of $f$ and $F$. 
\end{lem}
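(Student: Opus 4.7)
The plan is to adapt the classical Fourier-side averaging-lemma argument of Golse--Lions--Perthame--Sentis, enhanced to $L^p$ after DiPerna--Lions--Meyer, so as to carry the polynomial $v$-weights and velocity smoothings appearing on the right-hand side. First I extend $\tilde f$ and $\tilde F$ by zero outside $(T_1,T_2)$ in time; the distributional transport identity then reads
\[
\partial_t \tilde f + v\cdot\nabla_x \tilde f \;=\; \tilde F \,+\, f(T_1)\otimes\delta_{t=T_1} \,-\, f(T_2)\otimes\delta_{t=T_2}.
\]
Taking the $(t,x)$-Fourier transform converts the transport operator into multiplication by the symbol $i(\tau+v\cdot\xi)$, so pointwise in $v$
\[
i(\tau+v\cdot\xi)\,\widehat{\tilde f}(\tau,\xi,v) \;=\; \widehat{\tilde F}(\tau,\xi,v) \,+\, e^{-iT_1\tau}\hat f(T_1,\xi,v) \,-\, e^{-iT_2\tau}\hat f(T_2,\xi,v).
\]

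The core step is a resonant/non-resonant split at a threshold $\lambda=\lambda(|\xi|)>0$ to be chosen. Fix $\psi\in C_c^\infty(\R)$ with $\psi\equiv 1$ on $[-1,1]$ and supported in $[-2,2]$, and write
\[
\widehat{\tilde f}\;=\;\psi\!\Bigl(\tfrac{\tau+v\cdot\xi}{\lambda}\Bigr)\widehat{\tilde f}\;+\;\Bigl(1-\psi\bigl(\tfrac{\tau+v\cdot\xi}{\lambda}\bigr)\Bigr)\widehat{\tilde f}.
\]
On the non-resonant piece I divide the identity by $i(\tau+v\cdot\xi)$ and view the result as a Fourier multiplier acting on $\widehat{\tilde F}$ and on the two boundary terms; absorbing the polynomial weight $\langle v\rangle^{1+m}$ and the velocity smoothing $(1-\Delta_v)^{-m/2}$ into the output norm on the right, a Mikhlin--H\"ormander multiplier bound (with $v$ as a Banach-valued parameter) produces an $L^p_{t,x,v}$ estimate with prefactor of order $|\xi|^{s_b}\lambda^{-1}|\xi|^{r}$, the $|\xi|^{r}$ accounting for the negative Sobolev index hidden in $(1-\Delta_x-\partial_t^2)^{-r/2}$. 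On the resonant piece, $\widehat{\tilde f}_R$ is concentrated in a $v$-slab of thickness $O(\lambda/|\xi|)$ transverse to $\xi$; a fractional Sobolev slab embedding, interpolated via Lemma \ref{L218} between $\|(-\Delta_v)^{\beta/2}\tilde f\|_{L^p}$ and $\|\tilde f\|_{L^p}$, yields a factor of order $(\lambda/|\xi|)^{\beta_-}$, with the exponent $1+m+\beta$ entering through the interpolation weights used to trade the $v$-smoothing $(1-\Delta_v)^{-m/2}$ against the $v$-weight $\langle v\rangle^{1+m}$ in the source norm.

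Optimizing $\lambda$ by balancing the non-resonant gain $|\xi|^{s_b-1+r}$ against the resonant loss $(\lambda/|\xi|)^{\beta_-}$ and accounting for the $L^p$ dyadic Littlewood--Paley summation in $\xi$ pins the exponent to $s_b=\frac{(1-rp)\beta_-}{p(1+m+\beta)}$; the residual $\|\tilde f\|_{L^p_{t,x,v}}$ absorbs the low-frequency tail and the small gap $\beta-\beta_-$ provides the room needed to sum. The main obstacle is the $L^p$ (not $L^2$) nature of the bound: Plancherel is unavailable and the decomposition multiplier $\psi((\tau+v\cdot\xi)/\lambda)$ genuinely depends on $v$, so the Mikhlin symbol conditions in $(\tau,\xi)$ must be verified uniformly in $v$ and the resulting operator handled by vector-valued Calder\'on--Zygmund theory. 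A secondary technical point is that the polynomial weight $\langle v\rangle^{1+m}$ has to be commuted through the Fourier machinery carefully in order to produce precisely the norms stated; the commutators that arise are of lower order and get absorbed into $\|\tilde f\|_{L^p_{t,x,v}}$ on the right-hand side.
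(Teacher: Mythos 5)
The paper imports this result verbatim from \cite{AMSY2}, Proposition 2.14, and does not reprove it, so there is no internal proof to compare against; I am therefore assessing your sketch on its own merits. The high-level toolbox you reach for is the right one (extend by zero in time, take the $(t,x)$-Fourier transform of the transport operator, split at a threshold $\lambda(|\xi|)$ into resonant and non-resonant regions, optimize), and the treatment of the non-resonant piece --- dividing the symbol identity by $i(\tau+v\cdot\xi)$ and keeping track of the boundary deltas at $t=T_1,T_2$ --- is correct in spirit. However, there are genuine gaps in the two load-bearing steps.

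First, your estimate for the resonant piece does not hold as described. For fixed $(\tau,\xi)$ the constraint $|\tau+v\cdot\xi|\lesssim\lambda$ does restrict $v$ to a slab of thickness $h\sim\lambda/|\xi|$, and a 1D Sobolev embedding in the slab direction does give a gain $h^{\min(\beta,1/p)}$ in $\|\hat{\tilde f}(\tau,\xi,\cdot)\|_{L^p_v(\mathrm{slab})}$. But this is a statement on the Fourier side in $(\tau,\xi)$; the claimed conclusion is an $L^p_{t,x,v}$ bound on the physical side, and for $p\neq2$ there is no Plancherel identity carrying the pointwise-in-$(\tau,\xi)$ estimate back. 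You flag this obstruction (``Plancherel is unavailable'') but do not actually overcome it, and without that the resonant piece is not controlled. Moreover, the citation of Lemma \ref{L218} here is misplaced: that lemma bounds $\|(-\Delta)^{\beta'/2}(f^2)\|_{L^p}$ by $\|f\|_{H^\beta}\|f^2\|_{L^{p'}}^{1/2}$ and has nothing to do with slab restrictions; it is used elsewhere in the paper (Lemma \ref{L45}) for an entirely different purpose, and cannot be the interpolation tool you need here.

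Second, the ``vector-valued Mikhlin--H\"ormander'' step for the non-resonant piece is not verified, and the difficulty there is not merely technical. The symbol $(1-\psi((\tau+v\cdot\xi)/\lambda))/(\tau+v\cdot\xi)$ (times $|\xi|^{s_b}$ and the $r$-order factor) has $\xi$-derivatives of size $|v|/\lambda$ coming from the cutoff, so the Mikhlin constant degrades polynomially in $|v|$. Absorbing this into the weight $\langle v\rangle^{1+m}$ and the smoothing $(1-\Delta_v)^{-m/2}$ is precisely where the parameter $m$ and the combination $1+m+\beta$ in $s_b=\tfrac{(1-rp)\beta_-}{p(1+m+\beta)}$ must enter, but your sketch asserts this balance rather than deriving it, and in particular does not explain the appearance of the factor $1/p$ (as opposed to $1/p'$ or $1/2$, which the classical GLPS/DLM averaging theory would more naturally produce). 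Until the resonant bound is made to work in $L^p$ and the weight--symbol numerology is checked, the proposal does not constitute a proof.
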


\begin{lem}\label{L220}(\cite{AMSY2}, Lemma 3.5)
For any smooth function $f, g$, denote $G= \mu +g, F= \mu +f$, suppose $f$ satisfies the linearized Boltzmann equation
\[
\partial_t f + v \cdot \nabla_x f = 2 \tilde{Q}(G, F).
\]
Then for any $j, l \ge 0, \tau>0, K>0, 0 \le T_1 <T_2 \le T$, it follows that
\begin{equation*}
\begin{aligned} 
&\int_{T_1}^{T_2} \int_{\T^3} \left| \langle v \rangle^j (1-\Delta_v)^{-\tau/2}  \tilde{Q}(G, F )  \langle v \rangle^l f_{K, +}^l( \cdot, \cdot, v)               \right| dx dt
\\
\le& \frac 1 2 \int_{\T^3} \langle v \rangle^j  (1-\Delta_v)^{-\tau/2}  (f_{K, +}^l)^2(T_1, \cdot, v )+2 \int_{T_1}^{T_2} \int_{\T^3} \left[\langle v \rangle^j (1-\Delta_v)^{-\tau/2}  \tilde{Q}(G, F )  \langle v \rangle^l f_{K, +}^l( \cdot, \cdot, v)               \right]^+ dx dt.
\end{aligned}
\end{equation*}
where $[\cdot]^+$ denotes the positive part of the term and $f_{K, +}^l$ is defined in \eqref{level set function}.
\end{lem}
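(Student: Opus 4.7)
The plan is to use the elementary algebraic identity $|X| = 2X^+ - X$ to reduce the claim to a one-sided lower bound on $\int\!\int X\,dx\,dt$, and then compute that integral directly from the linearized equation. Writing $X(t,x,v)$ for the integrand on the left-hand side, the reduction leaves us needing to establish
\[
\int_{T_1}^{T_2}\int_{\T^3} X(t,x,v)\, dx\, dt \ \ge\ -\frac{1}{2}\int_{\T^3}\langle v\rangle^j (1-\Delta_v)^{-\tau/2}\bigl[(f_{K,+}^l)^2(T_1,x,\cdot)\bigr](v)\, dx.
\]

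To obtain this bound, set $\phi(t,x,v) := \langle v\rangle^l f(t,x,v)$, so that $\phi_+ := (\phi-K)_+ = f_{K,+}^l$. The linearized equation gives $\partial_t\phi + v\cdot\nabla_x\phi = 2\langle v\rangle^l\tilde{Q}(G,F)$; multiplying by $\phi_+$ and using the chain-rule identity $(\partial_t\phi + v\cdot\nabla_x\phi)\phi_+ = \frac{1}{2}(\partial_t + v\cdot\nabla_x)\phi_+^2$ yields the pointwise identity
\[
\tilde{Q}(G,F)\,\langle v\rangle^l f_{K,+}^l \ =\ \frac{1}{4}(\partial_t + v\cdot\nabla_x)(f_{K,+}^l)^2.
\]
Now set $W := (1-\Delta_v)^{-\tau/2}$, which is a Fourier multiplier acting only in $v$, hence commutes with $\partial_t$ and each $\partial_{x_i}$. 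Differentiation under the integral gives $W[v\cdot\nabla_x\psi] = \nabla_x\cdot W[v\psi]$, so applying $W$ to the identity above produces
\[
W\bigl[\tilde{Q}(G,F)\langle\cdot\rangle^l f_{K,+}^l\bigr](v) \ =\ \frac{1}{4}\partial_t W\bigl[(f_{K,+}^l)^2\bigr](v) \ +\ \frac{1}{4}\nabla_x\cdot W\bigl[v\,(f_{K,+}^l)^2\bigr](v).
\]

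Multiplying by $\langle v\rangle^j$ and integrating over $(t,x)\in[T_1,T_2]\times\T^3$, the divergence term vanishes since $\T^3$ has no boundary, and the time-derivative term collapses to a boundary contribution:
\[
\int_{T_1}^{T_2}\int_{\T^3} X\, dx\, dt \ =\ \frac{\langle v\rangle^j}{4}\int_{\T^3}\Bigl(W\bigl[(f_{K,+}^l)^2(T_2,x,\cdot)\bigr](v) - W\bigl[(f_{K,+}^l)^2(T_1,x,\cdot)\bigr](v)\Bigr) dx.
\]
Since $(1-\Delta_v)^{-\tau/2}$ has a pointwise positive Bessel kernel, $W[(f_{K,+}^l)^2]\ge 0$; discarding the nonnegative $T_2$-contribution gives exactly the required lower bound (with the sharper constant $\frac{1}{4}$, which is absorbed into the $\frac{1}{2}$ in the statement). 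The only interpretive subtlety is reading the LHS so that $(1-\Delta_v)^{-\tau/2}$ acts on the full product $\tilde{Q}(G,F)\,\langle v\rangle^l f_{K,+}^l$ with $\langle v\rangle^j$ as an outer multiplier; this is precisely what allows the $\phi_+$-chain rule to be applied \emph{before} the smoothing operator, and is the key to the clean identity above. The remaining ingredients—commutation of $W$ with $\partial_t$ and $\nabla_x$, positivity of the Bessel kernel, integration by parts on $\T^3$, and the chain rule for the positive part—are routine for smooth $f$.
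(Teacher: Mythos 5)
Your argument is correct. The paper itself gives no proof here (it cites \cite{AMSY2}, Lemma 3.5), but your chain is the natural one: the reduction $|X|=2X^+-X$ converts the claim into the lower bound $\int X \ge -\tfrac12\langle v\rangle^j\int_{\T^3}(1-\Delta_v)^{-\tau/2}(f_{K,+}^l)^2(T_1)\,dx$, which follows by applying $(1-\Delta_v)^{-\tau/2}$ to the renormalized transport identity $\tilde{Q}(G,F)\langle v\rangle^l f_{K,+}^l=\tfrac14(\partial_t+v\cdot\nabla_x)(f_{K,+}^l)^2$, integrating over $[T_1,T_2]\times\T^3$, killing the divergence term on the torus, and dropping the nonnegative $T_2$-boundary term using positivity of the Bessel kernel. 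You correctly identify the two small points that make this work: $(1-\Delta_v)^{-\tau/2}$ commutes with $\partial_t$ and with each $\partial_{x_i}$ (so $W[v\cdot\nabla_x\psi]=\nabla_x\cdot W[v\psi]$), and the operator must be read as acting on the full product $\tilde{Q}(G,F)\langle v\rangle^l f_{K,+}^l$ with $\langle v\rangle^j$ as a pure outer $v$-multiplier, so the inequality is pointwise in $v$. Your constant $\tfrac14$ (coming from the factor $2$ in $2\tilde{Q}$) is indeed sharper than the stated $\tfrac12$; had the hypothesis been $\partial_t f+v\cdot\nabla_x f=\tilde{Q}(G,F)$ as in \eqref{linearized equation}, the same computation would yield $\tfrac12$ exactly, which is likely the intended normalization.
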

Similarly we have
\begin{lem}(\cite{AMSY2} Lemma 3.6)
For any smooth function $ g, h$, denote $G= \mu +g$, suppose $h$ satisfies the linearized Boltzmann equation
\[
\partial_t h + v \cdot \nabla_x h = \tilde{Q} (G, -\mu + h ),
\]
then for any $j, l \ge 0, \tau>0, K>0, 0 \le T_1 <T_2 \le T$, it follows that
\begin{equation*}
\begin{aligned} 
&\int_{T_1}^{T_2} \int_{\T^3} \left| \langle v \rangle^j (1-\Delta_v)^{-\tau/2}  \tilde{Q}(G, -\mu +h )  \langle v \rangle^l h_{K, +}^l( \cdot, \cdot, v)               \right| dx dt
\\
\le& \frac 1 2 \int_{\T^3} \langle v \rangle^j  (1-\Delta_v)^{-\tau/2}  (h_{K, +}^l)^2(T_1, \cdot, v )+2 \int_{T_1}^{T_2} \int_{\T^3} \left[\langle v \rangle^j (1-\Delta_v)^{-\tau/2}  \tilde{Q}(G, -\mu+h )  \langle v \rangle^l h_{K, +}^l( \cdot, \cdot, v)               \right]^+ dx dt,
\end{aligned}
\end{equation*}
where $[\cdot]^+$ denotes the positive part of the term.
\end{lem}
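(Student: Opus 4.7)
The argument mirrors that of Lemma~\ref{L220}, the only substantive change being that here the inhomogeneity is $\tilde Q(G,-\mu+h)$ instead of $2\tilde Q(G,F)$; the structural properties used---positivity of the square $(h_{K,+}^l)^2$ and positivity-preservation of $(1-\Delta_v)^{-\tau/2}$---are unaffected. Writing $\tilde h:=\langle v\rangle^l h$, the equation for $h$ multiplied by $\langle v\rangle^l$ gives
\[
\partial_t\tilde h+v\cdot\nabla_x\tilde h=\langle v\rangle^l\,\tilde Q(G,-\mu+h),
\]
and since $h_{K,+}^l=(\tilde h-K)_+$ and $\tfrac{d}{dx}(x-K)_+^2=2(x-K)_+$, the chain rule yields the level-set identity
\[
\partial_t(h_{K,+}^l)^2+v\cdot\nabla_x(h_{K,+}^l)^2=2\,h_{K,+}^l\,\langle v\rangle^l\,\tilde Q(G,-\mu+h).
\]

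Integrating in $x\in\T^3$ eliminates the transport term by periodicity, and integrating from $T_1$ to $T_2$ in time produces
\[
\int_{\T^3}(h_{K,+}^l)^2(T_2,x,v)\,dx-\int_{\T^3}(h_{K,+}^l)^2(T_1,x,v)\,dx=2\!\int_{T_1}^{T_2}\!\int_{\T^3}\langle v\rangle^l\,\tilde Q(G,-\mu+h)\,h_{K,+}^l\,dx\,dt.
\]
Now apply $\langle v\rangle^j(1-\Delta_v)^{-\tau/2}$ to both sides. This operator acts only in $v$, so it commutes with the $x$- and $t$-integrations. Being convolution in $v$ against a strictly positive Bessel kernel, $(1-\Delta_v)^{-\tau/2}$ preserves nonnegativity; together with $(h_{K,+}^l)^2\ge0$ this forces $\langle v\rangle^j(1-\Delta_v)^{-\tau/2}\int_{\T^3}(h_{K,+}^l)^2(T_2,x,v)\,dx\ge 0$, and this term may be dropped. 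The remaining inequality reads
\[
-\!\int_{T_1}^{T_2}\!\int_{\T^3}\langle v\rangle^j(1-\Delta_v)^{-\tau/2}\!\bigl[\tilde Q(G,-\mu+h)\,\langle v\rangle^l h_{K,+}^l\bigr]\,dx\,dt\le \tfrac12\,\langle v\rangle^j(1-\Delta_v)^{-\tau/2}\!\!\int_{\T^3}(h_{K,+}^l)^2(T_1,x,v)\,dx.
\]

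Let $B$ denote the integrand on the left-hand side of the lemma. The elementary identity $|B|=2B^+-B$ and the bound just derived on $-\iint B\,dx\,dt$ combine to give
\[
\int_{T_1}^{T_2}\!\int_{\T^3}|B|\,dx\,dt\le 2\!\int_{T_1}^{T_2}\!\int_{\T^3}B^+\,dx\,dt+\tfrac12\,\langle v\rangle^j(1-\Delta_v)^{-\tau/2}\!\!\int_{\T^3}(h_{K,+}^l)^2(T_1,x,v)\,dx,
\]
which is exactly the claimed inequality. The only mildly technical points are the chain-rule step (justified as in the proof of Lemma~\ref{L220}, via composition with the convex Lipschitz function $(\cdot-K)_+^2$ once the transport equation is known to hold in the appropriate distributional sense) and the positivity-preserving property of the Bessel potential (immediate from its kernel representation); neither constitutes a serious obstacle.
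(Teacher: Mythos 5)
Your proof is correct and follows the standard argument: derive the level-set identity $\partial_t(h_{K,+}^l)^2+v\cdot\nabla_x(h_{K,+}^l)^2=2h_{K,+}^l\langle v\rangle^l\tilde Q(G,-\mu+h)$, integrate in $x$ (transport vanishes by periodicity) and then in $t$, apply $\langle v\rangle^j(1-\Delta_v)^{-\tau/2}$, drop the nonnegative $T_2$-term using positivity of the Bessel kernel, and conclude via $|B|=2B^+-B$; the constants match exactly. The paper itself cites this lemma from \cite{AMSY2} without reproducing a proof, and your argument mirrors the companion Lemma~\ref{L220} as intended.
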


%
%
%

\section{Estimates for the collision operator}\label{section 3}

In this section we focus on the estimates for the collisional operator $Q$. We first recall 

\begin{lem}\label{L31}(\cite{CHJ}, Lemma 3.1)
Suppose $\gamma>-3,  s \in(0, 1)$, for any $l \ge 8$ large, $h, g$ smooth, we have
\begin{equation*}
\begin{aligned} 
|(Q (h, \mu), g \langle v \rangle^{2l}) | \le &  \Vert b(\cos \theta) \sin^{l-\frac {3+\gamma} 2} \frac \theta 2 \Vert_{L^1_\theta}\Vert h \Vert_{L^2_{l +\gamma/2, *}}\Vert g \Vert_{L^2_{l + \gamma/2, *}} + C_l \Vert h \Vert_{L^2_{l + \gamma/2-1/2}}\Vert g \Vert_{L^2_{ l + \gamma/2 - 1/2}}
\\ 
\le & \Vert b(\cos \theta) \sin^{l - 2} \frac \theta 2 \Vert_{L^1_\theta}\Vert h \Vert_{L^2_{l + \gamma/2, *}}\Vert g \Vert_{L^2_{l + \gamma/2, *}} + C_l \Vert h \Vert_{L^2_{l + \gamma/2-1/2}}\Vert g \Vert_{L^2_{l + \gamma/2-1/2}},
\end{aligned}
\end{equation*}
for some constant $C_l > 0$.
\end{lem}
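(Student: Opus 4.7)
The plan is to expand $(Q(h,\mu), g\langle v\rangle^{2l})$ via the weak form, apply the pre/post collisional change of variables to shift the weight onto $v'$, and then pin down which pieces of the resulting weight expansion survive the angular cancellation.

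\textbf{Step 1: reduction via pre/post CV.} Starting from the weak formulation and applying the pre/post change of variables to the gain term, I would first reduce to
\begin{equation*}
(Q(h,\mu), g\langle v\rangle^{2l}) = \int_{\R^3}\int_{\R^3}\int_{\mathbb{S}^2} B\,h(v_*)\mu(v)\,\bigl[g'\langle v'\rangle^{2l}-g\langle v\rangle^{2l}\bigr]\,d\sigma\,dv\,dv_*,
\end{equation*}
and then split the bracket as $g'[\langle v'\rangle^{2l}-\langle v\rangle^{2l}\cos^{2l}(\theta/2)]+\langle v\rangle^{2l}[g'\cos^{2l}(\theta/2)-g]$, calling the two contributions $\mathcal{I}_1$ and $\mathcal{I}_2$.

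\textbf{Step 2: main-order cancellation in $\mathcal{I}_1$.} Applying Lemma~\ref{L28} with $k=2l$, the first bracket equals $2l\langle v\rangle^{2l-2}\cos^{2l-1}(\theta/2)\sin(\theta/2)|v-v_*|(v_*\cdot\omega)+L_1+L_2$. Using the decomposition $\omega=\tilde\omega\cos(\theta/2)+\sin(\theta/2)(v'-v_*)/|v'-v_*|$ from \eqref{decomposition omega}, the dangerous $(v_*\cdot\tilde\omega)$ component must be killed by Lemma~\ref{L29}. Since that lemma requires the non-$(v_*\cdot\tilde\omega)$ factor to depend only on $v_*$ and $v'$, I would further write $\mu(v)\langle v\rangle^{2l-2}=\mu(v')\langle v'\rangle^{2l-2}+[\mu(v)\langle v\rangle^{2l-2}-\mu(v')\langle v'\rangle^{2l-2}]$; the first summand merges with $g'$ into a pure function of $v'$, so its integral vanishes by Lemma~\ref{L29}, while the second summand contributes an extra factor $\sin(\theta/2)|v-v_*|$ via the mean value theorem on the Gaussian-polynomial weight. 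The remaining $\sin(\theta/2)(v'-v_*)/|v'-v_*|$ part of $\omega$ is already an unproblematic higher-order correction of the same type.

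\textbf{Step 3: remainders $L_1, L_2$ and $\mathcal{I}_2$.} For the remainder $L_1$ in Lemma~\ref{L28}, which satisfies $|L_1|\lesssim\sin^{2l-2}(\theta/2)\langle v_*\rangle^{2l}\langle v\rangle^2$, I would invoke the Cauchy-Schwarz estimate of Lemma~\ref{L22}: the $\sin^{2l-2}(\theta/2)$ weight combined with the $\sin^{-3/2-\gamma/2}(\theta/2)$ factor from the singular change of variables inside Lemma~\ref{L22} produces exactly the exponent $l-(3+\gamma)/2$, so this piece contributes $\Vert b(\cos\theta)\sin^{l-(3+\gamma)/2}(\theta/2)\Vert_{L^1_\theta}\Vert h\Vert_{L^2_{l+\gamma/2,*}}\Vert g\Vert_{L^2_{l+\gamma/2,*}}$. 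The remainder $L_2$, which carries $\sin^2(\theta/2)$ but only $\langle v\rangle^{2l-2}\langle v_*\rangle^4$, has a uniformly bounded angular integral and feeds into the $C_l\Vert h\Vert_{L^2_{l+\gamma/2-1/2}}\Vert g\Vert_{L^2_{l+\gamma/2-1/2}}$ piece. For $\mathcal{I}_2$ I would write $g'\cos^{2l}(\theta/2)-g=(g'-g)+(\cos^{2l}(\theta/2)-1)g'$: the $(g'-g)$ part is tested against the smooth Gaussian-polynomial weight $\mu(v)\langle v\rangle^{2l}$ and is handled by the cancellation lemma, while $|\cos^{2l}(\theta/2)-1|\le 2l\sin^2(\theta/2)$ gives the second part after Cauchy-Schwarz, both terms joining the subordinate $C_l$ bound.

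\textbf{Main obstacle.} The substantive difficulty lies in Step 2, since Lemma~\ref{L29} does not apply verbatim to our integrand because of the unwanted $\mu(v)\langle v\rangle^{2l-2}$ factor on the $v$-side. The Taylor substitution that replaces $v$ by $v'$ in this weight is essential, and each resulting correction must be carefully absorbed into one of the two claimed norms; the extra $\sin(\theta/2)|v-v_*|$ factor from the mean value theorem is digested using the assumption $\gamma+2s>-1$ to keep the angular integral finite. Once Step 2 is settled, the second form of the inequality (with $\sin^{l-2}(\theta/2)$) follows immediately since $\sin^{l-2}(\theta/2)\ge\sin^{l-(3+\gamma)/2}(\theta/2)$ whenever $\gamma\le 1$.
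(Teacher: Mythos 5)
Your Step 1 (weak form plus pre/post collisional change of variables) is correct, but the decomposition you choose in Step 2 places the full weight $\langle v'\rangle^{2l}-\langle v\rangle^{2l}\cos^{2l}(\theta/2)$ inside the bracket and leaves the test factor $g'$ with \emph{no} weight. When you then feed the $L_1$ remainder of Lemma~\ref{L28} (applied with $k=2l$, so $|L_1|\lesssim\sin^{2l-2}(\theta/2)\langle v_*\rangle^{2l}\langle v\rangle^2$) into the Cauchy--Schwarz estimate of Lemma~\ref{L22} with $f_*=h_*\langle v_*\rangle^{2l}$, $g=\mu\langle v\rangle^2$, $h'=g'$, you obtain norms of the shape $\Vert h\Vert_{L^2_{2l+\gamma/2}}\Vert g\Vert_{L^2_{\gamma/2}}$, not the balanced $\Vert h\Vert_{L^2_{l+\gamma/2,*}}\Vert g\Vert_{L^2_{l+\gamma/2,*}}$. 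This imbalance is fatal, because a constant $C_l$ cannot convert a $2l$-weight on $h$ into an $l$-weight; the correct decomposition must write $g\langle v\rangle^{2l}=(g\langle v\rangle^{l})\cdot\langle v\rangle^{l}$ and commute only the lower weight $\langle v\rangle^{l}$, which is exactly the pattern that Lemma~\ref{L32} and Theorem~\ref{T34} follow with the $\Lambda$-type split.

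There is also a concrete arithmetic error in Step 3. With $\sin^{2l-2}(\theta/2)$ from $L_1$ combined with the $\sin^{-(3+\gamma)/2}(\theta/2)$ cost of the singular change of variables, the resulting angular exponent is $2l-2-(3+\gamma)/2$, not $l-(3+\gamma)/2$; these agree only when $l=2$, and you take $l\ge 8$. Even with the balanced split at exponent $l$ (which is what you need), Lemma~\ref{L28} still produces $\sin^{l-2}(\theta/2)\langle v_*\rangle^{l}\langle v\rangle^2$ and hence $\sin^{l-2-(3+\gamma)/2}(\theta/2)$, which is \emph{more} singular near $\theta=0$ than the claimed $\sin^{l-(3+\gamma)/2}(\theta/2)$, so it does not reproduce the sharp constant in the lemma. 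The sharp first term $\Vert b\sin^{l-(3+\gamma)/2}(\theta/2)\Vert_{L^1_\theta}\Vert h\Vert_{L^2_{l+\gamma/2,*}}\Vert g\Vert_{L^2_{l+\gamma/2,*}}$ arises instead from a finer bound of the form $\langle v'\rangle^l\lesssim\langle v\rangle^l\cos^l(\theta/2)+\langle v_*\rangle^l\sin^l(\theta/2)+\text{lower order}$, where the $\langle v_*\rangle^l\sin^l(\theta/2)$ term contributes exactly $\sin^l\cdot\sin^{-(3+\gamma)/2}=\sin^{l-(3+\gamma)/2}$ in the angular cost and distributes the weight evenly between $h$ and $g$ through the two factors of the Cauchy--Schwarz inequality. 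Lemma~\ref{L28} bundles this term into $L_1$ with an extra $\sin^{-2}\langle v\rangle^2$ factor, which is too lossy to give the constant claimed. Finally, in your treatment of $\mathcal{I}_2$ the appeal to the cancellation lemma is not quite legitimate as stated: the integrand $h_*\mu(v)\langle v\rangle^{2l}(g'-g)$ carries $v$-dependence outside the $(f'-f)$ structure, so you need a commutator step, replacing it by $h_*[(\mu\langle\cdot\rangle^{2l}g)'-(\mu\langle\cdot\rangle^{2l}g)]-h_*(\mu'\langle v'\rangle^{2l}-\mu\langle v\rangle^{2l})g'$, with the second piece handled via the smoothness of the Gaussian--polynomial weight, and this again needs to be done in the weight-balanced form to land in the right norms.
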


Then we introduce several upper bounds for the weighted commutator which is very important in the whole paper. Various upper bounds handle the moments required in various estimates.

\begin{lem}\label{L32}
Suppose $\gamma \in (-3, 0], s\in (0,1)$, $\gamma+2s>-1, l \ge 8$, for any smooth function $g, f, h$ we have
\begin{align} 
\label{estimate Lambda 1}
\nonumber
\Lambda =& \int_{\R^3}\int_{\R^3}\int_{\mathbb{S}^2}  | v-v_*|^\gamma b(\cos \theta) g(v_*) f(v) h'(v) \langle v' \rangle^l (\langle v' \rangle^l - \cos^l \frac \theta 2 \langle v \rangle^l ) dv dv_* d \sigma 
\\ \nonumber
\lesssim &  \min \{ \Vert f \Vert_{L^2_{|\gamma| +7 }} \Vert g \Vert_{H^s_{ l +\gamma/2 }}\Vert h \Vert_{H^s_{ l + \gamma/2}}, \Vert g\Vert_{L^\infty_{ l + 5 +|\gamma|}}  \Vert f \Vert_{L^2_{ l + \gamma/2  -1 }}\Vert h \Vert_{L^2_{ l + \gamma/2-1 }} \}
\\
&+ \min \{ \Vert g \Vert_{L^2_{|\gamma| +7 }} \Vert f \Vert_{H^s_{ l + \gamma/2}}\Vert h \Vert_{H^s_{ l + \gamma/2}} ,   \Vert g \Vert_{L^\infty_{|\gamma| +9}} \Vert f \Vert_{H^s_{ l+ \gamma/2  }}  \Vert h \Vert_{L^2_{ l + \gamma/2  -s'} }  \},
\end{align}
with $s' = \min \{\frac 1 2 , 1 - s \} >0$.  We also have another estimate of $\Lambda$
\begin{equation}
\label{estimate Lambda 2}
|\Lambda| \lesssim  \Vert g \Vert_{L^\infty_{|\gamma| + 9  }} \Vert f \Vert_{H^s_{ l + \gamma/2 }}\Vert h \Vert_{L^2_{ l + \gamma/2}}  +\Vert g \Vert_{L^\infty_l }  \Vert f \Vert_{L^2_4} \Vert h \Vert_{L^2_{l + 2 }}. 
\end{equation}
Similarly we have
\begin{align} 
\label{estimate Gamma 1}
\nonumber
\Gamma : =& \int_{\R^3}\int_{\R^3}\int_{\mathbb{S}^2}   | v-v_*|^\gamma  b(\cos \theta) g(v_*) f(v) h'(v) \langle v' \rangle^l (\langle v' \rangle^l - \langle v \rangle^l ) dv dv_* d \sigma 
\\
\lesssim &  \min \{ \Vert f \Vert_{L^2_{|\gamma| +7 }} \Vert g \Vert_{H^s_{ l + \gamma/2 }}\Vert h \Vert_{H^s_{ l  + \gamma/2}}, \Vert g\Vert_{L^\infty_{ l + 5 +|\gamma|}}  \Vert f \Vert_{L^2_{ l + \gamma/2 }}\Vert h \Vert_{L^2_{ l + \gamma/2  }} \}+ \Vert g \Vert_{L^2_{|\gamma| +7 }} \Vert f \Vert_{H^s_{ l + \gamma/2}}\Vert h \Vert_{H^s_{ l + \gamma/2}},
\end{align}
we also have two other estimates of $\Gamma$
\begin{equation}
\label{estimate Gamma 2}
|\Gamma|\lesssim  \Vert g \Vert_{L^\infty_{|\gamma| + 9  }} \Vert f \Vert_{H^s_{ l + \gamma/2 }}\Vert h \Vert_{L^2_{ l + \gamma/2}}  +\Vert g \Vert_{L^\infty_l   }  \Vert f \Vert_{L^2_4} \Vert h \Vert_{L^2_{l + 2}} ,
\end{equation}
and
\begin{equation}
\label{estimate Gamma 3}
|\Gamma|\lesssim  \Vert g\Vert_{L^\infty_l } ( \Vert  f \Vert_{L^\infty_l }  +  \Vert \nabla( f\langle v \rangle^{l - 2} )  \Vert_{L^\infty} )  \Vert  h \Vert_{L^1_{ l +2 } } .
\end{equation}
\end{lem}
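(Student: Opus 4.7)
The plan is to combine the Taylor expansion of Lemma~\ref{L28}, the cancellation identity of Lemma~\ref{L29}, and the weighted singular-integral estimates of Lemma~\ref{L213}. First I would apply Lemma~\ref{L28} to decompose
\[
\langle v'\rangle^l - \cos^l\tfrac\theta 2\,\langle v\rangle^l = M + L_1 + L_2,
\]
where $M := l\langle v\rangle^{l-2}\cos^{l-1}\tfrac\theta 2\sin\tfrac\theta 2\,|v-v_*|(v_*\cdot\omega)$ and $L_1, L_2$ satisfy the quantitative bounds in Lemma~\ref{L28}. This splits $\Lambda = \Lambda_M + \Lambda_{L_1} + \Lambda_{L_2}$ and reduces the problem to estimating each piece separately.

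For $\Lambda_{L_1}$ and $\Lambda_{L_2}$, the additional angular factors $\sin^{l-2}\tfrac\theta 2$ and $\sin^2\tfrac\theta 2$ absorb the non-integrable singularity of $b$ at $\theta=0$ (the former because $l\ge 8$, the latter because $s<1$). I would then combine $\langle v'\rangle\lesssim\langle v\rangle\langle v_*\rangle$ with Lemma~\ref{L213} applied via either the regular or the singular change of variables (the choice dictates whether the large moment is placed on $g$ or on the pair $f,h$), producing both minima appearing in \eqref{estimate Lambda 1}.

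The principal term $\Lambda_M$ is handled by further splitting $\omega$ according to \eqref{decomposition omega}. The $(v'-v_*)/|v'-v_*|$-component carries an extra $\sin\tfrac\theta 2$, hence a total of $\sin^2\tfrac\theta 2$, and is absorbed into the $\Lambda_{L_2}$-type bound. The $\tilde\omega$-component is the delicate piece: to invoke Lemma~\ref{L29}, whose hypothesis requires a rigid tensor product $f_*\,g'$, I would first fold the post-collisional weight into $h$ by writing $h'(v)\langle v'\rangle^l = (h\langle\cdot\rangle^l)(v')$, and then Taylor-expand the remaining pre-collisional factor as
\[
f(v)\langle v\rangle^{l-2} = f(v')\langle v'\rangle^{l-2} + R(v,v',v_*,\sigma).
\]
The $f(v')\langle v'\rangle^{l-2}$ piece completes the tensor-product structure and vanishes identically by Lemma~\ref{L29} after the regular change of variables. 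The remainder $R$ is controlled either by $\|\nabla(f\langle\cdot\rangle^{l-2})\|_{L^\infty}\,|v-v'|$ or, in an $H^s$-flavoured version, via the Slobodeckij difference quotient of $f\langle v\rangle^{l-2}$; since $|v-v'| = \sin\tfrac\theta 2\,|v-v_*|$, it inherits one more power of $\sin\tfrac\theta 2$ and can again be treated by a Lemma~\ref{L213}-type estimate.

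The bounds for $\Gamma$ follow the same template: the identity $\langle v'\rangle^l-\langle v\rangle^l = \bigl(\langle v'\rangle^l-\cos^l\tfrac\theta 2\,\langle v\rangle^l\bigr)+\bigl(\cos^l\tfrac\theta 2-1\bigr)\langle v\rangle^l$ reduces $\Gamma$ to $\Lambda$ plus a scalar-multiplier piece with factor $O(\sin^2\tfrac\theta 2)$, directly controllable by Lemma~\ref{L213}. The variants \eqref{estimate Lambda 2} and \eqref{estimate Gamma 2} would follow by measuring $g$ in $L^\infty$ and invoking Lemma~\ref{L212} in place of Lemma~\ref{L210}; for \eqref{estimate Gamma 3} the Taylor remainder is bounded by the $W^{1,\infty}$-norm of $f\langle v\rangle^{l-2}$ and the remaining angular kernel is integrated against $|v-v_*|$, which is absorbed into the $L^1_{l+2}$-norm of $h$. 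The main obstacle will be the cancellation step for the $\tilde\omega$-component of $\Lambda_M$: matching Lemma~\ref{L29}'s rigid tensor-product hypothesis to the genuinely $v$-dependent prefactor $f(v)\langle v\rangle^{l-2}$ requires careful bookkeeping so that the Taylor remainder is absorbed into precisely the $L^\infty$, $L^2$, or $H^s$ norm required by each of the stated bounds.
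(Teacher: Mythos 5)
Your plan matches the paper's proof quite closely in its architecture: the Taylor decomposition via Lemma~\ref{L28}, the splitting of $\omega$ along $\tilde\omega$ and $(v'-v_*)/|v'-v_*|$ via \eqref{decomposition omega}, the use of Lemma~\ref{L29} to kill the pure post-collisional piece, and the reduction of $\Gamma$ to $\Lambda$ plus a $(\cos^l\frac\theta2-1)$ correction are all exactly the paper's moves. The treatments of $\Lambda_{L_1}$, $\Lambda_{L_2}$, and $\Lambda_{1,2}$ via the extra $\sin^{l-2}\frac\theta2$ or $\sin^2\frac\theta2$ gain and Lemma~\ref{L213}/Hardy--Littlewood--Sobolev also line up.

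The genuine gap is in your handling of the remainder in the critical $\tilde\omega$-piece, i.e.\ the term $\Lambda_{1,1}$ that after Lemma~\ref{L29} carries the factor $f\langle v\rangle^{l-2}-f'\langle v'\rangle^{l-2}$. For the $L^\infty$-flavoured bound \eqref{estimate Gamma 3} your mean-value argument is fine. But for the $H^s$-flavoured bounds you write that the difference ``inherits one more power of $\sin\frac\theta2$ and can again be treated by a Lemma~\ref{L213}-type estimate'' via ``the Slobodeckij difference quotient of $f\langle v\rangle^{l-2}$.'' This is where it breaks down: $f\langle v\rangle^{l-2}-f'\langle v'\rangle^{l-2}$ is a genuine difference of function values, not a quantity that produces an explicit $\sin\frac\theta2$ factor multiplying a fixed weighted $H^s$ or $L^2$ norm, so Lemma~\ref{L213} does not directly apply. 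What the paper actually does is (i) first split off a benign piece by writing $f\langle v\rangle^{l-2}-f'\langle v'\rangle^{l-2}=\langle v\rangle^{-(2-s)}\bigl(f\langle v\rangle^{l-s}-f'\langle v'\rangle^{l-s}\bigr)+f'\langle v'\rangle^{l-s}\bigl(\langle v\rangle^{-(2-s)}-\langle v'\rangle^{-(2-s)}\bigr)$ --- a weight-redistribution step you do not mention and which is what places the moments correctly to land on $H^s_{l+\gamma/2}$; then (ii) apply Cauchy--Schwarz in all variables, producing a quadratic form $\int b\,|v-v_*|^\gamma|g_*|\langle v_*\rangle^2\bigl(f\langle v\rangle^{l-s}-f'\langle v'\rangle^{l-s}\bigr)^2$; and (iii) expand this via the algebraic identity $(a-b)^2=-2a(b-a)+(b^2-a^2)$ to produce a $\bigl(Q(|g|\langle\cdot\rangle^2,\,f\langle\cdot\rangle^{l-s}),\,f\langle\cdot\rangle^{l-s}\bigr)$ pairing plus a cancellation-lemma term, each of which is then bounded by Lemma~\ref{L23} and Lemma~\ref{L212}. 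This last step is the crux: the ``Slobodeckij-looking'' quadratic form is only controllable because it can be recast as a collision form and fed into the existing upper-bound machinery. Without steps (i)--(iii) your proposal has no route to the $\Vert g\Vert_{L^2_{|\gamma|+7}}\Vert f\Vert_{H^s_{l+\gamma/2}}\Vert h\Vert_{H^s_{l+\gamma/2}}$ and $\Vert g\Vert_{L^\infty_{|\gamma|+9}}\Vert f\Vert_{H^s_{l+\gamma/2}}\Vert h\Vert_{L^2_{l+\gamma/2-s'}}$ bounds in \eqref{estimate Lambda 1} and \eqref{estimate Lambda 2}, and in particular no way to extract the small gain $s'=\min\{\frac12,1-s\}$ in the $h$-moment.
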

\begin{proof}
By \eqref{estimate regular}
\begin{equation*}
\begin{aligned} 
|\Gamma - \Lambda| := |\Lambda_4| = &\int_{\R^3}\int_{\R^3}\int_{\mathbb{S}^2} | v-v_*|^\gamma   b(\cos \theta) g(v_*) f(v) h'(v) \langle v' \rangle^l  \langle v \rangle^l ( 1-\cos^l  \frac \theta 2) dv dv_* d \sigma 
\\
 \lesssim&   \int_{\R^3} \int_{\R^3} \int_{\mathbb{S}^2} b(\cos \theta) \sin^{2} \frac \theta 2   |v-v_*|^\gamma \langle v_* \rangle^{4}  g_* |f| \langle v \rangle^{l }   |h'|  \langle v' \rangle^l  dvdv_* d\sigma
\\
\lesssim & \min \{ \Vert g \Vert_{L^2_{|\gamma| +7}} \Vert f \Vert_{H^s_{ l + \gamma/2  }} \Vert h \Vert_{H^s_{ l +  \gamma/2 }}, \Vert g\Vert_{L^\infty_{9+|\gamma|}}  \Vert f \Vert_{L^2_{ l  + \gamma/2}}\Vert h \Vert_{L^2_{ l + \gamma/2}} \}.
\end{aligned}
\end{equation*}
By regular change of variable and Lemma \ref{L211} we have
\begin{equation*}
\begin{aligned} 
 |\Lambda_4|  \lesssim &\int_{\R^3} \int_{\R^3} \int_{\mathbb{S}^2} b(\cos \theta) \sin^{2} \frac \theta 2   |v-v_*|^\gamma \langle v_* \rangle^{4}  g_* |f| \langle v \rangle^{l}   |h'|  \langle v' \rangle^l dvdv_* d\sigma
\\
\lesssim & \Vert f \Vert_{L^\infty_l} \int_{\R^3} \int_{\R^3} \int_{\mathbb{S}^2} b(\cos \theta) \sin^{2} \frac \theta 2   |v-v_*|^\gamma \langle v_* \rangle^{4}  g_*    |h'|  \langle v' \rangle^l dvdv_* d\sigma
\\
\lesssim &\Vert f \Vert_{L^\infty_l}  \int_{\R^3} \int_{\R^3}    |v-v_*|^\gamma \langle v_* \rangle^{4}  g_*    |h'|  \langle v' \rangle^l dvdv_*\lesssim \Vert g \Vert_{L^\infty_8} \Vert f \Vert_{L^\infty_l} \Vert h \Vert_{L^1_l} ,
\end{aligned}
\end{equation*}
Gathering the two estimates we have  
\[
|\Lambda_4 | \lesssim  \min \{ \Vert g \Vert_{L^2_{|\gamma| +7}} \Vert f \Vert_{H^s_{ l + \gamma/2  }} \Vert h \Vert_{H^s_{ l + \gamma/2  }}, \Vert g\Vert_{L^\infty_{9+|\gamma|}}  \Vert f \Vert_{L^2_{ l + \gamma/2}}\Vert h \Vert_{L^2_{ l + \gamma/2}}, \Vert g \Vert_{L^\infty_8} \Vert f \Vert_{L^\infty_l} \Vert h \Vert_{L^1_l}  \},
\]
We focus on the $\Lambda$ term. By Lemma \ref{L28} we have
\begin{equation*}
\begin{aligned} 
\Lambda=& \int_{\R^3} \int_{\R^3} \int_{\mathbb{S}^2} b(\cos \theta) |v-v_*|^\gamma (l \langle v \rangle^{l - 2} |v-v_*| (v_*\cdot \omega) \cos^{l-1} \frac \theta 2 \sin \frac \theta 2)   g_* f  h'   \langle v' \rangle^l dvdv_* d\sigma
\\
& + \int_{\R^3} \int_{\R^3} \int_{\mathbb{S}^2} b(\cos \theta) |v-v_*|^\gamma  g_* f   h' \langle v' \rangle^l \sum_{i=2}^3 L_i dvdv_* d\sigma := \sum_{i=1}^3 \Lambda_i,
\end{aligned}
\end{equation*}
with 
\[
|L_1 |\le C_l\sin^{l - 2} \frac \theta 2 \langle  v_* \rangle^{l} \langle v \rangle^2 ,\quad |L_2| \le C_l \langle v \rangle^{l - 2}  \langle v_* \rangle^4 \sin^2 \frac \theta 2 .
\]
We first estimate the $\Lambda_2$ term, for the $\Lambda_2$ term, we have several different estimates. Since $l \ge 5$, by \eqref{estimate singular} we have
\begin{equation*}
\begin{aligned} 
|\Lambda_2| \lesssim   \int_{\R^3} \int_{\R^3} \int_{\mathbb{S}^2} b(\cos \theta) |v-v_*|^\gamma\sin^{ l  - 2} \frac \theta 2  |g_*| \langle v_* \rangle^{l} |f| \langle v \rangle^2   |h'|  \langle v' \rangle^l  dvdv_* d\sigma
\lesssim    \Vert f \Vert_{L^2_{|\gamma| + 4}} \Vert g \Vert_{H^s_{ l + \gamma/2 }}   \Vert h \Vert_{H^s_{ l  + \gamma/2}},
\end{aligned}
\end{equation*}
Using \eqref{estimate regular} we have
\begin{equation*}
\begin{aligned} 
 |\Lambda_2| \lesssim&   \int_{\R^3} \int_{\R^3} \int_{\mathbb{S}^2} b(\cos \theta) |v-v_*|^\gamma\sin^{l - 2 } \frac \theta 2  |g_*| \langle v_* \rangle^{l}  |f| \langle v \rangle^2   |h'|  \langle v' \rangle^l dvdv_* d\sigma
\\
\lesssim&   \int_{\R^3} \int_{\R^3} \int_{\mathbb{S}^2} b(\cos \theta) |v-v_*|^\gamma\sin^{l-2} \frac \theta 2  |g_*| \langle v_* \rangle^{l + 1} |f| \langle v \rangle^3  |h'|  \langle v' \rangle^{l - 1} dvdv_* d\sigma
\\
\lesssim &   \Vert g \Vert_{L^\infty_{ l  + |\gamma| + 5 }} \Vert f \Vert_{L^2_{ l + \gamma/2-1 }}   \Vert h \Vert_{L^2_{ l  + \gamma/2-1}}.
\end{aligned}
\end{equation*}
By singular change of variable we have
\begin{equation*}
\begin{aligned} 
 |\Lambda_2| \lesssim&   \int_{\R^3} \int_{\R^3} \int_{\mathbb{S}^2} b(\cos \theta) |v-v_*|^\gamma\sin^{l - 2} \frac \theta 2  |g_*| \langle v_* \rangle^{ l } |f| \langle v \rangle^2   |h'|  \langle v' \rangle^l dvdv_* d\sigma
\\
\lesssim &   \Vert g \Vert_{L^\infty_l}    \int_{\R^3} \int_{\R^3} \int_{\mathbb{S}^2} b(\cos \theta) |v-v_*|^\gamma\sin^{l - 2-3-\gamma} \frac \theta 2  |f| \langle v \rangle^2   |h_*|  \langle v_* \rangle^l dvdv_* d\sigma
\\
\lesssim &\Vert g \Vert_{L^\infty_l  } \int_{\R^3} \int_{\R^3} |v-v_*|^\gamma  |f| \langle v \rangle^2 |h_*|  \langle  v_*\rangle^l    dv dv_*  ,
\end{aligned}
\end{equation*}
by Hardy-Littlewood-Sobolev inequality we have
\[
|\Lambda_2|   \lesssim \Vert g \Vert_{L^\infty_l }  \Vert f \Vert_{L^p_2} \Vert h \Vert_{L^p_l }     \lesssim  \Vert g \Vert_{L^\infty_l  }  \Vert f \Vert_{L^2_4} \Vert h \Vert_{L^2_{ l + 2 }} ,
\]
where $p=1$ if $\gamma=0$ and  $\frac 1 p =\frac 1 2(2 + \frac \gamma 3) \in (\frac 1 2, 1)$ if $-3< \gamma <0$.  By Lemma \ref{L211} we have
\[
|\Lambda_2|  \lesssim \Vert g \Vert_{L^\infty_l  } \Vert f  \Vert_{L^\infty_6} \Vert h \Vert_{L^1_l  },
\]
Gathering the terms we have 
\begin{equation*}
\begin{aligned} 
|\Lambda_2| \lesssim & \min \{ \Vert f \Vert_{L^2_{|\gamma| + 4 }} \Vert g \Vert_{H^s_{ l +\gamma/2 }}   \Vert h \Vert_{H^s_{ l + \gamma/2}} ,    \Vert g \Vert_{L^\infty_{l + |\gamma| +5}} \Vert f \Vert_{L^2_{ l +\gamma/2-1 }}   \Vert h \Vert_{L^2_{ l + \gamma/2-1}},  \Vert g \Vert_{L^\infty_l}  \Vert f \Vert_{L^2_4} \Vert h \Vert_{L^2_{l+2}} , \Vert g \Vert_{L^\infty_l} \Vert f  \Vert_{L^\infty_6} \Vert h \Vert_{L^1_l}   \}.
\end{aligned}
\end{equation*}
For the $\Lambda_3$  term, by Lemma \ref{L213}  we have
\begin{equation*}
\begin{aligned} 
 |\Lambda_3| \lesssim&   \int_{\R^3} \int_{\R^3} \int_{\mathbb{S}^2} b(\cos \theta) \sin^{2} \frac \theta 2   |v-v_*|^\gamma \langle v_* \rangle^{4}  g_* |f| \langle v \rangle^{l - 2}   |h'|  \langle v' \rangle^l  dvdv_* d\sigma
\\
\lesssim&   \int_{\R^3} \int_{\R^3} \int_{\mathbb{S}^2} b(\cos \theta) \sin^{2} \frac \theta 2   |v-v_*|^\gamma \langle v_* \rangle^{5}  g_* |f| \langle v \rangle^{l - 1}   |h'|  \langle v' \rangle^{l - 1} dvdv_* d\sigma
\\
\lesssim & \min \{   \Vert g \Vert_{L^2_{|\gamma| +7}} \Vert f \Vert_{H^s_{ l  + \gamma/2  }}  \Vert h \Vert_{H^s_{ l + \gamma/2}} , \Vert g \Vert_{L^\infty_{|\gamma| +9}} \Vert f \Vert_{L^2_{ l + \gamma/2 -1 }}  \Vert h \Vert_{L^2_{ l + \gamma/2 -1 }}  \}.
\end{aligned}
\end{equation*}
By regular change of variable and Lemma \ref{L211} we have
\begin{equation*}
\begin{aligned} 
|\Lambda_3|  \lesssim& \int_{\R^3} \int_{\R^3} \int_{\mathbb{S}^2} b(\cos \theta) \sin^2 \frac \theta 2 |v-v_*|^\gamma   |g_*| \langle  v_* \rangle^{4}  |f| \langle v \rangle^{l} |h'| \langle v' \rangle^l   dvdv_* d\sigma
\\
\lesssim&\Vert f \Vert_{L^\infty_l} \int_{\R^3} \int_{\R^3} \int_{\mathbb{S}^2} b(\cos \theta) \sin^2 \frac \theta 2 |v-v_*|^\gamma   |g_*| \langle  v_* \rangle^{4} |h'|  \langle v' \rangle^l  dvdv_* d\sigma
\\
\lesssim& \Vert f \Vert_{L^\infty_l}\int_{\R^3} \int_{\R^3}  |v-v_*|^\gamma   |g_*| \langle v_* \rangle^4  |h'|    \langle v' \rangle^l    dv dv_* 
\lesssim \Vert g \Vert_{L^\infty_8} \Vert f \Vert_{L^\infty_l} \Vert h\Vert_{L^1_l}.
\end{aligned}
\end{equation*}
Gathering the two estimates we have
\begin{equation*}
\begin{aligned} 
|\Lambda_3| \lesssim \min \{  \Vert g \Vert_{L^2_{|\gamma| +7}} \Vert f \Vert_{H^s_{ l + \gamma/2 }}  \Vert h \Vert_{H^s_{ l + \gamma/2}} ,   \Vert g \Vert_{L^\infty_{|\gamma| +9}} \Vert f \Vert_{L^2_{ l + \gamma/2  -1 }}  \Vert h \Vert_{L^2_{ l + \gamma/2 -1 }} ,   \Vert g \Vert_{L^\infty_8} \Vert f \Vert_{L^\infty_l} \Vert h\Vert_{L^1_l} \},
\end{aligned}
\end{equation*}
For the $\Lambda_1$ term,  by \eqref{decomposition omega} we have
\begin{equation*}
\begin{aligned} 
\Lambda_1=& \int_{\R^3} \int_{\R^3} \int_{\mathbb{S}^2} b(\cos \theta) |v-v_*|^\gamma (l \langle v \rangle^{l-2} |v-v_*| (v_*\cdot \tilde{\omega}) \cos^{l} \frac \theta 2 \sin \frac \theta 2) g_* f  h'\langle v' \rangle^l dvdv_* d\sigma
\\
& +\int_{\R^3} \int_{\R^3} \int_{\mathbb{S}^2} b(\cos \theta) |v-v_*|^\gamma (l \langle v \rangle^{l - 2} |v-v_*| (v_*\cdot \frac {v'-v_*} {|v'-v_*|}) \cos^{ l - 1} \frac \theta 2 \sin^2 \frac \theta 2) g_*  f h' \langle v' \rangle^l dvdv_* d\sigma
\\
:= &\Lambda_{1, 1} +\Lambda_{1, 2}.
\end{aligned}
\end{equation*}
For the $\Lambda_{1, 2}$ term, using \eqref{estimate regular}  we have
\begin{equation*}
\begin{aligned} 
 |\Lambda_{1, 2}| \lesssim&   \int_{\R^3} \int_{\R^3} \int_{\mathbb{S}^2} b(\cos \theta) \sin^{2} \frac \theta 2 |v-v_*|^{1+\gamma} \langle v_* \rangle  |g_*| |f| \langle v \rangle^{l -2}   |h'|  \langle v' \rangle^l  dvdv_* d\sigma
\\
\lesssim&   \int_{\R^3} \int_{\R^3} \int_{\mathbb{S}^2} b(\cos \theta) \sin^{2} \frac \theta 2 |v-v_*|^{\gamma} \langle v_* \rangle^2  |g_*| |f| \langle v \rangle^{ l -1}   |h' |  \langle v' \rangle^{l  -1} dvdv_* d\sigma
\\
\lesssim &   \min \{ \Vert g \Vert_{L^2_{|\gamma| +7}} \Vert f \Vert_{H^s_{ l + \gamma/2  }}  \Vert h \Vert_{H^s_{ l  + \gamma/2}} ,   \Vert g \Vert_{L^\infty_{|\gamma| +7}} \Vert f \Vert_{L^2_{ l  + \gamma/2  -1 }}  \Vert h \Vert_{L^2_{ l  + \gamma/2 -1 }}  \}.
\end{aligned}
\end{equation*}
By regular change of variable and Lemma \ref{L211} we have
\begin{equation*}
\begin{aligned} 
 |\Lambda_{1, 2}| \lesssim&   \int_{\R^3} \int_{\R^3} \int_{\mathbb{S}^2} b(\cos \theta) \sin^{2} \frac \theta 2 |v-v_*|^{1+\gamma} \langle v_* \rangle  |g_*| |f| \langle v \rangle^{l - 2}   |h'|  \langle v' \rangle^{l }  dvdv_* d\sigma
\\
\lesssim&  \Vert f \Vert_{L^\infty_l} \int_{\R^3} \int_{\R^3} \int_{\mathbb{S}^2} b(\cos \theta) \sin^{2} \frac \theta 2 |v-v_*|^{\gamma} \langle v_* \rangle^3  |g_*|  |h'| \langle v' \rangle^{l } dvdv_* d\sigma
\\
\lesssim &   \Vert g \Vert_{L^\infty_{|\gamma| +7}} \Vert f \Vert_{L^\infty_{l }}  \Vert h \Vert_{L^1_l }.
\end{aligned}
\end{equation*}
Gathering the two estimates we have
\[
|\Lambda_{1, 2}| \lesssim  \min \{   \Vert g \Vert_{L^2_{|\gamma| +7}} \Vert f \Vert_{H^s_{ l + \gamma/2  }}  \Vert h \Vert_{H^s_{ l  + \gamma/2}} ,    \Vert g \Vert_{L^\infty_{|\gamma| +7}} \Vert f \Vert_{L^2_{ l + \gamma/2  -1 }}  \Vert h \Vert_{L^2_{ l + \gamma/2 -1 }} ,     \Vert g \Vert_{L^\infty_{|\gamma| +7}} \Vert f \Vert_{L^\infty_{l }}  \Vert h \Vert_{L^1_l}      \}.
\]
For the $\Lambda_{1, 1}$ term,  by Lemma \ref{L29} we have
\begin{equation}
\label{Lambda 11}
\Lambda_{1, 1} =  l \int_{\R^3} \int_{\R^3} \int_{\mathbb{S}^2} b(\cos \theta) |v-v_*|^{1+ \gamma}  (v_*\cdot \tilde{\omega}) \cos^{l} \frac \theta 2 \sin \frac \theta 2 g_* (f \langle v \rangle^{l - 2} -f'  \langle v' \rangle^{l - 2})  h'\langle v' \rangle^l  dvdv_* d\sigma. 
\end{equation}
Using 
\[
f \langle v \rangle^{l-2} -f' \langle v' \rangle^{l-2}= \frac 1 {\langle v \rangle^{2-s} }( f \langle v \rangle^{l-s} - f' \langle v' \rangle^{l-s})+ f' \langle v' \rangle^{l - s} \left ( \frac 1 {\langle v \rangle^{2-s}}- \frac 1 {\langle v' \rangle^{2-s}  } \right ),
\]
we split $\Lambda_{1, 1}$ into two parts
\begin{equation*}
\begin{aligned}
\Lambda_{1, 1} =& l \int_{\R^3} \int_{\R^3} \int_{\mathbb{S}^2} b(\cos \theta) |v-v_*|^{1+\gamma} (v_* \cdot \tilde{\omega}) \cos^l \frac \theta 2 \sin \frac \theta 2 g_* h' \langle v' \rangle^{l} \frac 1 {\langle v \rangle^{2-s} }(f \langle v \rangle^{l - s} -f' \langle v' \rangle^{ l - s })dv dv_* d\sigma
\\
&+ l \int_{\R^3} \int_{\R^3} \int_{\mathbb{S}^2} b(\cos \theta) |v-v_*|^{1+\gamma} (v_* \cdot \tilde{\omega}) \cos^l \frac \theta 2 \sin \frac \theta 2 g_*  f' h' \langle v' \rangle^{2l-s } \left (\frac 1 {\langle v \rangle^{2-s}}- \frac 1 {\langle v' \rangle^{2 - s} } \right) dv dv_* d\sigma
\\
:=& \Lambda_{1,1,1} + \Lambda_{1,1,2}.
\end{aligned}
\end{equation*}
For the  $\Lambda_{1,1,2}$ term, by
\[
\left| \frac 1 {\langle v \rangle^{2-s}}- \frac 1 {\langle v' \rangle^{2-s}} \right|= \frac {|\langle v' \rangle^{2-s} - \langle v \rangle^{2-s}|} {\langle v' \rangle^{2 - s} \langle v\rangle^{2-s}} \lesssim  \frac {|v'-v| (\langle v \rangle^{1-s} +\langle v' \rangle^{1-s} )} {\langle v' \rangle^{2-s} \langle v\rangle^{2-s}} \lesssim  \frac {|v'-v| \langle v \rangle^{1-s} \langle v_* \rangle^{1-s} } {\langle v' \rangle^{2-s} \langle v\rangle^{2-s}} \lesssim \frac {|v-v_*| \sin \frac \theta 2 \langle v_* \rangle } {\langle v' \rangle^{2-s} \langle v\rangle} ,
\]
together with regular change of variable and  \eqref{estimate regular}  we have
\begin{equation*}
\begin{aligned}
|\Lambda_{1,1,2} |\lesssim &\int_{\R^3} \int_{\R^3} \int_{\mathbb{S}^2} b(\cos \theta) |v-v_*|^{2+\gamma}  \cos^l \frac \theta 2 \sin^2 \frac \theta 2 \langle v_* \rangle^2 |g_*| \frac 1 {\langle v \rangle} |f'| |h'| \langle v' \rangle^{2l-2}  dv dv_* d\sigma
\\
\lesssim &\int_{\R^3} \int_{\R^3} \int_{\mathbb{S}^2} b(\cos \theta) |v-v_*|^{2+\gamma}  \cos^l \frac \theta 2 \sin^2 \frac \theta 2 \langle v_* \rangle^3 |g_*|  |f'| |h'| \langle v' \rangle^{2l-3}  dv dv_* d\sigma
\\
\lesssim &\int_{\R^3} \int_{\R^3} \int_{\mathbb{S}^2} b(\cos \theta) |v-v_*|^{2+\gamma}  \cos^{l-3-\gamma} \frac \theta 2 \sin^2 \frac \theta 2 \langle v_* \rangle^3 |g_*|  |f| |h| \langle v \rangle^{2l-3}  dv dv_* d\sigma
\\
\lesssim &\int_{\R^3} \int_{\R^3} \int_{\mathbb{S}^2} b(\cos \theta) |v-v_*|^{\gamma}  \cos^{l-3-\gamma} \frac \theta 2 \sin^2 \frac \theta 2 \langle v_* \rangle^5 |g_*|  |f| |h| \langle v \rangle^{2l-1}  dv dv_* d\sigma
\\
\lesssim &  \min \{ \Vert g \Vert_{L^\infty_{|\gamma| +9}} \Vert f \Vert_{L^2_{  l + \gamma/2 -1/2  }}  \Vert h \Vert_{L^2_{ l + \gamma/2  -1/2 }}  , \Vert g \Vert_{L^2_{|\gamma| +7}} \Vert f \Vert_{H^s_{ l + \gamma/2 -1/2 }}  \Vert h \Vert_{H^s_{ l + \gamma/2 -1/2 }}   \},
\end{aligned}
\end{equation*}
For the $\Lambda_{1, 1, 1}$ term, by Cauchy-Schwarz inequality we have
\begin{equation*}
\begin{aligned}
|\Lambda_{1,1,1}| \le & \left( \int_{\R^3} \int_{\R^3} \int_{\mathbb{S}^2} b(\cos \theta) |v-v_*|^{\gamma}  \langle v_*\rangle^2  |g_*| (f \langle v \rangle^{l-s} -f' \langle v' \rangle^{l -s})^2  dv dv_* d\sigma  \right)^{1/2}
\\
&\left( \int_{\R^3} \int_{\R^3} \int_{\mathbb{S}^2} b(\cos \theta) \frac{ |v-v_*|^{\gamma+2} } {\langle v \rangle^{4-2s}} \cos^{2l} \frac \theta 2\sin^2 \frac \theta 2 |g_*| |h'|^2\langle v' \rangle^{2l}   dv dv_* d\sigma  \right)^{1/2}.
\end{aligned}
\end{equation*}
By
\[
\frac{ |v-v_*|^2 } {\langle v \rangle^{4-2s}}  \lesssim \frac{ \langle v \rangle^2+ \langle v_* \rangle^2} {\langle v \rangle^{4-2s}}  \lesssim \frac { \langle v_* \rangle^2 } {\langle v \rangle^{2-2s}}  ,
\]
together with  regular change of variable and Lemma \ref{L212}  we have
\begin{equation*}
\begin{aligned}
&\int_{\R^3} \int_{\R^3} \int_{\mathbb{S}^2} b(\cos \theta) \frac{ |v-v_*|^{\gamma+2} } {\langle v \rangle^{4}} \cos^{2l} \frac \theta 2\sin^2 \frac \theta 2 |g_*| |h'|^2\langle v' \rangle^{2l}   dv dv_* d\sigma
\\
\lesssim &\int_{\R^3} \int_{\R^3} \int_{\mathbb{S}^2} b(\cos \theta) \cos^{2l} \frac \theta 2\sin^2 \frac \theta 2     |v-v_*|^{\gamma} \langle v_*\rangle^2  |g_*| \frac {1} {\langle v \rangle^{2-2s} }  |h'|^2\langle v' \rangle^{2l}   dv dv_* d\sigma
\\
\lesssim &\int_{\R^3} \int_{\R^3} \int_{\mathbb{S}^2} b(\cos \theta) \cos^{2l} \frac \theta 2\sin^2 \frac \theta 2     |v-v_*|^{\gamma} \langle v_*\rangle^4  |g_*|  |h'|^2\langle v' \rangle^{2l - 2 + 2s }   dv dv_* d\sigma
\\
\lesssim&\int_{\mathbb{S}^2} b(\cos \theta) \cos^{2l  -3-\gamma} \frac \theta 2\sin^2 \frac \theta 2 d\sigma \int_{\R^3} \int_{\R^3}      |v'-v_*|^{\gamma} \langle v_*\rangle^4  |g_*| |h'|^2\langle v' \rangle^{2l - 2 + 2s}   dv' dv_* 
\\
\lesssim &  \min \{ \Vert g \Vert_{L^2_{|\gamma| +7}} \Vert h \Vert_{H^s_{l+\gamma/2 }}^2  , \Vert g \Vert_{L^\infty_{|\gamma| +8}} \Vert h \Vert_{L^2_{l +\gamma/2  -1 +s }}^2   \}.
\end{aligned}
\end{equation*}
Since $(a-b)^2 =-2a(b-a)+ (b^2 -a^2)$, we have
\begin{equation*}
\begin{aligned}
 &\int_{\R^3} \int_{\R^3} \int_{\mathbb{S}^2} b(\cos \theta) |v-v_*|^{\gamma}  |g_*| \langle v_* \rangle^2  (f \langle v \rangle^{l-s} -f' \langle v' \rangle^{l-s})^2  dv dv_* d\sigma
\\
=&-2\int_{\R^3} \int_{\R^3} \int_{\mathbb{S}^2} b(\cos \theta) |v-v_*|^{\gamma}   |g_*|  \langle v_* \rangle^2 f \langle v \rangle^{l-s} ( f' \langle v' \rangle^{l - s} - f \langle v \rangle^{l - s} )  dv dv_* d\sigma
\\
&+\int_{\R^3} \int_{\R^3} \int_{\mathbb{S}^2} b(\cos \theta) |v-v_*|^{\gamma}  |g_*| \langle v_* \rangle^2  (|f'|^2 \langle v' \rangle^{2l - 2s}  - |f|^2 \langle v \rangle^{2l - 2s} )  dv dv_* d\sigma
\\
=&-2(Q(|g|\langle \cdot \rangle^2, f \langle \cdot \rangle^{l -s}) , f \langle \cdot \rangle^{l -s} )+\int_{\R^3} \int_{\R^3} \int_{\mathbb{S}^2} b(\cos \theta) |v-v_*|^{\gamma}  |g_*| \langle v_* \rangle^2(|f'|^2 \langle v' \rangle^{2l - 2s}  - |f|^2 \langle v \rangle^{2l - 2s} )  dv dv_* d\sigma.
\end{aligned}
\end{equation*}
By cancellation lemma and Lemma \ref{L212}  we have
\begin{equation*}
\begin{aligned}
&\int_{\R^3} \int_{\R^3} \int_{\mathbb{S}^2} b(\cos \theta) |v-v_*|^{\gamma}  |g_*|  \langle v_* \rangle^2  (|f'|^2 \langle v' \rangle^{2l - 2s}  - |f|^2 \langle v \rangle^{2l - 2s} )  dv dv_* d\sigma
\\
\lesssim&  \int_{\R^3} \int_{\R^3}  |v-v_*|^{\gamma}  |g_*| \langle v_* \rangle^2  |f|^2 \langle v \rangle^{2l - 2s}   dv dv_* \lesssim   \min \{ \Vert g \Vert_{L^2_{|\gamma| +7}} \Vert f \Vert_{H^s_{l + \gamma/2-s}}^2  , \Vert g \Vert_{L^\infty_{|\gamma| +8}} \Vert h \Vert_{L^2_{l + \gamma/2-s}}^2   \},
\end{aligned}
\end{equation*}
and by Lemma \ref{L23} we have
\[
|(Q(|g|  \langle \cdot \rangle^2 , f \langle \cdot \rangle^{l - s}) , f \langle \cdot \rangle^{l - s} )| \lesssim( \Vert g \Vert_{L^1_{|\gamma| +5}}+ \Vert g \Vert_{L^2_{|\gamma| +5 }})  \Vert f \Vert_{H^s_{ l+  \gamma/2 }}^2\lesssim \Vert g \Vert_{L^2_{|\gamma| +7 }} \Vert f \Vert_{H^s_{l + \gamma/2}}^2  \lesssim \Vert g \Vert_{L^\infty_{|\gamma| +9 }} \Vert f \Vert_{H^s_{l + \gamma/2}}^2 .
\]
For the term $\Lambda_{1, 1}$ we have another estimate, recall \eqref{Lambda 11}, by mean value theorem 
\[
|f \langle v \rangle^{l - 2} -f' \langle v' \rangle^{l - 2}| \le \Vert \nabla( f\langle v \rangle^{l - 2} ) \Vert_{L^\infty} |v-v'| \le  \Vert \nabla( f\langle v \rangle^{l -2} ) \Vert_{L^\infty} |v-v_*| \sin \frac \theta 2,
\]
together with regular change of variable and Lemma \ref{L211} we have
\begin{equation*}
\begin{aligned} 
\Lambda_{1, 1} \lesssim&  \Vert \nabla( f\langle v \rangle^{l - 2} ) \Vert_{L^\infty}\int_{\R^3} \int_{\R^3} \int_{\mathbb{S}^2} b(\cos \theta) |v-v_*|^{2+\gamma} \cos^l \frac \theta 2 \sin^2 \frac \theta 2 g_* \langle  v_* \rangle  h' \langle v' \rangle^{l}  dv dv_* d\sigma
\\
\lesssim&  \Vert \nabla( f\langle v \rangle^{l-2} ) \Vert_{L^\infty}\int_{\R^3} \int_{\R^3} \int_{\mathbb{S}^2} b(\cos \theta) |v-v_*|^{2+\gamma} \cos^{l-3-\gamma} \frac \theta 2 \sin^2 \frac \theta 2 g_* \langle  v_* \rangle  h \langle v \rangle^{l}  dv dv_* d\sigma
\\
\lesssim&  \Vert \nabla( f\langle v \rangle^{l-2} ) \Vert_{L^\infty}\int_{\R^3} \int_{\R^3}  |v-v_*|^{\gamma} g_* \langle  v_* \rangle^3  h\langle v \rangle^{l + 2} dv dv_* d\sigma \lesssim \Vert g \Vert_{L^\infty_7} \Vert \nabla( f\langle v \rangle^{l - 2} )  \Vert_{L^\infty} \Vert h \Vert_{L^1_{l + 2}},
\end{aligned}
\end{equation*}
Gathering the estimates for  $\Lambda_{1, 1, 1}$ and $\Lambda_{1, 1, 2 }$ terms we have
\begin{equation*}
\begin{aligned} 
|\Lambda_{1, 1}| \lesssim & \min \{   \Vert g \Vert_{L^2_{|\gamma| +7}} \Vert f \Vert_{H^s_{ l+ \gamma/2  }}  \Vert h \Vert_{H^s_{ l + \gamma/2}} ,    \Vert g \Vert_{L^\infty_{|\gamma| +9}} \Vert f \Vert_{H^s_{ l + \gamma/2  }}  \Vert h \Vert_{L^2_{ l + \gamma/2  -s'}} ,    \Vert g \Vert_{L^\infty_7} \Vert \nabla( f\langle v \rangle^{l -2} )  \Vert_{L^\infty} \Vert h \Vert_{L^1_{l  + 2}}  \},
\end{aligned}
\end{equation*}
with $s' = \min\{ \frac 1 2 , 1-s \} > 0$. The lemma is proved by gathering all the terms together.
\end{proof}

\begin{cor}\label{C33}
Suppose $\gamma \in (-3, 0],  s\in (0, 1), \gamma +2s>-1$. For any smooth function $f, g$, for any small constant $\epsilon >0$ we have
\begin{equation*}
\begin{aligned} 
\Lambda =& \int_{\R^3}\int_{\R^3}\int_{\mathbb{S}^2} | v-v_*|^\gamma  b(\cos \theta)  \mu (v_*) g(v) f'(v) \langle v' \rangle^l (\langle v' \rangle^l  - \cos^l \frac \theta 2 \langle v \rangle^l ) dv dv_* d \sigma 
\\
\le &  C_l \Vert \mu \Vert_{L^\infty_{l + |\gamma| + 5}} \Vert g \Vert_{H^s_{ l +  \gamma/2  }}  \Vert f\Vert_{L^2_{ l  + \gamma/2  -s'} }  \le C_l \Vert g \Vert_{H^s_{ l +  \gamma/2  }}  \Vert f\Vert_{L^2_{ l  + \gamma/2  -s'} }  \le \epsilon \Vert g \Vert_{H^s_{ l +  \gamma/2  }}^2 +C_{l, \epsilon}  \Vert f\Vert_{L^2_{ l  + \gamma/2  -s'} }^2,
\end{aligned}
\end{equation*}
for some constant $C_{l, \epsilon}>0$, where $s' = \min \{ \frac 1 2, 1-s\}$.
\end{cor}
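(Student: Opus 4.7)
The proof is essentially a direct specialization of Lemma \ref{L32}. The plan is to recognize that the integral $\Lambda$ in the corollary is precisely the $\Lambda$ appearing in \eqref{estimate Lambda 1} with the substitution $(g,f,h) \mapsto (\mu,g,f)$ in the notation of Lemma \ref{L32} (so $\mu$ plays the role of the $v_*$-variable function, $g$ the role of the $v$-variable function, and $f$ the role of the $v'$-variable function).

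First I would apply the second branch of the second $\min$ in \eqref{estimate Lambda 1}, namely the bound
\[
|\Lambda| \lesssim \|\mu\|_{L^\infty_{|\gamma|+9}}\,\|g\|_{H^s_{l+\gamma/2}}\,\|f\|_{L^2_{l+\gamma/2-s'}},
\]
with $s'=\min\{1/2,1-s\}>0$. This is exactly the term in \eqref{estimate Lambda 1} that keeps $L^\infty$ on the $v_*$-factor and distributes a full $H^s$ on one of the remaining factors while only losing $s'$ in the weight on the other, which is what we need because we only want the penalized factor to be in $L^2_{l+\gamma/2-s'}$.

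Next, since $\mu$ is the standard Gaussian, $\|\mu\|_{L^\infty_m}$ is a finite constant depending only on $m$ for every $m\ge 0$; in particular $\|\mu\|_{L^\infty_{l+|\gamma|+5}}\le C_l$, which absorbs that norm into the constant and yields the first displayed inequality. The intermediate inequality $\|g\|_{H^s_{l+\gamma/2}}\|f\|_{L^2_{l+\gamma/2-s'}}$ then follows trivially.

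Finally, to obtain the $\epsilon$-form of the estimate I would apply the elementary Young inequality $ab\le \epsilon a^2 + \frac{1}{4\epsilon}b^2$ with $a=\|g\|_{H^s_{l+\gamma/2}}$ and $b=C_l\|f\|_{L^2_{l+\gamma/2-s'}}$, setting $C_{l,\epsilon}:=C_l^2/(4\epsilon)$. There is no real obstacle: the only nontrivial content is the choice of which branch of \eqref{estimate Lambda 1} to invoke, and that choice is dictated by the form of the right-hand side we wish to obtain (an $H^s$ norm on $g$ and an $L^2$ norm with weight shift $s'$ on $f$, which is precisely the combination available in Lemma \ref{L32}).
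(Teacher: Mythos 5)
Your overall approach is exactly the one intended: the corollary is Lemma \ref{L32} specialized with $(g,f,h)\mapsto(\mu,g,f)$, the Gaussian norms $\Vert\mu\Vert_{L^\infty_m}$ are harmless constants, and Young's inequality finishes the $\epsilon$-form. However, your reading of \eqref{estimate Lambda 1} is off in a way that leaves a gap in the argument. The right-hand side of \eqref{estimate Lambda 1} is a \emph{sum} of two $\min$'s, not a single $\min$ of four alternatives. Picking the second branch of the second $\min$ bounds only the second summand; you must also bound the first summand, which you never address.

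Fortunately the gap closes with the same ingredients. Under the substitution $(g,f,h)\mapsto(\mu,g,f)$, the second branch of the \emph{first} $\min$ reads
\[
\Vert \mu\Vert_{L^\infty_{l+5+|\gamma|}}\,\Vert g\Vert_{L^2_{l+\gamma/2-1}}\,\Vert f\Vert_{L^2_{l+\gamma/2-1}},
\]
and since $s'=\min\{1/2,1-s\}\le 1/2<1$ we have $\Vert g\Vert_{L^2_{l+\gamma/2-1}}\le\Vert g\Vert_{H^s_{l+\gamma/2}}$ and $\Vert f\Vert_{L^2_{l+\gamma/2-1}}\le\Vert f\Vert_{L^2_{l+\gamma/2-s'}}$, so this summand is controlled by the same product $\Vert\mu\Vert_{L^\infty_{l+|\gamma|+5}}\Vert g\Vert_{H^s_{l+\gamma/2}}\Vert f\Vert_{L^2_{l+\gamma/2-s'}}$ as the one you wrote down. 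Combined with the second branch of the second $\min$ (and noting $\Vert\mu\Vert_{L^\infty_{|\gamma|+9}}\le\Vert\mu\Vert_{L^\infty_{l+|\gamma|+5}}$ because $l\ge 8$), the full sum is bounded as claimed. So your conclusion and the final Young step are correct, but the write-up should explicitly treat both summands of \eqref{estimate Lambda 1} rather than a single branch.
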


\begin{thm}\label{T34}
Suppose $\gamma+2s>-1, s \in (0, 1), \gamma \in (-3, 0]$. For any smooth function $f, g$, suppose $G = \mu + g$ satisfies 
\[
G \ge 0,\quad  \Vert G \Vert_{L^1} \ge A, \quad \Vert G \Vert_{L^1_2} +\Vert G \Vert_{L \log L} \le B,
\]
for some constant $A, B>0$. For any $l  \ge 10$ we have
\begin{equation*}
\begin{aligned}
(Q(G, f), f \langle v \rangle^{2l} )\le& - \gamma_2 \Vert f \Vert_{H^s_{l + \gamma/2}}^2  - \frac {1} {4} \Vert  b(\cos \theta) \sin^2 \frac \theta 2  \Vert_{L^1_\theta}\Vert f \Vert_{L^2_{l + \gamma/2, *}}^2 + C_l  \Vert f \Vert_{L^2_{l + \gamma/2-s' }}^2
\\
&+C_l \min \{ \Vert f \Vert_{L^2_{|\gamma| +7 }} \Vert g \Vert_{H^s_{ l + \gamma/2 }}\Vert f \Vert_{H^s_{ l  + \gamma/2}}, \Vert g\Vert_{L^\infty_{l + 5+|\gamma|}}  \Vert f \Vert_{L^2_{ l  + \gamma/2}}^2 \}  +C_l \Vert g \Vert_{L^2_{|\gamma| +7 }} \Vert f \Vert_{H^s_{ l  + \gamma/2}}^2,
\end{aligned}
\end{equation*}
for some constant $\gamma_2, C_l > 0$, where $s' = \min \{\frac 1 2 , 1 - s \} >0$. 
\end{thm}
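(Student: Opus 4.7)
The plan is to introduce $F := f\langle v\rangle^l$ and decompose the target quantity into a principal coercive piece plus a weight-commutator remainder. From the identity $f'\langle v\rangle^l = F' - f'(\langle v'\rangle^l - \langle v\rangle^l)$ one obtains
\[
(f'-f)\, f\langle v\rangle^{2l} \;=\; (F'-F)F \;-\; f'(\langle v'\rangle^l - \langle v\rangle^l)\, F,
\]
so
\[
(Q(G,f), f\langle v\rangle^{2l}) \;=\; (Q(G,F), F) \;-\; \mathcal{W},
\]
with $\mathcal{W} := \int\!\!\int\!\!\int B\, G_*\, f'(\langle v'\rangle^l - \langle v\rangle^l)\, F\, d\sigma\, dv\, dv_*$. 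Then split $G = \mu + g$ in each term.

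For the principal piece $(Q(G,F),F)$: the Maxwellian $\mu$ satisfies the non-degeneracy bounds of Lemma \ref{L24} with universal constants, so $(Q(\mu,F), F) \le -C_1 \|F\|_{H^s_{\gamma/2}}^2 + C_2 \|F\|_{L^2_{\gamma/2}}^2$. A standard weight-commutator estimate for $\langle D\rangle^s \circ \langle v\rangle^l$ converts this into $-\gamma_2 \|f\|_{H^s_{l+\gamma/2}}^2 + C_l\|f\|_{L^2_{l+\gamma/2-s'}}^2$ with $s' = \min\{1/2, 1-s\}$, yielding the coercive term of the statement. For the perturbative $(Q(g,F),F)$, Lemma \ref{L23} with balanced weights $w_1 = w_2 = \gamma/2$ and $a = b = s$ produces a bound $\lesssim \|g\|_{L^2_{|\gamma|+7}} \|f\|_{H^s_{l+\gamma/2}}^2$, matching the last error in the conclusion.

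For the weight commutator $\mathcal{W}$, further split
\[
\langle v'\rangle^l - \langle v\rangle^l \;=\; \bigl(\langle v'\rangle^l - \cos^l(\theta/2)\langle v\rangle^l\bigr) + \langle v\rangle^l\bigl(\cos^l(\theta/2) - 1\bigr).
\]
The first summand is of the $\Lambda$-type of Lemma \ref{L32}: its $g_*$-part produces the stated $\min\{\cdot,\cdot\}$ error via estimate \eqref{estimate Lambda 1}, and its $\mu_*$-part is handled by Corollary \ref{C33}, delivering only subcritical error absorbable into $C_l \|f\|_{L^2_{l+\gamma/2-s'}}^2$. For the second summand, the key point is that against the positive measure $B\mu_*$ the sign structure produces a genuine dissipation: use the quadratic identity $f'f = \tfrac12(f^2 + (f')^2 - (f'-f)^2)$, pair $\cos^l(\theta/2) - 1 = -\tfrac{l}{2}\sin^2(\theta/2) + O(\sin^4(\theta/2))$ with the Maxwellian $\mu_*$, apply the pre-post collisional change of variables (Lemma 2.6) to convert the $(f')^2$-piece back to an $f^2$-piece via the cancellation lemma, and collect the leading quadratic-in-$\sin(\theta/2)$ contribution. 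After carefully tuning the constants this yields exactly $-\tfrac14 \|b(\cos\theta)\sin^2(\theta/2)\|_{L^1_\theta} \|f\|_{L^2_{l+\gamma/2,*}}^2$, while the $\sin^4$-remainder and the $g_*$-companion fall into the stated error terms via Lemma \ref{L32} and interpolation down to $\|f\|_{L^2_{l+\gamma/2-s'}}^2$.

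The principal obstacle is this last step: extracting the explicit constant $\tfrac14\|b\sin^2(\theta/2)\|_{L^1_\theta}$ in front of the $L^2_{l+\gamma/2,*}$ dissipation. It requires careful bookkeeping of the quadratic expansion of $1 - \cos^l(\theta/2)$, the factor of $\tfrac12$ from the $f'f$ quadratic identity, and the pre-post symmetrization that transfers residual $(f')^2$ contributions into the target $\mu_*$-weighted norm, all while ensuring every error lands at subcritical regularity $\|f\|_{L^2_{l+\gamma/2-s'}}^2$ so as to be absorbable into the coercive $\|f\|_{H^s_{l+\gamma/2}}^2$ via interpolation.
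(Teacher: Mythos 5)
Your high-level plan — write $(Q(G,f),f\langle v\rangle^{2l})=(Q(G,F),F)+\mathcal{W}$ with $F=f\langle v\rangle^l$, get the $H^s$ coercivity from the principal piece via Lemma~\ref{L24}, and treat $\mathcal{W}$ as a weight commutator to be split as $(\langle v'\rangle^l-\cos^l\tfrac\theta2\langle v\rangle^l)+\langle v\rangle^l(\cos^l\tfrac\theta2-1)$ — corresponds exactly to the paper's $T_3+T_4$ decomposition. However, the paper's proof of Theorem~\ref{T34} does not extract the $-\tfrac14\|b\sin^2\tfrac\theta2\|_{L^1_\theta}\|f\|^2_{L^2_{l+\gamma/2,*}}$ dissipation from that decomposition at all. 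In the paper's $T_3+T_4$, the $\cos^l-1$ piece $T_{4,2}$ is simply bounded above in absolute value by $C\|f\|^2_{L^2_{l+\gamma/2}}$ (via Lemma~\ref{L210}) — it is an error term, not a dissipation. The $L^2_{l+\gamma/2,*}$ dissipation comes from a \emph{separate, different} decomposition $T_1+T_2$, which is then combined \emph{linearly} with $T_3+T_4$: $\int Q(G,f)f\langle v\rangle^{2l}\le\tfrac{1}{1+\delta_2}(T_1+T_2)+\tfrac{\delta_2}{1+\delta_2}(T_3+T_4)$, with $\delta_2$ tuned to keep both negative terms. You propose to get both dissipations from a single decomposition, which is a genuine simplification if it works, but the step you flag as the principal obstacle is precisely where the idea breaks.

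Concretely, applying $FF'=\tfrac12\bigl(F^2+F'^2-(F'-F)^2\bigr)$ to the $\cos^l\tfrac\theta2-1$ piece gives
\[
\mathcal{W}_2=-\tfrac12\!\int b\,(1-\cos^l\tfrac\theta2)|v-v_*|^\gamma G_*(F^2+F'^2)\,+\,\tfrac12\!\int b\,(1-\cos^l\tfrac\theta2)|v-v_*|^\gamma G_*(F'-F)^2.
\]
The first integral is indeed a dissipation, but the second has the wrong sign, and its coefficient is of the \emph{same} $O(\sin^2\tfrac\theta2)$ order as the good term; worse, $1-\cos^l\tfrac\theta2\sim\tfrac l2\sin^2\tfrac\theta2$ whereas the lower bound $1-\cos^l\tfrac\theta2\ge\sin^2\tfrac\theta2$ has coefficient $1$, so the crude estimate $(F'-F)^2\le 2(F^2+F'^2)$ (plus regular change of variable) makes the bad term dominate by a factor $\sim l$. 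Since the weight $b\sin^2\tfrac\theta2$ is integrable, $\int b\sin^2\tfrac\theta2\,G_*(F'-F)^2$ is not lower order than $\|F\|^2_{L^2_{\gamma/2,*}}$: it is the same order, so you cannot absorb it. The paper's $T_1$ manipulation avoids this entirely by a pointwise Cauchy--Schwarz \emph{with the $\cos^l\tfrac\theta2$ factor built in},
\[
|f||f'|\langle v\rangle^l\langle v'\rangle^l\cos^l\tfrac\theta2-|f|^2\langle v\rangle^{2l}\;\le\;\tfrac12\bigl(|f'|^2\langle v'\rangle^{2l}\cos^{2l}\tfrac\theta2-|f|^2\langle v\rangle^{2l}\bigr),
\]
so that after the regular change of variables $v\mapsto v'$ the right side becomes $\tfrac12|f|^2\langle v\rangle^{2l}(\cos^{2l-3-\gamma}\tfrac\theta2-1)\le0$ with no $(F'-F)^2$ remainder at all. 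That is the mechanism producing the explicit constant, and it is missing from your outline. A secondary remark: your commutator identity is off — with $F=f\langle v\rangle^l$ and the weak form $\int Q(G,f)\,f\langle v\rangle^{2l}=\int\!\int\!\int B\,G_*(ff'\langle v'\rangle^{2l}-f^2\langle v\rangle^{2l})$, the remainder is $\int\!\int\!\int B\,G_*\,ff'\langle v'\rangle^l(\langle v'\rangle^l-\langle v\rangle^l)$, carrying $\langle v'\rangle^l$ (not $F=f\langle v\rangle^l$) and with the opposite sign from what you wrote; this does not change the diagnosis above but should be fixed if you pursue the calculation.
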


\begin{proof} By Cauchy-Schwarz inequality 
\[
|f| |f'| \langle v \rangle^{l} \langle v' \rangle^l \cos^l \frac \theta 2 -|f|^2 \langle v \rangle^{2l} \le \frac 1 2 (|f'|^2\langle v' \rangle^{2l} \cos^{2l} \frac \theta 2-|f|^2 \langle v \rangle^{2l} ),
\]
so by cancellation lemma we have
\begin{equation*}
\begin{aligned}
\int_{\R^3} Q(G, f) f \langle v \rangle^{2l} dv =& \int_{\R^3} \int_{\R^3} \int_{\mathbb{S}^2} b(\cos \theta) |v-v_*|^r  G_* (ff' \langle v '\rangle^{2l}  -|f|^2 \langle v \rangle^{2l}    ) dv dv_* d\sigma
\\
\le & \int_{\R^3} \int_{\R^3} \int_{\mathbb{S}^2} b(\cos \theta) |v-v_*|^r  G_* (|f| |f'| \langle v '\rangle^{l} \langle v '\rangle^{l} -|f|^2 \langle v \rangle^{2l}    ) dv dv_* d\sigma
\\
\le& \int_{\R^3} \int_{\R^3} \int_{\mathbb{S}^2} b(\cos \theta) |v-v_*|^r  G_* (|f| |f'| \langle v' \rangle^l \langle v \rangle^l \cos^l \frac \theta 2 -|f|^2 \langle v \rangle^{2l }    ) dv dv_* d\sigma 
\\
&+ \int_{\R^3} \int_{\R^3} \int_{\mathbb{S}^2} b(\cos \theta) |v-v_*|^r  G_* |f||f'| \langle v' \rangle^l (\langle v' \rangle^{l} -   \langle v \rangle^l \cos^l \frac \theta 2 ) dv dv_* d\sigma 
\\
\le&\frac 1 2\int_{\R^3} \int_{\R^3} \int_{\mathbb{S}^2} b(\cos \theta) |v-v_*|^r  G_* |f|^2  \langle v \rangle^{2l } ( \cos^{2 l - 3 - \gamma} \frac \theta 2 -1    ) dv dv_* d\sigma 
\\
&+ \int_{\R^3} \int_{\R^3} \int_{\mathbb{S}^2} b(\cos \theta) |v-v_*|^r  G_* |f||f'| \langle v' \rangle^l (\langle v' \rangle^{l} -   \langle v \rangle^l \cos^l \frac \theta 2 ) dv dv_* d\sigma  :=T_1+T_2.
\end{aligned}
\end{equation*}
First we talk on $T_1$ term, by $G= \mu +g \ge 0$, by Lemma \ref{L212} we have
\[
T_1 \le -\gamma_0 \Vert f \Vert_{L^2_{l + \gamma/2, *}}^2 +C_l \Vert g \Vert_{L^2_{|\gamma| +7 }} \Vert f \Vert_{H^s_{ l  + \gamma/2}}^2,
\]
for some constant $\gamma_0, C_l>0$, where $\gamma_0$ is defined by
\begin{equation}
\label{gamma 0}
 \gamma_0  : = \frac 1 2 \int_{\R^3} b(\cos \theta) \sin^2 \frac \theta 2    d \sigma  \le \frac  {1} {2} \int_{\R^3} b(\cos \theta) ( 1-\cos^{2l-3-\gamma} \frac \theta 2   )  d \sigma .
\end{equation}
For the $T_2$ term, by \eqref{estimate Lambda 1} and Corollary \ref{C33} we have
\[
T_{2} \le \epsilon \Vert f \Vert_{H^s_{l + \gamma/2}}^2 +  C_{l ,\epsilon}  \Vert f \Vert_{L^2_{l + \gamma/2-s'}}^2 +C_l  \Vert g \Vert_{L^2_{|\gamma| +7 }} \Vert f \Vert_{H^s_{ l  + \gamma/2}}^2 + C_l  \min \{ \Vert f \Vert_{L^2_{|\gamma| +7 }} \Vert g \Vert_{H^s_{ l + \gamma/2 }}\Vert f \Vert_{H^s_{ l + \gamma/2}}, \Vert g\Vert_{L^\infty_{ l + 5 +  |\gamma|  }}  \Vert f \Vert_{L^2_{ l  + \gamma/2}}^2 \}.
\]
which implies
\begin{equation*}
\begin{aligned}
T_1 + T_{2}\le& -\gamma_0   \Vert f \Vert_{L^2_{l + \gamma/2, *}}^2 + \epsilon \Vert f \Vert_{H^s_{l +\gamma/2}}^2+C_{l,\epsilon}  \Vert f \Vert_{L^2_{l +\gamma/2-s'}}^2+C_l \Vert g \Vert_{L^2_{|\gamma| +7 }} \Vert f \Vert_{H^s_{ l  + \gamma/2}}^2
\\
&+ C_l  \min  \{ \Vert f \Vert_{L^2_{|\gamma| +7 }} \Vert g \Vert_{H^s_{ l + \gamma/2 }}\Vert f \Vert_{H^s_{ l  + \gamma/2}}, \Vert g\Vert_{L^\infty_{ l  + 5+|\gamma| }}  \Vert f \Vert_{L^2_{ l + \gamma/2}}^2 \} .
\end{aligned}
\end{equation*}
We introduce another decomposition
\begin{equation*}
\begin{aligned}
\int_{\R^3} Q(G, f) f \langle v \rangle^{2l } dv =&\int_{\R^3} Q(G, f \langle v \rangle^l ) f \langle v \rangle^{l} dv +\int_{\R^3}  \left( \langle v \rangle^l    Q(G, f ) -  Q(G, f \langle v \rangle^l) \right) f \langle v \rangle^l dv :=T_3+T_4.
\end{aligned}
\end{equation*}
For the $T_3$ term, by Lemma \ref{L24} we have
\[
T_3 \le -\gamma_1 \Vert f \Vert_{H^s_{l +\gamma/2}}^2 +C_l   \Vert f \Vert_{L^2_{l +\gamma/2}}^2 .
\]
for some constant $\gamma_1 >0$. For $T_4$ term, we split it into three parts
\begin{equation*}
\begin{aligned} 
T_4 =& \int_{\R^3} \int_{\R^3} \int_{\mathbb{S}^2} b(\cos \theta) |v-v_*|^r  G_* f f' \langle v' \rangle^l  (\langle v' \rangle^{l} -   \langle v \rangle^l   ) dv dv_* d\sigma 
\\
=& \int_{\R^3} \int_{\R^3} \int_{\mathbb{S}^2} b(\cos \theta) |v-v_*|^r  \mu_* f f' \langle v' \rangle^l  (\langle v' \rangle^{l } -   \langle v \rangle^l \cos^l \frac \theta 2 ) dv dv_* d\sigma 
\\
&+\int_{\R^3} \int_{\R^3} \int_{\mathbb{S}^2} b(\cos \theta) |v-v_*|^r  \mu_* f \langle v \rangle^l f' \langle v' \rangle^l  (\cos^l \frac \theta 2 -1) dv dv_* d\sigma
\\
&+ \int_{\R^3} \int_{\R^3} \int_{\mathbb{S}^2} b(\cos \theta) |v-v_*|^r  g_* f f' \langle v' \rangle^l (\langle v' \rangle^{l} -   \langle v \rangle^l ) dv dv_* d\sigma  :=T_{4,1} +T_{4,2} +T_{4,3} .
\end{aligned}
\end{equation*}
For the $T_{4, 1}, T_{4, 2}$ term, by Corollary \ref{C33} and Lemma \ref{L210}  we have
\[
|T_{4, 1}| \le \epsilon \Vert f \Vert_{H^s_{l  +   \gamma/2}}^2+C_{l  , \epsilon}  \Vert f \Vert_{L^2_{l + \gamma/2-s'}}^2, \quad |T_{4,2}| \lesssim  \Vert  f\Vert_{L^2_{l + \gamma/2}}^2.
\]
For $T_{4, 3}$ term by \eqref{estimate Gamma 1} we have
\[
|T_{4, 3}| \le  C_l  \min \{ \Vert f \Vert_{L^2_{|\gamma| +7 }} \Vert g \Vert_{H^s_{ l + \gamma/2 }}\Vert f \Vert_{H^s_{ l + \gamma/2}}, \Vert g\Vert_{L^\infty_{ l + 5+|\gamma|}}  \Vert f \Vert_{L^2_{ l  + \gamma/2}}^2 \}   +C_l \Vert g \Vert_{L^2_{|\gamma| +7 }} \Vert f \Vert_{H^s_{ l + \gamma/2}}^2.
\]
which implies 
\[
T_{4} \le \epsilon \Vert f \Vert_{H^s_{l  + \gamma/2}}^2+C_{l , \epsilon}  \Vert f \Vert_{L^2_{l + \gamma/2}}^2 +  C_l \Vert g \Vert_{L^2_{|\gamma| +7 }} \Vert f \Vert_{H^s_{ l + \gamma/2}}^2 + C_l  \min \{ \Vert f \Vert_{L^2_{|\gamma| +7 }} \Vert g \Vert_{H^s_{ l + \gamma/2 }}\Vert f \Vert_{H^s_{ l  + \gamma/2}}, \Vert g\Vert_{L^\infty_{ l  + 5 + |\gamma|}}  \Vert f \Vert_{L^2_{ l  + \gamma/2}}^2 \} .
\]
Taking $\epsilon =\frac {\gamma_1} 2$, we have
\begin{equation*}
\begin{aligned}
T_{3}+ T_{4} \le& -\frac {\gamma_1} 2 \Vert f \Vert_{H^s_{l + \gamma/2}}^2+C_l  \Vert f \Vert_{L^2_{l + \gamma/2}}^2   +C_l  \Vert g \Vert_{L^2_{|\gamma| +7 }} \Vert f \Vert_{H^s_{ l  + \gamma/2}}^2
\\
&+ C_l  \min \{ \Vert f \Vert_{L^2_{|\gamma| +7 }} \Vert g \Vert_{H^s_{ l + \gamma/2 }}\Vert f \Vert_{H^s_{ l  + \gamma/2}}, \Vert g\Vert_{L^\infty_{ l + 5 + |\gamma|  }}  \Vert f \Vert_{L^2_{ l  + \gamma/2}}^2 \} .
\end{aligned}
\end{equation*}
Gathering the two ways of expansion linearly, take $\delta_2>0$ small we have
\begin{equation*}
\begin{aligned}
\int_{\R^3} Q(G, f) f \langle v \rangle^{2l} dv 
\le& \frac{1} {1+\delta_2} (T_1 +T_2) + \frac{\delta_2} {1+\delta_2} (T_3 +T_4)
\\
\le& \frac{1} {1+\delta_2} (\epsilon- \delta_2\frac {\gamma_1} 2) \Vert f \Vert_{H^s_{l +\gamma/2}}^2 +  \frac{1} {1+\delta_2} (-\gamma_0+ C_l \delta_2) \Vert f \Vert_{L^2_{l +\gamma/2, *}}^2 + C_{l ,\epsilon}  \Vert f \Vert_{L^2_{l + \gamma/2-s'}}^2
\\
&+ C_l  \min \{ \Vert f \Vert_{L^2_{|\gamma| +7 }} \Vert g \Vert_{H^s_{ l+\gamma/2 }}\Vert f \Vert_{H^s_{ l  + \gamma/2}}, \Vert g\Vert_{L^\infty_{ l + 5+|\gamma|}}  \Vert f \Vert_{L^2_{ l + \gamma/2}}^2 \}  +C_l \Vert g \Vert_{L^2_{|\gamma| +7 }} \Vert f \Vert_{H^s_{ l + \gamma/2}}^2
\\
\le& - \delta_2\frac {\gamma_1} {8} \Vert f \Vert_{H^s_{l +\gamma/2}}^2  - \frac {\gamma_0} {2}\Vert f \Vert_{L^2_{l +\gamma/2, *}}^2 + C_{l }  \Vert f \Vert_{L^2_{l+\gamma/2 - s'  }}^2
\\
&+ C_l   \min \{ \Vert f \Vert_{L^2_{|\gamma| +7 }} \Vert g \Vert_{H^s_{ l + \gamma/2 }}\Vert f \Vert_{H^s_{ l  + \gamma/2}}, \Vert g\Vert_{L^\infty_{l + 5+|\gamma|}}  \Vert f \Vert_{L^2_{ l + \gamma/2}}^2 \}    +  C_l \Vert g \Vert_{L^2_{|\gamma| +7 }} \Vert f \Vert_{H^s_{ l + \gamma/2}}^2,
\end{aligned}
\end{equation*}
by taking $\delta_2 = \min\{ \frac {\gamma_0} {4 C_l}, \frac 1 2\}$, $\epsilon = \frac {\gamma_1 \delta_2} {4}$. The proof is thus finished. 
\end{proof}

\begin{rmk}\label{R35}
For any $0 \le \theta \le \frac \pi 2$, $l > 10$,  we have
\[
 \frac 1 4 \sin^2\frac \theta 2 -  \sin^{l-2 } \frac \theta 2 \ge \sin^2 \frac \theta 2\left( \frac 1 4 -\frac {1} {   2^{\frac {l-2} {2} } } \right)>0,
\]
\end{rmk}
Combine Lemma \ref{L31} and Theorem \ref{T34}, by Remark \ref{R35} we have 
\begin{thm}\label{T36} Suppose $\gamma+2s>-1, s \in (0, 1), \gamma \in (-3, 0]$. For any smooth function $f $, suppose $F= \mu +f$ satisfies 
\[
F \ge 0,\quad  \Vert F \Vert_{L^1} \ge A, \quad \Vert F \Vert_{L^1_2} +\Vert F \Vert_{L \log L} \le B,
\]
for some constant $A, B>0$. For any  $l > 10$, there exist constants $c_0, C_l>0$ such that
\begin{equation*}
\begin{aligned}
( Q(\mu+f, \mu+f) , f \langle v \rangle^{2l} ) &\le -4c_0 \Vert f \Vert_{H^s_{l +\gamma/2}}^2 + C_{l}  \Vert f \Vert_{L^2_{l +\gamma/2 - s' }}^2 + C_l \Vert f \Vert_{L^2_{|\gamma| +7 }} \Vert f \Vert_{H^s_{ l  + \gamma/2}}^2
\\
&\le - 2 c_0 \Vert f \Vert_{H^s_{l +\gamma/2}}^2 + C_{l }  \Vert f \Vert_{L^2}^2 + C_l \Vert f \Vert_{L^2_{|\gamma| +7 }} \Vert f \Vert_{H^s_{ l  + \gamma/2}}^2.
\end{aligned}
\end{equation*}
Moreover, if  $\chi$ is the cutoff function defined in \eqref{cutoff function}, we still have
\begin{equation*}
\begin{aligned}
( Q(\mu+f\chi, \mu+f) , f \langle v \rangle^{2l} ) &\le -4c_0 \Vert f \Vert_{H^s_{l+\gamma/2}}^2 + C_{l}  \Vert f \Vert_{L^2_{l + \gamma/2-s'}}^2 + C_l\Vert f \Vert_{L^2_{|\gamma| +7 }} \Vert f \Vert_{H^s_{ l  + \gamma/2}}^2
\\
&\le -2c_0 \Vert f \Vert_{H^s_{l + \gamma/2}}^2 + C_l  \Vert f \Vert_{L^2}^2 + C_l \Vert f \Vert_{L^2_{|\gamma| +7 }} \Vert f \Vert_{H^s_{ l  + \gamma/2}}^2.
\end{aligned}
\end{equation*}
\end{thm}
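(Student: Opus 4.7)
The plan is to decompose the full nonlinear collision term using the bilinearity and the identity $Q(\mu,\mu)=0$, then apply the two main estimates already established: Lemma \ref{L31} for the $Q(\cdot,\mu)$ piece and Theorem \ref{T34} for the $Q(\mu+f,f)$ piece. Concretely, I would write
\[
(Q(\mu+f,\mu+f),f\langle v\rangle^{2l}) = (Q(f,\mu),f\langle v\rangle^{2l}) + (Q(\mu+f,f),f\langle v\rangle^{2l}),
\]
and treat the two terms separately. For the first, Lemma \ref{L31} produces a coercive-looking term $\|b(\cos\theta)\sin^{l-2}\tfrac{\theta}{2}\|_{L^1_\theta}\|f\|_{L^2_{l+\gamma/2,*}}^2$ plus a remainder in $\|f\|_{L^2_{l+\gamma/2-1/2}}^2$. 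For the second, Theorem \ref{T34} applied with $g=f$ (the hypotheses on $F=\mu+f$ are exactly those assumed here) yields the main dissipation $-\gamma_2\|f\|_{H^s_{l+\gamma/2}}^2$ together with the negative contribution $-\tfrac14\|b(\cos\theta)\sin^2\tfrac{\theta}{2}\|_{L^1_\theta}\|f\|_{L^2_{l+\gamma/2,*}}^2$, a remainder $\|f\|_{L^2_{l+\gamma/2-s'}}^2$, and the trilinear terms in $\|f\|_{L^2_{|\gamma|+7}}\|f\|_{H^s_{l+\gamma/2}}^2$ (choosing the first option in the minimum of Theorem \ref{T34}).

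The key cancellation comes from Remark \ref{R35}: since $l>10$ we have $\|b(\cos\theta)\sin^{l-2}\tfrac{\theta}{2}\|_{L^1_\theta}\le\tfrac14\|b(\cos\theta)\sin^2\tfrac{\theta}{2}\|_{L^1_\theta}$, so the positive $L^2_{l+\gamma/2,*}$ contribution coming from Lemma \ref{L31} is absorbed by the negative one from Theorem \ref{T34}, and the combined coefficient of $\|f\|_{L^2_{l+\gamma/2,*}}^2$ is nonpositive and can be dropped. Collecting the surviving terms and setting $4c_0:=\gamma_2$ gives the first inequality. For the second inequality, I would interpolate the weighted $L^2$ remainder: since $s'>0$, by H\"older/Young one has
\[
\|f\|_{L^2_{l+\gamma/2-s'}}^2 \le \epsilon \|f\|_{L^2_{l+\gamma/2}}^2 + C_\epsilon \|f\|_{L^2}^2 \le \epsilon \|f\|_{H^s_{l+\gamma/2}}^2 + C_\epsilon \|f\|_{L^2}^2,
\]
and choosing $\epsilon=c_0/C_l$ absorbs this into the dissipation, leaving a coefficient $-2c_0$ in front of $\|f\|_{H^s_{l+\gamma/2}}^2$ as required.

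For the cutoff version I would run exactly the same decomposition with $f$ replaced by $f\chi$ in the first slot:
\[
(Q(\mu+f\chi,\mu+f),f\langle v\rangle^{2l}) = (Q(f\chi,\mu),f\langle v\rangle^{2l}) + (Q(\mu+f\chi,f),f\langle v\rangle^{2l}).
\]
For the first piece, Lemma \ref{L31} with $h=f\chi$ goes through because $|f\chi|\le|f|$, giving the same bound with $f$ in place of $f\chi$ after monotonicity of weighted $L^2$ norms. For the second piece, I would split further as $Q(\mu,f)+Q(f\chi,f)$: the $Q(\mu,f)$ part is handled directly by the coercivity of $-Q(\mu,\cdot)$ (Lemma \ref{L24} with $g=\mu$, which trivially satisfies the mass and entropy hypotheses), producing the dissipation and the $-\tfrac14\|b\sin^2\tfrac{\theta}{2}\|_{L^1_\theta}\|f\|_{L^2_{l+\gamma/2,*}}^2$ term as in the proof of Theorem \ref{T34}; the perturbative $Q(f\chi,f)$ part is controlled by Lemma \ref{L32}, again using $|f\chi|\le|f|$ pointwise to bound all its weighted norms by those of $f$. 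The rest of the argument (combination via Remark \ref{R35} and interpolation) is identical.

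The only genuinely delicate point is ensuring that replacing $f$ by $f\chi$ in the ``$g$'' slot of Theorem \ref{T34} is safe, because Theorem \ref{T34} was stated under a hypothesis on $G=\mu+g$ as a whole. I would handle this by \emph{not} invoking Theorem \ref{T34} as a black box for the cutoff case, but rather redoing its proof with the input $(\mu,f\chi,f)$: the coercivity step uses only $\mu$, not $\mu+f\chi$, and every other estimate reduces to applications of Lemma \ref{L32} (and Lemma \ref{L31}) that depend on $f\chi$ only through weighted $L^2$, $L^\infty$ and $H^s$ norms, each of which is controlled by the corresponding norm of $f$. This is the step that requires the most care, but it is essentially bookkeeping.
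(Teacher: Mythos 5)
Your proposal matches the paper's approach exactly: the paper introduces Theorem~\ref{T36} with the one-line remark ``Combine Lemma~\ref{L31} and Theorem~\ref{T34}, by Remark~\ref{R35} we have,'' and your decomposition $Q(\mu+f,\mu+f)=Q(f,\mu)+Q(\mu+f,f)$ followed by L31, T34 (with $g=f$, first option of the min), and the R35 cancellation of the $\|f\|_{L^2_{l+\gamma/2,*}}^2$ contributions is precisely what that remark is pointing to; the final Young's-inequality absorption to pass to $-2c_0$ is also standard and correct. Your treatment of the cutoff variant is sound in spirit; in fact your caution about invoking Theorem~\ref{T34} as a black box is somewhat unnecessary, since $\mu+f\ge 0$ and $|f\chi|\le|f|$ pointwise already force $\mu+f\chi\ge 0$ (and the mass/energy/entropy bounds carry over for $\delta_0$ small), so one may apply T34 directly with $g=f\chi$ and then discharge the $\|f\chi\|_{H^s}$ factor by instead choosing the $L^\infty$ branch of the minimum and noting that $\|f\chi\|_{L^\infty_{l+5+|\gamma|}}$ is uniformly controlled by the cutoff.
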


Combine Lemma \ref{L31} and Theorem \ref{T34}, we also have

\begin{thm}\label{T37} Suppose $\gamma+2s>-1, s \in (0, 1), \gamma \in (-3, 0]$. For any smooth function $g$, suppose $G = \mu + g$ satisfies 
\[
G \ge 0,\quad  \Vert G \Vert_{L^1} \ge A, \quad \Vert G \Vert_{L^1_2} +\Vert G \Vert_{L \log L} \le B,
\]
for some constant $A, B>0$. For any  $l > 10$, there exist constants $c_0, C_l>0$ such that
\begin{equation*}
\begin{aligned}
( Q(\mu+g, \mu+f) , f \langle v \rangle^{2l} ) \le &-4c_0 \Vert f \Vert_{H^s_{l  +  \gamma/2}}^2 + C_{l}  \Vert f \Vert_{L^2_{l + \gamma/2 - s'}}^2 + C_l \Vert g \Vert_{L^2_{|\gamma| +7 }} \Vert f \Vert_{H^s_{ l + \gamma/2}}^2 
\\
&+C_l \Vert g\Vert_{L^\infty_{l + 5 + |\gamma|}}  \Vert f \Vert_{L^2_{ l + \gamma/2}}^2+ C_l\Vert g\Vert_{L^2_{l + \gamma/2}}  \Vert f \Vert_{L^2_{ l  + \gamma/2}}
\\
\le& -2c_0 \Vert f \Vert_{H^s_{l + \gamma/2}}^2 + C_{l}  \Vert f \Vert_{L^2}^2 + C_l\Vert g \Vert_{L^2_{|\gamma| +7 }} \Vert f \Vert_{H^s_{ l + \gamma/2}}^2 
\\
&+ C_l\Vert g\Vert_{L^\infty_{l + 5 + |\gamma|}}  \Vert f \Vert_{L^2_{ l + \gamma/2}}^2 + C_l\Vert g\Vert_{L^2_{ l + \gamma/2}}  \Vert f \Vert_{L^2_{ l + \gamma/2}}.
\end{aligned}
\end{equation*}
\end{thm}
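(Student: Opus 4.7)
The plan is to decompose
\[
Q(\mu+g,\,\mu+f) \;=\; Q(\mu+g,\,f) \;+\; Q(\mu+g,\,\mu) \;=\; Q(\mu+g,\,f) \;+\; Q(g,\,\mu),
\]
where in the last step I use the detailed-balance identity $Q(\mu,\mu)=0$, a consequence of the energy-conservation relation $|v|^2+|v_*|^2=|v'|^2+|v'_*|^2$ which gives $\mu_*\mu=\mu'_*\mu'$. After testing against $f\langle v\rangle^{2l}$, the two pieces will be controlled by Theorem \ref{T34} and Lemma \ref{L31} respectively.

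For the first piece $(Q(\mu+g,f),\,f\langle v\rangle^{2l})$, I apply Theorem \ref{T34} directly with $G=\mu+g$. The hypotheses $\|G\|_{L^1}\ge A$ and $\|G\|_{L^1_2}+\|G\|_{L\log L}\le B$ are exactly those assumed in Theorem \ref{T37}. Among the two branches of the minimum in the conclusion of Theorem \ref{T34} I keep the $L^\infty$-branch $C_l\|g\|_{L^\infty_{l+5+|\gamma|}}\|f\|_{L^2_{l+\gamma/2}}^2$, since this is the branch that appears in the target estimate. This produces the coercive dissipation $-\gamma_2\|f\|_{H^s_{l+\gamma/2}}^2-\tfrac14\|b(\cos\theta)\sin^2(\theta/2)\|_{L^1_\theta}\|f\|_{L^2_{l+\gamma/2,*}}^2$, the lower-order remainder $C_l\|f\|_{L^2_{l+\gamma/2-s'}}^2$, and the $g$-dependent terms $C_l\|g\|_{L^2_{|\gamma|+7}}\|f\|_{H^s_{l+\gamma/2}}^2$ and $C_l\|g\|_{L^\infty_{l+5+|\gamma|}}\|f\|_{L^2_{l+\gamma/2}}^2$.

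For the second piece $(Q(g,\mu),\,f\langle v\rangle^{2l})$, I apply Lemma \ref{L31} with the renaming $h\leftrightarrow g,\; g\leftrightarrow f$ to get
\[
|(Q(g,\mu),\,f\langle v\rangle^{2l})| \;\lesssim\; \|b\sin^{l-2}(\theta/2)\|_{L^1_\theta}\,\|g\|_{L^2_{l+\gamma/2,*}}\|f\|_{L^2_{l+\gamma/2,*}} + C_l\|g\|_{L^2_{l+\gamma/2-1/2}}\|f\|_{L^2_{l+\gamma/2-1/2}}.
\]
Since $l>10$ the angular integral $\|b\sin^{l-2}(\theta/2)\|_{L^1_\theta}$ is a finite constant, and $\|\cdot\|_{L^2_{l+\gamma/2,*}}\sim\|\cdot\|_{L^2_{l+\gamma/2}}$; also $\|g\|_{L^2_{l+\gamma/2-1/2}}\le\|g\|_{L^2_{l+\gamma/2}}$ and similarly for $f$. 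Hence the whole second piece is bounded by $C_l\|g\|_{L^2_{l+\gamma/2}}\|f\|_{L^2_{l+\gamma/2}}$. Discarding the (non-positive and unneeded) $L^2_{l+\gamma/2,*}$ dissipation from Theorem \ref{T34} and renaming $\gamma_2=4c_0$ yields the first stated inequality.

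Finally, to pass from the first to the second form, I use the elementary weight--regularity interpolation
\[
\|f\|_{L^2_{l+\gamma/2-s'}}^{2} \;\le\; \epsilon\,\|f\|_{H^s_{l+\gamma/2}}^{2} + C_\epsilon\,\|f\|_{L^2}^{2},
\]
valid for any $\epsilon>0$ since $s'>0$ (split in Fourier between low and high frequencies, or interpolate between $L^2_{l+\gamma/2}$ with the $s$-derivative and $L^2$ plain). Choosing $\epsilon$ small enough to be absorbed by half of the $-4c_0\|f\|_{H^s_{l+\gamma/2}}^2$ dissipation downgrades the leading constant from $-4c_0$ to $-2c_0$ and converts $C_l\|f\|_{L^2_{l+\gamma/2-s'}}^{2}$ to $C_l\|f\|_{L^2}^{2}$. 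The whole argument is essentially bookkeeping; the only conceptual point is the detailed-balance cancellation $Q(\mu,\mu)=0$ that makes the decomposition clean, after which the two pre-proved estimates plug in directly.
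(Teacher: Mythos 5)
Your proof is correct and follows essentially the same route that the paper (implicitly) takes: the paper states Theorem \ref{T37} as a direct combination of Lemma \ref{L31} (applied to the piece $(Q(g,\mu),f\langle v\rangle^{2l})$, after the observation $Q(\mu,\mu)=0$) with Theorem \ref{T34} (applied to $(Q(\mu+g,f),f\langle v\rangle^{2l})$), exactly as you do, retaining the $L^\infty$-branch of the minimum, discarding the nonpositive $\|f\|_{L^2_{l+\gamma/2,*}}^2$ dissipation, and then passing to the second displayed inequality by the elementary weight interpolation $\|f\|_{L^2_{l+\gamma/2-s'}}^2\le\epsilon\|f\|_{L^2_{l+\gamma/2}}^2+C_\epsilon\|f\|_{L^2}^2$ together with $\|f\|_{L^2_{l+\gamma/2}}\le\|f\|_{H^s_{l+\gamma/2}}$.
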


For the regularizing linear operator defined in \eqref{definition L alpha} we have
\begin{lem}\label{L38} (\cite{AMSY2}, Proposition 3.1 ) For any $\alpha \ge 0, l \ge 8$, for any function $f$ smooth, we have 
\[
\int_{\T^3} \int_{\R^3  }  L_\alpha(\mu + f) f \langle v \rangle^{2l} dv dx  \le   - \frac 1 2 \Vert f \Vert_{L^2_x L^2_{l+\alpha}}^2  - \Vert  f \Vert_{L^2_x H^1_{ l +\alpha}}^2 + C_{l, \alpha} \Vert f\Vert_{L^2_xL^2_v}^2 + C_{l, \alpha}\Vert f \Vert_{L^2_x L^2_v}.
\]
\end{lem}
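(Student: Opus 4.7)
\textbf{Proof plan for Lemma \ref{L38}.}

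The plan is to expand the definition of $L_\alpha$, integrate by parts once in $v$, and then carefully separate the quadratic-in-$f$ contributions (which will supply the coercive terms) from the linear-in-$f$ contributions coming from $\mu$ (which will feed into the $C_{l,\alpha}\|f\|_{L^2_xL^2_v}$ remainder). Writing $L_\alpha(\mu+f)=-\langle v\rangle^{2\alpha}(\mu+f)+\nabla_v\!\cdot\!\bigl(\langle v\rangle^{2\alpha}\nabla_v(\mu+f)\bigr)$ and testing against $f\langle v\rangle^{2l}$, integration by parts in $v$ yields
\[
\int_{\R^3}L_\alpha(\mu+f)\,f\langle v\rangle^{2l}\,dv = -\int \langle v\rangle^{2l+2\alpha}(\mu+f)f\,dv-\int \langle v\rangle^{2\alpha}\nabla_v(\mu+f)\cdot\nabla_v(f\langle v\rangle^{2l})\,dv,
\]
with no boundary term because $f$ is assumed smooth and the weights are polynomial.

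First I would handle the purely $f$-dependent part. The zeroth order piece gives exactly $-\|f\|_{L^2_{l+\alpha}}^2$. Expanding $\nabla_v(f\langle v\rangle^{2l})=\langle v\rangle^{2l}\nabla_v f+2l\langle v\rangle^{2l-2}vf$, the quadratic-in-$f$ contribution from the divergence term is
\[
-\int\langle v\rangle^{2l+2\alpha}|\nabla_v f|^2\,dv-2l\int\langle v\rangle^{2l+2\alpha-2}f\,(v\cdot\nabla_v f)\,dv.
\]
The second integral equals $-l\int v\langle v\rangle^{2l+2\alpha-2}\!\cdot\!\nabla_v(f^2)\,dv$, and a further integration by parts turns it into $l\int f^2\,\nabla_v\!\cdot\!\bigl(v\langle v\rangle^{2l+2\alpha-2}\bigr)\,dv$; the divergence is bounded by $C_{l,\alpha}\langle v\rangle^{2l+2\alpha-2}$, so the whole term is controlled by $\epsilon\|f\|_{L^2_{l+\alpha}}^2+C_{l,\alpha,\epsilon}\|f\|_{L^2_v}^2$ for arbitrary $\epsilon>0$.

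Next I would treat the two terms involving $\mu$, namely $-\int\langle v\rangle^{2l+2\alpha}\mu\,f\,dv$ and $-\int\langle v\rangle^{2\alpha}\nabla_v\mu\cdot\nabla_v(f\langle v\rangle^{2l})\,dv$. Since $\mu$ is Schwartz, every weight $\langle v\rangle^{N}\mu$ and $\langle v\rangle^{N}\partial_{v}\mu$ lies in $L^2_v$ with norm depending only on $l,\alpha$; thus by Cauchy--Schwarz the first term is bounded by $C_{l,\alpha}\|f\|_{L^2_v}$, while the second splits into an $f$-part handled by Cauchy--Schwarz on $\int\langle v\rangle^{2l+2\alpha}(\nabla_v\mu)\cdot(\nabla_v f)\,dv\le \epsilon\|\nabla_v f\|_{L^2_{l+\alpha}}^2+C_{l,\alpha,\epsilon}\|f\|_{L^2_v}$-type estimates, and a $vf$-part that is majorized analogously. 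Choosing $\epsilon$ small enough to absorb all the floating $\epsilon\|f\|_{L^2_{l+\alpha}}^2$ and $\epsilon\|\nabla_v f\|_{L^2_{l+\alpha}}^2$ into the two negative coercive terms (leaving coefficients $\tfrac12$ and $1$ respectively, which matches the stated conclusion), and then integrating the whole inequality over $x\in\T^3$, gives the desired bound with a residual $C_{l,\alpha}\|f\|_{L^2_xL^2_v}^2+C_{l,\alpha}\|f\|_{L^2_xL^2_v}$.

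The only delicate point is bookkeeping: one has to verify that after absorption of the cross terms arising from $2l\langle v\rangle^{2l-2}vf$ and from $\nabla_v\mu$, the coefficient in front of $\|f\|_{L^2_xL^2_{l+\alpha}}^2$ is indeed at least $\tfrac12$ and the coefficient in front of $\|\nabla_v f\|_{L^2_xL^2_{l+\alpha}}^2$ at least $1$ so that together they majorize $\|f\|_{L^2_xH^1_{l+\alpha}}^2$ in the sense of the stated norm. This is essentially a routine but careful Young-type computation; no estimate for the Boltzmann kernel is required.
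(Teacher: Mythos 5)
Your overall strategy is the right one, and since the paper cites \cite{AMSY2} for this lemma rather than proving it, a direct integration-by-parts verification is exactly what is wanted; the treatment of the quadratic-in-$f$ terms (one IBP to turn $2l\int\langle v\rangle^{2l+2\alpha-2}f\,v\cdot\nabla_v f$ into a lower-order weight, then interpolate $\langle v\rangle^{2l+2\alpha-2}\le\epsilon\langle v\rangle^{2l+2\alpha}+C_\epsilon$) is correct and produces the $-\tfrac12\|f\|_{L^2_xL^2_{l+\alpha}}^2-\|\nabla_v f\|_{L^2_xL^2_{l+\alpha}}^2$ coercivity plus a $C_{l,\alpha}\|f\|_{L^2_xL^2_v}^2$ remainder.

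There is, however, one slip in the linear-in-$f$ part. The displayed inequality $\int\langle v\rangle^{2l+2\alpha}(\nabla_v\mu)\cdot(\nabla_v f)\,dv\le\epsilon\|\nabla_v f\|_{L^2_{l+\alpha}}^2+C_{l,\alpha,\epsilon}\|f\|_{L^2_v}$ is not what Cauchy--Schwarz followed by Young actually gives: Cauchy--Schwarz yields $C_{l,\alpha}\|\nabla_v f\|_{L^2_{l+\alpha}}$ and Young then produces $\epsilon\|\nabla_v f\|_{L^2_{l+\alpha}}^2+C_{l,\alpha,\epsilon}$ with a \emph{pure constant} remainder, which is strictly weaker than the stated $C_{l,\alpha}\|f\|_{L^2_xL^2_v}$ (it does not vanish when $f\to 0$). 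The correct and simpler route is to integrate by parts once more: since $\mu$ is Schwartz, $-\int\langle v\rangle^{2l+2\alpha}\nabla_v\mu\cdot\nabla_v f\,dv=\int\nabla_v\!\cdot\!\bigl(\langle v\rangle^{2l+2\alpha}\nabla_v\mu\bigr)f\,dv\le C_{l,\alpha}\|f\|_{L^2_v}$, and the zeroth-order $\mu$-piece and the $2l\langle v\rangle^{2l-2}vf\cdot\nabla_v\mu$ piece are already linear in $f$ and bounded the same way by Cauchy--Schwarz against the Gaussian weight. With that fix all $\mu$-contributions are controlled by $C_{l,\alpha}\|f\|_{L^2_xL^2_v}$ without touching the coercive terms, and your final absorption step goes through as written.
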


\begin{lem}\label{L39} Suppose $\gamma+2s>-1, s \in (0, 1), \gamma \in (-3, 0]$. Fo any smooth function $f, g$, suppose $G= \mu +g$ satisfies 
\[
G \ge 0,\quad \inf_{t, x} \Vert G \Vert_{L^1_v} \ge A, \quad  \sup_{t, x}(\Vert G \Vert_{L^1_2} +\Vert G \Vert_{L \log L} )\le B,
\]
for some constant $A, B>0$. If $f$ is the solution to 
\[
\partial_t f + v \cdot \nabla_x f= \tilde{Q}(\mu+g , \mu +f)  = Q(\mu +g , \mu + f) +\epsilon L_\alpha(\mu + f) ,\quad \epsilon \in [0, 1],
\]
 Then for any $\epsilon \in [0, 1] $, $\alpha \ge 0$, $12 \le l \le k_0 - 8$,  suppose 
\[
\sup_{t, x} \Vert g \Vert_{L^2_{|\gamma| + 7}} <  \delta_0,  \quad \sup_{t, x} \Vert g \Vert_{L^\infty_{k_0}} <+\infty,
\]
for some $\delta_0 >0$ small. Then for any $t \ge 0$ have
\begin{equation}
\label{estimate v l 1}
\Vert \langle v \rangle^l f(t)\Vert_{L^2_{x, v}}^2 + c_0\int_0^t \Vert \langle v \rangle^l f(\tau) \Vert_{L^2_xH^s_{\gamma/2}}^2 d\tau  + \le Ce^{C(g) t} (\Vert \langle v \rangle^l f_0\Vert_{L^2_{x, v}}^2  +\sup_{t, x}\Vert g\Vert_{L^\infty_{k_0}}^2 t  +\epsilon^2 t ) ,
\end{equation}
for some constant $c_0>0$, where 
\[
C(g) = 1 + \sup_{t, x}\Vert g\Vert_{L^\infty_{k_0}}^2.
\] 
If in addition we assume $l \ge 3+2\alpha $, then for any $0 < s' < \frac {s }  {2(s+3)}$, we have
\begin{equation}
\label{estimate v l 2}
\int_{0}^t \Vert (I-\Delta_x)^{s'/2} f (\tau) \Vert_{L^2_{x, v}} d \tau \le Ce^{C(g) t} (\Vert \langle v \rangle^l f_0\Vert_{L^2_{x, v}}^2  +\sup_{t, x}\Vert g\Vert_{L^\infty_{k_0}}^2 t  +\epsilon^2 t) ,
\end{equation}
where the constant $C$ is independent of $\epsilon$. 
\end{lem}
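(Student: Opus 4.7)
For \eqref{estimate v l 1} I perform a standard weighted $L^2_{x,v}$ energy estimate: test the equation against $f\langle v\rangle^{2l}$ and integrate over $\T^3\times\R^3$. The transport term vanishes by periodicity in $x$. The nonlinear collision contribution is handled by Theorem \ref{T37}, which applies thanks to the prescribed lower/upper bounds on $G=\mu+g$. Since $|\gamma|<3$ and $l\le k_0-8$, the index $l+5+|\gamma|$ lies below $k_0$, so both $\|g\|_{L^\infty_{l+5+|\gamma|}}$ and $\|g\|_{L^2_{l+\gamma/2}}$ are controlled by $\sup_{t,x}\|g\|_{L^\infty_{k_0}}$, which produces the $C(g)$ prefactor together with the $\sup_{t,x}\|g\|_{L^\infty_{k_0}}^2\,t$ source on the right of \eqref{estimate v l 1}. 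The regularizing piece $\epsilon L_\alpha(\mu+f)$ is treated by Lemma \ref{L38}, which yields only non-positive leading contributions plus an $O(\epsilon)$ source whose square gives the $\epsilon^2 t$ term. The smallness assumption $\sup_{t,x}\|g\|_{L^2_{|\gamma|+7}}<\delta_0$ is then used to absorb the bad factor $C_l\|g\|_{L^2_{|\gamma|+7}}\|f\|_{H^s_{l+\gamma/2}}^2$ into the coercive $-2c_0\|f\|_{H^s_{l+\gamma/2}}^2$ by choosing $\delta_0$ small enough. The resulting differential inequality is closed by Grönwall's lemma, delivering \eqref{estimate v l 1}.

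\medskip

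For the hypoelliptic gain \eqref{estimate v l 2} I apply the time-averaging Lemma \ref{L219} with $p=2$ to the equation $\partial_t f+v\cdot\nabla_x f=\tilde{Q}(\mu+g,\mu+f)$, tuning the parameters $m$, $r$ and $\beta_-$ so that the output exponent $s^b$ can be made any prescribed $s'\in(0,\tfrac{s}{2(s+3)})$. The two endpoint traces and the $\|(-\Delta_v)^{\beta/2}\tilde f\|_{L^2_{t,x,v}}$ and $\|\tilde f\|_{L^2_{t,x,v}}$ contributions appearing on the right of Lemma \ref{L219} are bounded by \eqref{estimate v l 1}: the polynomial decay required by the $\langle v\rangle^{1+m}(1-\Delta_v)^{-m/2}$ factor is paid with $l\ge 1+m$, and the $\beta<s$ piece is absorbed by the $H^s_{\gamma/2}$-dissipation since $\langle v\rangle^{\gamma/2}\le 1$. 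The delicate item is the source estimate $\|\langle v\rangle^{1+m}(1-\Delta_x-\partial_t^2)^{-r/2}(1-\Delta_v)^{-m/2}\tilde{Q}(\mu+g,\mu+f)\|_{L^2_{t,x,v}}$, which by duality reduces to bounding $(\tilde{Q}(\mu+g,\mu+f),\varphi)$ against test functions smooth and localized enough in $v$. For the collisional part this is supplied by Lemma \ref{L23}; for the $\epsilon L_\alpha$ part the extra hypothesis $l\ge 3+2\alpha$ ensures that the two derivatives and the weight $\langle v\rangle^{2\alpha}$ produced by $L_\alpha$ do not exceed the $l$-weighted $H^1$-control furnished by Lemma \ref{L38}. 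Combining these bounds with \eqref{estimate v l 1} yields \eqref{estimate v l 2} uniformly in $\epsilon\in[0,1]$.

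\medskip

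\textbf{Main obstacle.} The hardest step is the dual weighted Sobolev estimate on $\tilde{Q}(\mu+g,\mu+f)$ required by Lemma \ref{L219}: one must simultaneously dominate the singular angular kernel, the polynomial $v$-growth intrinsic to $Q$, and the regularizing operator $L_\alpha$, all uniformly in $\epsilon\in[0,1]$ and with parameters calibrated so that the averaging-lemma output $s^b$ stays strictly below $\tfrac{s}{2(s+3)}$. The weight requirement $l\ge 3+2\alpha$ in \eqref{estimate v l 2} is precisely the price paid to balance the $\langle v\rangle^{2\alpha}$-weighted second-order contributions coming from $L_\alpha$ against the $l$-weighted control available from \eqref{estimate v l 1}.
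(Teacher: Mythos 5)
Your proposal follows the paper's own line of argument quite closely: testing against $f\langle v\rangle^{2l}$, invoking Theorem~\ref{T37} and Lemma~\ref{L38}, absorbing $C_l\|g\|_{L^2_{|\gamma|+7}}\|f\|^2_{H^s_{l+\gamma/2}}$ into the coercive term via $\delta_0$ small, and closing with Grönwall for \eqref{estimate v l 1}; then feeding the transport identity into Lemma~\ref{L219} with $p=2$ and parameters giving $s^b=s'$, and bounding the source via Lemma~\ref{L23}. This matches the paper, which takes $\beta=s$, $m=2$, $r=0$ in Lemma~\ref{L219}.

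One small imprecision in your treatment of the $\epsilon L_\alpha$ contribution to the averaging-lemma source term deserves correction. You write that $l\ge 3+2\alpha$ ensures the two derivatives and the weight from $L_\alpha$ ``do not exceed the $l$-weighted $H^1$-control furnished by Lemma~\ref{L38}.'' But the $H^1$-dissipation in Lemma~\ref{L38} carries a factor of $\epsilon$ and is therefore \emph{not} available uniformly in $\epsilon\in[0,1]$, so it cannot be used to close the estimate in \eqref{estimate v l 2} uniformly. The mechanism is actually simpler: the paper applies Lemma~\ref{L219} with $m=2$, so the factor $(1-\Delta_v)^{-m/2}=(1-\Delta_v)^{-1}$ built into the source bound already absorbs the second-order derivatives of $L_\alpha$. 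What remains is only the polynomial weight $\langle v\rangle^{2\alpha}$ times $f$, multiplied by the $\langle v\rangle^{1+m}=\langle v\rangle^3$ factor required by Lemma~\ref{L219}, giving $\Vert\langle v\rangle^3(1-\Delta_v)^{-1}L_\alpha(\mu+f)\Vert_{L^2_v}\lesssim\epsilon\Vert f\Vert_{L^2_{3+2\alpha}}+C\epsilon$. The hypothesis $l\ge 3+2\alpha$ is needed precisely so that $\Vert f\Vert_{L^2_{3+2\alpha}}\le\Vert f\Vert_{L^2_l}$ is controlled by \eqref{estimate v l 1} — no $H^1$ regularity is invoked. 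Your final conclusion is correct; only the cited mechanism needs adjustment.
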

\begin{proof} Since $\gamma \le 0$, by Theorem \ref{T37} and Lemma \ref{L38} we have
\begin{align}\label{computation estimate g}
\nonumber
\frac {d} {dt} \frac 1 2 \Vert \langle v \rangle^l f\Vert_{L^2_{x, v}}^2 = & (Q(\mu +g, \mu+f ), f \langle v \rangle^{2l})_{L^2_{x, v}} +  \epsilon (L_{\alpha}(\mu +f) , f \langle v \rangle^{2l})_{L^2_{x, v}}
\\ \nonumber
\le &-2c_0 \Vert f \Vert_{L^2_x H^s_{l +\gamma/2}}^2 + C_{l}  \Vert f \Vert_{L^2_{x, v}}^2 + C_l\sup_x \Vert g \Vert_{L^2_{|\gamma| +7 }} \Vert f \Vert_{L^2_x H^s_{ l + \gamma/2}}^2 
\\ \nonumber
&+ C_l \sup_x \Vert g\Vert_{L^\infty_{l + 5 + |\gamma| }}  \Vert f \Vert_{L^2_x L^2_{ l + \gamma/2}}^2 + C_l \Vert g\Vert_{L^2_x L^2_{ l  + \gamma/2}}  \Vert f \Vert_{L^2_x L^2_{ l + \gamma/2}} + \epsilon C_{l} \Vert f\Vert_{L^2_x  L^2_v}^2 + \epsilon C_{l}\Vert f \Vert_{L^2_x L^2_v},
\\ \nonumber
\le &  -( 2c_0 - C_l \sup_{x} \Vert g\Vert_{L^2_{|\gamma| + 7}} ) \Vert f \Vert_{L^2_x H^s_{l+\gamma/2}}^2 + C_l (1+ \sup_{x}\Vert g\Vert_{L^\infty_{k_0}} +\epsilon ) \Vert f \Vert_{L^2_x L^2_{ l + \gamma/2}}^2  
\\
&+ C_l (\sup_{x}\Vert g\Vert_{L^\infty_{k_0}} +\epsilon )  \Vert f \Vert_{L^2_x L^2_{ l + \gamma/2}}
\\ 
\le&  -c_0 \Vert f \Vert_{L^2_x H^s_{l+\gamma/2}}^2 + C_l (1+ \sup_{x}\Vert g\Vert_{L^\infty_{k_0}}  ) \Vert f \Vert_{L^2_x L^2_{ l}}^2   + \sup_{x}\Vert g\Vert_{L^\infty_{k_0}}^2 +\epsilon^2 ,
\end{align}
by Gr\"onwall's inequality we deduce \eqref{estimate v l 1}. For \eqref{estimate v l 2}, by Lemma \ref{L23} we have
\begin{equation*}
\begin{aligned}
\Vert\langle v \rangle^3 (I-\Delta_v)^{-1} ({Q}(G, F)) \Vert_{L^2_{v}}  \lesssim \Vert\langle v \rangle^3 (I-\Delta_v)^{-s} ({Q}(G, f) +Q(f, \mu) ) \Vert_{L^2_{v}}  \lesssim \Vert g\Vert_{L^2_{7}} \Vert f\Vert_{L^2_{7}} + \Vert g\Vert_{L^2_{7}} + \Vert f\Vert_{L^2_{7}},
\end{aligned} 
\end{equation*}
and
\[
\Vert\langle v \rangle^3 (I-\Delta_v)^{-1} L_\alpha (\mu +f)   \Vert_{L^2_{v}}    \le \epsilon \Vert f \Vert_{L^2_{3+2\alpha}}  + C\epsilon. 
\]
By Lemma \ref{L219} with 
\[
\beta =s, \quad m=2, \quad r =0, \quad p =2, \quad s^b = \frac {s_-} {2(s+3) } :=s',\quad T_1 =0, \quad T_2 =T,
\]
where $s_-$ is any constant satisfies $0< s_- <s$, we have
\begin{align}
\label{computation s'}
\nonumber
\int_{0}^t \Vert (I-\Delta_x)^{s'/2} f(\tau) \Vert_{L^2_{x, v}}^2 d\tau \lesssim& \Vert \langle v \rangle^3 (1-\Delta_v)^{-1} f(0)  \Vert_{L^2_{x, v}}^2 + \Vert \langle v \rangle^3 (1-\Delta_v)^{-1} f(t)  \Vert_{L^2_{x, v}}^2 
\\ \nonumber
&+ \int_{0}^t \Vert (I-\Delta_v)^{s/2} f(\tau )  \Vert_{L^2_{x, v}}^2 d\tau   + \int_{0}^t \Vert\langle v \rangle^3 (I-\Delta_v)^{-1} (\tilde{Q}(G, F)  (\tau  ) ) \Vert_{L^2_{x, v}}^2 d\tau  
\\ \nonumber
\lesssim& \Vert \langle v \rangle^3 f(0)  \Vert_{L^2_{x, v}}^2 + \Vert \langle v \rangle^3  f(t)  \Vert_{L^2_{x, v}}^2 + \int_{0}^t \Vert (I-\Delta_v)^{s/2} f (\tau  ) \Vert_{L^2_{x, v}}^2 d\tau 
\\
& + \int_{0}^t \Vert\langle v \rangle^3 (I-\Delta_v)^{-1} (\tilde{Q}(G, F) (\tau)) \Vert_{L^2_{x, v}}^2 d\tau,
\end{align} 
the proof is thus finished if we assume $l \ge 3 + 2  \alpha$.
\end{proof}

\begin{cor}\label{C310} Suppose $\gamma+2s>-1, s \in (0, 1), \gamma \in (-3, 0]$. Fo any smooth function $f$,  suppose $f$  is a solution to the modified Boltzmann equation
\[
\partial_t f+ v \cdot \nabla_x f = \epsilon L_\alpha(\mu +f) + Q(\mu + f \chi, \mu + f),
\]
where $\chi$ is the cutoff function defined in \eqref{cutoff function}. Suppose $F = \mu +f \chi$ satisfies
\[
F \ge 0,\quad \inf_{t, x} \Vert F \Vert_{L^1_v} \ge A, \quad  \sup_{t, x}(\Vert F \Vert_{L^1_2} +\Vert F \Vert_{L \log L} )\le B,
\]
For some constant $A, B >0$. Then for any $\epsilon \in [0, 1] $,  $12 \le l$,  suppose 
\[
\sup_{t, x} \Vert f \Vert_{L^2_{|\gamma|+7}} <  \delta_0,  
\]
for some $\delta_0 >0$ small. Then for any $t \ge 0$ have
\[
\Vert \langle v \rangle^l f(t)  \Vert_{L^2_{x, v}}^2 + c_0 \int_{0}^t \int_{\T^3} \Vert \langle v \rangle^l  f(\tau) \Vert_{H^s_{\gamma/2}}^2 dx d\tau \le C_l e^{C_l t} (\Vert \langle v \rangle^l f_0 \Vert_{L^2_{x, v}}^2  + \epsilon^2 t) ,
\]
for some constant $c_0, C_l>0$.  If in addition $l \ge 3+2\alpha $, then for any $0 < s' < \frac {s }  {2(s+3)}$, we have
\[
\int_{0}^t \Vert (I-\Delta_x)^{s'/2} f \Vert_{L^2_{x, v}} dt \le C_l e^{C_l  t} (\Vert \langle v \rangle^l f_0\Vert_{L^2_{x, v}}^2 +  \epsilon^2 t ). 
\]
\end{cor}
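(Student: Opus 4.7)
This corollary is the nonlinear counterpart of Lemma~\ref{L39}, with the role of the ``independent'' perturbation $g$ now played by $f\chi$. The plan is to replay the argument in~\eqref{computation estimate g} verbatim, but using Theorem~\ref{T36} in place of Theorem~\ref{T37}; this is precisely the reason why no $\sup_{t,x}\Vert g\Vert_{L^\infty_{k_0}}$ term appears in the right-hand side here, since Theorem~\ref{T36} absorbs the cutoff directly into the coercivity and only leaves a trilinear remainder weighted by $\Vert f\Vert_{L^2_{|\gamma|+7}}$, which is exactly what the smallness hypothesis is designed to kill.

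More concretely, I would test the equation against $f\langle v\rangle^{2l}$ and integrate over $\T^3\times\R^3$. Theorem~\ref{T36} provides
\[
(Q(\mu+f\chi,\mu+f),f\langle v\rangle^{2l})_{L^2_{x,v}} \le -2c_0\Vert f\Vert_{L^2_xH^s_{l+\gamma/2}}^2 + C_l\Vert f\Vert_{L^2_{x,v}}^2 + C_l\sup_x\Vert f\Vert_{L^2_{|\gamma|+7}}\Vert f\Vert_{L^2_xH^s_{l+\gamma/2}}^2,
\]
while Lemma~\ref{L38} contributes
\[
\epsilon(L_\alpha(\mu+f),f\langle v\rangle^{2l})_{L^2_{x,v}} \le -\tfrac{\epsilon}{2}\Vert f\Vert_{L^2_xL^2_{l+\alpha}}^2 + \epsilon C_{l,\alpha}\Vert f\Vert_{L^2_{x,v}}^2 + \epsilon C_{l,\alpha}\Vert f\Vert_{L^2_{x,v}}.
\]
Summing these two bounds, choosing $\delta_0$ small enough so that $C_l\delta_0<c_0$ to absorb the trilinear remainder into the coercive term, and using $2ab\le a^2+b^2$ to turn the linear $\Vert f\Vert_{L^2_{x,v}}$ from the $L_\alpha$ estimate into $\epsilon^2+\Vert f\Vert_{L^2_{x,v}}^2$ produces
\[
\frac{d}{dt}\tfrac{1}{2}\Vert\langle v\rangle^l f\Vert_{L^2_{x,v}}^2 + c_0\Vert f\Vert_{L^2_xH^s_{l+\gamma/2}}^2 \le C_l\Vert f\Vert_{L^2_{x,v}}^2 + \epsilon^2,
\]
and Gr\"onwall's inequality then yields the first estimate.

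For the hypoelliptic estimate the plan is to invoke Lemma~\ref{L219} with the same parameters used in~\eqref{computation s'}, namely $\beta=s$, $m=2$, $r=0$, $p=2$, $s^b=s'$, and $T_1=0$, $T_2=t$. The only quantities that must be controlled in the averaging-lemma bound are the initial and terminal traces, the velocity regularity $\int_0^t\Vert(1-\Delta_v)^{s/2}f\Vert_{L^2_{x,v}}^2\,d\tau$ (already supplied by the first estimate), and the source term $\langle v\rangle^3(1-\Delta_v)^{-1}\tilde{Q}(\mu+f\chi,\mu+f)$. Splitting the latter into $Q(\mu,f)$, $Q(f\chi,\mu)$, $Q(f\chi,f)$ and $\epsilon L_\alpha(\mu+f)$, each piece is bounded by duality through Lemma~\ref{L23} (for the collision pieces) and by direct inspection (for the $L_\alpha$ piece, which is where the assumption $l\ge3+2\alpha$ enters, in order to dominate the $\langle v\rangle^{2\alpha}$ and the gradient produced by $L_\alpha$). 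The main obstacle I anticipate is merely bookkeeping: confirming that the multiplicative cutoff $\chi$ does not damage the weighted $L^2$ bounds, which is immediate since $\chi$ is smooth with $0\le\chi\le1$ so $\Vert f\chi\Vert_{L^2_k}\le\Vert f\Vert_{L^2_k}$ for every $k$. Combining with the first estimate closes the argument.
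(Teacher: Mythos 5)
Your proposal is correct and mirrors the paper's argument almost exactly: apply the coercivity bound for $Q(\mu+f\chi,\mu+f)$ together with Lemma~\ref{L38} for the $\epsilon L_\alpha$ term, absorb the trilinear remainder via the smallness of $\sup_{t,x}\Vert f\Vert_{L^2_{|\gamma|+7}}$, close with Gr\"onwall, and then replay the averaging step of Lemma~\ref{L39} via Lemma~\ref{L219} using $l\ge 3+2\alpha$ to control the $L_\alpha$ source. In fact your invocation of Theorem~\ref{T36} (which contains the explicit $Q(\mu+f\chi,\mu+f)$ variant) is the accurate reference; the paper's own proof cites Theorem~\ref{T37} with a leftover $\Vert g\Vert_{L^2_{|\gamma|+7}}$ in the intermediate display, which is a typographical slip for the Theorem~\ref{T36} bound you use.
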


\begin{proof} Since $\gamma \le 0$, by Theorem \ref{T37} and Lemma \ref{L38} we have
\begin{equation*}
\begin{aligned}
\frac {d} {dt} \frac 1 2 \Vert \langle v \rangle^l f\Vert_{L^2_{x, v}}^2 = &(Q(\mu +f \chi, \mu+f ), f \langle v \rangle^{2l})_{L^2_{x, v}} +  \epsilon (L_{\alpha}(\mu +f) , f \langle v \rangle^{2l})_{L^2_{x, v}}
\\
\le &-2c_0 \Vert f \Vert_{L^2_x H^s_{l +\gamma/2}}^2 + C_{l}  \Vert f \Vert_{L^2_{x, v}}^2 + C_l\sup_x \Vert g \Vert_{L^2_{|\gamma| +7 }} \Vert f \Vert_{L^2_x H^s_{ l + \gamma/2}}^2  + \epsilon C_{l} \Vert f\Vert_{L^2_x  L^2_v}^2 + \epsilon C_{l}\Vert f \Vert_{L^2_x L^2_v},
\\
\le   & -( 2c_0 - C_l\sup_{x}  \Vert f \Vert_{L^2_{7+|\gamma|}}) \Vert f \Vert_{L^2_xH^s_{l + \gamma/2}}^2 + (C_l +\epsilon  ) \Vert f \Vert_{L^2_{x, v}}^2 +\epsilon \Vert f \Vert_{L^2_{x, v}}
\\
\le& -c_0 \Vert f \Vert_{L^2_xH^s_{l + \gamma/2}}^2 + C_l  \Vert f \Vert_{L^2_{x, v}}^2 +\epsilon ^2,
\end{aligned}
\end{equation*}
the remaining proof follows a  similar line  as for  Lemma \ref{L39} and thus omitted.
\end{proof}

\section{Estimate for the level set function}\label{section 4}

In this section, we focus on the linearized equation
\begin{equation}
\label{linearized equation}
\partial_t f + v \cdot \nabla_x f= \tilde{Q}(\mu+g , \mu +f)  = Q(\mu +g , \mu + f) +\epsilon L_\alpha(\mu + f) ,\quad \epsilon \in [0, 1],\quad f(0, x ,v)=f_0(x, v), \quad \mu+f \ge 0. 
\end{equation}
In this section we come to compute the level set estimate for the Boltzmann equation. It is easily seen that if $f$ is a solution to \eqref{linearized equation}, the level set function $f_{k, +}^l$ defined in \eqref{level set function} satisfies
\begin{equation}
\label{level set equation}
\partial_t (f_{K, +}^l)^2  + v \cdot \nabla_x (f_{K, +}^l)^2 = 2 \tilde{Q}(G, F) \langle v \rangle^l f_{K, +}^l. 
\end{equation}
We first prove a bound for the level set function

\begin{lem}\label{L41}
Suppose $G = \mu+g, F =\mu +f$ smooth and $\gamma \in (-3, 0], s \in (0, 1), \gamma +2s >-1$. Suppose in addition G satisfies that
\[
G \ge 0, \quad \inf_{t, x} \Vert G \Vert_{L^1_v} \ge A >0, \quad \sup_{t, x} (\Vert G \Vert_{L^1_2}  +\Vert G \Vert_{L \log L} ) < B< +\infty. 
\]
For some constant $A, B >0$. \\
(1) For any constant $l > 10, K \ge 0$ we have
\begin{align}
\label{estimate f l k}
\nonumber
\int_{\T^3} \int_{\R^3} Q(G, F) f_{K, +}^l \langle v \rangle^l dv dx \le& -(2c_1 -C_l \sup_x \Vert  g \Vert_{L^\infty_{|\gamma|+9}} )\Vert f_{K, +}^l \Vert_{L^2_x H^s_{\gamma/2}}^2 + C_{l}(1 + \sup_x \Vert g \Vert_{L^\infty_l}  ) \Vert f_{K,+}^l \Vert_{L^2_x L^2_2}^2 
\\
&+ C_l (1 + K)(1 + \sup_x \Vert g \Vert_{L^\infty_l})  \Vert f_{K, +}^l \Vert_{L^1_x L^1_2},
\end{align}
for some constant $c_1, C_l>0$. \\
(2) For the regularizing term, for any $l \ge 10, \alpha, K \ge 0$ we have
\[
\int_{\T^3} \int_{\R^3  }  L_\alpha(\mu + f) f_{K, +}^l  \langle v \rangle^{l} dv dx  \le  - \frac 1 2 \Vert \langle v \rangle^{\alpha} f_{K, +}^l \Vert_{L^2_x H^1_v}^2  +C_{l ,\alpha} \Vert  f_{K, +}^l \Vert_{L^2_{x, v}}^2 + C_{l, \alpha} (1 + K)  \Vert f_{K, +}^l \Vert_{L^1_{x, v}},
\]
for some constant $C_{l, \alpha} >0$. 
\end{lem}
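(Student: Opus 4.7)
The plan is to build the estimate for part (1) by splitting $F = \mu + f$, treating $(Q(G,\mu), f_{K,+}^l \langle v \rangle^l)$ as a source-type term, while $(Q(G,f), f_{K,+}^l \langle v \rangle^l)$ supplies the coercive contribution once $f_{K,+}^l$ is carefully extracted from $f$. The key pointwise identity throughout is that on $\{f\langle v \rangle^l \ge K\}$ we have $f\langle v \rangle^l = f_{K,+}^l + K$ and $f_{K,-}^l = 0$, so $f = (f_{K,+}^l + K)\langle v \rangle^{-l}$ there. Everywhere one writes the universal decomposition $f\langle v \rangle^l = f_{K,+}^l + f_{K,-}^l + K$, which is what lets us produce $f_{K,+}^l$ inside the $Q$-bracket.

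For part (1), I would follow the strategy of Theorem \ref{T34} adapted to the level set variable. Using the weight-commutator identity,
\[
\int_{\R^3} Q(G,F)\,\langle v\rangle^l f_{K,+}^l\, dv = \int_{\R^3} Q(G, f\langle v\rangle^l)\, f_{K,+}^l\, dv + \int_{\R^3}\bigl[\langle v \rangle^l Q(G,f) - Q(G,f\langle v \rangle^l)\bigr] f_{K,+}^l\, dv + \int_{\R^3} Q(G,\mu)\,\langle v\rangle^l f_{K,+}^l\, dv.
\]
The commutator bracket is precisely the type controlled by Lemma \ref{L32} (via \eqref{estimate Gamma 1}--\eqref{estimate Gamma 3}); applied with the $L^\infty$-norm falling on $g$ and the $H^s$ budget carried by $f_{K,+}^l$, it produces the good factor $\sup_x\|g\|_{L^\infty_{|\gamma|+9}}\|f_{K,+}^l\|_{H^s_{\gamma/2}}^2$ that gets absorbed into the coercive side. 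The $Q(G,\mu)$-source term is bounded by Lemma \ref{L31}. For the main piece I insert $f\langle v \rangle^l = f_{K,+}^l + f_{K,-}^l + K$, yielding
\[
\int Q(G, f\langle v \rangle^l) f_{K,+}^l\, dv = (Q(G,f_{K,+}^l), f_{K,+}^l) + (Q(G,f_{K,-}^l), f_{K,+}^l) + K\bigl(Q(G,1), f_{K,+}^l\bigr).
\]
Lemma \ref{L24} applied to the first summand (the hypothesis $\Vert G\Vert_{L^1}\ge A$, $\Vert G\Vert_{L^1_2}+\Vert G\Vert_{L\log L}\le B$ is exactly what's needed) produces $-2c_1\|f_{K,+}^l\|_{H^s_{\gamma/2}}^2 + C\|f_{K,+}^l\|_{L^2_{\gamma/2}}^2$. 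The crossed term $(Q(G,f_{K,-}^l),f_{K,+}^l)$ is handled with Lemma \ref{L32}, using $f_{K,-}^l f_{K,+}^l \equiv 0$ so that the only nonvanishing contribution comes from the $v'$-pre-collisional factor and can be estimated in $L^\infty_v$-norm of $g$ against $\|f_{K,+}^l\|_{L^2}^2$. Finally the cancellation lemma reduces $Q(G,1) = G*S$ to a convolution, giving the precise $(1+K)(1+\sup_x\|g\|_{L^\infty_l})\|f_{K,+}^l\|_{L^1_x L^1_2}$ contribution. Integrating in $x$ and collecting under the smallness of $\sup_x\|g\|_{L^\infty_{|\gamma|+9}}$ yields \eqref{estimate f l k}.

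For part (2), the operator $L_\alpha$ is purely differential, so I would integrate by parts in $v$ to get
\[
\int_{\R^3} L_\alpha(\mu+f)\, f_{K,+}^l\langle v \rangle^l\, dv = -\int \langle v \rangle^{2\alpha}(\mu+f)\, f_{K,+}^l\langle v \rangle^l\, dv - \int \langle v \rangle^{2\alpha}\nabla_v(\mu+f)\cdot\nabla_v(f_{K,+}^l\langle v\rangle^l)\, dv,
\]
then on the support of $f_{K,+}^l$ substitute $f\langle v \rangle^l = f_{K,+}^l + K$ and, noting that $\nabla_v f_{K,+}^l = \mathbf 1_{\{f\langle v\rangle^l\ge K\}}\nabla_v(f\langle v\rangle^l)$, rewrite $\nabla_v f$ in terms of $\nabla_v f_{K,+}^l$ plus a zeroth-order correction in $f_{K,+}^l + K$. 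The mass-type piece contributes $-\|\langle v \rangle^\alpha f_{K,+}^l\|_{L^2_v}^2$ plus $\mu$-linear and $K$-linear error bounded by $(1+K)\|f_{K,+}^l\|_{L^1_{v}}$; the gradient piece yields $-\|\langle v\rangle^\alpha \nabla_v f_{K,+}^l\|_{L^2_v}^2$ plus crossed first-order terms absorbed into half the coercive part by Cauchy--Schwarz, at the cost of an error $C_{l,\alpha}\|f_{K,+}^l\|_{L^2_v}^2 + C_{l,\alpha}(1+K)\|f_{K,+}^l\|_{L^1_v}$. Integrating in $x$ concludes.

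The main obstacle I anticipate is the commutator step in part (1): the weight $\langle v\rangle^l$ does not commute with the singular bilinear $Q$, and Lemma \ref{L32} must be deployed so that the resulting $H^s_{\gamma/2}$-norm of $f_{K,+}^l$ sits on the dissipation side with a factor $\sup_x\|g\|_{L^\infty_{|\gamma|+9}}$ (absorbable by smallness) rather than on the energy side. A secondary subtlety is keeping the $K$-dependence \emph{linear}, which requires tracking that $K$ only enters through the cancellation-lemma term $K\,G\ast S$ and through the affine shift $f\langle v\rangle^l = f_{K,+}^l + K$; any mishandling of the crossed $f_{K,-}^l f_{K,+}^l$ bracket would introduce spurious higher powers of $K$ and break the De Giorgi iteration downstream.
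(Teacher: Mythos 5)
Your scheme for part~(1) reverses the order of operations relative to the paper, and as written this leaks a term that the lemma's right-hand side cannot control. You pull the weight through $Q$ first, producing the commutator $A_2 := \int[\langle v\rangle^l Q(G,f) - Q(G,f\langle v\rangle^l)]f_{K,+}^l\,dv$, and only afterwards insert the level-set decomposition of $f$. In weak form $A_2 = \int\int\int B\,G_*\,f(v)\,f_{K,+}^l(v')(\langle v'\rangle^l - \langle v\rangle^l)\,dv\,dv_*\,d\sigma$ carries the raw $f$, not $f_{K,+}^l$. Feeding this directly into Lemma~\ref{L32} leaves a norm of $f$ on the right, which the stated conclusion of Lemma~\ref{L41} does not admit; replacing $f(v)\langle v\rangle^l = f_{K,+}^l(v)+ f_{K,-}^l(v) + K$ inside $A_2$ instead produces an $f_{K,-}^l$-piece $\int\int\int B\,G_*\,\langle v\rangle^{-l}f_{K,-}^l(v)\,f_{K,+}^l(v')(\langle v'\rangle^l - \langle v\rangle^l)$ with \emph{no definite sign} (the factor $\langle v'\rangle^l - \langle v\rangle^l$ changes sign), and no norm of $f_{K,-}^l$ is available either. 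A related imprecision: you say $(Q(G,f_{K,-}^l),f_{K,+}^l)$ ``can be estimated in $L^\infty_v$-norm of $g$ against $\|f_{K,+}^l\|_{L^2}^2$'', but after using $f_{K,-}^lf_{K,+}^l\equiv 0$ what survives is $\int\int\int B\,G_*\,f_{K,-}^l(v)\,f_{K,+}^l(v')$, which still contains $f_{K,-}^l$ in an essential way; it happens to be $\le 0$ (since $G_*\ge 0$, $f_{K,-}^l\le 0$, $f_{K,+}^l(v')\ge 0$) and must be \emph{dropped}, not estimated.

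The missing idea --- and the paper's actual route --- is a sign argument performed \emph{before} any commutator splitting. The paper decomposes $Q(G,F) = Q(G,f - K\langle\cdot\rangle^{-l}) + Q(G,K\langle\cdot\rangle^{-l}) + Q(G,\mu) =: T_1+T_2+T_3$ and, writing $T_1$ in weak form, invokes $G\ge 0$, $f\langle v\rangle^l - K\le f_{K,+}^l(v)$ with equality on $\{f\langle v\rangle^l\ge K\}$, $(f\langle v\rangle^l - K)f_{K,+}^l(v) = (f_{K,+}^l(v))^2$, and $f_{K,+}^l(v')\ge 0$ to obtain a one-sided bound in which \emph{both} slots already carry $f_{K,+}^l$. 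Only then is the weight commutator applied, and in fact the paper takes a convex combination of the commutator route (Lemma~\ref{L24} plus a $\Gamma$-type estimate) and a Cauchy--Schwarz/cancellation-lemma route, mirroring Theorem~\ref{T34}. Your plan can be rescued by grouping: the sum of $(Q(G,f_{K,-}^l),f_{K,+}^l)$ with the $f_{K,-}^l$-piece of $A_2$ equals $\int\int\int B\,G_*\,f_{K,-}^l(v)\,f_{K,+}^l(v')\langle v'\rangle^l\langle v\rangle^{-l}\le 0$ and can be discarded --- but this grouping is exactly the sign trick in disguise, so the shortest path is to do the sign argument first. Your part~(2) sketch is a plausible direct integration by parts; the paper simply cites \cite{AMSY2} Proposition~3.3 for this step, so there is nothing there to compare.
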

\begin{proof}
The regularizing term is proved in \cite{AMSY2} Proposition 3.3. We focus on the collision term later. First make the decomposition
\begin{equation*}
\begin{aligned} 
\int_{\R^3} Q(G, F) f_{K, +}^l \langle v \rangle^l dv  =& \int_{\R^3} Q (G, f-\frac K {\langle v \rangle^l} ) f_{K, +}^l \langle v \rangle^l dv  +\int_{\R^3} Q (G, \frac {K} {\langle v \rangle^l}) f_{K, +}^l \langle v \rangle^l dv 
\\
&+\int_{\R^3} Q(G, \mu) f_{K, +}^l \langle v \rangle^l dv    :=T_1+T_2+T_3.
\end{aligned}
\end{equation*}
Recall the fact that
\[
f \langle v \rangle^l - K \le f_{K, +}^l(v)  ,\quad  (f \langle v \rangle^l - K )f_{K, +}^l(v)  = |f_{K, +}^l(v)|^2   , \quad     f_{K, +}^l(v') \ge 0, 
\]
since $G \ge 0$, we have
\begin{equation*}
\begin{aligned} 
T_1 =& \int_{\R^3} \int_{\R^3} \int_{\mathbb{S}^2} b(\cos \theta) |v-v_*|^\gamma G_* ( f - \frac {K} {\langle v \rangle^l}  )(f_{K, +}^l(v') \langle v' \rangle^l  -f_{K, +}^l(v) \langle v \rangle^l )dv dv_* d \sigma
\\
\le &  \int_{\R^3} \int_{\R^3} \int_{\mathbb{S}^2} b(\cos \theta) |v-v_*|^\gamma G_*  f_{K, +}^l (v) \frac  1 {\langle v \rangle^l}  (f_{K, +}^l(v') \langle v' \rangle^l  -f_{K, +}^l(v) \langle v \rangle^l )  dv dv_* d \sigma.
\end{aligned}
\end{equation*}
By Cauchy-Schwarz inequality we have
\begin{equation*}
\begin{aligned} 
&f_{K, +}^l(v)  \frac 1 {\langle  v \rangle^l} (f_{K, +}^l (v') \langle v' \rangle^l  - f_{K, +}^l (v) \langle v \rangle^l  ) 
\\
= & \frac 1 {\langle v \rangle^l} (f_{K,+}^l (v ) f_{K,+}^l (v') \langle v \rangle^l  \cos^l \frac \theta 2 - |f_{K,+}^l (v )|^2  \langle v \rangle^l) + \frac 1 {\langle v \rangle^l}  f_{K,+}^l (v ) f_{K,+}^l (v') (\langle v' \rangle^l  - \langle v \rangle^l  \cos^l \frac \theta 2 )
\\
\le& \frac 1 2 \left( |f_{K,+}^l (v') |^2 \cos^{2l} \frac \theta 2-|f_{K,+}^l (v )|^2 \right) + \frac 1 {\langle v \rangle^l}  f_{K,+}^l (v ) f_{K,+}^l (v') (\langle v' \rangle^l  - \langle v \rangle^l  \cos^l \frac \theta 2 ),
\end{aligned}
\end{equation*}
together with  cancellation lemma we have
\begin{equation*}
\begin{aligned} 
T_1 \le& \frac 1 2 \int_{\R^3} \int_{\R^3} \int_{\mathbb{S}^2} b(\cos \theta) |v-v_*|^\gamma G_*  \left( |f_{K,+}^l (v') | ^2 \cos^{2l} \frac \theta 2 -  |f_{K,+}^l (v )|^2 \right)  dv dv_* d\sigma
\\
&+\int_{\R^3} \int_{\R^3} \int_{\mathbb{S}^2}b(\cos \theta) |v-v_*|^\gamma G_*  \frac 1 {\langle v \rangle^l}  f_{K,+}^l (v ) f_{K,+}^l (v') (\langle v' \rangle^l  - \langle v \rangle^l  \cos^l \frac \theta 2 ) dv dv_* d\sigma
\\
\le& \frac 1 2 \int_{\R^3} \int_{\R^3} \int_{\mathbb{S}^2} b(\cos \theta) |v-v_*|^\gamma \mu_*   |f_{K,+}^l (v) |^2  (\cos^{2l-3-\gamma} \frac \theta 2-1)   dv dv_* d\sigma
\\
&+ \frac 1 2 \int_{\R^3} \int_{\R^3} \int_{\mathbb{S}^2} b(\cos \theta) |v-v_*|^\gamma g_*| f_{K,+}^l (v) |^2  (\cos^{2l-3-\gamma} \frac \theta 2-1)   dv dv_* d\sigma
\\
&+\int_{\R^3} \int_{\R^3} \int_{\mathbb{S}^2}b(\cos \theta) |v-v_*|^\gamma \mu_*  \frac 1 {\langle v \rangle^l}  f_{K,+}^l (v ) f_{K,+}^l (v') (\langle v' \rangle^l  - \langle v \rangle^l  \cos^l \frac \theta 2 ) dv dv_* d\sigma
\\
&+\int_{\R^3} \int_{\R^3} \int_{\mathbb{S}^2}b(\cos \theta) |v-v_*|^\gamma g_*  \frac 1 {\langle v \rangle^l}  f_{K,+}^l (v ) f_{K,+}^l (v') (\langle v' \rangle^l  - \langle v \rangle^l  \cos^l \frac \theta 2 ) dv dv_* d\sigma
\\
:=& T_{1,1} + T_{1, 2}+T_{1,3} +T_{1,4}.
\end{aligned}
\end{equation*}
By cancellation lemma and   Lemma \ref{L211} we have
\[
T_{1, 1} \le -\gamma_0 \Vert f_{K, +}^l \Vert^2_{L^2_{\gamma/2}}, \quad  T_{1, 2} \lesssim \Vert g \Vert_{L^\infty_{|\gamma|+4}} \Vert f_{K, +}^l \Vert^2_{L^2_{\gamma/2}},
\]
where $\gamma_0$ is defined in \eqref{gamma 0}. For the term $T_{1,3}$, by Lemma \ref{L23} apply for $f= f_{K,+}^l \frac 1 {\langle v \rangle^l}$ we have
\[
T_{1, 3} \lesssim \epsilon \Vert f_{K,+}^l \Vert_{H^s_{\gamma/2}}^2+ C_{l, \epsilon} \Vert f_{K,+}^l \Vert_{L^2_{\gamma/2 -s'}}^2.
\]
For the term $T_{1, 4}$, by \eqref{estimate Lambda 2} apply for $f=h= f_{K,+}^l \frac 1 {\langle v \rangle^l}$  we have
\[
T_{1,4} \lesssim \Vert g \Vert_{L^\infty_{|\gamma| + 9  }} \Vert f_{K, +}^l \Vert_{H^s_{ \gamma/2 }}\Vert  f_{K, +}^l  \Vert_{L^2_{  \gamma/2}}  +\Vert g \Vert_{L^\infty_l}  \Vert  f_{K, +}^l  \Vert_{L^2_{3-l}} \Vert  f_{K, +}^l  \Vert_{L^2_{2}}. 
\]
We also have another way on estimate $T_1$  term, by 
\begin{equation}
\label{f K l decomposition}
f_{K, +}^l (v) \frac 1 {\langle  v \rangle^l} (f_{K, +}^l (v') \langle v' \rangle^l  - f_{K, +}^l (v) \langle v \rangle^l  ) 
=  f_{K,+}^l (v )( f_{K,+}^l (v')  - f_{K,+}^l (v )) + \frac 1 {\langle v \rangle^l}  f_{K,+}^l (v ) f_{K,+}^l (v') (\langle v' \rangle^l  - \langle v \rangle^l   ),
\end{equation}
we have
\begin{equation*}
\begin{aligned} 
T_1 \le&  \int_{\R^3} \int_{\R^3} \int_{\mathbb{S}^2} b(\cos \theta) |v-v_*|^\gamma G_* f_{K,+}^l (v )( f_{K,+}^l (v')  - f_{K,+}^l (v ))  dv dv_* d\sigma
\\
&+\int_{\R^3} \int_{\R^3} \int_{\mathbb{S}^2}b(\cos \theta) |v-v_*|^\gamma G_*  \frac 1 {\langle v \rangle^l}  f_{K,+}^l (v ) f_{K,+}^l (v') (\langle v' \rangle^l  - \langle v \rangle^l   )dv dv_* d\sigma := T_{1,5}  +T_{1,6}.
\end{aligned}
\end{equation*}
By Lemma \ref{L24} we have
\[
T_{1,5} = (Q(G, f_{K,+}^l) ,f_{K, +}^l) \le -\gamma_1 \Vert f_{K, +}^l \Vert_{H^s_{\gamma/2}}^2 + C_l \Vert f_{K, +}^l \Vert_{L^2_{\gamma/2}}^2, 
\]
for some constant  $\gamma_1 >0$. For the term $T_{1, 6}$,  by \eqref{estimate Gamma 2} apply for $f=h= f_{K,+}^l \frac 1 {\langle v \rangle^l}$  we have
\[
T_{1,6} \lesssim \Vert g \Vert_{L^\infty_{|\gamma| + 9  }} \Vert f_{K, +}^l \Vert_{H^s_{ \gamma/2 }}\Vert  f_{K, +}^l  \Vert_{L^2_{  \gamma/2}}  +\Vert g \Vert_{L^\infty_l}  \Vert  f_{K, +}^l  \Vert_{L^2_{3-l}} \Vert  f_{K, +}^l  \Vert_{L^2_{2}} .
\]
Gathering the two estimates, for some  $\delta_2 \in (0, 1)$ small we compute
\begin{equation*}
\begin{aligned}
T_1 =& \frac{1} {1+\delta_2} (T_{1, 1} +T_{1, 2}+ T_{1, 3} +T_{1, 4}) + \frac{\delta_2} {1+\delta_2} (T_{1, 5} +T_{1, 6} )
\\
\le &\frac {-\gamma_0 +C_l \delta_2 }  {1+\delta_2}   \Vert f_{K, +}^l \Vert_{L^2_{\gamma/2}}^2 + \frac {-\gamma_1 \delta_2 + \epsilon}  {1+\delta_2} \Vert f_{K, +}^l \Vert_{H^s_{\gamma/2}}^2 + C_{l, \epsilon} \Vert f_{K,+}^l \Vert_{L^2_{\gamma/2 -s'}}^2
\\
&+ C_l \Vert g \Vert_{L^\infty_{|\gamma| + 9  }} \Vert f_{K, +}^l \Vert_{H^s_{ \gamma/2 }}\Vert  f_{K, +}^l  \Vert_{L^2_{  \gamma/2}}  + C_l \Vert g \Vert_{L^\infty_l}  \Vert  f_{K, +}^l  \Vert_{L^2_{3-l}} \Vert  f_{K, +}^l  \Vert_{L^2_{2}} 
\\
\le & -(2c_1 -C_l \Vert  g \Vert_{L^\infty_{|\gamma|+9}} )\Vert f_{K, +}^l \Vert_{H^s_{\gamma/2}}^2 + C_{l}(1+\Vert g \Vert_{L^\infty_l}  ) \Vert f_{K,+}^l \Vert_{L^2_2}^2,
\end{aligned}
\end{equation*}
for some $c_1>0$ by taking $ \delta_2  =\frac {\gamma_0} {2 C_l}, \epsilon = \frac {\gamma_1 \delta_2} 2$. For the  $T_2$ term, using \eqref{f K l decomposition} we first split it into two terms
\begin{equation*}
\begin{aligned}
T_2  =& K \int_{\R^3} \int_{\R^3} \int_{\mathbb{S^2}} b (\cos \theta) |v-v_*|^\gamma G_* \frac 1 {\langle v \rangle^l} (f_{K, +}^l(v') \langle v' \rangle^l - f_{K, +}^l(v) \langle v \rangle^l) dv dv_* d\sigma
\\
=& K \int_{\R^3} \int_{\R^3} \int_{\mathbb{S^2}} b (\cos \theta) |v-v_*|^\gamma   G_* (f_{K, +}^l(v')  - f_{K, +}^l(v) ) dv dv_* d\sigma
\\
&+ K\int_{\R^3} \int_{\R^3} \int_{\mathbb{S^2}} b (\cos \theta) |v-v_*|^\gamma  G_* \frac 1 {\langle v \rangle^l} f_{K, +}^l(v')( \langle v' \rangle^l -  \langle v \rangle^l ) dv dv_* d\sigma
:= T_{2, 1} +T_{2, 2}.
\end{aligned}
\end{equation*}
For the term $T_{2, 1}$,  by cancellation lemma and Lemma \ref{L211}  we have
\[
T_{2, 1} \lesssim K\int_{\R^3} \int_{\R^3}  |v-v_*|^\gamma   G_* f_{K, +}^l(v) dv dv_* \lesssim K \Vert G \Vert_{L^\infty_4} \Vert f_{K, +}^l \Vert_{L^1} \lesssim K  (1 + \Vert g \Vert_{L^\infty_l})  \Vert f_{K, +}^l \Vert_{L^1} .
\]
For the term $T_{2, 2}$,  by \eqref{estimate Gamma 3} we have
\[
T_{2, 2} \lesssim K \Vert  G\Vert_{L^\infty_l} \Vert f_{K, +}^l \Vert_{L^1_2} \lesssim K  (1 + \Vert g \Vert_{L^\infty_l})  \Vert f_{K, +}^l \Vert_{L^1_2}.
\]
For the $T_3$ term, similarly we have
\begin{equation*}
\begin{aligned}
T_3  =&  \int_{\R^3} \int_{\R^3} \int_{\mathbb{S^2}} b (\cos \theta) |v-v_*|^\gamma G_* \mu (f_{K, +}^l(v') \langle v' \rangle^l - f_{K, +}^l(v) \langle v \rangle^l) dv dv_* d\sigma
\\
=&  \int_{\R^3} \int_{\R^3} \int_{\mathbb{S^2}} b (\cos \theta) |v-v_*|^\gamma   G_*  \mu \langle v \rangle^l (f_{K, +}^l(v')  - f_{K, +}^l(v) ) dv dv_* d\sigma
\\
&+ \int_{\R^3} \int_{\R^3} \int_{\mathbb{S^2}} b (\cos \theta) |v-v_*|^\gamma  G_* \mu f_{K, +}^l(v')( \langle v' \rangle^l -  \langle v \rangle^l ) dv dv_* d\sigma
:= T_{3, 1} +T_{3, 2}.
\end{aligned}
\end{equation*}
For the term $T_{3, 1}$ by cancellation lemma we have
\[
T_{3, 1} \lesssim \int_{\R^3} \int_{\R^3}  |v-v_*|^\gamma   G_* |f_{K, +}^l(v)| dv dv_* \lesssim \Vert G \Vert_{L^\infty_4} \Vert f_{K, +}^l \Vert_{L^1} \lesssim (1 + \Vert g \Vert_{L^\infty_l})  \Vert f_{K, +}^l \Vert_{L^1} .
\]
For the term $T_{3, 2}$,  by \eqref{estimate Gamma 3} we have
\[
T_{3, 2} \lesssim  \Vert  G\Vert_{L^\infty_l} \Vert f_{K, +}^l \Vert_{L^1_2} \lesssim   (1 + \Vert g \Vert_{L^\infty_l})  \Vert f_{K, +}^l \Vert_{L^1_2},
\]
gathering the terms together we have 
\[
\int_{\R^3} Q(G, F) f_{K, +}^l \langle v \rangle^l dv \le -(c_1 -C_l \Vert  g \Vert_{L^\infty_{|\gamma|+9}} )\Vert f_{K, +}^l \Vert_{H^s_{\gamma/2}}^2 + C_{l}(1 +  \Vert g \Vert_{L^\infty_l}  ) \Vert f_{K,+}^l \Vert_{L^2_2}^2 + C_l (1 + K)(l +  \Vert g \Vert_{L^\infty_l})  \Vert f_{K, +}^l \Vert_{L^1_2},
\]
by integrating in $x$, \eqref{estimate f l k} is proved . 
\end{proof}

To show $|f| \langle v \rangle^k \le K$ later, we will need not only the level set function $f_{K, +}^l$, but  also the one for  $-f \langle v \rangle^l \le K$. It is not surprising that the estimate for $(-f)_{K, +}^l$ follows a similar line as that for $f_{K, +}^l$. The equation for $h=-f$ is 
\[
\partial_t h + v \cdot \nabla_x h =-Q(G, \mu - h ) -\epsilon L_{\alpha} (u-h),
\]
for the function $h$ we have the following estimate

\begin{lem}\label{L42}
Suppose $G = \mu+g $ smooth and $\gamma \in (-3, 0], s \in (0, 1), \gamma +2s >-1$. Suppose in addition G satisfies that
\[
G \ge 0, \quad \inf_{t, x} \Vert G \Vert_{L^1_v} \ge A >0, \quad \sup_{t, x} (\Vert G \Vert_{L^1_2}  +\Vert G \Vert_{L \log L} ) < B< +\infty. 
\]
For some constant $A, B >0$. \\
(1) For any constant $l > 10, K \ge 0$ we have
\begin{align}
\nonumber
- \int_{\T^3} \int_{\R^3} Q(G, \mu-h) h_{K, +}^l \langle v \rangle^l dv dx \le& -(c_1 -C_l \sup_x \Vert  g \Vert_{L^\infty_{|\gamma|+9}} )\Vert h_{K, +}^l \Vert_{L^2_x H^s_{\gamma/2}}^2 + C_{l}(1 + \sup_x \Vert g \Vert_{L^\infty_l}  ) \Vert h_{K,+}^l \Vert_{L^2_x L^2_2}^2 
\\
&+ C_l (1 + K)(l + \sup_x \Vert g \Vert_{L^\infty_l})  \Vert h_{K, +}^l \Vert_{L^1_x L^1_2},
\end{align}
for some constant $c_1,C_l>0$. \\
(2) For the regularizing term, for any $l \ge 10, \alpha, K \ge 0$ we have
\[
-\int_{\T^3} \int_{\R^3  }  L_\alpha(\mu - h) h_{K, +}^l  \langle v \rangle^{l} dv dx  \le  - \frac 1 2 \Vert \langle v \rangle^{\alpha} h_{K, +}^l \Vert_{L^2_x H^1_v}^2  +C_{l ,\alpha} \Vert  h_{K, +}^l \Vert_{L^2_{x, v}}^2 + C_{l, \alpha} (1 + K)  \Vert h_{K, +}^l \Vert_{L^1_{x, v}},
\]
for some constant $C_{l, \alpha} >0$. 
\end{lem}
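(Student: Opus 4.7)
The plan is to mirror the proof of Lemma \ref{L41} almost verbatim, exploiting the simple identity
\[
-Q(G, \mu - h) = -Q(G, \mu) + Q(G, h) = Q\bigl(G, h - \tfrac{K}{\langle v \rangle^l}\bigr) + Q\bigl(G, \tfrac{K}{\langle v \rangle^l}\bigr) - Q(G, \mu).
\]
This produces three pieces $T_1, T_2, T_3$ analogous to those in Lemma \ref{L41}, with only the $Q(G,\mu)$ piece carrying a reversed sign. Since that piece was estimated in modulus (and is a lower-order contribution), the sign reversal is cosmetic and the bounds transfer directly with $f_{K,+}^l$ replaced by $h_{K,+}^l$.

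Concretely, for $T_1 = \int Q(G, h - K/\langle v \rangle^l) h_{K,+}^l \langle v \rangle^l \, dv$ one uses the three structural properties
\[
h \langle v \rangle^l - K \le h_{K,+}^l, \qquad (h \langle v \rangle^l - K) h_{K,+}^l = |h_{K,+}^l|^2, \qquad h_{K,+}^l(v') \ge 0,
\]
together with $G \ge 0$, exactly as for $f_{K,+}^l$ in Lemma \ref{L41}. Two parallel expansions are then performed: one via Cauchy--Schwarz $|h_{K,+}^l||h_{K,+}^{l,\prime}|\cos^{l}\frac\theta 2 \le \tfrac12(|h_{K,+}^{l,\prime}|^2 \cos^{2l}\frac\theta 2 + |h_{K,+}^l|^2)$ combined with the cancellation lemma and the commutator bounds \eqref{estimate Lambda 2}; the other via the direct splitting \eqref{f K l decomposition} with Lemma \ref{L24} giving coercivity and \eqref{estimate Gamma 2} handling the weight commutator. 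Taking a small convex combination $\tfrac{1}{1+\delta_2}(\text{first})+\tfrac{\delta_2}{1+\delta_2}(\text{second})$, and choosing $\delta_2 \sim \gamma_0/C_l$ and $\epsilon \sim \gamma_1 \delta_2$, absorbs the non-coercive $H^s$ contributions and yields
\[
T_1 \le -(2 c_1 - C_l \Vert g \Vert_{L^\infty_{|\gamma|+9}}) \Vert h_{K,+}^l \Vert_{H^s_{\gamma/2}}^2 + C_l(1+\Vert g\Vert_{L^\infty_l}) \Vert h_{K,+}^l \Vert_{L^2_2}^2.
\]

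For $T_2 = K \int Q(G, 1/\langle v \rangle^l) h_{K,+}^l \langle v \rangle^l$ and for the $-Q(G,\mu)$ piece (the analog of $T_3$ in Lemma \ref{L41}), one again inserts the decomposition
$h_{K,+}^{l,\prime}\langle v'\rangle^l - h_{K,+}^l \langle v\rangle^l = \langle v \rangle^l(h_{K,+}^{l,\prime} - h_{K,+}^l) + h_{K,+}^{l,\prime}(\langle v'\rangle^l - \langle v\rangle^l)$, then invokes the cancellation lemma plus Lemma \ref{L211} for the first subterm and the weight commutator estimate \eqref{estimate Gamma 3} for the second. Since both estimates control only magnitudes, the sign of $Q(G, \mu)$ is irrelevant and each piece contributes at most
$C_l(1+K)(1+\Vert g \Vert_{L^\infty_l}) \Vert h_{K,+}^l \Vert_{L^1_2}$, which is exactly the form required. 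Integrating in $x$ and combining gives the claimed estimate (1).

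For the regularizing term (2), the equation for $h$ has $-\epsilon L_\alpha(\mu - h)$ on the right, which equals $\epsilon L_\alpha(\mu - h)\cdot(-1)$; pairing against $h_{K,+}^l \langle v \rangle^l$ and using the identical integration-by-parts argument as in \cite{AMSY2}, Proposition 3.3 (the quoted reference for Lemma \ref{L41} (2)) yields the same dissipative $-\tfrac12 \Vert \langle v \rangle^\alpha h_{K,+}^l \Vert_{L^2_x H^1_v}^2$ term together with the stated lower-order errors. I expect the only point that needs minor care is bookkeeping the sign flip between the $\mu$ and $-h$ contributions when integrating the regularizing operator against the level set function, but this is a routine adaptation of the proof already given for $f$.
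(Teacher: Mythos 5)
Your proposal reproduces the paper's argument essentially verbatim: the decomposition $-Q(G,\mu-h)=Q(G,h-K/\langle v\rangle^l)+Q(G,K/\langle v\rangle^l)-Q(G,\mu)$, the observation that the first two pieces coincide with $T_1,T_2$ from Lemma \ref{L41} with $f_{K,+}^l$ replaced by $h_{K,+}^l$, and the remark that the $-Q(G,\mu)$ and $-L_\alpha(\mu)$ pieces are estimated in absolute value so the sign flip is harmless. The only cosmetic difference is that for part (2) the paper cites \cite{AMSY2} Proposition 3.4 (the $h$-version) rather than 3.3, but you correctly explain that the argument is the same integration-by-parts with the sign flipped on the $\mu$ contribution.
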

\begin{proof}
The regularizing term is proved in \cite{AMSY2} Proposition 3.4. We focus on the term later. First make the  decomposition
\begin{equation*}
\begin{aligned} 
-\int_{\R^3} Q(G, \mu - h) h_{K, +}^l \langle v \rangle^l dv  =& \int_{\R^3} Q(G, h-\frac K {\langle v \rangle^l}) h_{K, +}^l \langle v \rangle^l dv  +\int_{\R^3} Q(G, \frac {K} {\langle v \rangle^l}) h_{K, +}^l \langle v \rangle^l dv 
\\
&- \int_{\R^3} Q(G, \mu) f_{K, +}^l \langle v \rangle^l dv    :=K_1+K_2+K_3,
\end{aligned}
\end{equation*}
The $K_1$ $K_2$ term is the same as $T_1, T_2$ term in Lemma \ref{L41}, we have
\[
K_1 + K_2 \le -(c_1 -C_l \Vert  g \Vert_{L^\infty_{|\gamma|+9}} )\Vert h_{K, +}^l \Vert_{H^s_{\gamma/2}}^2 + C_{l}(1+\Vert g \Vert_{L^\infty_l}  ) \Vert h_{K,+}^l \Vert_{L^2_2}^2 
+ C_l K(l + \Vert g \Vert_{L^\infty_l})  \Vert h_{K, +}^l \Vert_{L^1_2},
\]
For the $K_3$ term, similarly as $T_3$ term in Lemma \ref{L41} we have
\begin{equation*}
\begin{aligned}
K_3  =&  \int_{\R^3} \int_{\R^3} \int_{\mathbb{S^2}} b (\cos \theta) |v-v_*|^\gamma G_* (-\mu) (f_{K, +}^l(v') \langle v' \rangle^l - f_{K, +}^l(v) \langle v \rangle^l) dv dv_* d\sigma
\\
=&  \int_{\R^3} \int_{\R^3} \int_{\mathbb{S^2}} b (\cos \theta) |v-v_*|^\gamma   G_*  (-\mu) \langle v \rangle^l (f_{K, +}^l(v')  - f_{K, +}^l(v) ) dv dv_* d\sigma
\\
&+ \int_{\R^3} \int_{\R^3} \int_{\mathbb{S^2}} b (\cos \theta) |v-v_*|^\gamma  G_* (-\mu) f_{K, +}^l(v')( \langle v' \rangle^l -  \langle v \rangle^l ) dv dv_* d\sigma:= K_{3, 1} +K_{3, 2}.
\end{aligned}
\end{equation*}
For the term $K_{3, 1}$, by \eqref{estimate Gamma 3}  we have
\[
K_{3, 1} \lesssim \int_{\R^3} \int_{\R^3}  |v-v_*|^\gamma   G_* |f_{K, +}^l(v)| dv dv_* \lesssim \Vert G \Vert_{L^\infty_4} \Vert f_{K, +}^l \Vert_{L^1} \lesssim (1 + \Vert g \Vert_{L^\infty_l})  \Vert f_{K, +}^l \Vert_{L^1}.
\]
For the term $K_{3, 2}$ by Lemma \ref{L23} we have
\[
K_{3, 2} \lesssim  \Vert  G\Vert_{L^\infty_l} \Vert f_{K, +}^l \Vert_{L^1_2} \lesssim   (1 + \Vert g \Vert_{L^\infty_l})  \Vert f_{K, +}^l \Vert_{L^1_2},
\]
the proof is ended by gathering all the terms and integrate in $x$.
\end{proof}

\begin{lem}\label{L43} Suppose $\gamma+2s>-1, s \in (0, 1), \gamma \in (-3, 0]$. For any smooth function $f, g$, suppose $G = \mu + g$ satisfies 
\[
G \ge 0,\quad  \Vert G \Vert_{L^1} \ge A, \quad \Vert G \Vert_{L^1_2} +\Vert G \Vert_{L \log L} \le B,
\]
for some constant $A, B>0$. Then for any $T>0$ and
\[
[T_1, T_2 ] \in [0, T] , \quad \epsilon \in [0, 1], \quad j \ge 0,  \quad \tau > 2,\quad  K \ge 0, \quad l \ge 10,
\]
we have
\begin{equation*}
\begin{aligned} 
&\int_{T_1}^{T_2} \int_{\T^3} \int_{\R^3} \left| \langle v \rangle^j (1-\Delta_v)^{-\tau/2} ( \tilde{Q}(G, F )  \langle v \rangle^l f_{K, +}^l  )           \right| dv dx dt
\\
\le& C \Vert \langle v \rangle^{j/2} f_{K, + }^l(T_1, \cdot, \cdot ) \Vert_{L^2_{x, v}}^2 + C (1+ \sup_{t, x}  \Vert g \Vert_{L^\infty_{9+|\gamma|}}) \Vert f_{K, +}^l \Vert_{L^2_{t, x} H^s_{\gamma/2}}^2 + C(1 + \sup_{t, x} \Vert g \Vert_{L^\infty_l}  )  \Vert f_{K, +}^l    \Vert_{L^2_{t, x} L^2_{j+2}}^2
\\
& + C (1 + K)   (1 +\sup_{t, x}  \Vert g \Vert_{L^\infty_l})   \Vert f_{K, +}^l \Vert_{L^1_{t, x} L^1_{j+2}},
\end{aligned}
\end{equation*}
where the constant $C$ is independent of $\epsilon$. Identical estimate holds for $\tilde{Q}(G, -\mu +h)$ with $f_{K, +}^l$ replaced by $h_{K, +}^l$
\end{lem}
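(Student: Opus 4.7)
The plan is a direct combination of the pointwise-in-$v$ averaging identity of Lemma~\ref{L220} with the weighted dissipation/upper bounds of Lemmas~\ref{L41} and~\ref{L38} (and, for the $h$ case, Lemma~\ref{L42} together with the companion pairing lemma immediately following Lemma~\ref{L220}). Fix $v$; since $f$ solves \eqref{linearized equation} we may apply Lemma~\ref{L220} pointwise in $v$ and integrate in $v$ to obtain
\begin{equation*}
\text{LHS}\le \tfrac12\int_{\T^3}\!\!\int_{\R^3}\!\langle v\rangle^j (1-\Delta_v)^{-\tau/2}(f_{K,+}^l)^2(T_1)\,dv\,dx+2\int_{T_1}^{T_2}\!\!\int_{\T^3}\!\!\int_{\R^3}\!\bigl[\langle v\rangle^j (1-\Delta_v)^{-\tau/2}\tilde Q(G,F)\,\langle v\rangle^l f_{K,+}^l\bigr]^+ dv\,dx\,dt.
\end{equation*}

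For the first (initial data) term, the hypothesis $\tau>2$ ensures that $(1-\Delta_v)^{-\tau/2}$ is convolution with a positive Bessel kernel decaying faster than any polynomial. By self-adjointness and the elementary pointwise estimate $(1-\Delta_v)^{-\tau/2}\langle v\rangle^j\lesssim \langle v\rangle^j$,
\begin{equation*}
\int_{\R^3}\!\langle v\rangle^j(1-\Delta_v)^{-\tau/2}(f_{K,+}^l)^2\,dv=\int_{\R^3}\!(f_{K,+}^l)^2\,(1-\Delta_v)^{-\tau/2}(\langle v\rangle^j)\,dv\lesssim \|\langle v\rangle^{j/2} f_{K,+}^l\|_{L^2_v}^2,
\end{equation*}
which produces the $C\|\langle v\rangle^{j/2}f_{K,+}^l(T_1)\|_{L^2_{x,v}}^2$ contribution.

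For the positive-part term, one peels off the nonnegative factor $\langle v\rangle^l f_{K,+}^l$ and introduces a sign $\sigma(v)\in\{\pm 1\}$ so that by duality
\begin{equation*}
\int_{\R^3}\!\bigl|\langle v\rangle^j(1-\Delta_v)^{-\tau/2}\tilde Q(G,F)\bigr|\,\langle v\rangle^l f_{K,+}^l\,dv=\int_{\R^3}\!\tilde Q(G,F)\,\Psi\,dv,\qquad\Psi:=(1-\Delta_v)^{-\tau/2}\bigl(\sigma\,\langle v\rangle^{j+l}f_{K,+}^l\bigr).
\end{equation*}
Since $\tau>2>2s$, the smoothing in $\Psi$ consumes the Sobolev loss produced by the singular collision kernel, so Lemma~\ref{L23} applied to $Q(G,F)$ paired against $\Psi$, combined with the weighted-commutator manipulations already carried out in the proof of Lemma~\ref{L41}, yields precisely the $\bigl(1+\sup_{t,x}\|g\|_{L^\infty_{9+|\gamma|}}\bigr)\|f_{K,+}^l\|_{L^2_{t,x}H^s_{\gamma/2}}^2$ term, the $\bigl(1+\sup_{t,x}\|g\|_{L^\infty_l}\bigr)\|f_{K,+}^l\|_{L^2_{t,x}L^2_{j+2}}^2$ term, and the $(1+K)\bigl(1+\sup_{t,x}\|g\|_{L^\infty_l}\bigr)\|f_{K,+}^l\|_{L^1_{t,x}L^1_{j+2}}$ term. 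The regularizing contribution $\epsilon L_\alpha F$ is estimated identically via Lemma~\ref{L38}; since $(1-\Delta_v)^{-\tau/2}$ smooths $\epsilon L_\alpha F$ back to a controlled expression, it is absorbed into the $L^2_{j+2}$ and $L^1_{j+2}$ quantities uniformly in $\epsilon\in[0,1]$.

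The main obstacle is the bookkeeping of polynomial velocity weights through the Bessel conjugation: one has to verify carefully that the commutator $[(1-\Delta_v)^{-\tau/2},\langle v\rangle^{j+l}]$ produces only terms absorbable by $\|f_{K,+}^l\|_{L^2_{j+2}}$ and $\|f_{K,+}^l\|_{L^1_{j+2}}$ rather than by a quantity with the larger weight $\langle v\rangle^{j+l}$. This is precisely where the strict inequality $\tau>2$ is used, so that two powers of velocity are ``traded'' against the smoothing order of the Bessel potential. The companion estimate for $h_{K,+}^l$ is then immediate upon replacing Lemma~\ref{L220} by its $h$-analog and Lemma~\ref{L41} by Lemma~\ref{L42}.
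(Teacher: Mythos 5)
Your proof has a genuine gap at the central step. The quantity to bound is
\begin{equation*}
\int_{\R^3}\Bigl|\langle v\rangle^j (1-\Delta_v)^{-\tau/2}\bigl(\tilde{Q}(G,F)\,\langle v\rangle^l f_{K,+}^l\bigr)\Bigr|\,dv,
\end{equation*}
where the Bessel potential acts on the \emph{whole product} $\tilde{Q}(G,F)\,\langle v\rangle^l f_{K,+}^l$. In your duality step you write instead $\bigl|\langle v\rangle^j (1-\Delta_v)^{-\tau/2}\tilde Q(G,F)\bigr|\,\langle v\rangle^l f_{K,+}^l$, i.e.\ you have quietly pulled the factor $\langle v\rangle^l f_{K,+}^l$ outside of $(1-\Delta_v)^{-\tau/2}$. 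These are not the same object (the Bessel potential is not a local multiplier), and as a consequence your $\Psi=(1-\Delta_v)^{-\tau/2}(\sigma\,\langle v\rangle^{j+l}f_{K,+}^l)$ is not a usable test function: it has the unknown $f_{K,+}^l$ inside the Bessel potential, so it is not pointwise bounded by $\langle v\rangle^j$ and cannot be fed into the commutator estimates from Lemma~\ref{L41}, nor can Lemma~\ref{L23} be applied without extra information on $\Psi$.

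The paper's proof keeps the product $\tilde{Q}(G,F)\,\langle v\rangle^l f_{K,+}^l$ outside the Bessel potential. After Lemma~\ref{L220} reduces to the positive part, duality transfers $(1-\Delta_v)^{-\tau/2}$ \emph{onto the characteristic function}, producing the fixed, nonnegative, $W^{2,\infty}$ weight $W_K=(1-\Delta_v)^{-\tau/2}(\langle v\rangle^j 1_{A_K})$. The role of $\tau>2$ is precisely to obtain $|W_K|+|\nabla W_K|+|\nabla^2 W_K|\le C\langle v\rangle^j$ (it is not ``to consume the Sobolev loss of the collision kernel''). With that bound in hand one can repeat the level-set decomposition of Lemma~\ref{L41} with the extra weight $W_K$, and the one genuinely new term,
\begin{equation*}
T_{1,3}' = \frac12\int b(\cos\theta)|v-v_*|^\gamma G_*\,|f_{K,+}^l(v)|^2\bigl(W_K(v')-W_K(v)\bigr)\,dv\,dv_*\,d\sigma,
\end{equation*}
is controlled via the $W^{2,\infty}$ cancellation estimate of Lemma~\ref{L215}. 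Your proposal never isolates this term and never records the $W^{2,\infty}$ bound on $W_K$, which is the actual content of the lemma. The auxiliary ``sign function'' device is also unnecessary once the positive-part reduction of Lemma~\ref{L220} is applied, and it would destroy the sign information $W_K\ge 0$ used to make the $T_{1,1}'$ contribution nonpositive.
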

\begin{proof}
First note that for any $\tau \ge 0$, by the theory of Bessel kernel
\[
(I - \Delta_v)^{-\tau/2} f = G_\tau * f, \quad G_\tau \in L^1(\R^d),
\]
which implies
\[
\int_{\R^3} \int_{\T^3} \langle v \rangle^j (1-\Delta_v)^{-\tau/2} (f_{K, +}^l)^2 (T_1, x, v) dx dv \le C \Vert \langle v \rangle^{j/2} f_{K, +}^l (T_1, \cdot, \cdot) \Vert_{L^2_{x, v}}^2.
\]
By Lemma \ref{L220} we have that for any $j \ge 0, l \ge 0, \tau \ge 0$ 
\begin{equation*}
\begin{aligned} 
&\int_{T_1}^{T_2} \int_{\T^3} \int_{\R^3} \left| \langle v \rangle^j (1-\Delta_v)^{-\tau/2} ( \tilde{Q}(G, F )  \langle v \rangle^l f_{K, +}^l  )           \right| dv dx dt -  C \Vert \langle v \rangle^{l/2} f_{K, +}^l (T_1, \cdot, \cdot) \Vert_{L^2_{x, v}}^2
\\
& \le 2 \int_{T_1}^{T_2} \int_{\T^3} \int_{\R^3} \left[ \langle v \rangle^j (1-\Delta_v)^{-\tau/2}  (\tilde{Q}(G, F )  \langle v \rangle^l f_{K, +}^l     )        \right]^+ dv dx dt
\\
& = 2 \int_{T_1}^{T_2} \int_{\T^3} \int_{\R^3} \langle v \rangle^j (1-\Delta_v)^{-\tau/2}  (\tilde{Q}(G, F )  \langle v \rangle^l f_{K, +}^l  )     1_{A_K} dv dx dt
\\
& = 2 \int_{T_1}^{T_2} \int_{\T^3} \int_{\R^3} \tilde{Q}(G, F )  \langle v \rangle^l f_{K, +}^l  (1-\Delta_v)^{-\tau/2}  (\langle v \rangle^j   1_{A_K}  )    dv dx dt,
\end{aligned}
\end{equation*}
where $A_K$ is the set given by 
\[
A_K = \{   (t, x, v) \in (T_1, T_2) \times \T^3 \times \R^3 |(1-\Delta_v)^{-\tau/2}  (\tilde{Q}(G, F )  \langle v \rangle^l f_{K, +}^l  )   \ge 0  \}.
\]
Denote 
\begin{equation}
\label{W k}
W_K(v) : =  (1-\Delta_v)^{-\tau/2}  (\langle v \rangle^j   1_{A_K}  )   \ge 0,
\end{equation}
since $\tau >2$, by (3.39) in \cite{AMSY2} we have
\[
|W_K(v) | + |\nabla_i W_K(v) | +| \nabla^2_{i, j} W_k(v) | \le C \langle v \rangle^{j} ,\quad i, j = 1,2,3,
\]
with $C>0$ is independent of $K$. Then we only need to estimate the term
\begin{equation*}
\begin{aligned} 
&\int_{T_1}^{T_2} \int_{\T^3} \int_{\R^3} \tilde{Q}(G, F )  \langle v \rangle^l f_{K, +}^l  W_k  dv dx dt
\\
=&\int_{T_1}^{T_2} \int_{\T^3} \int_{\R^3} {Q}(G, F )  \langle v \rangle^l f_{K, +}^l  W_k  dv dx dt  + \epsilon \int_{T_1}^{T_2} \int_{\T^3} \int_{\R^3}   L_\alpha (F)  \langle v \rangle^l f_{K, +}^l  W_k  dv dx dt,
\end{aligned}
\end{equation*}
For the second term, by \cite{AMSY2} Proposition 3.7 we have
\[
\int_{\T^3} \int_{\R^3  }    L_\alpha(\mu + f) f_{K, +}^l  \langle v \rangle^{l} W_k dv dx  \le C_{l ,\alpha} \Vert  f_{K, +}^l \Vert_{L^2_{x}L^2_{j/2} }^2 + C_{l, \alpha} (1 + K)  \Vert f_{K, +}^l \Vert_{L^1_{x}L^1_j }.
\]
We focus on the collision term later. First make the decomposition
\begin{equation*}
\begin{aligned} 
\int_{\R^3} Q(G, F) f_{K, +}^l \langle v \rangle^l W_K dv  =& \int_{\R^3} Q(G, f-\frac K {\langle v \rangle^l}) f_{K, +}^l \langle v \rangle^l  W_K dv  +\int_{\R^3} Q(G, \frac {K} {\langle v \rangle^l}) f_{K, +}^l \langle v \rangle^l W_K dv 
\\
&+\int_{\R^3} Q(G, \mu) f_{K, +}^l \langle v \rangle^l W_K dv    :=T_1'+T_2'+T_3'.
\end{aligned}
\end{equation*}
For the $T_1'$ term performing a similar argument we still have
\begin{equation*}
\begin{aligned} 
T_1' =& \int_{\R^3} \int_{\R^3} \int_{\mathbb{S}^2} b(\cos \theta) |v-v_*|^\gamma G_* ( f - \frac {K} {\langle v \rangle^l}  )(f_{K, +}^l(v') W_K(v') \langle v' \rangle^l  -f_{K, +}^l(v) \langle v \rangle^l W_K(v) )dv dv_* d \sigma
\\
\le &  \int_{\R^3} \int_{\R^3} \int_{\mathbb{S}^2} b(\cos \theta) |v-v_*|^\gamma G_*  f_{K, +}^l (v) \frac  1 {\langle v \rangle^l}  (f_{K, +}^l(v') W_K(v')  \langle v' \rangle^l  -f_{K, +}^l(v) \langle v \rangle^l W_K(v) )  dv dv_* d \sigma.
\end{aligned}
\end{equation*}
By the Cauchy-Schwarz inequality 
\begin{equation*}
\begin{aligned} 
&f_{K, +}^l(v) \frac 1 {\langle  v \rangle^l} (f_{K, +}^l (v')W_K(v') \langle v' \rangle^l  - f_{K, +}^l (v)  W_k(v ) \langle v \rangle^l  ) 
\\
 =& \frac 1 {\langle v \rangle^l} (f_{K,+}^l (v ) f_{K,+}^l (v')W_K(v')  \langle v \rangle^l  \cos^l \frac \theta 2 - |f_{K,+}^l (v )|^2  \langle v \rangle^l W_K) 
+ \frac 1 {\langle v \rangle^l}  f_{K,+}^l (v ) f_{K,+}^l (v') W_K(v' ) (\langle v' \rangle^l  - \langle v \rangle^l  \cos^l \frac \theta 2 )
\\
\le &\frac 1 2 \left( |f_{K,+}^l (v') |^2 W_K(v') \cos^{2l} \frac \theta 2  - |f_{K,+}^l (v )|^2 W_K(v) \right) 
+ \frac 1 {\langle v \rangle^l}  f_{K,+}^l (v ) f_{K,+}^l (v') W_K(v') (\langle v' \rangle^l  - \langle v \rangle^l  \cos^l \frac \theta 2 )  
\\
&+\frac 1 2 |f_{K, +}^l(v) |^2 (W_K(v')-W_K(v) ),
\end{aligned}
\end{equation*}
we split  $T_1'$ term into 3 terms $T_1' \le T_{1, 1}' + T_{1, 2}' +T_{1, 3}'$ respectively. For the  term $T_{1, 1}'$  by cancellation lemma and Lemma \ref{L212} we have
\begin{equation*}
\begin{aligned} 
T_{1,1}' =& \frac 1 2 \int_{\R^3} \int_{\R^3} \int_{\mathbb{S}^2} b(\cos \theta) |v-v_*|^\gamma G_*  \left( |f_{K,+}^l (v') |^2 W_k(v' ) \cos^{2l} \frac \theta 2-|f_{K,+}^l (v )|^2 W_k(v )  \right)  dv dv_* d\sigma
\\
= &  -\gamma_0 \int_{\R^3} \int_{\R^3}  |v-v_*|^\gamma (\mu + g ) (f_{K,+}^l (v ))^2 W_k(v )   dv dv_* \le -\gamma_0 (1- C_l \Vert  g \Vert_{L^\infty_{4+|\gamma| } }  )  \Vert f_{K, +}^l \sqrt{W_K} \Vert_{L^2_{\gamma/2}}.
\end{aligned}
\end{equation*}
where $\gamma_0$ is defined in \eqref{gamma 0}. For the $T_{1, 2}'$ term by \eqref{estimate Gamma 3} we have
\begin{equation*}
\begin{aligned} 
T_{1, 2}' = &\int_{\R^3} \int_{\R^3} \int_{\mathbb{S}^2}b(\cos \theta) |v-v_*|^\gamma G_*  \frac 1 {\langle v \rangle^l}  f_{K,+}^l (v ) f_{K,+}^l (v')W_k(v') (\langle v' \rangle^l  - \langle v \rangle^l  \cos^l \frac \theta 2 ) dv dv_* d\sigma
\\
\lesssim&\Vert G \Vert_{L^\infty_{|\gamma| + 9  }} \Vert f_{K, +}^l \Vert_{H^s_{ \gamma/2 }}\Vert f_{K, +}^l  W_k \Vert_{L^2_{  \gamma/2}}  +\Vert G \Vert_{L^\infty_l}  \Vert  f_{K, +}^l \Vert_{L^2_{3-l} } \Vert f_{K, +}^l  W_k  \Vert_{L^2_{2}} 
\\
\lesssim& (1+ \Vert g \Vert_{L^\infty_{|\gamma| + 9  }} ) \Vert f_{K, +}^l \Vert_{H^s_{ \gamma/2 }}^2  +(1 + \Vert g \Vert_{L^\infty_l}  )  \Vert f_{K, +}^l    \Vert_{L^2_{j+2}}^2.
\end{aligned}
\end{equation*}
For the $T_{1, 3}'$ term by Lemma \ref{L215} and Lemma \ref{L211} we have
\begin{equation*}
\begin{aligned} 
T_{1, 3}' =& \int_{\R^3} \int_{\R^3} \int_{\mathbb{S}^2} b(\cos \theta) |v-v_*|^\gamma G_*  |f_{K, +}^l(v)|^2  ( W_K' - W_K )dv dv_* d \sigma
\\
\lesssim& \int_{\R^3} \int_{\R^3}  |v-v_*|^{2+\gamma}  G_*  |f_{K, +}^l(v')|^2  \left(  \sup_{|u| \le |v_*| + |v|} |\nabla W_K(u)|  +  \sup_{|u| \le |v_*| + |v|} |\nabla^2 W_K (u)| \right) dv dv_* 
\\
\lesssim& \int_{\R^3} \int_{\R^3}  |v-v_*|^{\gamma}  G_*  \langle v_* \rangle^{j+2} |f_{K, +}^l(v')|^2  \langle v \rangle^{j+2} dv dv_* 
\\
\lesssim& (1 + \Vert g \Vert_{L^\infty_{j+6}} )\Vert f_{K, +}^l\Vert_{L^2_{j/2+1}}^2. 
\end{aligned}
\end{equation*}
The term  $T_2', T_3'$ is the same as the term $T_2, T_3$ in Lemma \ref{L41}, thus we have
\[
|T_2'| + |T_3'| \lesssim C(1 + K)   (1 + \Vert g \Vert_{L^\infty_l})  \Vert f_{K, +}^l W_K\Vert_{L^1_2} \lesssim C(1 + K)   (1 + \Vert g \Vert_{L^\infty_l})   \Vert f_{K, +}^l \Vert_{L^1_{j+2}},
\]
so the proof is ended by gathering all the terms together and integrate in $t $ and $x$. 
\end{proof}

For any $l \ge 0, p>1, 0< s''<\frac s {2(s+3)}, K \ge 0, 0 \le T_1 \le T_2$ fixed, we define the energy functional 
\begin{align}
\label{energy functional}
\nonumber
\mathcal{E}_{p, s''} (K, T_1, T_2) :=& \sup_{t \in [T_1, T_2] } \Vert f_{K, +}^l \Vert_{L^2_{x, v}}^2   + \int_{T_1}^{T_2} \int_{\T^3} \Vert \langle  v \rangle^{\gamma/2} f_{K, +}^l \Vert_{H^s_v}^2 dx d\tau 
\\ 
&+\frac 1 {C_0} \left( \int_{T_1}^{T_2} \Vert (1-\Delta_x)^{s''/2} ( \langle v \rangle^{-2+ \gamma/2} f_{K, +}^l)^2\Vert_{L^p_{x, v}}^p d\tau \right)^{1/p}.
\end{align}
where $C_0$ is a constant which will be determined later.

\begin{lem}\label{L44}
Let the parameters $T_1, T_2, s, s'', l, n$ be given such that
\[
0 \le T_1 <T_2 < T, \quad 0 <s'' < \frac s {2(s+3)} \in (0, 1), \quad  l \ge 0, \quad  n \ge 0. 
\]
There exist a constant $p^{c} >1$ depends on $s, s''$ such that for any $p \in (1, p^{c})$ fixed, there exist a $l_0>0$ large such that if we suppose
\[
\sup_{t}\Vert \langle v \rangle^{l+l_0} f \Vert_{L^2_{x, v}} \le   C_1,
\]
for some constant $C_1>0$. Then there exist a constant $q_*$ which is independent of $p$ and satisfies $1 < q_* <\frac {r(1)} 2$ such that the following holds: For any $1<q <q_*$ we can find a pair of parameter $(r_*, \xi_*)$ with the properties
\[
r_* >q_*>q>1, \quad \xi_* >2q_* >2q >2,
\] 
such that for any $0 \le M  < K $ and $0 \le T_1 \le T_2 \le T$ we have
\[ 
\Vert \langle v \rangle^{\frac n q} (f_{K, +}^l)^2 \Vert_{L^q((T_1, T_2  ) \times \T^3 \times \R^3)} \le C \frac { \mathcal{E}_{p, s''}  (M, T_1, T_2)^{\frac {r_*} q} } {(K-M)^{\frac {\xi_* - 2q} q }   } ,
\] 
where C  only depends on $(C_1, s, s'', q, p)$ and $q_*$ only depends on $(s, s'')$,  $r_*, \xi_*$ only depends on $(s, s'', p)$ and  $l_0 $ only depends on $(s, s'', p, n)$. In particular, all these parameters are independent of $K, M, T_1, T_2, l$ and $f$.
\end{lem}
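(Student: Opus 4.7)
Combine a De Giorgi-style truncation inequality with a Gagliardo--Nirenberg-type interpolation built from the three ingredients of $\mathcal{E}_{p,s''}(M,T_1,T_2)$.

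\textbf{Step 1 (truncation).} Because $M<K$, on the set $\{f_{K,+}^l>0\}$ both pointwise inequalities $f_{K,+}^l\le f_{M,+}^l$ and $K-M\le f_{M,+}^l$ hold. Multiplying them and rearranging yields, for any $\xi_*>2q$,
$$(f_{K,+}^l)^{2q}\le \frac{(f_{M,+}^l)^{\xi_*}}{(K-M)^{\xi_*-2q}}.$$
After multiplying by $\langle v\rangle^{n}$ and integrating over $(T_1,T_2)\times\T^3\times\R^3$, the claim reduces to proving
$$\int_{T_1}^{T_2}\!\int_{\T^3}\!\int_{\R^3}\langle v\rangle^{n}(f_{M,+}^l)^{\xi_*}\,dv\,dx\,d\tau\le C\,\mathcal{E}_{p,s''}(M,T_1,T_2)^{r_*}.$$

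\textbf{Step 2 (higher integrability in $x$).} Choose $p^c=3/s''$ so that, for every $p\in(1,p^c)$, the Bessel potential space $H^{s'',p}(\T^3)$ embeds into $L^{p^*}(\T^3)$ with $\tfrac{1}{p^*}=\tfrac{1}{p}-\tfrac{s''}{3}>0$. Applied pointwise in $(t,v)$ to $(\langle v\rangle^{-2+\gamma/2}f_{M,+}^l)^2$, the third term of $\mathcal{E}_{p,s''}$ then gives
$$\big\|(\langle v\rangle^{-2+\gamma/2}f_{M,+}^l)^2\big\|_{L^p_tL^{p^*}_xL^p_v}\lesssim \mathcal{E}_{p,s''},$$
i.e.\ $\langle v\rangle^{-2+\gamma/2}f_{M,+}^l\in L^{2p}_tL^{2p^*}_xL^{2p}_v$ with norm $\lesssim\mathcal{E}_{p,s''}^{1/2}$.

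\textbf{Step 3 (interpolation).} Combine three controls on $f_{M,+}^l$: the bound $L^\infty_tL^2_{x,v}$ from the first term of $\mathcal{E}$, the $v$-regularity $\langle v\rangle^{\gamma/2}f_{M,+}^l\in L^2_tL^2_xH^s_v$ from the second, and the $x$-integrability from Step~2. A pointwise-in-$t$ application of the mixed Sobolev inequality of Lemma~\ref{L217} (with $\eta=s$, $\eta'=s''$, $m=p$), followed by H\"older in $t$, produces
$$\|f_{M,+}^l\|_{L^{\xi_*}_{t,x,v}}^{\xi_*}\lesssim \mathcal{E}_{p,s''}^{r_*}$$
for a pair $(r_*,\xi_*)$ with $\xi_*>2q_*>2$ and $r_*>q_*>1$. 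The threshold $q_*$ is taken strictly below $\tilde r(s,s'',1,3)/2$, which depends only on $(s,s'')$; continuity of $\tilde\alpha(\cdot)$ in $m$ (as stated in Lemma~\ref{L217}) guarantees that for $p$ sufficiently close to $1$ (tightening $p^c$ if necessary) the same inequality persists with $m=p$, so that $q_*$ remains independent of $p$.

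\textbf{Step 4 (velocity weight) and main obstacle.} The weight $\langle v\rangle^{n}$ is absorbed through H\"older: write
$$\langle v\rangle^{n}(f_{M,+}^l)^{\xi_*}=\big(\langle v\rangle^{n/\delta}f_{M,+}^l\big)^{\delta}(f_{M,+}^l)^{\xi_*-\delta}$$
for a small $\delta>0$ and invoke the standing hypothesis $\sup_t\|\langle v\rangle^{l+l_0}f\|_{L^2_{x,v}}\le C_1$, choosing $l_0=l_0(s,s'',p,n)$ so that $n/\delta\le l_0$; this controls the high-weight factor uniformly by $C_1$, while Step~3 controls the remaining factor by $\mathcal{E}_{p,s''}^{r_*}$. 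The truly delicate point is Step~3: producing, for each $q<q_*$, exponents $(r_*,\xi_*)$ meeting $r_*>q_*>q$ and $\xi_*>2q_*>2q$ \emph{uniformly} in $p$ near $1$. This is precisely the role of the continuity statement in Lemma~\ref{L217}; without it, $q_*$ would depend on $p$ and the subsequent De Giorgi iteration in Section~\ref{section 4} would fail to close.
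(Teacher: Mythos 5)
Your overall skeleton is the right one: the truncation $f_{M,+}^l\ge f_{K,+}^l+(K-M)$ on $\{f_{K,+}^l>0\}$ to extract the factor $(K-M)^{-(\xi_*-2q)}$, then a mixed interpolation via Lemma~\ref{L217} using all three pieces of $\mathcal{E}_{p,s''}$, and finally absorption of the velocity weight through the extra moment hypothesis $\sup_t\|\langle v\rangle^{l+l_0}f\|_{L^2_{x,v}}\le C_1$ by taking $l_0$ large. You also correctly identify that the continuity of $\tilde\alpha(\cdot)$ (and $\tilde r(\cdot)$) in $m$ is what lets one keep $q_*$ independent of $p$.

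There is, however, a genuine gap exactly where you flag the argument as ``delicate.'' Step~3 is not a proof: it asserts the existence of a pair $(r_*,\xi_*)$ with $\xi_*>2q_*>2q$ and $r_*>q_*>q$ but gives no construction and no verification of these strict inequalities. The construction is nontrivial. In the paper's argument one first fixes $r(p)=\tilde r(s,s'',p,3)$, expresses the Lebesgue interpolation exponent $\beta$ via $\tfrac1\xi=\tfrac{1-\beta}{2}+\tfrac{\beta}{r(p)}$ so that $\beta\xi=\tfrac{r(p)(\xi-2)}{r(p)-2}$, and then shows that the quantity $\zeta\tfrac{\beta\xi}{2}+(1-\zeta)\tfrac{\beta\xi}{2p}$ (with $\zeta=\tilde\alpha(s,s'',p,3)$, which is precisely the exponent-splitting dictated by Lemma~\ref{L217}) is $<1$ as $\xi\downarrow 2q_*$ and $>1$ as $\xi\uparrow r(1)$, so by the intermediate value theorem some $\xi_*\in(2q_*,r(1))$ achieves equality. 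The two needed inequalities are secured by the choices of $p^c$ and $q_*$ via the constraints $\min_{[1,p^c]}\tfrac{r(p)}{2p}\tfrac{r(1)-2}{r(p)-2}>1$ and $q_*=1+\tfrac{r(1)-2}{2r(p^c)}$. Your choice $p^c=3/s''$, dictated by a Sobolev embedding $H^{s'',p}\hookrightarrow L^{p^*}$, is unrelated to these constraints and does not ensure them; consequently your Step~3 cannot close without this extra analysis. Moreover, once one invokes Lemma~\ref{L217} directly (which already consumes $(1-\Delta_x)^{s''/2}(f^2)$ in $L^p_x$), the additional Sobolev step in your Step~2 producing $L^{p^*}_x$ integrability is never used and is in fact a dead end: $\mathcal{E}_{p,s''}$ is fed to Lemma~\ref{L217} as is, not through an intermediate embedding. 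Finally, $r_*>q_*$ is likewise asserted rather than proved: in the paper it follows only after noting $r_*=\bigl(1-(1-\beta')(1-\beta_*)\bigr)\tfrac{\xi_*}{2}$ and sending $\beta'\to1$ (i.e.\ taking $l_0$ large), which is a genuine step, not automatic.
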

\begin{proof}
For any $p >1$, define 
\[
r(1) : = \tilde{r } (s, s'', 1, 3) , \quad r(p ) := \tilde{r } (s, s'', p, 3),
\]
where $\tilde{r}(\cdot, \cdot, \cdot, \cdot)$ is defined in Lemma \ref{L217}. By the continuity of $r( \cdot)$, 
\[
\frac {r(z)} {2z} \frac {r(1) -2} {r(z) -2} \to \frac {r(1)} 2 >1,  \quad \hbox{as} \quad  z \to 1,
\]
there exist a $p^{c}  \in (1, 2)$ fixed and close enough to $1$ such that
\begin{equation}
\label{requirement p c}
\min_{[1, p^c]} \frac {r(p)} {2p} \frac {r(1) -2} {r(p) -2} >1. 
\end{equation}
Choose
\[
q_* : = 1+ \frac {r(1) -2} {2 r(p^c)}, \quad 1<  q_* < r(1)/2, 
\]
it is clear that $q_*$ only depends on $p^c, s, s''$(independent of $p$). Such choice guarantees that
\begin{equation}
\label{requirement q *}
\frac {r(p^{c})} 2 \frac {2 q_* -2} {r(1) -2}  =\frac 1 2 <1.
\end{equation}
For any $\xi, q$ satisfying $2<2q <\xi<r(1) <r(p)$,  define $\beta =\beta(\xi, p) \in (0, 1) $ by 
\[
\frac 1 {\xi} = \frac {1-\beta} {2} +\frac \beta {r(p)}, \quad \beta \in (0, 1), \quad \beta \xi = \frac {r(p) (\xi -2)} { r(p) -2 }. 
\]
By \eqref{requirement q *} we have
\[
\frac {\beta \xi} {2p} <\frac {\beta \xi} {2 }  =  \frac {r(p)} 2 \frac { \xi -2} { r(p) -2 } \le \frac {r(p^c)} 2 \frac { \xi -2} { r(1) -2 }<1 , \quad \hbox{as} \quad \xi \to 2 q_*, \quad \xi >2q_*, \quad \forall p \in(1, p^c),
\]
and by \eqref{requirement p c} we have
\[
\frac {\beta \xi} {2} > \frac {\beta \xi} {2p}  =  \frac {r(p)} {2p} \frac { \xi -2} { r(p) -2 } \to \frac {r(p)} {2p} \frac { r(1)  - 2 } { r(p) -2 }>1, \quad \hbox{as} \quad \xi \to r(1), \quad \xi < r(1), \quad \forall p \in(1, p^c).
\]
So for any $\zeta \in (0, 1)$, for any $p \in (1, p^c)$ we can find a pair $\beta_*, \xi_*$ such that
\[
\zeta \frac {\beta_* \xi_*} {2} +(1-\zeta) \frac {\beta_* \xi_*} {2p} =1, \quad \frac 1 {\xi_*} = \frac {1-\beta_*} {2} +\frac {\beta_*} {r(p)}. 
\]
We will take $\zeta = \tilde{\alpha}(s, s'', p, 3)$ later, where $\tilde{\alpha}(\cdot, \cdot, \cdot, \cdot)$ is defined in Lemma \ref{L217}. Wwith the parameters above we can start prove the theorem. For any $q \in (1, q_*)$, since $K > M > 0$ implies $f_{M, +}^l \ge f_{K, +}^l +(K-M) $ , by H\"older inequality  we have
\begin{equation*}
\begin{aligned} 
\Vert \langle v \rangle^{\frac n q} (f_{K, +}^l)^2 \Vert_{L^q_{t, x, v}}^q  =& \int_{T_1}^{T_2} \Vert \langle v \rangle^{\frac n {2q} } f_{K, +}^l  \Vert_{L^{2q}_{x, v}}^{2q} d\tau  \le\frac 1 { (K-M)^{\xi_* -2q} } \int_{T_1}^{T_2} \Vert \langle v \rangle^{n/\xi_*} f_{M, +}^l\Vert_{L^{\xi_*}_{x, v} }^{\xi_*} d\tau
\\
\le &\frac 1 { (K-M)^{\xi_* -2q} } \int_{T_1}^{T_2} \Vert\langle v \rangle^{a_0} f_{M, +}^l \Vert_{L^2_{x, v}}^{(1-\beta_*)\xi_*} \Vert\langle v \rangle^{-4+ \gamma/2} f_{M, +}^l \Vert_{L^{r(p)}_{x, v}}^{\beta_*\xi_*} d\tau ,
\end{aligned}
\end{equation*}
where $a_0 = \frac 1 {1-\beta_*} (\frac n {\xi_*} + (4-\gamma/2) \beta_* )$. By Lemma \ref{L217} with parameter $(r(p), s, s'' , p )$ and H\"older's inequality we have
\begin{equation*}
\begin{aligned}
&\int_{T_1}^{T_2} \Vert\langle v \rangle^{a_0} f_{M, +}^l \Vert_{L^2_{x, v}}^{(1-\beta_*)\xi_*} \Vert\langle v \rangle^{-4+ \gamma/2} f_{M, +}^l \Vert_{L^{r(p)}_{x, v}}^{\beta_*\xi_*} d\tau
\\
\le& C \int_{T_1}^{T_2}  \Vert\langle v \rangle^{a_0} f_{M, +}^l \Vert_{L^2_{x, v}}^{(1-\beta_*)\xi_*} \Vert (- \Delta_v )^{s/2}  (\langle v \rangle^{-4+\gamma/2} f_{M, +}^l )  \Vert_{L^2_{x, v}}^{\zeta \beta_* \xi_*}  \Vert  \langle v \rangle^{-8 + \gamma}  (1- \Delta_x )^{\frac {s''} 2} (f_{M, +}^ l )^2 \Vert_{L^1_v L^p_x}^{\frac{ 1-\zeta} 2 \xi_* \beta_* }  d\tau
\\
\le& C \int_{T_1}^{T_2}  \Vert\langle v \rangle^{a_0} f_{M, +}^l \Vert_{L^2_{x, v}}^{(1-\beta_*)\xi_*} \Vert (- \Delta_v )^{s/2}  (\langle v \rangle^{\gamma/2} f_{M, +}^l )  \Vert_{L^2_{x, v}}^{\zeta \beta_* \xi_*}  \Vert  \langle v \rangle^{-4+ \gamma}  (1- \Delta_x )^{\frac {s''} 2} (f_{M, +}^ l )^2 \Vert_{L^p_{x, v} }^{\frac{ 1-\zeta} 2 \xi_* \beta_* }  d\tau
\\
\le& C\left( \sup_{t}  \Vert\langle v \rangle^{a_0} f_{M, +}^l     \Vert_{L^2_{x, v}}^{(1-\beta_*)\xi_*}   \right)    \left( \int_{T_1}^{T_2} \Vert (- \Delta_v )^{s/2}  ( \langle v \rangle^{\gamma/2} f_{M, +}^l )  \Vert_{L^2_{x, v}}^{2}  d\tau \right)^{\frac {\zeta \beta_* \xi_*} 2 } 
\\
&\times \left( \int_{T_1}^{T_2} \Vert  \langle v \rangle^{-4+ \gamma}  (1- \Delta_x )^{\frac {s''} 2} (f_{M, +}^ l )^2 \Vert_{L^p_{x, v} }^{ p} d\tau  \right)^{\frac{ 1-\zeta} {2p} \xi_* \beta_* }
\\
 \le  &C\left( \sup_{t}  \Vert\langle v \rangle^{a_0} f_{M, +}^l     \Vert_{L^2_{x, v}}^{(1-\beta_*)\xi_*}   \right)   \mathcal{E}_{p, s''} (M, T_1. T_2)^{\frac {\zeta \beta_* \xi_*} 2 }  \mathcal{E}_{p, s''} (M, T_1. T_2)^{\frac{ 1-\zeta} {2} \xi_* \beta_* }. 
\end{aligned}
\end{equation*}
By H\"older inequality we easily have
\begin{equation*}
\begin{aligned}
&\sup_t \Vert\langle v \rangle^{a_0} f_{M, +}^l     \Vert_{L^2_{x, v}}^{(1-\beta_*)\xi_*}  \le  \sup_t\Vert  \langle v \rangle^{l_0} f_{M, +}^l     \Vert_{L^2_{x, v}}^{(1-\beta' )(1-\beta_*)\xi_*}   \sup_t \Vert  f_{M, +}^l     \Vert_{L^2_{x, v}}^{\beta' (1-\beta_*)\xi_*} 
\\
\le & C_1^{(1-\beta')(1-\beta_*)\xi_*   } \mathcal{E}_{p, s''}(M, T_1, T_2)^{\beta' (1-\beta_*) \frac {\xi_*} 2},
\end{aligned}
\end{equation*}
where $l_0 =\frac {a_0} {1-\beta'}$. Gathering all the terms we deduce
\[
\int_{T_1}^{T_2} \Vert\langle v \rangle^{a_0} f_{M, +}^l \Vert_{L^2_{x, v}}^{(1-\beta_*)\xi_*} \Vert\langle v \rangle^{-4+ \gamma/2} f \Vert_{L^{r(p)}_{x, v}}^{\beta_*\xi_*} d\tau \le C C_1^{(1-\beta')(1-\beta_*)\xi_*   } E_P(M, T_1, T_2)^{r_*},
\]
with 
\[
r_* = (1- (1 -\beta') (1-\beta_*)  ) \frac  {\xi_*} 2,
\]
since $\xi_* > 2q_* $ we can make $r_* >q_*$ by taking $l_0 \gg a_0$ which could implies that $\beta'$ is arbitrarily near $1$. We can see that $r_*, \xi_*$ only depends on $s, s'', p$ and $l_0$ only depends on $\beta_*, \xi_*, n, \beta'$ which only depends on $s, s'', p, n$.
\end{proof}

With the lemma above, we are ready to estimate regarding the energy functional 
\begin{lem}\label{L45}
Suppose $G = \mu+g, F =\mu +f$ smooth and $\gamma \in (-3, 0], s \in (0, 1), \gamma +2s >-1$. Let $T>0, \alpha \ge 0$ be fixed.  Assume that $f$ is a solution to \eqref{linearized equation}. Suppose in addition G satisfies that
\[
G \ge 0, \quad \inf_{t, x} \Vert G \Vert_{L^1_v} \ge A >0, \quad \sup_{t, x} (\Vert G \Vert_{L^1_2}  +\Vert G \Vert_{L \log L} ) < B < +\infty,
\]
for some constant $A, B>0$.  Then for any $\epsilon \in [0, 1] $,  $12 \le l \le k_0 $, suppose 
\begin{equation}
\label{assumption on g 1}
\sup_{t, x} \Vert  g  \Vert_{L^\infty_{9+|\gamma|}}  \le \delta_0, \quad \sup_{t, x} \Vert g \Vert_{L^\infty_{k_0}}  \le C,
\end{equation}
for some constant $\delta_0 >0$ small. Then there exists $s"> 0 $ and $p^a>0 $ such that for any  $p \in (1, p^a)$  there exist $l_0>0$ which depends on $s'', p$ such that, if we assume
\[ 
\sup_{t} \Vert \langle  v \rangle^{l_0+l }  f  \Vert_{L^2_{x, v} } \le C_1 < +  \infty,
\]
Then for any $0  \le T_1 \le T_2 \le T,  \epsilon \in (0, 1),  0 \le M <K$ we have
\begin{equation*}
\begin{aligned} 
&\Vert f_{K, +}^l (T_2) \Vert_{L^2_{x, v}}^2 + \int_{T_1}^{T_2} \Vert \langle v \rangle^{\gamma/2} (1-\Delta_v)^{s/2}  f_{K, +}^l \Vert_{L^2_{x, v}}^2 d \tau +\frac 1 {C_0} \left(       \int_{T_1}^{T_2} \Vert (1-\Delta_x)^{{s''}/2}  (\langle v \rangle^{-4+ \gamma} (f_{K, +}^l )^2) \Vert_{L^p_{x, v}}^p d \tau                                 \right)^{\frac 1 p}
\\
\le& C \Vert \langle v \rangle^2 f_{K, +}^l (T_1) \Vert_{L^2_{x, v}}^2 +  C \Vert \langle v \rangle^2 f_{K, +}^l (T_1)  \Vert_{L^{2p}_{x, v}}^2 +\frac {CK} {K - M} \sum_{i=1}^4 \frac {\mathcal{E}_{p, s''}(M, T_1, T_2)^{\beta_i}} {(K-M)^{a_i}},
\end{aligned}
\end{equation*}
for some constant  $\beta_i > 1, a_i > 0$ are defined later. The constant $C$ is independent of $\epsilon, K, M, f, T_1, T_2$. Furthermore, the same estimate holds for $h=-f$ with $f_{K, +}^l $ replaced by $h_{K, +}^l$.
\end{lem}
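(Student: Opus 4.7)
My plan is to deduce the three ingredients of the energy functional $\mathcal{E}_{p,s''}(K,T_1,T_2)$ separately, then link them at the level $K$ to the level $M$ via Lemma \ref{L44}. The starting equation is the level-set identity
\[
\partial_t (f_{K,+}^l)^2 + v \cdot \nabla_x (f_{K,+}^l)^2 = 2\tilde Q(G,F)\langle v\rangle^l f_{K,+}^l,
\]
from which I read off (i) $L^\infty_t L^2_{x,v}\cap L^2_{t,x}H^s_{v,\gamma/2}$ control by a direct time integration, and (ii) $L^p_{t,x,v}$ control of $(1-\Delta_x)^{s''/2}\bigl(\langle v\rangle^{-4+\gamma}(f_{K,+}^l)^2\bigr)$ by the time averaging Lemma \ref{L219}, since the equation for the squared level set has the right transport structure.

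\textbf{Steps 1 and 2 (sup--$L^2$ and gain $H^s_v$).} First I integrate the identity above on $[T_1,T_2']$ with $T_2'\le T_2$ arbitrary and in $(x,v)$. Applying the two parts of Lemma \ref{L41} to the right-hand side produces a coercive term $-2c_1\|f_{K,+}^l\|_{L^2_xH^s_{\gamma/2}}^2$ modulo a perturbation of size $C_l\sup_x\|g\|_{L^\infty_{9+|\gamma|}}$, which is absorbed by the smallness assumption \eqref{assumption on g 1}. The remainder reduces to $\|f_{K,+}^l\|_{L^2_{t,x}L^2_{j}}^2$ and $(1+K)\|f_{K,+}^l\|_{L^1_{t,x}L^1_j}$ type terms with $j$ fixed by the weights coming from Lemma \ref{L41}. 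On these I invoke Lemma \ref{L44} at $q=1$ and at a value $q$ slightly above $1$ (with $n=j,2j$ absorbed into $l_0$): each such norm at level $K$ is bounded by $\mathcal{E}_{p,s''}(M,T_1,T_2)^{r_*}/(K-M)^{\xi_*-2q}$ with $r_*>q_*>1$, which already contributes some of the $(\beta_i,a_i)$ terms in the stated inequality; the prefactor $K$ is distributed as $K/(K-M)$ as written.

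\textbf{Step 3 (the $L^p$ spatial regularity).} For the third piece I apply the time averaging Lemma \ref{L219} to the extension of $u=(f_{K,+}^l)^2$ by zero outside $[T_1,T_2]$, with source $\tilde F =2\tilde Q(G,F)\langle v\rangle^l f_{K,+}^l$, choosing $p\in(1,p^a)$, $\beta=s$, and fixed $m,r$ so that the effective spatial regularity exponent $s^b$ lies in $(0,s/2(s+3))$ as required. The boundary traces produce the $\|\langle v\rangle^2 f_{K,+}^l(T_1)\|_{L^2_{x,v}}^2$ and $L^{2p}_{x,v}$ data terms. The source contribution is treated exactly as in Lemma \ref{L43}, giving a bound in terms of $\|f_{K,+}^l\|_{L^2_{t,x}H^s_{\gamma/2}}^2$ (already controlled in Step 1), plus $L^2$ and $L^1$ norms of $f_{K,+}^l$ (again routed through Lemma \ref{L44}), plus the boundary data. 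Finally, to cast the estimate in the stated form, I use Lemma \ref{L218} (with $f$ replaced by $\langle v\rangle^{-2+\gamma/2}f_{K,+}^l$) to relate $(1-\Delta_x)^{s''/2}(\langle v\rangle^{-4+\gamma}(f_{K,+}^l)^2)$ in $L^p$ to the product $\|\langle v\rangle^{-2+\gamma/2}f_{K,+}^l\|_{H^{s''+\eta}_x}\cdot\|(f_{K,+}^l)^2\|_{L^{p'}}^{1/2}$; the first factor is supplied by the time averaging output and the second by Lemma \ref{L44} applied at exponent $2p'$. The analog for $h=-f$ follows identically from Lemma \ref{L42}.

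\textbf{Main obstacle.} The genuine difficulty is not any individual estimate but the bookkeeping that produces exponents $\beta_i>1$ in the final recursion: every application of Lemma \ref{L44} yields a power $r_*$ of $\mathcal{E}(M)$, and one must verify that each product arising from Steps 1--3 has total power strictly greater than one on $\mathcal{E}(M)$. This is exactly why $p$ is constrained to $(1,p^a)$ with $p^a$ close to $1$, why $q_*$ is forced into $(1,r(1)/2)$, and why the weight parameter $l_0$ is taken large --- these choices were designed in Lemma \ref{L44} precisely so that $r_*>q_*>1$, which then propagates to $\beta_i>1$ throughout the estimate and makes the subsequent De Giorgi iteration convergent.
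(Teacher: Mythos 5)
Your Steps 1--2 match the paper: integrate the level-set identity, apply Lemma \ref{L41}, absorb the $\|g\|_{L^\infty_{9+|\gamma|}}$ perturbation by smallness, and route the lower-order norms through Lemma \ref{L44}. The "main obstacle" commentary is also accurate.

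Step 3 has a genuine gap. You propose applying the time averaging Lemma \ref{L219} to the \emph{unweighted} $(f_{K,+}^l)^2$ and then using Lemma \ref{L218} "to cast the estimate in the stated form." This cannot work, for two reasons. First, Lemma \ref{L219} applied to $(f_{K,+}^l)^2$ places on the right-hand side the term $\|(-\Delta_v)^{\beta/2}(f_{K,+}^l)^2\|_{L^p}$, and Lemma \ref{L218} (in $v$) then asks for $\|f_{K,+}^l\|_{H^\beta_v}$; but the energy functional only controls $\|\langle v\rangle^{\gamma/2}f_{K,+}^l\|_{H^s_v}$, and since $\gamma\le0$ this is a \emph{weaker} quantity — the unweighted Sobolev norm is not available. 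This is exactly why the paper applies Lemma \ref{L219} to the \emph{weighted} function $\big(f_{K,+}^l\langle v\rangle^{-2+\gamma/2}\big)^2$: the resulting velocity-derivative term $\|(-\Delta_v)^{\beta/2}(f_{K,+}^l\langle v\rangle^{-2+\gamma/2})^2\|_{L^p}$ is, via Lemma \ref{L218}, controlled by $\|\langle v\rangle^{-2+\gamma/2}f_{K,+}^l\|_{H^s_v}$, which in turn is bounded by $\|\langle v\rangle^{\gamma/2}f_{K,+}^l\|_{H^s_v}$ and hence by the energy functional (cf.\ Lemma \ref{L214}). Second, your description of Lemma \ref{L218} has the direction reversed: it gives an upper bound on $\|(-\Delta)^{\beta'/2}f^2\|_{L^p}$ in terms of $\|f\|_{H^\beta}\|f^2\|_{L^{p'}}^{1/2}$, so it cannot be used to pass from the averaging output $\|(-\Delta_x)^{s^b/2}(f_{K,+}^l)^2\|_{L^p}$ to the target $\|(1-\Delta_x)^{s''/2}\big(\langle v\rangle^{-4+\gamma}(f_{K,+}^l)^2\big)\|_{L^p}$; in the paper, \ref{L218} is used only for the velocity-derivative term on the right of \ref{L219}, never to reformulate the output. (Incidentally, no reformulation is needed once the averaging lemma is applied to the weighted function, since the fractional Laplacian in $x$ commutes with multiplication by any function of $v$.) A smaller point: you set $\beta=s$, but \ref{L218} requires $\beta'<\beta<1$ strictly, so the paper takes $\beta\in(0,s)$; and the specific admissibility constraints $\sigma p<1$, $\sigma p_*>6$ (so that $H^{-\sigma,p}\supseteq L^1$), $\tau_1>2$, $\sigma+\tau_1<3$ need to be stated to make the source estimate via Lemma \ref{L43} go through.
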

\begin{proof}
If $f$ is a solution to \eqref{linearized equation} we have $f_{K, +}^l$ satisfies \eqref{level set equation}. Choose $\sigma =1/4, \tau_1 > 2$, $f_{K, +}^l$ satisfies 
\[
\partial_t (f_{K, +}^l  \langle v \rangle^{-2+ \gamma/2})^2   + v \cdot \nabla_x (f_{K, +}^l  \langle v \rangle^{-2+ \gamma/2} )^2   = 2 \tilde{Q}(G, F) \langle v \rangle^{l-4+\gamma} f_{K, +}^l : = (1-\Delta_x-\partial_t^2)^{\sigma/2} (1-\Delta_v)^{\sigma/2 + \tau_1/2} G_K^l,
\]
where we define $G_K^l$ as
\[
G_K^l : = 2(1-\Delta_x-\partial_t^2)^{-\sigma/2} (1-\Delta_v)^{-\sigma/2 - \tau_1/2} ( \tilde{Q}(G, F) \langle v \rangle^{l-4+\gamma} f_{K, +}^l ). 
\]
Choose the parameters in  Lemma  \ref{L219} as
\[
m = \tau_1 + \sigma, \quad \beta \in (0, s), \quad s^b =s'' = \frac {(1-2\sigma) \beta_-} {2(1+\sigma +\tau_1 + \beta) } < \min \{\beta, \frac {(1-\sigma p) \beta_-} {p(1+\sigma +\tau_1 +\beta) }   \}, \quad r = \sigma, \quad \tau_1 > 2, \quad \sigma + \tau_1 <3,
\]
where $ 1 < p < 2$ is chosen to be close enough to 1 such that
\[
1< p < p^c, \quad \sigma p <1, \quad 1 < p <\frac {p} {2-p} <q_*, \quad \sigma p_* = \sigma \frac {p} {p-1} >6,
\]
where $p^c, q_*$ is defined in Lemma \ref{L44} and the last condition guarantees that
\[
H^{-\sigma, p} (\T^3 \times \R^3) \supseteq L^1(\T^3 \times \R^3) \quad \hbox{since} \quad H^{\sigma, p^*} (\T^3 \times \R^3) \subseteq L^\infty(\T^3 \times \R^3).
\]
With such choice, by Lemma \ref{L219} we have
\begin{equation*}
\begin{aligned} 
&\Vert (1-\Delta_x)^{s''/2 } (f_{K, +}^l \langle v \rangle^{-2+\gamma/2})^2\Vert_{L^p_{t, x, v}} 
\\
\lesssim&  \Vert \langle v \rangle^{\gamma} (f_{K, +}^l (T_1)  )^{2}\Vert_{L^p_{x, v}}  + \Vert \langle  v \rangle^4  (1-\Delta_v)^{-\tau_1/2}  (\langle v \rangle^{-2+ \gamma/2} f_{k, +}^l (T_2))^2\Vert_{H^{-\sigma, p}_{x, v}}
\\
& + \Vert \langle v \rangle^{-4+\gamma} (f_{K, +}^l)^2 \Vert_{L^p_{t, x, v}} + \Vert (-\Delta_v)^{\beta/2} (f_{K, +}^l  \langle v \rangle^{-2+ \gamma/2} )^2 \Vert_{L^p_{t, x, v}} + \Vert \langle v \rangle^{1+\sigma + \tau_1} G_K^l   \Vert_{L^p_{t, x, v}}
\\
\lesssim &\Vert \langle v \rangle^{2+\gamma/2} (f_{K, +}^l (T_1)  )\Vert_{L^{2p}_{x, v}}  + \Vert \langle  v \rangle^4  (1-\Delta_v)^{-\tau_1/2}  (\langle v \rangle^{-2+ \gamma/2} f_{k, +}^l (T_2))^2\Vert_{L^1_{x, v}}
\\
& + \Vert (f_{K, +}^l)^2 \Vert_{L^p_{t, x, v}} + \Vert (-\Delta_v)^{\beta/2} (f_{K, +}^l  \langle v \rangle^{-2+\gamma/2} )^2 \Vert_{L^p_{t, x, v}} + \Vert \langle v \rangle^{4} G_K^l   \Vert_{L^p_{t, x, v}}.
\end{aligned}
\end{equation*}
We will estimate the terms in the right hand side separately. For the third term by Lemma \ref{L44} we have
\[
\Vert  (f_{K, +}^l)^2 \Vert_{L^p_{t, x, v}} \lesssim \frac  { \mathcal{E}_{p, s''} (M, T_1, T_2)^{ \frac {r_*} p}} {(K-M)^{\frac {\xi_*-2p} p } }, \quad r_* >p, \quad \xi_* >2p.
\]
For the fourth term, using Lemma \ref{L218} with $p' = \frac {p} {2-p}$ and H\"older's inequality we have
\begin{equation*}
\begin{aligned} 
&\Vert (-\Delta_v)^{\beta/2} (f_{K, +}^l  \langle v \rangle^{-2+\gamma/2} )^2 \Vert_{L^p_{t, x, v}}^p   =\int_{T_1}^{T_2} \int_{\T^3} \Vert (-\Delta_v)^{\beta/2} (f_{K, +}^l  \langle v \rangle^{-2+\gamma/2} )^2 \Vert_{L^p_{v}}^p  dx d\tau
\\
\lesssim& \int_{T_1}^{T_2} \int_{\T^3} \Vert (I -\Delta_v)^{s/2}( f_{K, +}^l  \langle v \rangle^{-2+\gamma/2} )\Vert_{L^2_v}^p \Vert (f_{K, +}^l \langle v \rangle^{-2+\gamma/2} )^2 \Vert_{L^{p'}_v}^\frac p {2} +\Vert  (f_{K, +}^l  \langle v \rangle^{-2+\gamma/2} )^2\Vert_{L^{p}_v}^p dx d\tau
\\
\lesssim & \left(\int_{T_1}^{T_2} \Vert  (I -\Delta_v)^{s/2}( f_{K, +}^l  \langle v \rangle^{\gamma/2} ) \Vert_{L^2_{x, v}}^2 d\tau \right)^\frac p  2 \left(      \int_{T_1}^{T_2} \Vert (f_{K, +}^l )^2\Vert_{L^{p'}_{x, v}}^{p'}   d\tau \right)^{\frac {2-p} 2 } +    \int_{T_1}^{T_2} \Vert( f_{K, +}^l )^2\Vert_{L^{p}_{x, v}}^{p}   d\tau.
\end{aligned}
\end{equation*}
The first term is bounded by 
\[
\int_{T_1}^{T_2} \Vert  (I-\Delta_v)^{s/2}( f_{K, +}^l  \langle v \rangle^{\gamma/2} )\Vert_{L^2_{x, v}}^2 d\tau \lesssim \int_{T_1}^{T_2}  \Vert  f_{K, +}^l \Vert_{L^2_x H^s_{\gamma/2}}^2 d\tau \lesssim \int_{T_1}^{T_2}  \Vert  f_{M, +}^l \Vert_{L^2_x H^s_{\gamma/2}}^2 d\tau  \le \mathcal{E}_{p, s''} (M, T_1, T_2).
\]
Since $1< p < p' = \frac {p} {2-p}< q_*$ , by Lemma \ref{L44} we have
\[
 \int_{T_1}^{T_2} \Vert (f_{K, +}^l )^2\Vert_{L^{p'}_{x, v}}^{p'}   d\tau \lesssim \frac { \mathcal{E}_{p, s''}  (M, T_1, T_2)^{{r_*} } } {(K-M)^{{\xi_* - 2p'}  }   } , \quad   \int_{T_1}^{T_2} \Vert( f_{K, +}^l )^2\Vert_{L^{p}_{x, v}}^{p}  d \tau \lesssim \frac { \mathcal{E}_{p, s''}  (M, T_1, T_2)^{ {r_*}} } {(K-M)^{ {\xi_* - 2p}  }   } .
\]
Gathering all the terms we have
\[
\Vert (-\Delta_v)^{\beta/2} (f_{K, +}^l  \langle v \rangle^{\gamma/2} )^2 \Vert_{L^p_{t, x, v}}  \lesssim \frac { \mathcal{E}_{p, s''}  (M, T_1, T_2)^{\beta_1}} {(K-M)^{a_1 }   }+\frac { \mathcal{E}_{p, s''}  (M, T_1, T_2)^{\beta_2} }{(K-M)^{a_2 } },
\]
where the parameters satisfies
\[
\beta_1 = \frac 1 2 (1+r_*/p')>1, \quad a_1 = (\xi - 2p')/(2p')>0, \quad \beta_2 = r_*/p>1, \quad a_2 = (\xi_* - 2p)/p>0.
\]
Since $H^{-\sigma, p} (\T^3 \times \R^3) \supseteq L^1(\T^3 \times \R^3)$, by Lemma \ref{L43} with $j=4$ we have
\begin{equation*}
\begin{aligned} 
\Vert \langle v \rangle^{4} G_K^l   \Vert_{L^p_{t, x, v}} \le& 2 \left(\int_{T_1}^{T_2} \Vert \langle v \rangle^{4} (1-\Delta_x-\partial_t^2)^{-\sigma/2} (1-\Delta_v)^{-\sigma/2 - \tau_1/2} ( \tilde{Q}(G, F) \langle v \rangle^{l -4 + \gamma} f_{K, +}^l )\Vert_{L^p_{x, v}}^p d\tau  \right)^{1/p}
\\
\lesssim&\int_{T_1}^{T_2} \Vert  (1-\Delta_v)^{ - \tau_1/2} ( \tilde{Q}(G, F) \langle v \rangle^{l} f_{K, +}^l )\Vert_{L^1_{x, v}} d\tau 
\\
\lesssim&\Vert  \langle v \rangle^2 f_{K, +}^l (T_1) \Vert_{L^2_{x, v}}^2  +   \int_{T_1}^{T_2} \Vert f_{K, +}^l\Vert_{L^2_xH^s_{\gamma/2}}^2 d\tau + \int_{T_1}^{T_2}\Vert  \langle  v \rangle^6 f_{K, +}^l \Vert_{L^2_{x, v} }^2 d\tau  + (1 + K) \int_{T_1}^{T_2}\Vert  \langle  v \rangle^6 f_{K, +}^l \Vert_{L^1_{x, v} } d\tau,
\end{aligned}
\end{equation*}
by Lemma \ref{L44} we have
\begin{equation}
\label{estimate f K l E version}
 \int_{T_1}^{T_2}\Vert  \langle  v \rangle^6 f_{K, +}^l \Vert_{L^2_{x, v} }^2 d\tau  \le \frac {2^{2p-2}} {(K-M)^{2p-2}} \Vert  \langle  v \rangle^{\frac {12} {2p} } f_{\frac {K+M} 2, +}^l \Vert_{L^{2p}_{x, v} }^{2p} d\tau \lesssim \frac {\mathcal{E}_{p, s''}  (M, T_1, T_2)^{r_*}} { (K - M )^{\xi_*-2} },
\end{equation}
similarly 
\begin{equation}
\label{estimate f K l E version 2}
 \int_{T_1}^{T_2}\Vert  \langle  v \rangle^6 f_{K, +}^l \Vert_{L^1_{x, v} } d\tau  \le \frac {2^{2p-1}} {(K-M)^{2p-1}} \Vert  \langle  v \rangle^{\frac {6} {2p} } f_{\frac {K+M} 2, +}^l \Vert_{L^{2p}_{x, v} }^{2p} d\tau \lesssim \frac {\mathcal{E}_{p, s''}  (M, T_1, T_2)^{r_*}} { (K - M )^{\xi_* - 1} }.
\end{equation}
Since $\frac {K} {K-M} \ge 1$, we have
\begin{align} 
\label{estimate L p terms}
\nonumber
(1+K) \frac {\mathcal{E}_{p, s''}  (M, T_1, T_2)^{r_*}} { (K - M )^{\xi_* - 1} } \le \frac {\mathcal{E}_{p, s''}  (M, T_1, T_2)^{r_*}} { (K - M )^{\xi_* - 1} } + \frac {K} {K-M} \frac {\mathcal{E}_{p, s''}  (M, T_1, T_2)^{r_*}} { (K - M )^{\xi_* - 2} }
\\
\le \frac {K} {K-M} \left( \frac {\mathcal{E}_{p, s''}  (M, T_1, T_2)^{r_*}} { (K - M )^{\xi_* - 1} }  + \frac {\mathcal{E}_{p, s''}  (M, T_1, T_2)^{r_*}} { (K - M )^{\xi_* - 2} }  \right), \quad \xi_*>2.
\end{align}
Gathering all the terms we have
\begin{equation}
\label{estimate G k l}
\Vert \langle v \rangle^{4} G_K^l   \Vert_{L^p}   \lesssim \Vert  \langle v \rangle^2 f_{K, +}^l (T_1)  \Vert_{L^2_{x, v}}^2  +   \int_{T_1}^{T_2} \Vert f_{K, +}^l\Vert_{L^2_xH^s_{\gamma/2}}^2 d\tau + \frac {K} {K-M} \left( \frac {\mathcal{E}_{p, s''}  (M, T_1, T_2)^{r_*}} { (K - M )^{\xi_* - 1} }  + \frac {\mathcal{E}_{p, s''}  (M, T_1, T_2)^{r_*}} { (K - M )^{\xi_* - 2} }  \right),
\end{equation}
where the constants is independent of $\epsilon \in [0, 1]$. Finally we bound the second term, by Fubini's theorem we have
\[
\Vert (I -\Delta_v)^{-\tau_1/2} (f_{K, +}^l (T_2))^2 \Vert_{L^1_{x, v}}   = \int_{\R^3}  (I -\Delta_v)^{-\tau_1/2} \left( \int_{\T^3}  (f_{K, +}^l (T_2))^2  dx \right) dv.
\]
Integrate \eqref{level set equation} first in $x$ and then in $t, v$ gives
\begin{equation*}
\begin{aligned} 
&\int_{\R^3}  (I -\Delta_v)^{-\tau_1/2} \left( \int_{\T^3}  (f_{K, +}^l (T_2))^2  dx \right) dv
\\
\lesssim &\int_{\T^3} \int_{\R^3}  (I -\Delta_v)^{-\tau_1/2}   (f_{K, +}^l (T_1))^2  dxdv + \int_{T_1}^{T_2} \int_{\T^3} \int_{\R^3}  (I -\Delta_v)^{-\tau/2}  \left(   \tilde{Q}(Q, F)  \langle v \rangle^{l} f_{K, +}^l \right)  dv dx  d\tau
\\ 
\lesssim& \Vert \langle v \rangle^2 f_{K, +}^l (T_1)\Vert_{L^2_{x, v}}^2  +\int_{T_1}^{T_2} \Vert  (1-\Delta_v)^{ - \tau_1/2} ( \tilde{Q}(G, F) \langle v \rangle^{l} f_{K, +}^l )\Vert_{L^1_{x, v}} d\tau,
\end{aligned}
\end{equation*}
the last term can be estimated the same way as \eqref{estimate G k l}. Gathering all the terms we  have
\begin{align}
\label{estimate f K l s''}
\nonumber
\Vert (1-\Delta_x)^{s''/2 } (f_{K, +}^l \langle v \rangle^{-4+ \gamma/2})^2\Vert_{L^p_{t, x, v}} \le& C ( \Vert \langle v \rangle^2 f_{K, +}^l (T_1)\Vert_{L^2_{x, v}}^2 +  \Vert \langle v \rangle^2 f_{K, +}^l (T_1)\Vert_{L^{2p}_{x, v}}^2)
\\
&+C_l \int_{T_1}^{T_2} \Vert f_{K, +}^l\Vert_{L^2_xH^s_{\gamma/2}}^2 d\tau  + C \frac {K} {K-M}  \sum_{i=1}^4\frac { \mathcal{E}_{p, s''}  (M, T_1, T_2)^{\beta_i}} {(K-M)^{a_i }   },
\end{align}
where 
\[
\beta_1 = \frac 1 2 (1+r_*/p'),  \quad \beta_2 = r_*/p, \quad \beta_3 = \beta_4 = r_*
,\quad a_1 = (\xi - 2p')/(2p'),\quad a_2 = (\xi_* - 2p)/p, \quad a_3 =\xi_*-1, \quad a_4 = \xi_*-2,
\]
it's easily seen that $\beta_i >1, a_i >0$ for all $i=1, 2, 3, 4$. For any $\epsilon \in [0, 1]$, by Lemma \ref{L41} and \eqref{assumption on g 1} we have
\begin{equation*}
\begin{aligned}
\frac {d} {dt} \frac 1 2 \Vert  f_{K, +}^l \Vert_{L^2_{x, v}}^2 = &(Q(\mu +g, \mu+f ), f_{K, +}^l \langle v \rangle^{l})_{L^2_{x, v}} +  \epsilon (L_{\alpha}(\mu +f) ,f_{K, +}^l \langle v \rangle^{l})_{L^2_{x, v}}
\\
\le &-(2c_1 -C_l \sup_x \Vert  g \Vert_{L^\infty_{|\gamma|+9}} )\Vert f_{K, +}^l \Vert_{L^2_x H^s_{\gamma/2}}^2 + C_{l}(1 + \sup_x \Vert g \Vert_{L^\infty_l}  ) \Vert f_{K,+}^l \Vert_{L^2_x L^2_2}^2 
\\
&+ C_l (1 + K)(1 + \sup_x \Vert g \Vert_{L^\infty_l})  \Vert f_{K, +}^l \Vert_{L^1_x L^1_2} + \epsilon C_{l } \Vert  f_{K, +}^l \Vert_{L^2_{x, v}}^2 + \epsilon C_{l} (1 + K)  \Vert f_{K, +}^l \Vert_{L^1_{x, v}}
\\
\le &-c_1 \Vert f_{K, +}^l \Vert_{L^2_x H^s_{\gamma/2}}^2 + C_{l}\Vert f_{K,+}^l \Vert_{L^2_x L^2_2}^2 + C_l (1 + K)  \Vert f_{K, +}^l \Vert_{L^1_x L^1_2} .
\end{aligned}
\end{equation*}
Integrate in $[T_1, T_2]\times \T^3 \times \R^3$ and similarly as \eqref{estimate f K l E version}, \eqref{estimate f K l E version 2} and \eqref{estimate L p terms} we have
\begin{align} 
\label{estimate f K l v}
& \Vert f_{K, +}^l(T_2) \Vert_{L^2_{x, v}}^2 + c_1 \int_{T_1}^{T_2} \Vert f_{K, +}^l(\tau) \Vert_{L^2_x H^s_{\gamma/2}}^2  d\tau
\\
\le& \Vert f_{K, +}^l(T_1) \Vert_{L^2_{x, v}}^2 +  C_{l}  \int_{T_1}^{T_2}  \Vert  \langle v \rangle^2 f_{K,+}^l (\tau)\Vert_{L^2_{x, v}}^2  d\tau
+ C_l (1 + K) \int_{T_1}^{T_2}  \Vert  \langle v \rangle^2 f_{K,+}^l (\tau)\Vert_{L^1_{x, v}}  d\tau
\\
\le& \Vert f_{K, +}^l(T_1) \Vert_{L^2_{x, v}}^2 + C \frac {K} {K-M} \left( \frac {\mathcal{E}_{p, s''}  (M, T_1, T_2)^{r_*}} { (K - M )^{\xi_* - 1} }  + \frac {\mathcal{E}_{p, s''}  (M, T_1, T_2)^{r_*}} { (K - M )^{\xi_* - 2} }  \right).
\end{align}
So the proof is ended by adding the two terms \eqref{estimate f K l s''} and \eqref{estimate f K l v},  then  taking  $C_0$ large such that
\[
\frac {C_l } {C_0} \le \frac {c_1} 2.
\]
Since the estimates are all independent of $\epsilon$, the result is independent of $\epsilon$.  Since$ (-f)_{K, +}^l$ satisfies the same estimate as $f_{K, +}^l$, the proof is the same and thus omitted.
\end{proof}

Before proving the $L^\infty$ bound, we need a $L^2$ bound on the zeroth level energy $\mathcal{E}_{0, p, s''} $ defined by
\begin{align}
\label{definition E 0 p s''}
\mathcal{E}_{0, p, s''} = \mathcal{E}_{p, s''}(0, 0 , T) :=& \sup_{t \in [0, T] } \Vert f_{ +}^l \Vert_{L^2_{x, v}}^2   + \int_{0}^{T} \int_{\T^3} \Vert \langle  v \rangle^{\gamma/2} f_{+}^l \Vert_{H^s_v}^2 dx d\tau 
\\
&+\frac 1 {C_0} \left( \int_{0}^T \Vert (1-\Delta_x)^{s''/2}  (  \langle v \rangle^{-2+\gamma/2} f_+^l)^2\Vert_{L^p_{x, v}}^p d\tau \right)^{1/p}.
\end{align}
where $f_+$ denotes the positive part of $f$ and 
$f_+^l : = \langle v \rangle^l f_+$.

\begin{lem}\label{L46} Suppose $G = \mu+g, F =\mu +f$ smooth and $\gamma \in (-3, 0], s \in (0, 1), \gamma +2s >-1$. Let $T>0, \alpha \ge 0$ be fixed.  Assume that $f$ is a solution to \eqref{linearized equation}. Suppose in addition G satisfies that
\[
G \ge 0, \quad \inf_{t, x} \Vert G \Vert_{L^1_v} \ge A >0, \quad \sup_{t, x} (\Vert G \Vert_{L^1_2}  +\Vert G \Vert_{L \log L} ) < B < +\infty,
\]
for some constant $A, B>0$.  Then for any $\epsilon \in [0, 1] $,  $12 \le l \le k_0 -8, l \ge 3+2\alpha $, suppose 
\begin{equation}
\label{assumption on g 2}
\sup_{t, x} \Vert  g  \Vert_{L^\infty_{9+|\gamma|}}  \le \delta_0, \quad \sup_{t, x} \Vert g \Vert_{L^\infty_{k_0}}  \le C,
\end{equation}
for some constant $\delta_0 >0$ small.  Then for any $0 < s' <\frac {s} {2(s+3)}$, there exist $s" \in ( 0, s' \frac  4  {2l+r})$ and $p^b := p^b(l, \gamma, s, s') >1$ such that for any $1< p <p^b$, we have
\[
\mathcal{E}_{0, p, s''} \le C_l e^{C_l T} \max_{ j \in \{ 1/p, p'/p  \} } \left( \Vert \langle v \rangle^l f_0\Vert_{L^2_{x, v}}^{2j} + \sup_{t, x} \Vert g\Vert_{L^\infty_{k_0}}^{2j} T^j      +\epsilon^{2j} T^j  \right), \quad p' = p /(2- p). 
\]
The same estimate holds for $(-f)_{+}^l$ and its associated $\mathcal{E}_{0, p, s''} $.
\end{lem}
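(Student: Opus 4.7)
The plan is to bound the three pieces of the zeroth-level energy $\mathcal{E}_{0, p, s''}$ in sequence. The first two pieces, namely $\sup_t\Vert f_+^l\Vert_{L^2_{x, v}}^2$ and $\int_0^T\Vert \langle v\rangle^{\gamma/2} f_+^l\Vert_{L^2_x H^s_v}^2\,d\tau$, follow from the weighted $L^2$ energy argument of Lemma \ref{L39} applied to the nonnegative function $f_+^l$. Differentiating $\tfrac12\Vert f_+^l\Vert_{L^2_{x, v}}^2$ in time, invoking Lemma \ref{L41} with $K=0$ for the collision term together with Lemma \ref{L38} for the $\epsilon L_\alpha$ contribution, and using the smallness $\sup_{t, x}\Vert g\Vert_{L^\infty_{|\gamma|+9}} \le \delta_0$ together with $\sup_{t, x}\Vert g\Vert_{L^\infty_{k_0}} \le C$, produces
\[
\tfrac{d}{dt}\Vert f_+^l\Vert_{L^2_{x, v}}^2 + c_1\Vert \langle v\rangle^{\gamma/2}f_+^l\Vert_{L^2_x H^s_v}^2 \le C_l\Vert f_+^l\Vert_{L^2_{x, v}}^2 + C_l\bigl(\sup_{t, x}\Vert g\Vert_{L^\infty_{k_0}}^2 + \epsilon^2\bigr),
\]
where the $L^1$-in-$(x, v)$ remainder from the regularizing estimate is absorbed into the $L^2$ and $\epsilon^2$ terms via Cauchy--Schwarz (using the slack $l \le k_0 - 8$). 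Gronwall then controls these two pieces by
\[
E_0(T) := C_l e^{C_l T}\bigl(\Vert\langle v\rangle^l f_0\Vert_{L^2_{x, v}}^2 + \sup_{t, x}\Vert g\Vert_{L^\infty_{k_0}}^2\, T + \epsilon^2\, T\bigr),
\]
which is exactly the $j = 1$ term of the target bound.

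For the third piece $A_2 := \tfrac{1}{C_0}\bigl(\int_0^T \Vert (1-\Delta_x)^{s''/2}(\langle v\rangle^{-2+\gamma/2} f_+^l)^2\Vert_{L^p_{x, v}}^p\,d\tau\bigr)^{1/p}$, I would apply the time-averaging Lemma \ref{L219} to $\tilde u := (\langle v\rangle^{-2+\gamma/2} f_+^l)^2$, which satisfies
\[
\partial_t \tilde u + v\cdot\nabla_x \tilde u = 2\tilde Q(G, F)\langle v\rangle^{l-4+\gamma} f_+^l\,\mathbf{1}_{\{f\ge 0\}},
\]
with parameter choice $(m, r, \beta, s^b) = (\tau_1+\sigma, \sigma, \beta, s'')$ patterned on the proof of Lemma \ref{L45}. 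The threshold $p^b > 1$ is chosen so that for every $p \in (1, p^b)$ one has $\sigma p < 1$ and the Bessel embedding $H^{-\sigma, p}(\T^3\times\R^3) \supset L^1(\T^3\times\R^3)$ holds. The boundary terms at $t = 0, T$ then collapse to weighted $L^2_{x, v}$ norms of $f_+^l$ already controlled by step 1 or by $\Vert\langle v\rangle^l f_0\Vert_{L^2_{x, v}}^2$ at $t = 0$; the source term is handled by Lemma \ref{L43} with $K = 0$, whose right-hand side is again dominated by step 1. The delicate contributions are $\Vert (-\Delta_v)^{\beta/2}\tilde u\Vert_{L^p_{t, x, v}}$ and $\Vert \tilde u\Vert_{L^p_{t, x, v}}$: Lemma \ref{L218} converts the former into a product of $\Vert \langle v\rangle^{\gamma/2} f_+^l\Vert_{L^2_x H^s_v}$ (step 1) and $\Vert \tilde u\Vert_{L^{p'}}^{1/2}$ with $p' = p/(2-p)$, while Lemma \ref{L217} interpolates both $\Vert \tilde u\Vert_{L^p}$ and $\Vert \tilde u\Vert_{L^{p'}}$ between the $v$-regularity piece from step 1 and the $x$-Sobolev squared piece that is precisely $A_2$.

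Combining these ingredients gives an inequality of the shape
\[
C_0 A_2 \le C\bigl(E_0(T) + E_0(T)^{1/p} + E_0(T)^{p'/p}\bigr) + C\,E_0(T)^{\alpha}\,(C_0 A_2)^{1-\alpha},
\]
where $\alpha = \alpha(p, s'') \in (0, 1)$ is the interpolation exponent from Lemma \ref{L217}, continuous in the $L^m_x$-index. Since the prefactor $1/C_0$ built into the definition of $\mathcal{E}_{0, p, s''}$ can be made arbitrarily small by taking $C_0$ large, Young's inequality absorbs $(C_0 A_2)^{1-\alpha}$ into the left-hand side and yields $A_2 \le C\bigl(E_0(T)^{1/p} + E_0(T)^{p'/p}\bigr)$, which accounts for both $j \in \{1/p,\, p'/p\}$ in the maximum of the conclusion. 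The identical argument with Lemma \ref{L42} in place of Lemma \ref{L41} treats $h := -f$ and $h_+^l$. The main obstacle is the parameter bookkeeping: one must pick $(\sigma, \tau_1, \beta, m, r, p, s'', l_0)$ so that all of (i) the constraints of Lemma \ref{L219}, (ii) $\sigma p < 1$ together with the Bessel embedding $\sigma p^* > 6$ for $p^* = p/(p-1)$, (iii) the restriction $p < p^c$ from Lemma \ref{L44} so that Lemma \ref{L217} is applicable, and (iv) $\alpha > 0$ with strict inequality $1 - \alpha < 1$, hold simultaneously. The range $s'' \in (0, s'\cdot 4/(2l+r))$ and the lower bound $l_0 = l_0(s'', p)$ on required moments arise precisely from keeping all these compatibility conditions open as $p \downarrow 1$.
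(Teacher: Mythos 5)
Your plan is correct in outline but takes a genuinely different and substantially heavier route than the paper. The paper's proof of this lemma does \emph{not} invoke the time-averaging Lemma~\ref{L219}, nor Lemma~\ref{L43}, nor any Young-type absorption. Because there is no level threshold at $K=0$, one has $f_+^l \le |f|\langle v\rangle^l$ pointwise, so the first two pieces of $\mathcal{E}_{0,p,s''}$ follow at once by monotonicity from Lemma~\ref{L39}. For the third piece, the paper applies Lemma~\ref{L218} directly in the $x$-variable (the fractional Leibniz rule with $\beta'=s''$, $\beta\in(s'',s')$), turning $\Vert(-\Delta_x)^{s''/2}(f_+^l\langle v\rangle^{-2+\gamma/2})^2\Vert_{L^p_x}$ into $\Vert f_+^l\langle v\rangle^{-2+\gamma/2}\Vert_{H^\beta_x}$ and $\Vert(\cdot)^2\Vert_{L^{p'}_x}$ at each $t$. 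The $H^\beta_x$ term is absorbed into $\Vert(I-\Delta_x)^{s'/2}f\Vert_{L^2_{x,v}}$ already furnished by the second conclusion of Lemma~\ref{L39} (after trading the velocity weight against the $x$-frequency by Young's inequality), and the $L^{2p}$ and $L^{2p'}$ norms are controlled by Lemma~\ref{L216} interpolation, not Lemma~\ref{L44}. In contrast, you re-run the full $K>0$ machinery of Lemma~\ref{L45} at $K=0$; since the Lemma~\ref{L44} bounds all carry a factor $(K-M)^{-a_i}$ with $M<K$, they are unavailable at $K=0$, which is why you need the extra self-referential interpolation and Young absorption. Your route can in principle be made to work, but it is a detour the paper avoids entirely.

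The concrete gap is in the step where you assert that Lemma~\ref{L217} interpolates $\Vert\tilde u\Vert_{L^p_{t,x,v}}$ between the $v$-regularity and ``the $x$-Sobolev squared piece that is precisely $A_2$.'' It is not precisely $A_2$: Lemma~\ref{L217} returns $\bigl(\int_{\R^3}\Vert(1-\Delta_x)^{s''/2}f^2\Vert_{L^m_x}^2\,dv\bigr)^{(1-\alpha)/2}$, i.e.\ an $L^2_vL^m_x$ quantity, whereas the third piece of $\mathcal{E}_{0,p,s''}$ is an $L^p_{t,x,v}$ quantity. Bridging the two takes the velocity-weight Hölder bookkeeping that Lemma~\ref{L44} performs (trading an extra $\langle v\rangle^{-4}$ to pass from $L^2_v$ to $L^p_v$ and costing additional moments $l_0$); without reproducing that accounting the claimed inequality $C_0A_2\le\ldots+CE_0^{\alpha}(C_0A_2)^{1-\alpha}$ is not established. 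Finally, your remark that the absorption succeeds ``since the prefactor $1/C_0$ \ldots\ can be made arbitrarily small'' is misplaced: the $1/C_0$ in the definition of $\mathcal{E}$ is used elsewhere (in Lemma~\ref{L45}) to balance the coercive $H^s$ term against the source; the Young absorption you propose works by Young's inequality alone and does not rely on $C_0$ being large, but it does require the constant in front of $E_0^\alpha(C_0A_2)^{1-\alpha}$ to be finite and the exponent $\alpha$ bounded away from $0$ and $1$, neither of which you pin down.
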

\begin{proof} By \eqref{assumption on g 2} we have $0 \le C(g) \le 1+C$, taking supremum on $[0, T]$ in Lemma \ref{L39} we have
\begin{equation*}
\begin{aligned}
&\sup_{[0, T]}\Vert f_+^l(t)\Vert_{L^2_{x, v}}^2 + c_0\int_0^t \Vert \langle v \rangle^{\gamma/2} f_+^l (\tau)\Vert_{L^2_xH^s_{v}}^2 d\tau
\\
\le& \sup_{[0, T]}\Vert \langle v \rangle^l f(t)\Vert_{L^2_{x, v}}^2 + c_0\int_0^t \Vert \langle v \rangle^l f (\tau)\Vert_{L^2_xH^s_{\gamma/2}}^2 d\tau 
\\
\le& C_l e^{C_l T} (\Vert \langle v \rangle^l f_0\Vert_{L^2_{x, v}}^2  +\sup_{t, x}\Vert g\Vert_{L^\infty_{k_0}}^2 T +\epsilon^2 T ) :=C_l  e^{C_l T}  D.
\end{aligned}
\end{equation*}
Let's concentrate on the last term, for $p = (1,2) , 0< s" < \beta<  s'$, using Lemma \ref{L218} with $p' = \frac {p} {2-p}$ we have
\begin{equation*}
\begin{aligned} 
&\int_{0}^{T} \int_{\R^3} \Vert (-\Delta_x)^{s''/2} (f_{+}^l  \langle v \rangle^{-2+\gamma/2} )^2 \Vert_{L^p_{x}}^p  dv d\tau
\\
\lesssim& \int_{0}^{T} \int_{\R^3} \Vert (I-\Delta_x)^{\beta/2}( f_{ +}^l  \langle v \rangle^{-2+\gamma/2} )\Vert_{L^2_x}^p \Vert (f_{+}^l \langle v \rangle^{-2+\gamma/2} )^2 \Vert_{L^{p'}_x}^\frac p {2} +\Vert  (f_{ +}^l  \langle v \rangle^{-2+\gamma/2} )^2\Vert_{L^{p}_x}^p dv d\tau
\\
\lesssim & \left(\int_{0}^{T} \Vert  (I-\Delta_x)^{\beta/2}( f_{+}^l  \langle v \rangle^{-2+ \gamma/2} )\Vert_{L^2_{x, v}}^2 d\tau \right)^\frac p  2 \left(      \int_{0}^{T} \Vert (f_{ +}^l \langle v \rangle^{-2+\gamma/2} )^2\Vert_{L^{p'}_{x, v}}^{p'}   d\tau \right)^{\frac {2-p} 2 } +    \int_{0}^{T} \Vert( f_{+}^l \langle v \rangle^{-2+\gamma/2} )^2\Vert_{L^{p}_{x, v}}^{p}   d\tau
\\
\lesssim & \int_{0}^{T} \Vert  (I-\Delta_x)^{\beta/2}( f_{+}^l  \langle v \rangle^{-2+ \gamma/2} )\Vert_{L^2_{x, v}}^2 d\tau +       \int_{0}^{T} \Vert (f_{+}^l \langle v \rangle^{-2+\gamma/2} )^2\Vert_{L^{p'}_{x, v}}^{p'}   d\tau +    \int_{0}^{T} \Vert( f_{+}^l \langle v \rangle^{-2+\gamma/2} )^2\Vert_{L^{p}_{x, v}}^{p}   d\tau.
\end{aligned}
\end{equation*}
The control of the $L^{2p}$ and $L^{2p'}$ norms of $f_+^l$  are both through interpolations. First we have
\[
\Vert f_{+}^l \langle v \rangle^{-2+ \gamma/2} \Vert_{L^{2p}_{x, v}} \le \Vert f_{+}^l \langle v \rangle^{-2+ \gamma/2}\Vert_{L^{2}_{x, v}}^{1-\beta_p} \Vert f_{+}^l  \langle v \rangle^{-2+ \gamma/2} \Vert_{L^{\xi(p)}_{x, v}}^{\beta_p} , \quad \xi(p) = \frac 2 {2-p } >2, \quad \beta_p =\frac 1 p. 
\]
For any $\beta>0$, by choosing $\xi(p) = r(s, \beta, 3)$ in Lemma \ref{L216} we have
\[
\Vert f_{+}^l \langle v \rangle^{-2+ \gamma/2} \Vert_{L^{\xi(p)}_{x, v}} \lesssim \left(\int_{\T^3} \Vert f_+^l(x, \cdot) \langle v \rangle^{-2+ \gamma/2} \Vert_{H^s_v}^2dx  \right)^{1/2} +  \left(\int_{\R^3} \Vert f_+^l( \cdot, v) \langle v \rangle^{-2+ \gamma/2} \Vert_{H^\beta_x}^2dv \right)^{1/2}.
\]
Consequently, we have
\begin{equation}
\label{estimate f l 2p}
\Vert f_{+}^l \langle v \rangle^{-2+ \gamma/2}\Vert_{L^{2p}_{x, v}}^{2p} \le \Vert f_{+}^l \langle v \rangle^{-2+ \gamma/2}\Vert_{L^{2}_{x, v}}^{2(p-1)} (\Vert (I-\Delta_v)^{s/2} ( f_+^l \langle v \rangle^{-2+ \gamma/2}) \Vert_{L^2_{x, v}}^2 + \Vert (I-\Delta_x)^{\beta/2}  f_+^l (\langle v \rangle^{-2+ \gamma/2} )\Vert_{L^2_{x, v}}^2 ).
\end{equation}
For any $q  >1 $ we have
\begin{equation*}
\begin{aligned}
 \Vert (I-\Delta_x)^{\beta/2}  f_+^l (\langle v \rangle^{-2+ \gamma/2} )\Vert_{L^2_{x, v}}^2 =& \int_{\R^3} \sum_{\eta \in \mathbb{Z}^3} \langle v \rangle^{2l-4 +\gamma} \langle \eta \rangle^{2\beta}  |\mathcal{F}_x (f_+)|^2 dv
 \\
\le & \int_{\R^3} \sum_{\eta \in \mathbb{Z}^3} \frac 1 q \langle v \rangle^{(2l-4 +\gamma)q}  + (1  - \frac 1 q) \langle \eta \rangle^{2\beta\frac {q} {q-1} }   |\mathcal{F}_x (f_+)|^2 dv.
\end{aligned}
\end{equation*}
Taking $q = \frac {2l+\gamma  } {2l+\gamma-4}$, if we assume $2 \beta \frac  {q} {q-1} < 2s'$ or equivalently $\beta < s'(1-\frac 1 q)  = s' \frac {4} {2l+\gamma} $, then we have
\[
 \Vert (I-\Delta_x)^{\beta/2} ( f_+^l \langle v \rangle^{-2+ \gamma/2} )\Vert_{L^2_{x, v}}^2 \le C_{l, \gamma} ( \Vert  f_+^l \langle v \rangle^{ \gamma/2} \Vert_{L^2_{x, v}}^2 + \Vert (I-\Delta_x)^{s'/2}  f_+ \Vert_{L^2_{x, v}}^2 ).
\]
Integrate in $[0, T]$ together with Lemma \ref{L39} we have
\[
 \int_{0}^T \Vert (I-\Delta_x)^{\beta/2} ( f_+^l \langle v \rangle^{-2+ \gamma/2} )\Vert_{L^2_{x, v}}^2  d\tau \le C_{l, \gamma}\int_0^T ( \Vert  f_+^l \langle v \rangle^{ \gamma/2} \Vert_{L^2_{x, v}}^2 + \Vert (I-\Delta_x)^{s'/2}  f_+ \Vert_{L^2_{x, v}}^2 ) d\tau \le C D.
\]
Since $\xi$ is an increasing function in $\beta$, we obtain the corresponding range for $\xi(p)$ and for $p$ as
\[
\xi(p) \in (2, r(s, s'\frac {4} {2l+\gamma}, 3) ) : = (2, r^b), \quad p \in (1, 2-2/r^b) : = (1, p^b),
\]
where $r(\cdot, \cdot, \cdot)$ is defined in Lemma \ref{L216}. It is clear by its definition that $p^b$ depends on $l,\gamma, s, s'$. Using such parameters and combining \eqref{estimate f l 2p}, we obtain that
\[
\Vert f_{+}^l \langle v \rangle^{-2+ \gamma/2}\Vert_{L^{2p}_{x, v}}^{2p} \le C\Vert f_{+}^l \Vert_{L^{2}_{x, v}}^{2(p-1)} (\Vert (I-\Delta_v)^{s/2} ( f_+^l \langle v \rangle^{\gamma/2}) \Vert_{L^2_{x, v}}^2 + \Vert (I-\Delta_x)^{s'/2}  f_+ \Vert_{L^2_{x, v}}^2 ),
\]
by Lemma  \ref{L39}  we have
\begin{equation}
\label{estimate f K}
\Vert f_{+}^l \langle v \rangle^{-2+ \gamma/2}\Vert_{L^{2p}_{x, v}}^{2p} \le C D^p, \quad p \in(1, p^b).
\end{equation}
Taking $p'$ close to 1 such that $p' \in (1, p^b)$, \eqref{estimate f K} still holds when $p$ is replaced by $p'$, which means
\[
\Vert f_{+}^l \langle v \rangle^{-2+ \gamma/2}\Vert_{L^{2p'}_{x, v}}^{2p'} \le C D^{p'}.
\]
The same estimate holds for $(-f)_+^l$ and its associated $E_0$ since Lemma \ref{L39} applies to the absolute value of $f$ which contains both positive and negative parts of $f$. So the theorem is thus proved. 
\end{proof}

\begin{thm}\label{T47}
 Suppose $G = \mu+g, F =\mu +f$ smooth and $\gamma \in (-3, 0], s \in (0, 1), \gamma +2s >-1$. Let $T>0, \alpha \ge 0$ be fixed.  Assume that $f$ is a solution to \eqref{linearized equation}. Suppose in addition G satisfies that
\[
G \ge 0, \quad \inf_{t, x} \Vert G \Vert_{L^1_v} \ge A >0, \quad \sup_{t, x} (\Vert G \Vert_{L^1_2}  +\Vert G \Vert_{L \log L} ) < B < +\infty,
\]
for some constant $A, B>0$.  Then for any $\epsilon \in [0, 1] $,  $12 \le l \le k_0 -8, l \ge 3+2\alpha $, suppose 
\[
\sup_{t, x} \Vert  g  \Vert_{L^\infty_{9+|\gamma|}}  \le \delta_0, \quad \sup_{t, x} \Vert g \Vert_{L^\infty_{k_0}}  \le C,
\]
for some constant $\delta_0 >0$ small. Assume that the initial data satisfies
\[
\Vert \langle  v \rangle^{l} f_0 \Vert_{L^2_{x, v}}  < + \infty, \quad\Vert \langle  v \rangle^{l} f_0 \Vert_{L^\infty_{x, v}}  <+ \infty. 
\]
Then there exist a constant $l_0 \ge 0$ which depends on $s, \gamma , l$ such that if we additionally suppose that
\[
\sup_{t}  \Vert \langle  v \rangle^{l+l_0} f_0  \Vert_{L^2_{x, v} } \le C_1 < +\infty,
\]
for some constant $C_1>0$.  Then it follows that
\[
\sup_{t \in [0, T]}\Vert  \langle v \rangle^l f\Vert_{L^\infty_{x, v}}  \le \max \{ 2\Vert  \langle v \rangle^l  f_0 \Vert_{L^\infty_{x, v}} , K_0^1 \},
\]
where 
\[
K_0^1 := C_l e^{C_l T} \max_{ 1 \le i \le 4} \max_{j \in \{ 1/p, p'/p \} }  \Vert \langle v \rangle^l f_0\Vert_{L^2_{x, v}}^{2j} + \sup_{t, x} \Vert g\Vert_{L^\infty_{k_0}}^{2j} T^j   + \epsilon^{2j} T^j )^{\frac {\beta_i -1} {a_i}} , \quad p' = \frac p {2-p} ,
\]
and $a_i, \beta_i$ is defined in Lemma \ref{L46}. 
\end{thm}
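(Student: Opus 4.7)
The plan is to run a De~Giorgi level-set iteration on the energy functional $\mathcal{E}_{p,s''}$ from \eqref{energy functional}, using Lemma~\ref{L45} as the one-step contraction estimate and Lemma~\ref{L46} as the base case, and using Lemma~\ref{L39} to secure the hypothesis of Lemma~\ref{L45} via the choice of $l_0$.

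First I would fix the parameters. Choose $p\in(1,\min\{p^a,p^b\})$ and $s''\in(0,s'\tfrac{4}{2l+\gamma})\cap(0,\frac{s}{2(s+3)})$ so that both Lemma~\ref{L45} and Lemma~\ref{L46} apply; then let $l_0$ be the larger of the two exponents produced by these lemmas. Set
\[
K_0:=\max\bigl\{2\Vert\langle v\rangle^l f_0\Vert_{L^\infty_{x,v}},\,K_0^1\bigr\},\qquad M_k:=K_0\bigl(1-2^{-k}\bigr),\qquad \mathcal{E}_k:=\mathcal{E}_{p,s''}(M_k,0,T).
\]
For $k\ge1$ we have $M_k\ge K_0/2\ge\Vert\langle v\rangle^l f_0\Vert_{L^\infty_{x,v}}$, so $f^l_{M_k,+}(0,\cdot,\cdot)\equiv0$ and the initial-data contribution in Lemma~\ref{L45} vanishes. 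Applying Lemma~\ref{L45} with $K=M_k$, $M=M_{k-1}$, $T_1=0$, $T_2=T$, and using $K-M=K_0 2^{-k}$ together with $K/(K-M)\le 2^k$, produces the De~Giorgi recurrence
\[
\mathcal{E}_k\ \le\ C\sum_{i=1}^{4}\frac{2^{k(a_i+1)}\,\mathcal{E}_{k-1}^{\beta_i}}{K_0^{a_i}},
\]
with $\beta_i>1$ and $a_i>0$ as in Lemma~\ref{L45}.

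Next I would run the standard nonlinear iteration lemma: if there exist constants $\lambda>1$ and $C_*>0$ with $\mathcal{E}_k\le C_* \lambda^k \sum_i K_0^{-a_i}\mathcal{E}_{k-1}^{\beta_i}$ and if $\mathcal{E}_0$ satisfies $\mathcal{E}_0\le c\,K_0^{a_i/(\beta_i-1)}$ for every $i$ (with an explicit $c$ depending only on $\beta_i,a_i,C_*$), then $\mathcal{E}_k\to0$. Rearranging, this smallness requirement becomes
\[
K_0\ \ge\ C\,\mathcal{E}_0^{(\beta_i-1)/a_i},\qquad i=1,2,3,4.
\]
Lemma~\ref{L46} controls the base case $\mathcal{E}_0=\mathcal{E}_{0,p,s''}$ by
\[
\mathcal{E}_0\le C_l e^{C_l T}\max_{j\in\{1/p,p'/p\}}\!\Bigl(\Vert\langle v\rangle^l f_0\Vert_{L^2_{x,v}}^{2j}+\sup_{t,x}\Vert g\Vert_{L^\infty_{k_0}}^{2j}T^j+\epsilon^{2j}T^j\Bigr),
\]
and substituting into the requirement on $K_0$ yields precisely the threshold $K_0^1$ stated in the theorem. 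Hence the choice $K_0\ge K_0^1$ triggers the iteration and gives $\mathcal{E}_k\to0$; in particular $\sup_{[0,T]}\Vert f^l_{K_0,+}(t)\Vert_{L^2_{x,v}}=0$, so $\langle v\rangle^l f\le K_0$ almost everywhere.

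To obtain the lower bound, I would repeat the argument on $h=-f$, which satisfies the same linearized equation with $\mu$ replaced by $-\mu$ and for which Lemmas~\ref{L42}, \ref{L45}, \ref{L46} provide identical estimates. This gives $\langle v\rangle^l(-f)\le K_0$ almost everywhere, and combining the two one-sided bounds gives $\Vert\langle v\rangle^l f(t)\Vert_{L^\infty_{x,v}}\le K_0$ for all $t\in[0,T]$, which is the stated conclusion. The main technical obstacle is bookkeeping: one must verify the assumption $\sup_t\Vert\langle v\rangle^{l+l_0}f\Vert_{L^2_{x,v}}\le C_1$ needed to invoke Lemma~\ref{L45}, which is where Lemma~\ref{L39} enters and fixes the exponent $l_0$; and one must verify that the four pairs $(\beta_i,a_i)$ simultaneously satisfy the De~Giorgi smallness condition, which forces us to take the maximum over $i$ and over $j\in\{1/p,p'/p\}$ inside the definition of $K_0^1$.
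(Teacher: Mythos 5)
Your proposal is correct and follows essentially the same route as the paper: fixing $(p,s'')$ so that Lemmas~\ref{L45} and \ref{L46} both apply, running the De~Giorgi iteration on $\mathcal{E}_k = \mathcal{E}_{p,s''}(M_k,0,T)$ with levels $M_k = K_0(1-2^{-k})$, killing the initial-data terms by taking $K_0 \ge 2\Vert\langle v\rangle^l f_0\Vert_{L^\infty_{x,v}}$, closing the iteration when $K_0 \ge C\max_i \mathcal{E}_0^{(\beta_i-1)/a_i}$, substituting Lemma~\ref{L46} to identify this threshold with $K_0^1$, and finally repeating for $-f$. The only cosmetic difference is that you invoke an abstract nonlinear iteration lemma where the paper constructs the explicit comparison sequence $\mathcal{E}_k^* = \mathcal{E}_0 Q_0^{-k}$; the substance is identical.
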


\begin{proof}
Choose $(p, s'')$ close enough to $(1, 0)$ such that
\[
s'' =\frac {1} {(2l+4)(s+3)}, \quad  p =\frac {\min \{p^a, p^b \}} 2 +\frac 1 2,
\]
thus $(p, s'')$ satisfies
\[
\quad 0< s''< s' \frac {4 } {2l+4} ,\quad0< s'< \frac {s} {2(s+3)} , \quad1<  p < \min \{p^a, p^b \},
\]
where $p^a$ and $p^b$ is defined in Lemma \ref{L45} and Lemma \ref{L46} respectively. Such choice $p, s''$ guarantees Lemma \ref{L45} and \ref{L46} hold. We use a classical iteration scheme to prove the estimate for the $L^\infty$ norm for the solution. Fix $K_0$ to be determined, we introduce an increasing level set $M_k$ as
\[
M_k : = K_0 (1-\frac 1 {2^k}),  \quad k =0, 1, 2....
\]
Take $T_2 \in [0, T]$ with $T>0$ fixed in the analysis. In order to simplify the notation, denote 
\[
f_k := f_{M_k, +}^l, \quad  \mathcal{E}_k := \mathcal{E}_{p, s''}(M_k, 0, T), \quad k =0, 1, 2...,
\]
choose $M= M_{k-1} <M_k =K$ and $T_1 = 0$ in Lemma \ref{L45} and by
\[
\mathcal{E}_{p, s''} (M_{k-1},  0, T_2)  \le \mathcal{E}_{p, s''} (M_{k-1},  0, T)  = \mathcal{E}_{k-1} ,  \quad k = 1, 2...,
\]
together with  Lemma \ref{L45}, for any $0 \le T_2\le T$ we have
\begin{equation*}
\begin{aligned} 
&\Vert f_{k} (T_2) \Vert_{L^2_{x, v}}^2 + \int_{0}^{T_2} \Vert \langle v \rangle^{\gamma/2} (1-\Delta_v)^{s/2}  f_{k} \Vert_{L^2_{x, v}}^2 d \tau +\frac 1 {C_0} \left(       \int_{0}^{T_2} \Vert (1-\Delta_x)^{{s''}/2}  (\langle v \rangle^{-4+ \gamma} (f_{k} )^2) \Vert_{L^p_{x, v}}^p d \tau                                 \right)^{\frac 1 p}
\\
\le& C \Vert \langle v \rangle^2 f_{k} (0) \Vert_{L^2_{x, v}}^2 +  C \Vert \langle v \rangle^2 f_{k} (0)  \Vert_{L^{2p}_{x, v}}^2 +\frac {CK} {K - M} \sum_{i=1}^4 \frac {\mathcal{E}_{p, s''}  (M, 0, T_2)^{\beta_i}} {(K-M)^{a_i}}
\\
\le& C \Vert \langle v \rangle^2 f_{k} (0) \Vert_{L^2_{x, v}}^2 +  C \Vert \langle v \rangle^2 f_{k} (0)  \Vert_{L^{2p}_{x, v}}^2 +C \sum_{i=1}^4 \frac {2^{k(a_i +1)}   \mathcal{E}_{k-1}^{\beta_i }  } { K_0^{a_i} },
\end{aligned}
\end{equation*}
where in the last line  we have used $K-M = M_k -M_{k-1} = K_0(\frac  1 {2^{k-1}} -\frac  1 {2^k}) =  K_0\frac  1 {2^k},  K \le K_0$. Taking supremum in $T_2 \in [0, T]$ we deduce
\[
\mathcal{E}_k \le C \Vert \langle v \rangle^2 f_k (0) \Vert_{L^2_{x, v}}^2 +  C \Vert \langle v \rangle^2 f_k (0)  \Vert_{L^{2p}_{x, v}}^2 +C \sum_{i=1}^4 \frac {2^{k(a_i +1)}   \mathcal{E}_{k-1}^{\beta_i }  } { K_0^{a_i} }.
\]
By taking $K_0 \ge 2 \Vert \langle v \rangle^{l} f_0\Vert_{L^\infty_{x, v}} $, we have $f_{k }(0)  = 0, k =1, 2, ...$,  hence
\[
\mathcal{E}_k \le C \sum_{i=1}^4 \frac {2^{k(a_i +1)}   \mathcal{E}_{k-1}^{\beta_i }  } { K_0^{a_i} }.
\]
Let
\[
Q_0 = \max_{1 \le i \le 4}( 2^{\frac {a_i +1} {\beta_i -1} } ), \quad \mathcal{E}_K^* = \mathcal{E}_0 (1/Q_0)^k, \quad k=0, 1, 2, ...,
\]
since $\beta_i>1, \alpha_i >0$ we have $Q_0 >1$.  Suppose
\[
K_0 \ge K_0(\mathcal{E}_0) := \max_{1 \le i \le 4 } \{ 4^{\frac 1 {a_i}}  C^{\frac 1 {a_i}}  \mathcal{E}_0^{\frac {\beta_i -1} {a_i}  }Q_0^{\frac {\beta_i} {a_i}} \}. 
\]
We easily compute
\[
\mathcal{E}_0^{*}  =\mathcal{E}_0, \quad  C \sum_{i=1}^4 \frac {2^{k(a_i +1)}   (\mathcal{E}_{k-1}^*)^{\beta_i}   } { K_0^{a_i} }  \le  \sum_{i=1}^4 \frac {2^{k(a_i +1)}   \mathcal{E}_0^{\beta_i} Q_0^{-\beta_i(k-1)}   } {4 \mathcal{E}_0^{\beta_i-1}Q_0^{\beta_i} }  \le \sum_{i=1}^4 \frac {Q_0^{k(\beta_i -1)}  \mathcal{E}_0   } {4 Q_0^{\beta_i k} }  \le   \mathcal{E}_0 (1/Q_0)^k =  \mathcal{E}_k^*.
\]
by comparison principle we have $\mathcal{E}_k \le \mathcal{E}_k^* \to 0$, as $k \to \infty$. In particular we  deduce 
\[
\sup_{t \in [0, T]} \Vert f_{K_0, +}^l (t, \cdot ,\cdot)\Vert_{L^2_{x, v}} = 0, \quad \hbox{for} \quad K_0 = \max \{ 2 \Vert \langle v \rangle^l f_0  \Vert_{L^\infty_{x, v}} , K_0 (\mathcal{E}_0)    \}.
\]
which implies that
\[
\sup_{t \in [0, T]} \Vert \langle v \rangle^l f_{ +} (t, \cdot ,\cdot)\Vert_{L^\infty_{x, v}} \le K_0,
\]
By Lemma \ref{L46}  we have
\[
K_0(\mathcal{E}_0)  \le 
 C_l e^{C_l T} \max_{ 1 \le i \le 4} \max_{j \in \{ 1/p, p'/p \} } ( \Vert \langle v \rangle^l f_0\Vert_{L^2_{x, v}}^{2j} + \sup_{t, x} \Vert g\Vert_{L^\infty_{k_0}}^{2j} T^j + \epsilon^{2j} T^j  )^{\frac {\beta_i -1} {a_i}}  := K_0^1  , \quad p' = \frac p {2-p}. 
\]
A similar bound is also valid for $-f$ since Lemma \ref{L45} and Lemma \ref{L46} have their corresponding $-f$ version. The proof is thus ended. \end{proof}

\begin{rmk}
For any $s, \gamma $ given, since $l_0$ depends on $l$, in the following we will use $l_0(l)$ to denote its dependence.
\end{rmk}

\section{Local existence for the linearized  equation} \label{section 5}
In this section we establish the local existence  of a modified linearized Boltzmann equation. The ambient space for contraction and  the subset $H_k$ is defined by
\[
X_k : = L^\infty(0, T, L^2_xL^2_k(\T^3 \times \R^3) ), \quad H_k := \{ g \in X_k| \mu+g \ge 0\},
\]
for some constant $k \ge 0$. The precise equation consider in this section is
\begin{equation}
\label{modified Boltzmann equation g}
\partial_t f +v \cdot \nabla_x f = \epsilon L_\alpha(\mu + f) + Q(\mu + g \chi(\langle v \rangle^{k_0}  g), f) + Q(g \chi(\langle v \rangle^{k_0} g), \mu ), \quad f(0, x, v) =f_0(x, v)
\end{equation}
where we recall $\chi \in C^\infty$ is defined in \eqref{cutoff function}  satisfies $0 \le  \chi \le 1$, $|\nabla\chi| \le 4/\delta_0$  and 
\begin{align*}
\mathcal \chi(x):=\left\{
\begin{aligned}
  & 1,  \quad |x| \le \delta_0\\
 & 0, \quad   |x| \ge 2\delta_0.
\end{aligned}
\right.
\end{align*} Note that since $g \in H_k$, we have $\mu + g \chi(\langle v \rangle^{k_0} g )  \ge 0$. Before going to the proof of the local existence, we first prove a lemma on the cutoff function $\chi$. 

\begin{lem}\label{L51}
For any smooth function $g, h$, for any constant $k_0 \ge 0$, there exist a constant $C$ independent of $\delta_0, k_0, g, h$ such that
\[
|g \chi(\langle v \rangle^{k_0} g) - h \chi(\langle v \rangle^{k_0} h)| \le C| g-h|.
\]
\end{lem}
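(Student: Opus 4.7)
The approach is to reduce the inequality to a statement about a single-variable Lipschitz function. Set $\phi(x) := x\chi(x)$, so that $\langle v\rangle^{k_0} g\, \chi(\langle v\rangle^{k_0} g) = \phi(\langle v\rangle^{k_0} g)$, and consequently
\[
g\,\chi(\langle v\rangle^{k_0} g) \;=\; \langle v\rangle^{-k_0}\,\phi(\langle v\rangle^{k_0} g),
\]
and likewise for $h$. The plan is then to show that $\phi$ is Lipschitz on $\R$ with a constant independent of $\delta_0$, and to conclude via the mean value theorem.

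The key step is the uniform bound on $\phi'$. Differentiating gives $\phi'(x)=\chi(x)+x\chi'(x)$. Since $0\le \chi\le 1$ the first term contributes at most $1$. For the second term, $\chi'$ is supported in $\{\delta_0 \le |x| \le 2\delta_0\}$ and satisfies $|\chi'|\le 4/\delta_0$, so $|x\chi'(x)|\le 2\delta_0 \cdot 4/\delta_0 = 8$. Thus $\|\phi'\|_{L^\infty(\R)}\le 9$, a bound that depends on neither $\delta_0$ nor $k_0$.

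With this in hand, the conclusion follows by a direct computation:
\[
\bigl|g\chi(\langle v\rangle^{k_0} g) - h\chi(\langle v\rangle^{k_0} h)\bigr|
= \langle v\rangle^{-k_0}\bigl|\phi(\langle v\rangle^{k_0} g) - \phi(\langle v\rangle^{k_0} h)\bigr|
\le \langle v\rangle^{-k_0}\,\|\phi'\|_{L^\infty}\,\langle v\rangle^{k_0}|g-h|
\le 9\,|g-h|.
\]
No obstacle is anticipated; the only mildly delicate point is recognizing that the factor $\delta_0$ in $|\chi'|\lesssim 1/\delta_0$ is exactly compensated by the fact that $\chi'$ is supported on a scale $\delta_0$, which is what makes $x\chi'(x)$ uniformly bounded and hence the final constant $C$ independent of $\delta_0$ (and of $k_0$, which cancels between the $\langle v\rangle^{k_0}$ and $\langle v\rangle^{-k_0}$ factors).
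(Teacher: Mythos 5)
Your proof is correct, and it takes a cleaner route than the paper's argument. After reducing to the one-variable estimate $|p\chi(p) - q\chi(q)| \le C|p-q|$ with $p = \langle v\rangle^{k_0}g$, $q = \langle v\rangle^{k_0}h$ (a reduction the paper also makes), the paper proceeds by a direct three-case analysis: both $|p|, |q|$ large (both terms vanish), one large and one small (the gap $|p-q|\gtrsim\delta_0$ absorbs the surviving $|q\chi(q)|\le 2\delta_0$), and $|p|\le 3\delta_0$ (write $p\chi(p) - q\chi(q) = p(\chi(p)-\chi(q)) + \chi(q)(p-q)$ and use $|p|\cdot\|\chi'\|_\infty \le 3\delta_0\cdot 4/\delta_0 = 12$). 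You instead observe that $\phi(x) = x\chi(x)$ has $\phi'(x) = \chi(x) + x\chi'(x)$ uniformly bounded, since $\chi'$ is supported on $\{\delta_0\le|x|\le 2\delta_0\}$ and there $|x\chi'(x)|\le 2\delta_0\cdot 4/\delta_0 = 8$; the mean value theorem then gives the Lipschitz bound in one stroke. The underlying cancellation of $\delta_0$ is identical — the scale of the support of $\chi'$ offsets the $1/\delta_0$ in $\|\chi'\|_\infty$ — but your route avoids the case split entirely and delivers a slightly smaller constant ($9$ versus the paper's $13$). Either argument would be acceptable; yours is more economical.
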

\begin{proof}
Denote 
\[
p := \langle v \rangle^{k_0} g,  \quad  q := \langle v \rangle^{k_0} h.
\]
we only  need to prove that
\[
|p\chi(p)- q\chi(q) | \le C|p-q| , \quad \forall p, q \in \R.
\]
We split into three cases. If $|p| \ge 3\delta_0$ and $|q|  \ge 2 \delta_0$ we have
\[
|p\chi(p)- q\chi(q) |  = 0 \le |p-q|.
\]
If $|p| \ge 3\delta_0$ and $|q|  \le 2 \delta_0$ we have
\[
|p\chi(p)- q\chi(q) | = |q\chi(q)| \le |q| \le 2\delta_0 \le 2|p-q|. 
\]
It remains to prove the case $|p| \le 3\delta_0$. Since $|\chi| \le 1$ we have
\[
|p\chi(p)- q\chi(q) | =  | p(\chi(p ) -\chi(q) ) + \chi(q) (p-q)| \le |p| |\chi(p)-\chi(q)| + |\chi(q)| |p-q| \le  |p| |\chi(p)-\chi(q)| + |p-q|,
\]
since $|\nabla\chi| \le 4/\delta_0$ we deduce 
\[
|p| |\chi(p)-\chi(q)| + |p-q| \le 12|p-q| +|p-q| \le 13|p-q|. 
\]
Gathering all the cases the lemma is thus proved. 
\end{proof}

The main theorem for the linearized equation is 
\begin{lem}\label{L52}
Suppose $g, f$ smooth and $\gamma \in (-3, 0], s \in (0, 1), \gamma +2s >-1$. Assume that $f$ is a solution to \eqref{modified Boltzmann equation g}. Let $g \in H_k$ and let $\chi$ be the cutoff function defined in \eqref{cutoff function}. \\
(1) Let $T \ge 0$ be arbitrary but fixed. Suppose the initial data $f_0 \in H_k$ and assume that $k_0 \ge k+ 8   $ and $k \ge 12$, suppose that $\delta_0$ is small enough, the solution \eqref{modified Boltzmann equation g} has a unique solution $f \in H_k$.\\
(2) In addition, if we assume further that $k \ge l_0(14) + 14, \alpha = 5$, where $l_0$ is defined in Theorem \ref{T47}.  Then there exist constants $\delta_1,\epsilon_*>0$ and $T_{\delta_0} \in [0, 1]$ which is independent of $\epsilon$ such that if the initial data satisfies
\[
\Vert\langle v \rangle^{14}  f_0  \Vert_{L^\infty_{x, v} \cap L^2_{x, v}} \le \delta_1,
\]
then for any $ 0 <\epsilon \le \epsilon_*, T \in [0, T_{\delta_0} ] $, the solution obtained in (1) satisfies 
\[
\Vert\langle v \rangle^{14}  f  \Vert_{L^\infty ([0, T] \times \T^3 \times \R^3)} \le \delta_0.
\]
The choice of $\epsilon_*, \delta_*$ only depends on $r, s, k_0, \delta_0$.
\end{lem}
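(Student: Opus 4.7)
I would prove the two assertions in sequence, treating (1) as a (mostly) standard linear existence result and (2) as a quantitative application of Theorem \ref{T47} after carefully exploiting the cutoff.

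For part (1), setting $F := \mu + f$ and using $Q(\mu,\mu) = 0$ together with the fact that $\mu$ is stationary in both $t$ and $x$, the equation is equivalent to
\[
\partial_t F + v\cdot\nabla_x F \;=\; \epsilon L_\alpha F + Q\bigl(\mu + g\chi(\langle v\rangle^{k_0}g),\, F\bigr),
\]
which is linear in $F$ with nonnegative kernel coefficient $\mu + g\chi(\langle v\rangle^{k_0}g) \ge 0$. Lemma \ref{L51} together with the cutoff gives $\|g\chi(\langle v\rangle^{k_0}g)\|_{L^\infty_{k_0}} \le 2\delta_0$ uniformly in $t,x$, so the mass/energy/entropy hypotheses $\|\mu + g\chi\|_{L^1}\ge A$ and $\|\mu + g\chi\|_{L^1_2} + \|\mu+g\chi\|_{L\log L} \le B$ hold with constants depending only on $\mu$ and $\delta_0$. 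I would construct $F$ (and hence $f$) by an iterative scheme: starting from $f^{(0)} = 0$, define $f^{(n+1)}$ by the same equation with $f$ replaced by $f^{(n+1)}$ on the right only linearly. The energy estimate on $f^{(n+1)}$ is produced by pairing with $f^{(n+1)}\langle v\rangle^{2k}$ and invoking Theorem \ref{T37} (for the $Q(\mu+g\chi, f^{(n+1)})$ term), Lemma \ref{L38} (for $\epsilon L_\alpha$), and Lemma \ref{L23} (to absorb the source $Q(g\chi,\mu)$), yielding a Grönwall-closed bound in $X_k$ that is uniform in $n$. Contraction of the difference $f^{(n+1)} - f^{(n)}$ follows from the same energy argument applied to the linearized difference equation, using Lemma \ref{L51} to control $g\chi(\langle v\rangle^{k_0}g)$ Lipschitz-continuously. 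Nonnegativity of $F = \mu + f$ follows from a weak maximum principle on the limiting equation, since the coefficient is nonnegative and $L_\alpha$ is of the correct Kolmogorov-type form.

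For part (2), I would apply Theorem \ref{T47} with $l = 14$ and the choice $\tilde g := g\chi(\langle v\rangle^{k_0}g)$ playing the role of $g$ in that theorem. The key observation is that the cutoff gives the pointwise bound $|\tilde g| \le 2\delta_0 \langle v\rangle^{-k_0}$, hence
\[
\sup_{t,x}\|\tilde g\|_{L^\infty_{9+|\gamma|}} \;\le\; 2\delta_0, \qquad \sup_{t,x}\|\tilde g\|_{L^\infty_{k_0}} \;\le\; 2\delta_0,
\]
since $k_0 \ge k + 8 \ge 20 \ge 9 + |\gamma|$. This is precisely the smallness hypothesis of Theorem \ref{T47}, so provided the a priori $L^2$ bound $\sup_{t\in[0,T]}\|\langle v\rangle^{14+l_0(14)} f\|_{L^2_{x,v}} \le C_1$ holds --- which follows from the energy estimate of part (1) together with $k \ge l_0(14) + 14$ --- the theorem yields
\[
\sup_{t\in[0,T]}\|\langle v\rangle^{14} f\|_{L^\infty_{x,v}} \;\le\; \max\bigl\{\,2\|\langle v\rangle^{14}f_0\|_{L^\infty_{x,v}},\; K_0^1\,\bigr\},
\]
where $K_0^1 \le C_{14}\, e^{C_{14} T}\,\bigl(\|\langle v\rangle^{14}f_0\|_{L^2_{x,v}}^{2j} + (2\delta_0)^{2j} T^j + \epsilon^{2j} T^j\bigr)^{(\beta_i - 1)/a_i}$ for appropriate $i,j$. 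I would then choose, in order, $T_{\delta_0} \in (0,1]$ so small that $C_{14} e^{C_{14} T_{\delta_0}} (2\delta_0)^{2j(\beta_i-1)/a_i} T_{\delta_0}^{j(\beta_i-1)/a_i} \le \delta_0/4$, next $\epsilon_* \le 1$ so that $\epsilon_*^{2j} T_{\delta_0}^j$ contributes at most $\delta_0/4$, and finally $\delta_1$ so small that $2\delta_1 \le \delta_0$ and the contribution from $\|\langle v\rangle^{14} f_0\|_{L^2_{x,v}}^{2j}$ is $\le \delta_0/4$. These choices depend only on $\gamma, s, k_0, \delta_0$ as required.

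The main obstacle is ensuring the $\epsilon$-independence claimed in the statement: the constant $C$ in Theorem \ref{T47} and the preliminary $L^2$ estimates from Lemma \ref{L39} and Corollary \ref{C310} are $\epsilon$-independent precisely because Lemma \ref{L38} and the level-set estimates in Section \ref{section 4} were designed to be so, so one must simply verify that the source term $Q(\tilde g,\mu)$ (absent in the framework of Theorem \ref{T47} as written, which addresses $\tilde Q(G,F)$) is controlled by the same smallness of $\tilde g$ without reintroducing $\epsilon$-dependence. A secondary technical point is avoiding circularity: since $\tilde g$ depends on $g$, not on $f$, the cutoff closure is automatic here and the iteration in Section \ref{section 6} will later exploit this decoupling.
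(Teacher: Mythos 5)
Your treatment of part (2) is essentially the paper's: both apply Theorem \ref{T47} with $l=14$ and $\tilde g := g\chi(\langle v\rangle^{k_0}g)$, using the a priori $L^2_{x,v}$ bound with weight $\langle v\rangle^{l_0(14)+14}$ (which is $\le k$) from the linear energy estimate, and then tune $T_{\delta_0},\epsilon_*,\delta_1$ so that $K_0^1 < \delta_0$. One remark on a small misconception you raise: the term $Q(\tilde g,\mu)$ is \emph{not} outside the framework of Theorem \ref{T47}, because the identity $Q(\mu+\tilde g,\mu+f)=Q(\mu+\tilde g,f)+Q(\tilde g,\mu)$ shows that \eqref{modified Boltzmann equation g} \emph{is} exactly \eqref{linearized equation} with $g$ replaced by $\tilde g$; there is no extra source to check. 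Your choice of constants is also slightly suboptimal in that you bound the $(2\delta_0)^{2j}T^j$-term in terms of $\delta_0$ itself, but since $T\to 0$ forces this term to zero for any fixed $\delta_0$, the conclusion is the same as in the paper.

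Part (1), however, contains a genuine gap. The equation \eqref{modified Boltzmann equation g} is linear in the unknown $f$ (the function $g$ is \emph{fixed}), so there is nothing to iterate on: an iteration in which ``$f$ is replaced by $f^{(n+1)}$ on the right'' is simply the original equation, and an iteration in which the nonlocal term $Q(\mu+g\chi,\cdot)$ acts on the previous iterate $f^{(n)}$ would require a contraction estimate that controls the $H^{2s}$-norm of a difference by its $L^2$-norm --- something the given lemmas do not provide without the $\epsilon$-regularization absorbing a genuinely nonlinear coupling that is not present here. Moreover, Lemma \ref{L51} controls the Lipschitz continuity of the map $g\mapsto g\chi(\langle v\rangle^{k_0}g)$, which is entirely irrelevant in part (1) where $g$ is held fixed; it is used in the \emph{nonlinear} fixed-point argument of Lemma \ref{L61}, not here. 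You appear to have conflated the linear solvability step (Lemma \ref{L52}(1)) with the nonlinear fixed-point step (Lemma \ref{L61}). The paper's actual mechanism is a duality/Hahn--Banach argument: one proves the a priori estimate
\[
\sup_{t\in[0,T]}\|h\|_{L^2_xL^2_k}\;\le\;C_{T,k}\int_0^T\|Th\|_{L^2_xL^2_k}\,d\tau
\]
for the formal adjoint $T$ on test functions $h\in S$, defines the bounded linear functional $G(w):=(h_0,f_0)_{L^2_xL^2_k}+\int_0^T(Q(g\chi,\mu),h)_{L^2_xL^2_k}+\int_0^T(h,R_\epsilon)_{L^2_xL^2_k}$ on $W=TS\subset L^1([0,T],L^2_xL^2_k)$, extends it by Hahn--Banach, and reads off the weak solution $f\in L^\infty([0,T],L^2_xL^2_k)$ from the Riesz-type representation. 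Nonnegativity $F=\mu+f\ge 0$ is then proved directly by testing the equation for $F$ against $\langle v\rangle^{2k}F_-$ and closing a Gr\"onwall estimate; your appeal to a ``weak maximum principle'' is heuristically in the right direction but omits the actual computation (in particular the coercivity of $Q(G,F_-)$ against $F_-$, which requires Theorem \ref{T34}). Without the duality step, part (1) of your argument does not produce a solution.
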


\begin{proof}
 Denote
\[
T h := -\partial_t h - v \cdot \nabla_x h -\epsilon L_\alpha h  - Q( \mu + g\chi (\langle v \rangle^{k_0} g ), h ).
\]
Let $S$ be the space of test function given by
\[
S = C^\infty_0( (-\infty, T], C^\infty(\T^3, C_c^\infty (\R^3 ) ) ).
\]
Let $h \in S$,  multiply $Th$ by $h \langle v \rangle^{2k}$ and integrate in $x, v$ we have
\begin{equation*}
\begin{aligned}
(Th,  h)_{L^2_xL^2_k} =& -\frac 1 2 \frac d {dt} \Vert h \Vert_{L^2_xL^2_k} + \epsilon \int_{\T^3} \int_{\R^3}\langle  v \rangle^{2k} (\langle v \rangle^{2\alpha} h - \nabla_v   \langle v \rangle^{2\alpha} \cdot \nabla_v  h )h dv dx 
\\
&  - \int_{\T^3} \int_{\R^3} Q(\mu + g \chi(\langle v \rangle^{k_0} g ), h ) h \langle v \rangle^{2k} dv  dx .
\end{aligned}
\end{equation*}
First we easily compute
\[
\epsilon \int_{\T^3} \int_{\R^3}\langle  v \rangle^{2k} (\langle v \rangle^{2\alpha} h - \nabla_v  \langle v \rangle^{2\alpha}  \cdot \nabla_v h )h dv dx \ge \frac {\epsilon} 2 \Vert f \Vert_{L^2_{x}L^2_{k+\alpha} }^2 + \frac {\epsilon}  2  \Vert \langle v \rangle^{\alpha+ k }  \nabla_v h \Vert_{L^2_{x, v} }^2 dx dv  -C_k \epsilon \Vert h \Vert_{L^2_x L^2_k}^2.
\]
Denote 
\[
T_0 = \int_{\T^3} \int_{\R^3} Q(\mu + g \chi(\langle v \rangle^{k_0} g ), h ) h \langle v \rangle^{2k}dv dx .
\]
It's easily seen that if $\delta_0>0$ is small, $G= \mu +g\chi(\langle v \rangle^{k_0} g)$ satisfies 
\[
G \ge 0, \quad \inf_{t, x} \Vert G \Vert_{L^1_v} \ge A >0, \quad \sup_{t, x} (\Vert G \Vert_{L^1_2}  +\Vert G \Vert_{L \log L} ) < B < +\infty,
\]
for some constant $A, B>0$. If $\delta_0 > 0$ is small enough, by Theorem \ref{T34} we have
\begin{equation*}
\begin{aligned}
(Q(G, f), f \langle v \rangle^{2k} )_{L^2_{x, v}}\le& - \gamma_2 \Vert f \Vert_{H^s_{k+\gamma/2}}^2  -\gamma_1 \Vert f \Vert_{L^2_{k+\gamma/2}}^2 + C_{k}  \Vert f \Vert_{L^2_{k+\gamma/2 -s' }}^2
\\
& + C_k \sup_x \Vert g\chi \Vert_{L^\infty_{ k + 5 + |\gamma|}}  \Vert f \Vert_{L^2_{ k + \gamma/2}}^2   +C_k\Vert g\chi \Vert_{L^2_{|\gamma| +7 }} \Vert f \Vert_{H^s_{ k + \gamma/2}}^2
\\
\le  & - (\gamma_2  - C_k \sup_x \Vert g \chi \Vert_{L^\infty_{k_0}}) \Vert f \Vert_{L^2_x H^s_{ k + \gamma/2}}^2 -(\gamma_1 - C_k \sup_x  \Vert g \chi \Vert_{L^\infty_{k_0}} )  \Vert f \Vert_{L^2_x L^2_{ k + \gamma/2}}^2+ C_{k}  \Vert f \Vert_{L^2_x L^2_{k + \gamma/2 - s' }}^2
\\ 
\le  &  C_{k}  \Vert f \Vert_{L^2_x L^2_{k + \gamma/2 - s' }}^2,
\end{aligned}
\end{equation*}
for some constant $\gamma_1,\gamma_2, C_k>0$. Gathering the terms we easily deduce that
\[
(Th, h)_{L^2_xL^2_k}  \ge -\frac 1 2 \frac {d} {dt} \Vert  h \Vert_{L^2_x L^2_k}^2 - C_k \Vert f \Vert_{L^2_x L^2_k}^2.
\]
By Gr\"onwall's inequality we have
\[
\int_t^T e^{ 2C_k \tau } (Th, h)_{L^2_xL^2_k}  d \tau \ge \frac 1 2 e^{2C_{k} t} \Vert h(t, \cdot, \cdot)  \Vert_{L^2_xL^2_k}^2, \quad \forall t \in [0, T],
\]
and we easily have
\[
\int_t^T e^{ 2 C_k \tau} (Th, h)_ {L^2_xL^2_k}  d \tau \le \sup_{t \in [0, T]} \Vert \langle v \rangle^k h \Vert_{L^2_{x, v}} \int_{0}^T e^{2 C_k \tau } \Vert Th \Vert_{L^2_x L^2_k} d \tau, \quad \forall t \in [0, T],
\]
which implies
\[
\sup_{t \in [0, T]} \Vert \langle v \rangle^k h \Vert_{L^2_{x, v}}  \le C_{T, k} \int_{0}^T \Vert Th \Vert_{L^2_x L^2_k} d \tau, \quad \forall t \in [0, T].
\]
Denote 
\[
W := TS = \{ w| w=Th, h \in S  \}, \quad Y = L^1([0, T] , L^2_xL^2_k(\T^3 \times \R^3)), \quad X= Y^* = L^\infty([0, T] , L^2_xL^2_k(\T^3 \times \R^3)),
\]
where the adjoint is taken in the weighted space $L^2([0, T], L^2_xL^2_k (\T^3\times \R^3) )$. Then $W$ is a subspace of $Y$ and we have already shown that 
\[
\Vert h \Vert_{X} \le C_{T, k} \Vert w \Vert_Y. 
\]
Denote
\[
R_\epsilon = - \epsilon (\langle v \rangle^{2\alpha} \mu - \nabla_v  \cdot ( \langle v \rangle^{2\alpha} \nabla_v)  \mu ),
\]
and define the linear mapping on $W$ by
\[
G(w) = (h_0, f_0)_{L^2_xL^2_k}  + \int_{0}^T ( Q(g \chi,\mu), h  )_{L^2_xL^2_k}  d\tau + \int_0^T (h, R_\epsilon)_{L^2_xL^2_k}  d \tau.
\]
It is  easily seen that
\[
(h_0, f_0)_{L^2_xL^2_k}   \le  \Vert f_0 \Vert_{L^2_xL^2_k} \Vert h_0 \Vert_{L^2_x L^2_k} \le C \Vert f_0 \Vert_{L^2_xL^2_k} \Vert h \Vert_{L^\infty( [0, T], L^2_x L^2_k) }  \le  C_{T, k} \Vert f_0 \Vert_{L^2_xL^2_k}   \Vert w\Vert_{Y}.
\]
For the second term we easily deduce
\[
\epsilon \int_{\T^3}\int_{ \R^3} \langle  v \rangle^{2k} (\langle v \rangle^{2\alpha} \mu - \nabla_v  \cdot ( \langle v \rangle^{2\alpha} \nabla_v)  \mu )h dv dx \le C_k \epsilon \Vert h \Vert_{L^2_x L^2_k} \le C_k  \Vert h \Vert_{L^2_x L^2_k},
\]
which implies
\[
\int_0^T ( h, R_\epsilon  )_{L^2_xL^2_k}  d \tau   \le C_{T, k} \Vert   h \Vert_{L^\infty([0, T], L^2_{x}L^2_k ) }  \le C_{T, k}\Vert w\Vert_{Y}.
\]
Since $k_0 -8 \ge k$, by Lemma \ref{L23} we have
\begin{equation*}
\begin{aligned}
 \int_0^T ( Q(g \chi,\mu), h  )_{L^2_xL^2_k}  d\tau  = \int_0^T \int_{\T^3}( Q(g \chi,\mu), h \langle v \rangle^{2k}) dx d\tau 
 \le &C_k \int_0^T \int_{\T^3}    \Vert g \Vert_{L^1_{ k+\gamma+2s} \cap L^2_3}  \Vert h  \Vert_{L^2_k}    dx d\tau 
 \\
 \le& C_{T, k} \sup_{t, x} \Vert g \Vert_{L^\infty_{k_0}}  \Vert h \Vert_{L^\infty([0, T], L^2_{x}L^2_k) }
  \\
 \le& C_{T, k} \sup_{t, x} \Vert g \Vert_{L^\infty_{k_0}}  \Vert w\Vert_{Y}.
\end{aligned}
\end{equation*}
This shows that $G$ is a bounded linear functional on $W$, thus can be extended to $Y$, by Hahn-Banach theorem, there exists $f \in X$ such that
\[
G(w) = \int_0^T (w, f)_{L^2_xL^2_k} d \tau, 
\]
with
\[
\Vert f \Vert_{X} \le C
_{T, k}  \Vert f_0\Vert_{L^2_xL^2_k} + C_{T, k} (1+ \sup_{t, x} \Vert g \chi \Vert_{L^\infty_{k_0} } ) .
\]
so the existence is thus proved. To show that $f\in H_k$, we need to prove that $\mu +f \ge 0$. Let $F = \mu + f$ and $G =\mu + g\chi$, then $G \ge 0$ and $F$ satisfies
\begin{equation}
\label{modified Boltzmann equation F}
\partial_t F+ v \cdot \nabla_x F = -\epsilon( \langle v \rangle^{2 \alpha} I -\nabla_v (\langle v \rangle^{2\alpha} \nabla_v ) ) F + Q(G, F).
\end{equation}
Denote $\eta(x) = \frac 1 2 (x_-)^2,  x_- = \min \{x, 0   \},   F_{\pm} = \pm \max \{\pm  F, 0 \}$.  Multiply \eqref{modified Boltzmann equation F} by $\langle v \rangle^{2k} F_-$ we have
\[
\frac 1 2 \langle v \rangle^{2k }(\partial_t (F_-)^2+ v \cdot \nabla_x (F_-)^2) = -\epsilon \langle v \rangle^{2 \alpha+2 k} (F F_-) +  \epsilon \langle v \rangle^{2k}  F_- \nabla_v (\langle v \rangle^{2\alpha} \nabla_v ) ) F + Q(G, F) F_- \langle v \rangle^{2k}.
\]
We easily compute
\[
\int_{\T ^3}\int_{\R^3} -\epsilon \langle v \rangle^{2 \alpha+2 k} (F F_-)  dv dx= -\epsilon \Vert \langle v \rangle^{\alpha+k}  F_-\Vert_{L^2_x L^2_v}^2,
\]
Since  $\eta'' \ge 0 $, we have
\begin{equation*}
\begin{aligned} 
&\epsilon \int_{\T^3} \int_{\R^3} \langle v \rangle^{2k}    F_- \nabla_v (\langle v \rangle^{2\alpha} \nabla_v F )  dv dx 
\\
=& -\epsilon \int_{\T^3}  \int_{\R^3} \nabla_v (\langle v \rangle^{2k} F_-) \cdot (\langle v \rangle^{2\alpha} \nabla_v F) dv dx
\\
=& - \epsilon  \int_{\T^3} \int_{\R^3} \langle v \rangle^{2\alpha +2k} \eta''(F) |\nabla_v F|^2 dv dx -\epsilon  \int_{\T^3} \int_{\R^3} F_- \nabla_v ( \langle v \rangle^{2k} ) \cdot (\langle v \rangle^{2\alpha} \nabla_v F) dv dx
\\
=& - \epsilon \int_{\T^3}  \int_{\R^3} \langle v \rangle^{2\alpha +2k} \eta''(F) |\nabla_v F|^2 dv dx  + \frac 1 2 \epsilon \int_{\T^3}  \int_{\R^3} F_-^2 \nabla_v( \langle v \rangle^{2\alpha} \nabla_v  \langle v \rangle^{2k} ) dvdx
\\
\le &C_k \epsilon \Vert \langle v \rangle^{\alpha +k-1}  F_- \Vert_{L^2_x L^2_v}^2 \le \frac \epsilon 2 \Vert \langle v \rangle^{\alpha +k}  F_- \Vert_{L^2_xL^2_v}^2  + C_k  \epsilon \Vert \langle v \rangle^{k} F_-  \Vert_{L^2_x L^2_v}^2 .
\end{aligned}
\end{equation*}
Since $F_+ F_- = 0, F_+ \ge 0, F_- \le 0, G \ge 0 $, by Theorem \ref{T34} we have
\begin{equation*}
\begin{aligned}
 &\int_{\T^3} \int_{\R^3} Q(G, F ) F_-\langle v \rangle^{2k} dv dx
 \\
 =&  \int_{\T^3} \int_{\R^3} Q(G, F_-) F_-\langle v \rangle^{2k} dv dx+ \int_{\R^3} Q(G, F_+) F_-\langle v \rangle^{2k} dv dx
\\
=&\int_{\T^3} \int_{\R^3} Q(G, F_-) F_-\langle v \rangle^{2k} dv dx +  \int_{\T^3} \int_{\R^3} \int_{\R^3} \int_{ \mathbb{S}^2 } B G_*' (F_+)' F_-\langle v \rangle^{2k} dv dx 
\\
&- \int_{\T^3} \int_{\R^3} \int_{\R^3} \int_{ \mathbb{S}^2 } B G_* F_+ F_-\langle v \rangle^{2k} dv dx
\\ 
\le &  \int_{\T^3} \int_{\R^3} Q(G, F_-) F_-\langle v \rangle^{2k} dv  dx
\\ 
\le & - (\gamma_2  - C_k \sup_x \Vert g \chi \Vert_{L^\infty_{k_0}}) \Vert F_- \Vert_{L^2_x H^s_{ k + \gamma/2}}^2 -(\gamma_1 - C_k \sup_x  \Vert g \chi \Vert_{L^\infty_{k_0}} )  \Vert   F_-  \Vert_{L^2_x L^2_{ k + \gamma/2}}^2+ C_{k}  \Vert   F_-  \Vert_{L^2_x L^2_{k + \gamma/2 - s' }}^2
\\ 
\le & C_{k}  \Vert   F_-  \Vert_{L^2_x L^2_{k  }}^2.
\end{aligned}
\end{equation*}
Gathering all the terms we have
\[
\frac d {dt} \Vert F_- \Vert_{L^2_x L^2_k }^2 \le C_k \Vert F_- \Vert_{L^2_xL^2_k}^2,  \quad F_-|_{t=0}=0,
\]
then by Gr\"onwall's Lemma we conclude that $F_- = 0$, which implies $f \in H_k$. Uniqueness also follows by Lemma  \ref{L39}, thus the proof for (1) is finished. By \eqref{computation estimate g} we have
\[
\frac d {dt} \Vert \langle v \rangle^{k} f  \Vert_{L^2_{x, v}}^2 \le C_{k} \Vert \langle v \rangle^{k} f  \Vert_{L^2_{x, v}}^2+ C_{k} ,
\]
so for $T \in [0, 1]$ we have
\[
\sup_{t \in [0, T]} \Vert \langle v \rangle^{k} f  \Vert_{L^2_{x, v}}^2 \le C_k (\Vert \langle v \rangle^{k} f_0  \Vert_{L^2_{x, v}}^2+ 1) < + \infty.
\]
Since $\alpha=5$, we have $3+2\alpha  \le 14$, together with $k -l_0(14)  \ge 14$, take $l=14$ in Theorem  \ref{T47} we have
\[
\sup_{t \in [0, T]}\Vert  \langle v \rangle^{14} f\Vert_{L^\infty_{x, v}}  \le \max \{ 2\Vert  \langle v \rangle^{14} f_0 \Vert_{L^\infty_{x, v}} , K_0^1 \},
\]
where 
\[
K_0^1 := C_l e^{C_l T} \max_{ 1 \le i \le 4} \max_{j \in \{ 1/p, p'/p \} }  \Vert \langle v \rangle^{14}  f_0\Vert_{L^2_{x, v}}^{2j} + \sup_{t, x} \Vert g\chi\Vert_{L^\infty_{k_0}}^{2j} T^j  +\epsilon^{2j} T^j )^{\frac {\beta_i -1} {a_i}} , \quad p' = \frac p {2-p},
\]
if  $T \le 1$, we  easily conclude by taking $\delta_1, T_{\delta_0}, \epsilon_*$ small. 
\end{proof}

\begin{equation*}
\begin{aligned} 
\end{aligned}
\end{equation*}

\section{Nonlinear local theory for weak singularity}\label{section 6}
In this section we are considering the 
\begin{equation}
\label{modified equation}
\partial_t f+ v \cdot \nabla_x f = \epsilon L_\alpha(\mu +f) + Q(\mu + f \chi(\langle v \rangle^{k_0} f), \mu + f), \quad f(0, x, v) =f_0(x, v)
\end{equation}
recall the cutoff function $\chi \in C^\infty$ is defined in \eqref{cutoff function}.
If the solution to \eqref{modified equation} satisfies
\[
\Vert f \Vert_{L^\infty ([0, T] \times \T^3 \times \R^3  )} \le \delta_0,
\]
the solution becomes a solution to 
\begin{equation}
\label{epsilon Boltzmann equation}
\partial_t f+ v \cdot \nabla_x f =\epsilon L_\alpha(\mu +f)  + Q(\mu + f , \mu + f), \quad f(0, x, v) =f_0(x, v)
\end{equation}

We first prove the existence of the localized equation.

\begin{lem}\label{L61} Suppose $g, f$ smooth and $\gamma \in (-3, 0], s \in (0, 1/2], \gamma +2s >-1, \alpha =5$. Assume that $f$ is a solution to \eqref{modified equation}. Suppose $k_0$ and the initial data $f_0$ satisfies 
\[
k_0 -l_0(14) \ge 14 + 8 ,  \quad \Vert \langle v \rangle^{14+l_0 (14 )} f_0 \Vert_{L^2_{x, v}} < +\infty,\quad  \mu +f_0 \ge 0,  \quad \Vert\langle v \rangle^{14}  f_0  \Vert_{L^\infty_{x, v} \cap L^2_{x, v}} \le \delta_1, 
\]
where $l_0$ is defined in Theorem \ref{T47}. For any $\delta_0>0$ small satisfies the same assumptions in the lemma above, there exist  $\delta_1,\epsilon_* >0$  such that  for any $\epsilon \in (0, \epsilon_*]$, there exist a time $T_{\epsilon}$ such that \eqref{modified equation} has a  solution $f \in L^\infty([0, T] , L^2_xL^2_{14} (\T^3 \times \R^3)) $. Moreover the solution  $f$ satisfies
\[
\Vert  \langle v \rangle^{14}  f \Vert_{L^\infty( [0, T_\epsilon ] \times \T^3\times \R^3) }  \le \delta_0.
\]
where $\delta_1$, $\epsilon$ is defined in Lemma \ref{L52}.
\end{lem}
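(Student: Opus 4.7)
The strategy is to obtain the solution to \eqref{modified equation} as a fixed point of the map $\Phi\colon g\mapsto f$, where $f$ is the solution to the linear problem
\[
\partial_t f + v\cdot\nabla_x f = \epsilon L_\alpha(\mu+f) + Q\bigl(\mu + g\chi(\langle v\rangle^{k_0} g),\, f\bigr) + Q\bigl(g\chi(\langle v\rangle^{k_0} g),\, \mu\bigr), \qquad f(0)=f_0,
\]
constructed in Lemma \ref{L52}. A fixed point of $\Phi$ clearly solves \eqref{modified equation}, since $Q(\mu+g\chi,\mu+f) = Q(\mu+g\chi,f) + Q(g\chi,\mu)$.

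First I would set up the functional framework. Let $T\in(0,T_{\delta_0}]$, with $T_{\delta_0}$ from Lemma \ref{L52}(2), and introduce the closed ball
\[
\mathcal{B}_T := \Bigl\{ g \in L^\infty\bigl([0,T];L^2_x L^2_{14+l_0(14)}\bigr) : \mu+g\ge 0,\ g(0)=f_0,\ \|\langle v\rangle^{14} g\|_{L^\infty_{t,x,v}}\le\delta_0,\ \|\langle v\rangle^{14+l_0(14)}g\|_{L^\infty_tL^2_{x,v}}\le M \Bigr\},
\]
where $M$ is chosen large depending on $\|\langle v\rangle^{14+l_0(14)} f_0\|_{L^2_{x,v}}$; the metric on $\mathcal{B}_T$ is $d(g_1,g_2) := \sup_{t\in[0,T]} \|\langle v\rangle^{14}(g_1-g_2)(t)\|_{L^2_{x,v}}$. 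Since $g\chi(\langle v\rangle^{k_0} g)$ is automatically small in $L^\infty_{k_0}$ (the cutoff enforces $|g\chi|\le 2\delta_0 \langle v\rangle^{-k_0}$), and $\mu+g\chi\ge0$, the hypotheses of Lemma \ref{L52} are met by every $g\in\mathcal{B}_T$. Combining Lemma \ref{L52}(1)--(2) with the linear a priori bound \eqref{estimate v l 1} of Lemma \ref{L39} applied to the propagated weight $14+l_0(14)$ yields that, for $\delta_0,\delta_1,\epsilon_*$ and $T$ sufficiently small, $\Phi(\mathcal{B}_T)\subseteq\mathcal{B}_T$.

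Next I would establish the contraction. Set $f_i=\Phi(g_i)$, $\chi_i:=\chi(\langle v\rangle^{k_0} g_i)$, and subtract the two linearized equations:
\[
\partial_t(f_1-f_2) + v\cdot\nabla_x(f_1-f_2) = \epsilon L_\alpha(f_1-f_2) + Q(\mu+g_1\chi_1,\,f_1-f_2) + Q(g_1\chi_1-g_2\chi_2,\,f_2) + Q(g_1\chi_1-g_2\chi_2,\,\mu).
\]
Testing against $(f_1-f_2)\langle v\rangle^{28}$ and integrating over $\T^3\times\R^3$, the first collision term is absorbed via Theorem \ref{T37} (using that $g_1\chi_1$ is small in $L^2_{|\gamma|+7}\cap L^\infty_{k_0}$), and the regularizing term gives a coercive piece by Lemma \ref{L38}. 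For the last two terms, the pointwise Lipschitz estimate from Lemma \ref{L51},
\[
|g_1\chi(\langle v\rangle^{k_0}g_1) - g_2\chi(\langle v\rangle^{k_0}g_2)| \le C|g_1-g_2|,
\]
together with the upper bound of Lemma \ref{L23}, transfers one weighted $L^2$ factor onto $g_1-g_2$ while the other factor falls on $f_2$ (which is bounded in the higher $L^2_{14+l_0(14)}$-norm by membership in $\mathcal{B}_T$). Gr\"onwall's inequality then yields
\[
\sup_{[0,T]}\|\langle v\rangle^{14}(f_1-f_2)\|_{L^2_{x,v}}^2 \le C(\epsilon,\delta_0,M)\, T\, e^{C(\epsilon,\delta_0,M) T}\, d(g_1,g_2)^2,
\]
so that shrinking $T$ to some $T_\epsilon>0$ gives $d(\Phi g_1,\Phi g_2)\le \tfrac12 d(g_1,g_2)$. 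Banach's fixed point theorem provides the desired solution $f\in\mathcal{B}_{T_\epsilon}$, and the $L^\infty$ estimate $\|\langle v\rangle^{14} f\|_{L^\infty([0,T_\epsilon]\times\T^3\times\R^3)}\le\delta_0$ is built into the ball by Lemma \ref{L52}(2).

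The main obstacle is the contraction step: the difference $Q(g_1\chi_1-g_2\chi_2,f_2)$ contains the singular factor $|v-v_*|^\gamma$ acting on $g_1-g_2$ with a full velocity-weight, and one must verify that the moment margin $k_0\ge 14+l_0(14)+8$ combined with Lemma \ref{L212} suffices to redistribute weights so that the resulting bound closes in the same norm $d$ used for the contraction. Because $s\in(0,\tfrac12]$ in this section, only the $L^2$-type coercivity of Theorem \ref{T37} (not the $H^s$ dissipation alone) is needed, which is the reason the argument is restricted to weak singularity here and extended to general $s$ later in Section \ref{section 8}.
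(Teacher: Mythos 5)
Your proof follows essentially the same route as the paper: construct the map $\Phi\colon g\mapsto f$ from the linear theory of Lemma~\ref{L52}, use the $\epsilon L_\alpha$ regularization to absorb the $H^{2s}_{14+\alpha}$ norm produced by the upper bound in Lemma~\ref{L23} (this is exactly where the restriction $s\in(0,\tfrac12]$, $\alpha=5$ is needed), and obtain the contraction in $L^\infty_tL^2_{x,v,14}$ via Gr\"onwall on a short, $\epsilon$-dependent time interval; the $L^\infty$ bound on the fixed point is then read off Lemma~\ref{L52}(2). Two small corrections to your write-up. First, the coercive estimate for $Q(\mu+g_1\chi_1,\,f_1-f_2)$ is Theorem~\ref{T34} (which treats $(Q(G,f),f\langle v\rangle^{2l})$ with $G=\mu+g$), not Theorem~\ref{T37} (which treats $(Q(\mu+g,\mu+f),f\langle v\rangle^{2l})$). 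Second, and more substantively: after applying Lemma~\ref{L23} pointwise in $x$ to $Q(g_1\chi_1-g_2\chi_2,\,f_2)$, the $\T^3$-integral contains three factors
$\Vert g_1\chi_1-g_2\chi_2\Vert_{L^1\cap L^2}(x)\,\Vert f_2\Vert_{L^2_{\gamma+2s+14-\alpha}}(x)\,\Vert f_1-f_2\Vert_{H^{2s}_{14+\alpha}}(x)$,
and since both the contraction metric and the $\epsilon L_\alpha$-coercivity supply $L^2_x$ norms, the middle factor must be placed in $L^\infty_x$; your cited $L^\infty_tL^2_{x,v,14+l_0(14)}$-bound from $\mathcal{B}_T$ is the wrong one for this, because it does not control $\sup_x$. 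What closes the H\"older structure is the $L^\infty_{t,x,v}$-bound $\Vert\langle v\rangle^{14}f_2\Vert_{L^\infty_{t,x,v}}\le\delta_0$ (also in your ball, and provided by Lemma~\ref{L52}(2)) combined with the embedding $\Vert\cdot\Vert_{L^2_m}\lesssim\Vert\cdot\Vert_{L^\infty_{m+4}}$ and the arithmetic $\gamma+2s+14-\alpha+4\le14$. With that substitution your argument is correct and matches the paper's.
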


\begin{proof}
In this proof we will set $k = 14$. For any $g \in H_k$, define the map 
\[
\Gamma :H_k \to H_k ,\quad \Gamma g = f,
\]
where $f$ is the unique solution to the equation 
\[
\partial_t f +v \cdot \nabla_x f = \epsilon L_\alpha(\mu + f) + Q(\mu + g \chi(\langle v \rangle^{k_0}  g), f) + Q(g \chi(\langle v \rangle^{k_0} g), \mu ) ,\quad f(0) =f_0.
\]
Since $f_0  \in H_{14}  = H_k $, Lemma \ref{L52} guarantees that $\Gamma$ is well-defined provided $\delta_0, \epsilon, T$ are small enough. Our goal is to show that $\Gamma$ is a contraction map on $L^\infty([0, T] , L^2_xL^2_k(\T^3 \times \R^3))$. Let $g, h \in H_k$ and let $f_g, g_h$ be the corresponding solution such that
\[
\partial_t f_g +v \cdot \nabla_x f_g = \epsilon L_\alpha(\mu + f_g) + Q(\mu + g \chi(\langle v \rangle^{k_0}  g), f_g) + Q(g \chi(\langle v \rangle^{k_0} g), \mu ),\quad f_g(0) =f_0,
\]
and 
\[
\partial_t f_h +v \cdot \nabla_x f_h = \epsilon L_\alpha(\mu + f_h) + Q(\mu + h \chi(\langle v \rangle^{k_0}  h), f_h) + Q(h \chi(\langle v \rangle^{k_0} h), \mu ), \quad f_g(0) =f_0.
\]
Since $f_0 \in H_{14 + l_0(14)}$, by Lemma \ref{L52} we have there exist a $T_{\delta_0}$ such that for all $T \in [0, T_{\delta_0} ] $ we have  
\[
\Vert\langle v \rangle^{14}  f_h  \Vert_{L^\infty ([0, T] \times \T^3 \times \R^3)} \le \delta_0.
\]
The difference of the two equations satisfies
\begin{align}
\label{f g f h}
\nonumber
 \partial_t (f_g - f_h) + v \cdot \nabla_x (f_g -f_h) =& \epsilon  L_\alpha(f_g -  f_h)  + Q(\mu + g \chi(\langle v \rangle^{k_0}  g), f_g -f_h) 
 \\
 & +  Q(g \chi(\langle v \rangle^{k_0} g) - h \chi(\langle v \rangle^{k_0} h),  f_h )  +  Q(g \chi(\langle v \rangle^{k_0} g) - h \chi(\langle v \rangle^{k_0} h),  \mu ) .
\end{align}
In the following we will multiply both side of \eqref{f g f h} by $(f_g -f_h) \langle v \rangle^{2k}$ and integrate. For the first term in the right hand side we  easily compute
\[
\int_{\T^3 }\int_{\R^3} \epsilon  L_\alpha(f_g -  f_h) (f_g -f_h)  \langle v \rangle^{2k} dvdx \le  - \frac {\epsilon}  2  \Vert \langle v \rangle^{\alpha+ k }  ( f_g- f_h) \Vert_{L^2_xH^1_v }^2  + C_k \epsilon \Vert f_g-f_h \Vert_{L^2_x L^2_k}^2.
\]
By Theorem \ref{T34} we have
\begin{equation*}
\begin{aligned} 
&\int_{\T^3 }\int_{\R^3} Q(\mu + g \chi(\langle v \rangle^{k_0}  g), f_g -f_h)  (f_g -f_h)  \langle v \rangle^{2k} dvdx  
\\ 
\le & - (\gamma_2  - C_k\sup_x \Vert g \chi \Vert_{L^\infty_{k_0}}) \Vert f_g -f_h \Vert_{L^2_x H^s_{ k + \gamma/2}}^2 -(\gamma_1 -  C_k \sup_x \Vert g \chi \Vert_{ L^\infty_{k_0}} )  \Vert f_g -f_h \Vert_{L^2_x L^2_{ k + \gamma/2}}^2+ C_{k}  \Vert f_g -f_h \Vert_{L^2_x L^2_{k+\gamma/2-s' }}^2
\\
\le&  C_{k}  \Vert f_g -f_h \Vert_{L^2_{k}}^2. 
\end{aligned}
\end{equation*}
Since $k=14, \alpha = 5, s \in (0, \frac 1 2]$ we have
\[
\gamma+2s +k -\alpha + 4   \le 14, \quad   \gamma+2s +k -\alpha +2   \le k,  \quad 2s \le 1.
\]
together with Lemma \ref{L23} and Lemma \ref{L51} we have
\begin{equation*}
\begin{aligned} 
&\int_{\T^3 }\int_{\R^3} Q(g \chi(\langle v \rangle^{k_0} g) - h \chi(\langle v \rangle^{k_0} h),  f_h ) (f_g -f_h)  \langle v \rangle^{2k} dvdx  
\\
\le& C \int_{\T^3 } \Vert g \chi(\langle v \rangle^{k_0} g) - h \chi(\langle v \rangle^{k_0} h) \Vert_{L^1_{\gamma+2s +k -\alpha} \cap L^2_2}  \Vert f_h  \Vert_{L^2_{\gamma +2s +k -\alpha }}  \Vert  (f_g -f_h) \Vert_{H^{2s}_{k+\alpha}} dx
\\
\le&C(\sup_x \Vert f_h\Vert_{L^\infty_{\gamma+2s +k -\alpha + 4 }} ) \Vert g- h\Vert_{L^2_xL^2_k}  \Vert  (f_g -f_h) \Vert_{H^{2s}_{k+\alpha}} dx
\\
\le & \frac {\epsilon} {8} \Vert \langle v \rangle^{\alpha+ k }  ( f_g- f_h) \Vert_{L^2_xH^1_v }^2  + C_{\epsilon} \Vert g- h\Vert_{L^2_xL^2_k}^2.
\end{aligned}
\end{equation*}
By Lemma \ref{L31} and Lemma \ref{L51} we have
\begin{equation*}
\begin{aligned} 
&\int_{\T^3 }\int_{\R^3}  Q(g \chi(\langle v \rangle^{k_0} g) - h \chi(\langle v \rangle^{k_0} h),  \mu )   (f_g -f_h)  \langle v \rangle^{2k} dvdx  
\\
\le &C_ k  \Vert g \chi(\langle v \rangle^{k_0} g) - h \chi(\langle v \rangle^{k_0} h) \Vert_{L^2_xL^2_k}    \Vert  (f_g -f_h) \Vert_{L^2_xL^2_k} 
\\
\le &C_ k  \Vert g- h \Vert_{L^2_xL^2_k}    \Vert  (f_g -f_h) \Vert_{L^2_xL^2_k} 
\\
\le & \frac {\epsilon} {8} \Vert \langle v \rangle^{\alpha+ k }  ( f_g- f_h) \Vert_{L^2_xH^1_v }^2  +  C_{k, \epsilon} \Vert g- h\Vert_{L^2_xL^2_k}^2. 
\end{aligned}
\end{equation*}
Combining the terms above we have
\[
\frac 1 2 \frac d {dt} \Vert f_g -f_h\Vert_{L^2_xL^2_k}^2  + \frac \epsilon 8\Vert \langle v \rangle^{\alpha+ k }  ( f_g- f_h) \Vert_{L^2_xH^1_v }^2  \le C_{k, \epsilon}  \Vert f_g -f_h\Vert_{L^2_xL^2_k}^2 +C_{k, \epsilon}  \Vert g - h\Vert_{L^2_xL^2_k}^2. 
\]
Since $f_g(0) =f_h(0) =f_0$, choosing $T$ small enough which may depend on $\epsilon$, by Gr\"onwall's inequality we have
\[
\Vert f_g -f_h\Vert_{L^\infty ([0, T],  L^2_xL^2_k)}^2    \le \frac 1 2  \Vert g - h\Vert_{L^\infty ([0, T],  L^2_xL^2_k)}^2,
\]
which implies $\Gamma$ is a contraction map.  We obtain a solution to \eqref{modified Boltzmann equation g} by fixed point theorem.  The $L^\infty$ bound is a direct consequence of Lemma \ref{L52}. 
\end{proof}

Now we briefly describe how the discussions in Section \ref{section 4} still works for \eqref{modified equation}.  First  replacing $g$ in  Lemma  \ref{L45}  by $f \chi$, we have

\begin{lem}\label{L62}
Suppose $ F =\mu +f$ smooth and $\gamma \in (-3, 0], s \in (0, 1), \gamma +2s >-1$. Let $T>0, \alpha \ge 0$ be fixed.  Assume that $f$ is a solution to \eqref{modified equation}. Suppose in addition $F$ satisfies that
\[
F \ge 0, \quad \inf_{t, x} \Vert F \Vert_{L^1_v} \ge A >0, \quad \sup_{t, x} (\Vert F \Vert_{L^1_2}  +\Vert F \Vert_{L \log L} ) < B < +\infty,
\]
for some constant $A, B>0$.  Then for any $\epsilon \in [0, 1] $,  $12 \le l \le k_0 $, suppose 
\[
\sup_{t, x} \Vert  f  \Vert_{L^\infty_{9+|\gamma|}}  \le \delta_0,
\]
for some constant $\delta_0 >0$ small. Then there exists $s"> 0 $ and $p^a>0 $ such that for any  $p \in (1, p^a)$  there exist $l_0>0$ which depends on $s'', p$ such that, if we assume
\[ 
\sup_{t} \Vert \langle  v \rangle^{l_0+l }  f  \Vert_{L^2_{x, v} } \le C_1 < +  \infty,
\]
Then for any $0  \le T_1 \le T_2 \le T,  \epsilon \in (0, 1),  0 \le M <K$ we have
\begin{equation*}
\begin{aligned} 
&\Vert f_{K, +}^l (T_2) \Vert_{L^2_{x, v}}^2 + \int_{T_1}^{T_2} \Vert \langle v \rangle^{\gamma/2} (1-\Delta_v)^{s/2}  f_{K, +}^l \Vert_{L^2_{x, v}}^2 d \tau +\frac 1 {C_0} \left(       \int_{T_1}^{T_2} \Vert (1-\Delta_x)^{{s''}/2}  (\langle v \rangle^{-4+ \gamma} (f_{K, +}^l )^2) \Vert_{L^p_{x, v}}^p d \tau                                 \right)^{\frac 1 p}
\\
\le& C \Vert \langle v \rangle^2 f_{K, +}^l (T_1) \Vert_{L^2_{x, v}}^2 +  C \Vert \langle v \rangle^2 f_{K, +}^l (T_1)  \Vert_{L^{2p}_{x, v}}^2 +\frac {CK} {K - M} \sum_{i=1}^4 \frac {\mathcal{E}_{p, s''}(M, T_1, T_2)^{\beta_i}} {(K-M)^{a_i}},
\end{aligned}
\end{equation*}
for some constant  $\beta_i > 1, a_i > 0$ are defined later. The constant $C$ is independent of $\epsilon, K, M, f, T_1, T_2$. Furthermore, the same estimate holds for $h=-f$ with $f_{K, +}^l $ replaced by $h_{K, +}^l$.
\end{lem}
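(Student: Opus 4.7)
The plan is to proceed in near-lockstep with the proof of Lemma~\ref{L45}, since the only structural difference between \eqref{modified equation} and \eqref{linearized equation} is that the first slot of the collision operator is now the self-coupled quantity $f\chi(\langle v\rangle^{k_0}f)$ rather than an independent function $g$. The first step is to verify that the role of $g$ in Lemma~\ref{L45} can be played by $f\chi(\langle v\rangle^{k_0}f)$. Indeed, the cutoff $\chi$ forces $|f\chi(\langle v\rangle^{k_0}f)|\le 2\delta_0\langle v\rangle^{-k_0}$ pointwise, so that $\sup_{t,x}\Vert f\chi\Vert_{L^\infty_{k_0}}\le 2\delta_0$ holds automatically and the weaker bound $\sup_{t,x}\Vert f\chi\Vert_{L^\infty_{9+|\gamma|}}\le \delta_0$ is dominated by the assumed smallness $\sup_{t,x}\Vert f\Vert_{L^\infty_{9+|\gamma|}}\le \delta_0$. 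In particular the hydrodynamic hypotheses on $F=\mu+f$ transfer (after possibly shrinking $\delta_0$) to $\mu+f\chi$, which is what the linear lemmas of Section~\ref{section 4} really use.

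With this identification in hand, I would first write the equation for $(f_{K,+}^l)^2$, namely
\[
\partial_t (f_{K,+}^l)^2+v\cdot\nabla_x (f_{K,+}^l)^2 = 2\tilde Q(\mu+f\chi,\mu+f)\,\langle v\rangle^l f_{K,+}^l,
\]
and then repeat the ``two-track'' estimate of Lemma~\ref{L45}. On the first track one applies Lemma~\ref{L41} (collision level-set bound) and Lemma~\ref{L38} (regularizer bound) with $g=f\chi$ to produce the dissipative inequality
\[
\Vert f_{K,+}^l(T_2)\Vert_{L^2_{x,v}}^2+c_1\!\int_{T_1}^{T_2}\!\Vert f_{K,+}^l\Vert_{L^2_xH^s_{\gamma/2}}^2\,d\tau\le \Vert f_{K,+}^l(T_1)\Vert_{L^2_{x,v}}^2+C\tfrac{K}{K-M}\sum_{i=3}^4\tfrac{\mathcal{E}_{p,s''}(M,T_1,T_2)^{\beta_i}}{(K-M)^{a_i}},
\]
by controlling the tail moments via the Lemma~\ref{L44} interpolation applied to $f_{\frac{K+M}{2},+}^l$. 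On the second track one applies the time-averaging Lemma~\ref{L219} with the same parameters $(\sigma,\tau_1,\beta,s'')$ chosen in the proof of Lemma~\ref{L45}, to the quantity $(f_{K,+}^l\langle v\rangle^{-2+\gamma/2})^2$; the four terms on the right of Lemma~\ref{L219} are then estimated exactly as before, using Lemma~\ref{L218} for the $(-\Delta_v)^{\beta/2}$ piece, Lemma~\ref{L44} for the $L^p$ and $L^{p'}$ pieces of $f_{K,+}^l$, and Lemma~\ref{L43} (again with $g=f\chi$) for the source term involving $\tilde Q(\mu+f\chi,\mu+f)$.

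Summing the two tracks and absorbing the $H^s_{\gamma/2}$ piece from the averaging contribution into the dissipation by choosing $C_0$ large enough reproduces the claimed inequality with the same exponents $\beta_i,a_i$ and the same initial-data contribution $C\Vert\langle v\rangle^2 f_{K,+}^l(T_1)\Vert_{L^2_{x,v}}^2+C\Vert\langle v\rangle^2 f_{K,+}^l(T_1)\Vert_{L^{2p}_{x,v}}^2$. The estimate for $h=-f$ is identical: $h$ satisfies $\partial_t h+v\cdot\nabla_x h=-\tilde Q(\mu+f\chi,\mu-h)$, so the argument of Lemma~\ref{L45} applied through Lemma~\ref{L42} rather than Lemma~\ref{L41} yields the same bound on $h_{K,+}^l$.

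The main conceptual obstacle is to confirm that the dependence of the ``coefficient'' $f\chi(\langle v\rangle^{k_0}f)$ on the solution itself does not create any feedback in the constants of Lemmas~\ref{L41}, \ref{L43}, and \ref{L44}: all the ingredients used there require only pointwise nonnegativity of $\mu+f\chi$ together with the $L^\infty_{k_0}$ and $L^\infty_{9+|\gamma|}$ bounds on $f\chi$, both of which are dictated by $\chi$ and by the hypothesis $\sup_{t,x}\Vert f\Vert_{L^\infty_{9+|\gamma|}}\le\delta_0$ \emph{independently} of $f$ itself. Once this is verified, the constants $C,\beta_i,a_i$ are identical to those produced in the proof of Lemma~\ref{L45}, and in particular independent of $\epsilon,K,M,T_1,T_2$.
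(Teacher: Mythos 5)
Your proposal is correct and follows exactly the paper's intended route: the paper's own proof of Lemma~\ref{L62} is simply the one-line remark that one replaces $g$ by $f\chi(\langle v\rangle^{k_0}f)$ in Lemma~\ref{L45}, and you carry out precisely this substitution, verifying that the cutoff guarantees the needed $L^\infty_{k_0}$ and $L^\infty_{9+|\gamma|}$ bounds and the hydrodynamic hypotheses for $\mu+f\chi$ so that Lemmas~\ref{L41}, \ref{L43}, \ref{L44} and \ref{L219} apply verbatim.
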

For Lemma \ref{L46}, replacing the use of Lemma \ref{L39} by Corollary \ref{C310}, and repeating  the proof of Lemma \ref{L46} we have
\begin{lem}\label{L63} 
Suppose $f$ smooth and $\gamma \in (-3, 0], s \in (0, 1), \gamma +2s >-1$. Let $T>0, \alpha \ge 0$ be fixed.  Assume that $f$ is a solution to \eqref{modified equation}. Suppose in addition G satisfies that
\[
\mu +f  \ge 0, \quad \inf_{t, x} \Vert \mu + f\chi(\langle v \rangle^{k_0} f)  \Vert_{L^1_v} \ge A >0, \quad \sup_{t, x} (\Vert \mu + f\chi(\langle v \rangle^{k_0} f)  \Vert_{L^1_2}  +\Vert \mu + f\chi(\langle v \rangle^{k_0} f)  \Vert_{L \log L} ) \le B < +\infty,
\]
for some constant $A, B>0$.  Then for any $\epsilon \in [0, 1] $,  $12 \le l \le k_0 , l \ge 3+2\alpha $, suppose 
\[
\sup_{t, x} \Vert  f  \Vert_{L^\infty_{9+|\gamma|}}   \le \delta_0 ,
\]
for some constant $\delta_0 >0$ small.  Then for any $0 < s' <\frac {s} {2(s+3)}$, there exist $s" \in ( 0, s' \frac  4  {2l+r})$ and $p^b := p^b(l, \gamma, s, s') >1$ such that for any $1< p <p^b$, we have
\[
\mathcal{E}_{0, p, s''} \le C_l e^{C_l T} \max_{ j \in \{ 1/p, p'/p  \} } \left( \Vert \langle v \rangle^l f_0\Vert_{L^2_{x, v}}^{2j}     +\epsilon^{2j} T^j  \right), \quad p' = p /(2- p) ,
\]
where $\mathcal{E}_{0, p, s''} $ is defined in \eqref{definition E 0 p s''}. The same estimate holds for $(-f)_{+}^l$ and its associated $\mathcal{E}_{0, p, s''} $.
\end{lem}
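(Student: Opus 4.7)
\medskip
\noindent\textbf{Proof plan for Lemma \ref{L63}.}
The plan is to follow the same three-step architecture as in the proof of Lemma \ref{L46}, but systematically replace every appeal to Lemma \ref{L39} (which was tailored to the linear equation with an external field $g$) by the corresponding bound from Corollary \ref{C310} (tailored to the modified nonlinear equation where $g$ is replaced by $f\chi(\langle v\rangle^{k_0}f)$). Observe that Corollary \ref{C310} does not produce the source term $\sup_{t,x}\|g\|_{L^\infty_{k_0}}^2\, t$ that appears in Lemma \ref{L39}; this is precisely why the stated conclusion of Lemma \ref{L63} has only the $\|\langle v\rangle^l f_0\|_{L^2_{x,v}}^{2j}$ and $\epsilon^{2j}T^j$ terms.

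First, I would control the first two pieces of $\mathcal{E}_{0,p,s''}$ directly. Since $f_+^l \le \langle v\rangle^l |f|$, the two bounds in Corollary \ref{C310} yield
\[
\sup_{t\in[0,T]}\|f_+^l(t)\|_{L^2_{x,v}}^2 + c_0\int_0^T\|\langle v\rangle^{\gamma/2}f_+^l\|_{L^2_xH^s_v}^2\,d\tau \;\le\; C_l e^{C_lT}\bigl(\|\langle v\rangle^l f_0\|_{L^2_{x,v}}^2 + \epsilon^2 T\bigr) =: C_l e^{C_lT} D,
\]
together with $\int_0^T \|(1-\Delta_x)^{s'/2}f\|_{L^2_{x,v}}^2\,d\tau \le C_l e^{C_lT}D$ for any $s'<\frac{s}{2(s+3)}$. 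This requires the assumption $l \ge 3+2\alpha$ in order to apply the velocity-averaging part of Corollary \ref{C310}.

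Second, for the most delicate piece $\bigl(\int_0^T\|(1-\Delta_x)^{s''/2}(\langle v\rangle^{-2+\gamma/2}f_+^l)^2\|_{L^p_{x,v}}^p d\tau\bigr)^{1/p}$, I would reproduce verbatim the argument of Lemma \ref{L46}: apply Lemma \ref{L218} with $p' = p/(2-p)$ to reduce the $H^{s''}_x$ norm of the square to a combination of $H^\beta_x$ of $f_+^l$ (for some $\beta\in(s'',s')$) times $L^{2p}$ and $L^{2p'}$ norms; then interpolate these $L^q$ norms via Lemma \ref{L216} between the $L^2$ norm and the $L^{r}$ norm controlled by $H^s_v + H^\beta_x$. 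The crucial Fourier-side trick
\[
\|(I-\Delta_x)^{\beta/2}(f_+^l\langle v\rangle^{-2+\gamma/2})\|_{L^2_{x,v}}^2 \le C_{l,\gamma}\bigl(\|f_+^l\langle v\rangle^{\gamma/2}\|_{L^2_{x,v}}^2 + \|(I-\Delta_x)^{s'/2}f_+\|_{L^2_{x,v}}^2\bigr)
\]
(valid when $\beta < s'\tfrac{4}{2l+\gamma}$, by splitting the $v$- and $x$-weights with Young's inequality in Fourier) is what forces the definition $s'' < s'\tfrac{4}{2l+\gamma}$ and fixes the admissible range $p\in(1,p^b)$ with $p^b:=p^b(l,\gamma,s,s')$ inherited from $r(s,s'\tfrac{4}{2l+\gamma},3)$ in Lemma \ref{L216}. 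Once $\beta$ is admissible, each interpolated piece is bounded by $CD^p$ or $CD^{p'}$ via the first-step estimates.

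The main obstacle is purely bookkeeping: verifying that each appeal to Corollary \ref{C310} is licit, i.e.\ checking that the hypothesis $\sup_{t,x}\|f\|_{L^2_{|\gamma|+7}} \le \delta_0$ implies the smallness needed in Corollary \ref{C310} (it does, since $\|f\|_{L^2_{|\gamma|+7}} \lesssim \|f\|_{L^\infty_{9+|\gamma|}}$ by H\"older, and $l \ge 12 \ge |\gamma|+7$), and that the constants $C_l$ and $p^b$ produced are independent of $\epsilon$. The result for $(-f)_+^l$ follows by symmetry because Corollary \ref{C310} applies to $|f|$ through the $L^2$ identity, exactly as in the end of the proof of Lemma \ref{L46}. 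Gathering these three pieces with the choice $j\in\{1/p,p'/p\}$ to absorb the powers produced by the interpolation gives the stated bound on $\mathcal{E}_{0,p,s''}$.
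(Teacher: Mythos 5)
Your proposal matches the paper's intended proof exactly: the authors simply remark that Lemma \ref{L63} follows from the proof of Lemma \ref{L46} with every use of Lemma \ref{L39} replaced by Corollary \ref{C310}, which is precisely the substitution you carry out. You also correctly identify why the source term $\sup_{t,x}\|g\|_{L^\infty_{k_0}}^{2j}T^j$ drops out and verify (via $\|f\|_{L^2_{|\gamma|+7}}\lesssim\|f\|_{L^\infty_{9+|\gamma|}}$) that the smallness hypothesis of Corollary \ref{C310} is met.
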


To pass the limit in $\epsilon$ we need to show that the time interval of existence is independent of $\epsilon$. So we need to find the relation between the smallness of the initial data and the smallness of the solution.

\begin{lem}\label{L64}
 Suppose $f$ smooth and $\gamma \in (-3, 0], s \in (0, 1), \gamma +2s >-1$. Let $T \in [0, 1], \alpha =5$ be fixed.  Assume that $f$ is a solution to \eqref{modified equation}. Then for any $\epsilon \in [0, 1] $,  $14\le k_0 $, suppose 
\[
\Vert \langle  v \rangle^{k_0 + l_0(k_0)} f_0 \Vert_{L^2_{x, v}}  < +\infty, \quad \Vert\langle v \rangle^{14}  f\Vert_{L^\infty_{t, x, v }} \le \delta_0,
\]
for some constant $\delta_0 >0$ small, where $l_0$ is defined in Theorem \ref{T47}.  Then it follows that
\begin{equation}
\label{L infty estimate 1}
\sup_{t \in [0, T]}\Vert  \langle v \rangle^{k_0} f\Vert_{L^\infty_{x, v}}  \le \max \{ 2\Vert  \langle v \rangle^{k_0}  f_0 \Vert_{L^\infty_{x, v}} , K_0^1 \},
\end{equation}
where 
\[
K_0^1 := C_{k_0} e^{C_{k_0} T} \max_{ 1 \le i \le 4} \max_{j \in \{ 1/p, p'/p \} } ( \Vert \langle v \rangle^{k_0} f_0\Vert_{L^2_{x, v}}^{2j}  +\epsilon^{2j} T^{2j}  )^{\frac {\beta_i -1} {a_i}} , \quad p' = \frac p {2-p} ,
\]
Moreover, for any $0 \le T \le1$, there exist two constants $\delta_*, \epsilon_*>0$ small  enough which is independent of $T$ such that if we assume
\[
\Vert \langle  v \rangle^{k_0} f_0 \Vert_{L^2_{x, v} \cap L^\infty_{x, v} }  \le \delta_*, 
\]
then we have
\[
\Vert \langle v \rangle^{k_0} f\Vert_{L^\infty_{t, x, v }} \le \frac {\delta_1} 2 < \frac {\delta_0} 2,
\]
where $\delta_1$ is defined in Lemma \ref{L52}. 
\end{lem}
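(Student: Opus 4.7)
The plan is to split the proof into two stages, both mirroring the De Giorgi-style argument of Theorem~\ref{T47} but now applied to the nonlinear cutoff equation~\eqref{modified equation}. For part~(1) I would first verify that $G := \mu + f\chi(\langle v\rangle^{k_0} f)$ satisfies the hydrodynamic bounds $\Vert G\Vert_{L^1} \ge A$ and $\Vert G\Vert_{L^1_2} + \Vert G\Vert_{L\log L} \le B$ required by Lemmas~\ref{L62} and~\ref{L63}: since $|f\chi| \le 2\delta_0 \langle v\rangle^{-k_0}$ pointwise, the contribution of $f\chi$ to each of these norms is $O(\delta_0)$, so for $\delta_0$ small the bounds follow from those of $\mu$. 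The hypothesis $\Vert\langle v\rangle^{14} f\Vert_{L^\infty_{t,x,v}} \le \delta_0$ also supplies the smallness requirement $\sup_{t,x}\Vert f\Vert_{L^\infty_{9+|\gamma|}} \le \delta_0$ since $9 + |\gamma| < 12 \le 14$. Simultaneously, Corollary~\ref{C310} applied with moment weight $l = k_0 + l_0(k_0)$ (using $k_0 + l_0(k_0) \ge 3+2\alpha$) will yield
\[
\sup_{t \in [0, T]} \Vert\langle v\rangle^{k_0 + l_0(k_0)} f\Vert_{L^2_{x,v}} \le C_{k_0}\, e^{C_{k_0} T}\bigl(\Vert\langle v\rangle^{k_0 + l_0(k_0)} f_0\Vert_{L^2_{x,v}}^2 + \epsilon^2 T\bigr)^{1/2} =: C_1,
\]
which is precisely the high-moment control needed to invoke Lemma~\ref{L62} with $l = k_0$.

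Next I would run the De Giorgi iteration exactly as in Theorem~\ref{T47}: fix $(p, s'')$ near $(1, 0)$, set $M_k = K_0(1 - 2^{-k})$, and let $\mathcal{E}_k := \mathcal{E}_{p, s''}(M_k, 0, T)$ denote the energy functional \eqref{energy functional} at level $M_k$ for $f_{M_k, +}^{k_0}$. Choosing $K_0 \ge 2\Vert\langle v\rangle^{k_0} f_0\Vert_{L^\infty}$ kills the initial-data contribution for $k \ge 1$, so Lemma~\ref{L62} with $K = M_k$, $M = M_{k-1}$ produces the iterative inequality
\[
\mathcal{E}_k \le C \sum_{i=1}^4 \frac{2^{k(a_i+1)}\, \mathcal{E}_{k-1}^{\beta_i}}{K_0^{a_i}},
\]
and comparison with the geometric majorant $\mathcal{E}_k^* = \mathcal{E}_0 Q_0^{-k}$ (with $Q_0 = \max_i 2^{(a_i+1)/(\beta_i-1)}$) forces $\mathcal{E}_k \to 0$ provided
\[
K_0 \ge K_0(\mathcal{E}_0) := \max_{1 \le i \le 4} \bigl\{\, 4^{1/a_i}\, C^{1/a_i}\, \mathcal{E}_0^{(\beta_i - 1)/a_i}\, Q_0^{\beta_i/a_i}\,\bigr\}.
\]
The zeroth-level bound from Lemma~\ref{L63},
\[
\mathcal{E}_0 \le C_{k_0}\, e^{C_{k_0} T} \max_{j \in \{1/p,\, p'/p\}} \bigl(\Vert\langle v\rangle^{k_0} f_0\Vert_{L^2_{x,v}}^{2j} + \epsilon^{2j} T^{j}\bigr),
\]
then converts $K_0(\mathcal{E}_0)$ into the expression $K_0^1$ announced in the statement, yielding $\sup_{t \in [0,T]} \Vert f_{K_0, +}^{k_0}(t)\Vert_{L^2_{x,v}} = 0$. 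Repeating the same scheme for $(-f)_+^{k_0}$ via the $h$-versions of Lemmas~\ref{L62}, \ref{L63} gives the two-sided estimate~\eqref{L infty estimate 1}.

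For part~(2) I would substitute $\Vert\langle v\rangle^{k_0} f_0\Vert_{L^2 \cap L^\infty} \le \delta_*$ and $\epsilon \le \epsilon_*$ into~\eqref{L infty estimate 1} with $T \le 1$. The first alternative is bounded by $2\delta_*$, while
\[
K_0^1 \le C_{k_0}\, e^{C_{k_0}} \max_{i, j} \bigl(\delta_*^{2j} + \epsilon_*^{2j}\bigr)^{(\beta_i - 1)/a_i}.
\]
Since each exponent $(\beta_i - 1)/a_i$ is strictly positive and the prefactor is independent of $T \in [0, 1]$, the right-hand side vanishes as $(\delta_*, \epsilon_*) \to (0, 0)$. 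Fixing $\delta_*, \epsilon_*$ so small that both $2\delta_* \le \delta_1/2$ and $K_0^1 \le \delta_1/2$ then finishes the argument.

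The main technical subtlety will be keeping track of uniformity of the constants $C_{k_0}, \beta_i, a_i, Q_0$ in $\epsilon \in [0, \epsilon_*]$ and in $T \in [0, 1]$: the $\epsilon$-independence is already built into Lemmas~\ref{L62}, \ref{L63} and Corollary~\ref{C310}, while the $T$-independence will follow from the restriction $T \le 1$ so that $e^{C_{k_0} T} \le e^{C_{k_0}}$. The allowed $\delta_*, \epsilon_*$ will depend on the high-moment quantity $\Vert\langle v\rangle^{k_0 + l_0(k_0)} f_0\Vert_{L^2}$ through $C_1$ in the interpolation Lemma~\ref{L44}, but this dependence is compatible with the statement since the latter only asserts $T$-independence.
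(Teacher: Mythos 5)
Your proposal is correct and follows essentially the same route as the paper: verify the hypotheses of Lemma~\ref{L62}, Lemma~\ref{L63} and Corollary~\ref{C310} from the $L^\infty_{14}$ smallness and the high-moment assumption, run the De Giorgi iteration of Theorem~\ref{T47} with $l = k_0$, then specialize to $T\le 1$ and shrink $\delta_*, \epsilon_*$. The one point you flag at the end — that $\delta_*,\epsilon_*$ inherit a dependence on $C_1 = \Vert\langle v\rangle^{k_0+l_0(k_0)}f_0\Vert_{L^2}$ through the constant in Lemma~\ref{L44} — is a genuine subtlety that the paper leaves implicit, and noting it is a small improvement rather than a gap.
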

\begin{proof}
Take $l=k_0$ in Lemma \ref{L62} and \ref{L63} above,  $k_0$ satisfies the requirements in Lemma \ref{L62} and \ref{L63}. We just need to check that the assumptions in Lemma \ref{L62} and \ref{L63} holds. First  we have
\[
\sup_{t, x} \Vert f\Vert_{L^2_{7+|\gamma|}} \le C \sup_{t, x} \Vert f\Vert_{L^\infty_{14} } \le   \delta_0 \quad ,\sup_{t, x} \Vert f\Vert_{L^\infty_{9 + |\gamma|}} \le C\sup_{t, x} \Vert f\Vert_{L^\infty_{14} } \le   \delta_0,
\]
and 
\[
\inf_{t, x} \Vert \mu + f\chi \Vert_{L^1_v} \ge \Vert \mu \Vert_{L^1_v} - \Vert \langle v \rangle^{-4}  \Vert_{L^1_v} \Vert \langle v \rangle^4 f \chi \Vert_{L^\infty_{t, x, v}} \ge 1 - C\delta_0 \ge A>0,
\]
for some constant $A>0$, also we have
\[
\sup_{t, x} \Vert \mu + f\chi \Vert_{L^1_2 \cap L \log L} \le \sup_{t, x} \Vert \mu  \Vert_{L^1_2 \cap L \log L}+ \sup_{t, x} \Vert  f\chi \Vert_{L^1_2 \cap L \log L} \le C_0(1+\delta_0) \le B,
\]
for some constant $B>0$. By Corollary  \ref{C310} apply for $l=k_0+l_0(k_0)$ we have
\[
\sup_{t \in[0, T] } \Vert \langle v \rangle^{k_0+l_0(k_0)} f(t)  \Vert_{L^2_{x, v}}^2 \le Ce^{C_l T} (1 + \Vert \langle v \rangle^{k_0 +l_0(k_0)} f_0 \Vert_{L^2_{x, v}}^2 ) < +  \infty,
\]
Note that to make Corollary \ref{C310} works, we only need
\[
\sup_{t, x} \Vert f\Vert_{L^2_{7+|\gamma|}} \le \delta_0,
\]
this weight is independent of $k_0$. 
Then by the same proof as Theorem \ref{T47}, \eqref{L infty estimate 1} is thus proved.  Since $T \le 1, \epsilon_*\le 1, \delta_* <1, 2/p >1, 2p'/p >1$, we have
\[
K_0^1  \le C_{k_0} e^{C_{k_0} } \max_{1 \le i \le 4} (\delta_* + \epsilon_*)^{\frac {\beta_i -1} {a_i} }  = C_{k_0} e^{C_{k_0} }  (\delta_* +\epsilon_*)^{\eta_0}, \quad \eta_0 = \min_{1 \le i \le 4} \frac {\beta_i -1} {a_i} ,
\]
by setting
\[
\delta_*  =  \min \{\frac 1 2 \delta_1, \frac 1 {2 ( 2 C_{k_0} e^{C_{k_0}} )^{1/\eta_0}    }\delta_1^{\frac 1 {\eta_0} }     \},  \quad \epsilon_* =\frac 1 {2 (2C_{k_0} e^{C_{k_0}} )^{1/\eta_0}    }  \delta_1^{\frac 1 {\eta_0} } ,
\]
the lemma is thus proved. 
\end{proof}

 We summarize the above results to deduce the local existence for the regularized equation \eqref{epsilon Boltzmann equation}.

\begin{thm}\label{T65}
 Suppose $f$ smooth and $\gamma \in (-3, 0], s \in (0, 1/2], \gamma +2s >-1$. Let $T \in [0, 1], \alpha =5$ be fixed.  Suppose $k_0 \ge l_0(14) +  22 $, where $l_0$ is  defined in Theorem \ref{T47}. Suppose
\[
\Vert \langle  v \rangle^{k_0} f_0 \Vert_{L^2_{x, v} \cap L^\infty_{x, v} }  <\delta_*, \quad\Vert \langle  v \rangle^{k_0 + l_0(k_0)} f_0 \Vert_{L^2_{x, v}}  < +\infty, 
\]
suppose $\delta_*, \epsilon_*$ is defined in Lemma \ref{L64}.  then for any $0<\epsilon \le \epsilon_*$ the equation \eqref{modified equation} has a solution $f$ satisfying 
\begin{equation}
\label{L infty bound for modified}
\Vert \langle v \rangle^{k_0} f \Vert_{L^\infty([0, T] \times \T^3 \times \R^3)} \le \frac {\delta_1} 2 <\delta_0.
\end{equation}
In particular, the solution $f$ becomes a solution to \eqref{epsilon Boltzmann equation}.
\end{thm}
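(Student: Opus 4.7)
The plan is to combine the local existence from Lemma \ref{L61} with the uniform-in-$\epsilon$ $L^\infty$ estimate from Lemma \ref{L64} via a standard continuation argument. First, since the assumption on $k_0$ guarantees $k_0 - l_0(14) \ge 14 + 8$ and since $\delta_* \le \delta_1$, the smallness hypotheses of Lemma \ref{L61} are satisfied by $f_0$. Applying it for the fixed $\epsilon \in (0, \epsilon_*]$ produces a local-in-time solution $f \in L^\infty([0, T_\epsilon], L^2_x L^2_{14})$ of the modified equation \eqref{modified equation}, with the crude bound $\Vert \langle v \rangle^{14} f \Vert_{L^\infty([0, T_\epsilon] \times \T^3 \times \R^3)} \le \delta_0$. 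Note that this existence time $T_\epsilon$ may depend on $\epsilon$, so the first task is to upgrade this to existence on the full interval $[0, T]$ with $T \le 1$ uniformly in $\epsilon$.

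To do this I will define the maximal time
\[
T^{*} := \sup\Bigl\{ \tau \in [0, T] : f \text{ exists on } [0,\tau] \text{ and } \Vert \langle v \rangle^{k_0} f \Vert_{L^\infty([0,\tau] \times \T^3 \times \R^3)} \le \delta_1 \Bigr\}.
\]
The initial smallness $\Vert \langle v \rangle^{k_0} f_0 \Vert_{L^\infty} \le \delta_* < \delta_1$, together with Lemma \ref{L61}, ensures $T^{*} > 0$. The goal is then to show $T^{*} = T$, from which the conclusion follows: on $[0, T^{*}]$ the bootstrap condition $\Vert \langle v \rangle^{14} f\Vert_{L^\infty} \le \delta_0$ required by Lemma \ref{L64} is automatic, and Lemma \ref{L64} together with the assumption $\Vert \langle v \rangle^{k_0}f_0\Vert_{L^2_{x,v}\cap L^\infty_{x,v}} \le \delta_*$ yields the improved estimate $\Vert \langle v \rangle^{k_0} f \Vert_{L^\infty([0, T^{*}] \times \T^3 \times \R^3)} \le \delta_1/2$.

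Suppose towards contradiction that $T^{*} < T$. Since at time $T^{*}$ the solution still satisfies $\Vert \langle v \rangle^{k_0} f(T^{*}) \Vert_{L^2_{x,v} \cap L^\infty_{x,v}} \le \delta_1/2 \le \delta_*$ (by adjusting $\delta_*$ at the outset so this inequality is clean), and since the higher-moment bound $\Vert \langle v \rangle^{k_0 + l_0(k_0)} f(T^{*}) \Vert_{L^2_{x, v}} < +\infty$ is propagated by Corollary \ref{C310}, the hypotheses of Lemma \ref{L61} are met at the new initial time $T^{*}$. Restarting the local construction there yields an extension to $[0, T^{*} + \eta]$ for some $\eta > 0$, and by continuity of $t \mapsto \Vert \langle v \rangle^{k_0} f(t) \Vert_{L^\infty_{x,v}}$ (which follows from $f \in C([0, T^{*}+\eta], L^2_x L^2_{k_0})$ together with the uniform weighted-$L^\infty$ control obtained above), one has $\Vert \langle v \rangle^{k_0} f \Vert_{L^\infty([0, T^{*} + \eta'])} \le \delta_1$ for some $0 < \eta' \le \eta$. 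This contradicts the definition of $T^{*}$, so $T^{*} = T$.

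Once $T^{*} = T$ is established, Lemma \ref{L64} applied on $[0, T]$ gives the advertised bound \eqref{L infty bound for modified}, $\Vert \langle v \rangle^{k_0} f \Vert_{L^\infty} \le \delta_1/2 < \delta_0$. By the definition of the cutoff $\chi$ in \eqref{cutoff function}, the pointwise inequality $|\langle v \rangle^{k_0} f| \le \delta_1/2 < \delta_0$ forces $\chi(\langle v \rangle^{k_0} f) \equiv 1$, so $f \chi(\langle v \rangle^{k_0} f) = f$ and $f$ becomes a solution to \eqref{epsilon Boltzmann equation}. The main technical obstacle in executing this plan is verifying continuity in $t$ of the weighted $L^\infty$ norm needed to make the bootstrap closed; this will likely require combining the $L^2$ energy estimate from Corollary \ref{C310} with the hypoelliptic regularization used in Theorem \ref{T47}, so that the solution lies in a space where the $L^\infty$ norm is time-continuous (rather than merely essentially bounded), which is what permits the open-set step $T^{*} \mapsto T^{*}+\eta'$.
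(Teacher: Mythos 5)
Your plan — local existence via Lemma \ref{L61}, the uniform-in-$\epsilon$ $L^\infty$ improvement via Lemma \ref{L64}, continuation to the maximal interval — is the same approach the paper uses. Two small corrections. First, the inequality $\delta_1/2 \le \delta_*$ is the wrong direction: by the construction in Lemma \ref{L64}, $\delta_* \le \delta_1/2$, and it cannot be ``adjusted'' the other way since $\delta_*$ has to be small enough for Lemma \ref{L64}'s argument to close. Fortunately you never need $\delta_1/2\le\delta_*$: restarting Lemma \ref{L61} at time $T^*$ only requires $\Vert\langle v\rangle^{14}f(T^*)\Vert_{L^2\cap L^\infty}\le\delta_1$, and since $k_0\ge 14$ the bound $\Vert\langle v\rangle^{k_0}f(T^*)\Vert_{L^\infty}\le\delta_1/2\le\delta_1$ from Lemma \ref{L64} supplies exactly that. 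Second, you close the bootstrap through continuity in $t$ of the weighted $L^\infty$ norm, flagging this as a technical obstacle. The paper sidesteps it entirely: since Lemma \ref{L64} is applied on the entire interval $[0,\tilde T_\epsilon]$ with the original initial data (not restarted at $T^*$), once the solution exists past $T^*$ via Lemma \ref{L61} one simply re-invokes Lemma \ref{L64} on the longer interval and directly recovers the $\delta_1/2$ bound there — no continuity of $t\mapsto\Vert\langle v\rangle^{k_0}f(t)\Vert_{L^\infty_{x,v}}$ is needed, only the crude $\delta_0$-bound from Lemma \ref{L61} on the small extension that lets Lemma \ref{L64} apply.
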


\begin{proof} Since $k_0 \ge  l_0(14) + 22 $, by Lemma \ref{L61} we have, there exist a $T_\epsilon >0$ such that  \eqref{epsilon Boltzmann equation}  has a solution satisfies 
\[
\Vert  \langle v \rangle^{14}  f \Vert_{L^\infty( [0, T_\epsilon ] \times \T^3\times \R^3) }  \le \delta_0,
\]
and Lemma \ref{L64} implies that
\[
\Vert  \langle v \rangle^{k_0}  f \Vert_{L^\infty( [0, T_\epsilon ] \times \T^3\times \R^3) }  \le \frac 1 2 \delta_1.
\]
We claim that such $T_\epsilon$ can be extended to $T$ independent of $\epsilon$. Indeed by Lemma \ref{L61} and the Lemma \ref{L64} we first extend $T_\epsilon$ to $\tilde{T}_{\epsilon}$, where $T_\epsilon$ is the largest interval such that
\[
\Vert \langle v \rangle^{k_0} f \Vert_{L^\infty([0, \tilde{T}_\epsilon] \times \T^3 \times \R^3)} \le \frac 1 2 \delta_1.
\]
Since $k_0 \ge  l_0(20) +28 $, so  for any $t <\tilde{T}_{\epsilon}$ we can apply Lemma \ref{L61} on $f_0 = f(t)$ to continue extension. Since the estimates in Corollary \ref{C310} and Lemma \ref{L64} are all independent of $\epsilon$, we have
\[
\Vert \langle v \rangle^{k_0} f \Vert_{L^\infty([0, \tilde{T}_\epsilon] \times \T^3 \times \R^3)} \le \frac 1 2 \delta_1,
\]
so $\tilde{T}_{\epsilon}$ can be continued to the maximum interval $[0, T]$ for any $T \le 1$. Since \eqref{L infty bound for modified} implies $\chi(\langle v\rangle^{k_0} f) =1$, the solution $f$ becomes a solution to \eqref{epsilon Boltzmann equation}.

\end{proof}
Since we have obtained a solution to \eqref{epsilon Boltzmann equation}, we are ready to pass the limit in $\epsilon$ and obtain a local solution the  Boltzmann equation \eqref{Boltzmann equation}. 
\begin{thm}
Suppose that $s \in (0, \frac 1 2 ]$ and $k_0 \ge l_0(14) + 22$, $l_0$ is defined Lemma \ref{L39}. Suppose that $f_0$ satisfies 
\[
\Vert \langle  v \rangle^{k_0} f_0 \Vert_{L^2_{x, v} \cap L^\infty_{x, v} }  <\delta_*, \quad\Vert \langle  v \rangle^{k_0 + l_0 (k_0) } f_0 \Vert_{L^2_{x, v}}  < +\infty, 
\]
where $\delta_*$ is defined in Lemma \ref{L64} and $\delta_0$ satisfies all the bound above. Then for any $T \le 1$, the nonlinear Boltzmann equation \eqref{Boltzmann equation} has a solution $f \in L^\infty([0, T], L^2_xL^2_{k_0 +l_0(k_0)} )$. Moreover, $f$ satisfies the bound 
\[
\Vert \langle v \rangle^{k_0} f \Vert_{L^\infty([0, T] \times \T^3 \times \R^3)} \le \frac {\delta_1} 2 <\delta_0.
\]
\end{thm}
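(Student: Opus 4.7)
The plan is to construct the nonlinear solution as a limit of the regularized solutions furnished by Theorem \ref{T65}. For every $\epsilon \in (0, \epsilon_*]$, Theorem \ref{T65} produces a solution $f^\epsilon$ to \eqref{epsilon Boltzmann equation} on $[0,T]$ with the $\epsilon$-independent pointwise bound $\Vert\langle v\rangle^{k_0} f^\epsilon\Vert_{L^\infty_{t,x,v}} \le \delta_1/2$. Applying Corollary \ref{C310} at the higher moment $l=k_0+l_0(k_0)$ yields, uniformly in $\epsilon$,
\[
\sup_{t\in[0,T]} \Vert\langle v\rangle^{k_0+l_0(k_0)}f^\epsilon(t)\Vert_{L^2_{x,v}}^2 + \int_0^T \Vert\langle v\rangle^{k_0+l_0(k_0)} f^\epsilon\Vert_{L^2_x H^s_{\gamma/2}}^2 d\tau \le C,
\]
together with the $\epsilon$-weighted dissipation $\epsilon \int_0^T \Vert f^\epsilon\Vert_{L^2_x H^1_{k_0+l_0(k_0)+\alpha}}^2 d\tau \le C$ read off from Lemma \ref{L38}.

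Next I would pass to the limit $\epsilon\to 0$ along a subsequence. The uniform $L^\infty$ and $L^2$ bounds immediately give weak-$\ast$ convergence $f^{\epsilon_n}\rightharpoonup^{\ast} f$. To handle the quadratic collision operator in the limit requires strong compactness, which I would obtain by combining three ingredients: the fractional $v$-regularity $L^2_t H^s_v$ contained in the bound above; the fractional $x$-regularity $L^2_t H^{s'}_x$ coming from the averaging lemma Lemma \ref{L219} applied to the transport equation; and uniform-in-$\epsilon$ bounds on $\partial_t f^\epsilon$ in a negative-order weighted space deduced directly from \eqref{epsilon Boltzmann equation}. An Aubin--Lions type compactness argument then yields $f^{\epsilon_n}\to f$ strongly in $L^2_{t,x,v}$ on bounded velocity regions, with velocity tails controlled by the uniform high-moment bound. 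The continuity estimates for $Q$ in Lemma \ref{L23} permit passing to the limit $Q(\mu+f^{\epsilon_n},\mu+f^{\epsilon_n})\to Q(\mu+f,\mu+f)$ in the sense of distributions.

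The regularization term vanishes in the limit: writing $\epsilon L_\alpha\phi = -\epsilon\langle v\rangle^{2\alpha}\phi + \epsilon\nabla_v\cdot(\langle v\rangle^{2\alpha}\nabla_v\phi)$ and using the $\epsilon$-weighted $H^1_v$ bound above, one checks that $\epsilon_n L_\alpha(\mu+f^{\epsilon_n})\to 0$ in $\mathcal{D}'$ at rate $O(\sqrt{\epsilon_n})$ in a suitable weighted negative-order norm. Thus $f$ solves \eqref{Boltzmann equation} in the weak sense. The pointwise bound $\Vert\langle v\rangle^{k_0}f\Vert_{L^\infty}\le\delta_1/2$ is inherited by weak-$\ast$ lower semicontinuity, the membership $f \in L^\infty_t L^2_x L^2_{k_0+l_0(k_0)}$ follows from Fatou applied to the uniform $L^2$ bound, and nonnegativity $\mu+f\ge 0$ passes to the limit by weak closedness of the nonnegative cone.

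The main obstacle is producing the strong compactness required for the nonlinear term. Lemma \ref{L219} is the natural device, but to apply it uniformly in $\epsilon$ one must place every piece of the right-hand side of the transport equation into a suitable negative-order space with bounds independent of $\epsilon$. The potentially dangerous piece is the second-order regularization $\epsilon\nabla_v\cdot(\langle v\rangle^{2\alpha}\nabla_v f^\epsilon)$; this is controlled by writing it as $\sqrt{\epsilon}\cdot(\sqrt{\epsilon}\,\nabla_v\cdot(\langle v\rangle^{2\alpha}\nabla_v f^\epsilon))$ and noting that the $\sqrt{\epsilon}$-weighted Laplacian is uniformly bounded in $L^2_t H^{-1}_v$ by duality from the $\epsilon$-weighted $H^1$ estimate. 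This yields the decay needed to render the averaging-lemma output uniform in $\epsilon$ and ultimately to close the compactness loop.
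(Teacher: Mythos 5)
Your proposal follows essentially the same route as the paper: produce $f^\epsilon$ with Theorem \ref{T65}, obtain uniform-in-$\epsilon$ bounds in $L^\infty_{k_0}$, $L^\infty_t L^2_x L^2_{k_0+l_0(k_0)}$ and in the fractional spaces via the averaging lemma, extract a strongly convergent subsequence, pass to the limit in the collision operator, and observe that the regularization term disappears. The overall structure is correct.

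The one place where your argument and the paper's diverge noticeably is the passage to the limit in $Q$. You invoke the duality estimates of Lemma \ref{L23}, which would require tracking Sobolev regularity of $\mu+f^{\epsilon}$ uniformly in $\epsilon$ and integrating the product against the $L^2_v$-difference. The paper instead tests against $\phi\in C_c^\infty(\R^3_v)$, uses the pre-post collisional formulation to rewrite $(Q(g,h),\phi)$ in terms of $\phi(v')-\phi(v)$, and applies Lemma \ref{L215} to bound $\int_{\mathbb{S}^2}(\phi'-\phi)b\,d\sigma \lesssim \Vert\phi\Vert_{W^{2,\infty}}|v-v_*|^2$; this removes the angular singularity entirely, so the convergence reduces to
\[
\Vert\phi\Vert_{W^{2,\infty}}\bigl(\sup_{t,x}\Vert f^\epsilon\Vert_{L^\infty_6}\bigr)\,\Vert f^\epsilon-f\Vert_{L^2_{t,x}L^2_4}\to 0,
\]
using only the $L^\infty$ and $L^2$ bounds you already have. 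This is more elementary and avoids the extra bookkeeping. Your route can be made to work, but you would need to be explicit about which norms of $f^\epsilon$ are uniformly controlled in order to apply Lemma \ref{L23}, and the weights/regularity indices there must be matched to what the uniform estimates actually give. Your discussion of the regularization term vanishing via the $\sqrt{\epsilon}$-split and $\epsilon$-weighted $H^1_v$ dissipation is correct and slightly more detailed than the paper's treatment.
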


\begin{proof}
Denote $f^\epsilon$ as the local solution to \eqref{epsilon Boltzmann equation}. By Corollary  \ref{L39}, Lemma \ref{L52} and Lemma \ref{L64} we obtain the uniform in $\epsilon$ bound of $f^\epsilon$ in the following space
\[
L^\infty_{k_0} ([0, T]  \times \T^3 \times\R^3  ) \cap L^\infty([0, T], L^2_xL^2_{k_0 +l_0(k_0)}(\T^3 \times \R^3) ) \cap H^{s'} ([0, T] \times \T^3, H^s_{k_0+l_0(k_0)}(\R^3) ), \quad s' < \frac {s} {2 (s +3)  }.
\]  
We can extract a subsequence, still denote $f^\epsilon$ such that
\[
\Vert f^\epsilon \Vert_{L^\infty_{k_0}([0, T] \times \T^3 \times \R^3)} \le \frac {\delta_1} 2,
\]
by the uniform polynomial decay and a diagonal argument, we have
\[
f^\epsilon \to f \quad \hbox {strongly in}  \quad L^2_{t, x }L^2_{l_0(k_0) + k_0} ([0, T] \times \T^3 \times \R^3),
\]
such strong convergence implies the convergence of $Q(f_\epsilon, f_\epsilon )$ to $Q(f, f)$ as distributions.  Indeed taking $\phi \in C_c^\infty(\R^3_v)$ as test functions, by Lemma \ref{L215} we have
\begin{equation*}
\begin{aligned}
&\left \Vert \int_{\R^3} Q(f^\epsilon, f^\epsilon) \phi(v) dv - \int_{\R^3} Q(f, f) \phi(v) dv \right\Vert_{L^2_{t, x}}
\\
=&\left \Vert \int_{\R^3} \int_{\R^3} \int_{\mathbb{S}^2 } b(\cos \theta) |v-v_*|^\gamma (f^\epsilon(v_*) f^\epsilon (v)   - f(v_*) f (v)    )  (\phi(v') -\phi(v) ) dv dv_* d\sigma \right\Vert_{L^2_{t, x}}
\\
\le&\left \Vert \int_{\R^3} \int_{\R^3}  |f^\epsilon(v_*) f^\epsilon (v)   - f(v_*) f (v)    |  |v-v_*|^\gamma \left| \int_{\mathbb{S}^2 } b(\cos \theta) (\phi(v') -\phi(v) ) d\sigma  \right|  dv dv_*  \right\Vert_{L^2_{t, x}}
\\
\le &\Vert \phi \Vert_{W^{2,\infty}}\left \Vert \int_{\R^3} \int_{\R^3}  |v-v_*|^{2+\gamma} |   f^\epsilon(v_*) f^\epsilon (v)   - f(v_*) f (v)  |    dv dv_* \right\Vert_{L^2_{t, x}}
\\
\le &\Vert \phi \Vert_{W^{2,\infty}}   \left \Vert \int_{\R^3} \int_{\R^3}  |v-v_*|^{2+\gamma} |   f^\epsilon(v_*)    - f(v_*)  |  |f^\epsilon (v)|  dv dv_* \right\Vert_{L^2_{t, x}}
\\
& +\Vert \phi \Vert_{W^{2,\infty}} \left \Vert \int_{\R^3} \int_{\R^3}  |v-v_*|^{2+ \gamma} |   f^\epsilon (v)   - f (v)  |  |f(v_*) |  dv dv_* \right\Vert_{L^2_{t, x}}
\\
\le &\Vert \phi \Vert_{W^{2,\infty}}    (\sup_{t, x} \Vert f_\epsilon \Vert_{L^\infty_6} ) \Vert f^\epsilon -f \Vert_{L^2_{t, x} L^2_4} \to 0 \quad \hbox{as}  \quad \epsilon \to 0.
\end{aligned}
\end{equation*}
Therefore we obtain a solution $f$ to the Boltzmann equation, where $f$ lives in the space
\[
L^\infty_{k_0} ([0, T]  \times \T^3 \times\R^3  ) \cap L^\infty([0, T], L^2_xL^2_{k_0 +l_0(k_0)}(\T^3 \times \R^3) ) \cap H^{s'} ([0, T] \times \T^3, H^s_{k_0+l_0 (k_0) }(\R^3) ), \quad s' < \frac {s} {2 (s +3)  } ,
\]  
so the theorem is ended.
\end{proof}

\section{Global existence}\label{section 7}
We recall that $L$ denotes for the linearized operator 
\[
L f = Q(\mu, f) +Q(f, \mu) - v \cdot \nabla_x f,
\]
and the nonlinear Boltzmann equation is recast as
\[
\partial_t f = L f +Q(f, f) , \quad (t, x, v) \in (0, T) \times\T^3  \times \R^3.
\]

\begin{lem}\label{L71} (\cite{CHJ}, Lemma 3.4 + Lemma \ref{L23}) For any $k \ge 12$, for any smooth function $f, g$, denote $X_k = L^2_k, Y_k =H^{s}_{k+\gamma/2}, Z_k = H^{-s}_{k-\gamma/2}$ which is the dual of $Y_k$ respect to $X_k$, then we have
\[
\Vert Q(f, g) \Vert_{Z_k} \lesssim \Vert f \Vert_{X_k} \Vert  g \Vert_{Y_{k+2}} + \Vert f \Vert_{Y_k} \Vert  g \Vert_{X_k},
\]
which implies
\[
\Vert Q(f, f) \Vert_{L^2_{x}H^{s}_{k-\gamma/2}} \lesssim (\sup_{x} \Vert f \Vert_{L^2_k}) \Vert  f \Vert_{L^2_xH^s_{k+2-\gamma}}.
\]
\end{lem}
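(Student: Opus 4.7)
The first inequality is the content of Lemma 3.4 of \cite{CHJ} and follows from Lemma \ref{L23} by duality. Indeed, since $Z_k$ is the dual of $Y_k$ with respect to the $X_k = L^2_k$ pairing, proving the bound is equivalent to showing, for every smooth $\phi$,
\[
\bigl|(Q(f,g),\,\phi \langle v\rangle^{2k})_{L^2_v}\bigr| \lesssim \bigl(\|f\|_{L^2_k}\|g\|_{H^s_{k+2+\gamma/2}} + \|f\|_{H^s_{k+\gamma/2}}\|g\|_{L^2_k}\bigr) \|\phi\|_{H^s_{k+\gamma/2}}.
\]
I would split this by moving a weight inside the collision operator: writing
\[
(Q(f,g),\,\phi \langle v\rangle^{2k})_{L^2_v} = (Q(f,\,g\langle v\rangle^{k+\gamma/2}),\,\phi \langle v\rangle^{k-\gamma/2})_{L^2_v} + \mathcal{R},
\]
the main term is controlled by Lemma \ref{L23} with $a=b=s$ and a permissible choice of $(w_1,w_2)$ with $w_1+w_2=\gamma+2s$ dictated by the three cases of that lemma (depending on the sign of $\gamma+2s$), yielding the contribution $\|f\|_{L^2_k}\|g\|_{H^s_{k+2+\gamma/2}}\|\phi\|_{H^s_{k+\gamma/2}}$.

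The commutator $\mathcal{R}$ collects the terms that arise from $\langle v'\rangle^{k+\gamma/2} - \langle v\rangle^{k+\gamma/2}\cos^{k+\gamma/2}(\theta/2)$ inside the collision integral. This is exactly the structure handled by Lemma \ref{L28}, and the subsequent estimates run in parallel with the proof of Lemma \ref{L32}: singular and regular changes of variables (Lemma \ref{L22}), the weighted $|v-v_*|^\gamma$ bounds (Lemmas \ref{L210} and \ref{L212}), and a Cauchy--Schwarz / Taylor expansion argument produce a bound by $\|f\|_{H^s_{k+\gamma/2}}\|g\|_{L^2_k}\|\phi\|_{H^s_{k+\gamma/2}}$, with remaining lower-order terms absorbed using the weight margin provided by $k \ge 12$. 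This completes the first inequality.

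For the second inequality, given the first, I square and integrate in $x$:
\[
\|Q(f,f)\|_{L^2_x H^s_{k-\gamma/2}}^2 = \int_{\T^3} \|Q(f,f)(x,\cdot)\|_{Z_k}^2\,dx \lesssim \int_{\T^3}\bigl(\|f\|_{L^2_k}\|f\|_{H^s_{k+2+\gamma/2}} + \|f\|_{H^s_{k+\gamma/2}}\|f\|_{L^2_k}\bigr)^2 dx.
\]
Since $\gamma \in (-3,0]$, the Sobolev weight $k+2-\gamma$ dominates both $k+2+\gamma/2$ and $k+\gamma/2$, hence $\|f\|_{H^s_{k+2+\gamma/2}} + \|f\|_{H^s_{k+\gamma/2}} \lesssim \|f\|_{H^s_{k+2-\gamma}}$ pointwise in $x$. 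Pulling $\sup_x\|f(x,\cdot)\|_{L^2_k}$ outside the integral yields
\[
\|Q(f,f)\|_{L^2_x H^s_{k-\gamma/2}}^2 \lesssim \Bigl(\sup_x\|f\|_{L^2_k}\Bigr)^2 \|f\|_{L^2_x H^s_{k+2-\gamma}}^2,
\]
and a square root finishes the proof. The only technical obstacle is the commutator estimate $\mathcal R$, but in practice it reduces to a citation of Lemma \ref{L32} rather than a fresh computation; everything else is duality plus Sobolev weight comparison.
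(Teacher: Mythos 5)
The paper does not prove this lemma: both inequalities are cited from \cite{CHJ}, Lemma~3.4, together with Lemma~\ref{L23}, so your proposal is a reconstruction attempt rather than a comparison. The road map for the first inequality — duality, peeling weights onto $g$ and onto the test function $\phi$, Lemma~\ref{L23} for the main term, commutator machinery for the remainder — is the right shape, but the steps that carry the actual content are left unverified. For the main term one must select the admissible pair $(w_1,w_2)$ in Lemma~\ref{L23} so that the $H^s_{w_1}$-weight on $g\langle v\rangle^{k+\gamma/2}$ stays below $k+2+\gamma/2$ and the $H^s_{w_2}$-weight on $\phi\langle v\rangle^{k-\gamma/2}$ stays below $k+\gamma/2$ (a working choice is $w_2=\gamma$, $w_1=2s\le 2$), and $k\ge 12$ is what lets you absorb $\|f\|_{L^1_{w_3}}+\|f\|_{L^2_{-(\gamma+2s)}}$ into $\|f\|_{L^2_k}$; none of this is carried out. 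Moreover the commutator is not a citation of Lemma~\ref{L32}: a short computation with the pre-post collisional change of variable gives
\[
\mathcal R=\int B\, f_*\, g\, \phi'\,\langle v'\rangle^{k-\gamma/2}\bigl(\langle v'\rangle^{k+\gamma/2}-\langle v\rangle^{k+\gamma/2}\bigr)\,dv\,dv_*\,d\sigma,
\]
with the asymmetric weights $\langle v'\rangle^{k-\gamma/2}$ and $\langle\,\cdot\,\rangle^{k+\gamma/2}$, whereas Lemma~\ref{L32} treats the symmetric object $\langle v'\rangle^l(\langle v'\rangle^l-\langle v\rangle^l)$. Adapting the argument is plausible but requires a fresh estimate, which you wave at.

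For the second inequality, the identity $\|Q(f,f)\|_{L^2_xH^s_{k-\gamma/2}}^2=\int_{\T^3}\|Q(f,f)\|_{Z_k}^2\,dx$ is false as written, since $Z_k=H^{-s}_{k-\gamma/2}$ and not $H^s_{k-\gamma/2}$. The paper's own statement almost certainly carries the same sign typo (compare the use in the proof of Theorem~\ref{T73}, where the quantity controlled is $\|Q(f,f)\|_{L^2_xH^{-s}_{10-\gamma/2}}$; a collision operator cannot in general map $L^2$ data into a positive-order Sobolev space). You should have flagged this rather than silently reproducing it. Once the exponent is corrected to $-s$, the rest of your argument — square, integrate in $x$, pull out $\sup_x\|f\|_{L^2_k}$, and dominate both $H^s_{k+2+\gamma/2}$ and $H^s_{k+\gamma/2}$ by $H^s_{k+2-\gamma}$ using $\gamma\le 0$ — is elementary and correct.
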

\begin{lem}\label{L72} (\cite{CHJ}, Corollary 3.3, Lemma 5.2)
Let $f$ smooth be the solution to the linearized Boltzmann equation
\[
\partial_t f = L f,
\]
Then if $\gamma<0$, then for any $k \ge 12$ large we have
\[
\Vert S_L(t) f \Vert_{L^2_{x, v}} \lesssim \theta (t)  \Vert f \Vert_{L^2_x L^2_k}, \quad \Vert S_L(t) f \Vert_{L^2_{x, v}} \lesssim t^{-1/2}  \theta (t) \Vert f \Vert_{L^2_x H^{-s}_{k-\gamma/2}},
\]
where $ \theta(t) =e^{-\lambda t}$ for some $\lambda >0$ if $\gamma=0$,  $\theta(t) =  (1+t )^{-\frac {|k -12|}{|\gamma|}}$ if $\gamma \in (-3, 0)$.
\end{lem}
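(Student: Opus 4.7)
The plan is to follow the enlargement-of-functional-spaces method of Gualdani-Mischler-Mouhot combined with hypocoercivity, as carried out in \cite{CHJ}. Write $L = \mathcal A + \mathcal B$ where $\mathcal A := M \chi_R(v)$ is a velocity truncation (large constant $M$, bounded support of radius $R$) which maps $L^2_x L^2_k$ into the Gaussian-weighted space $L^2_x L^2(\mu^{-1/2})$ continuously, and $\mathcal B := L - \mathcal A$ is the remainder, which inherits the coercivity of $L$ up to a perturbation. Applying Theorem \ref{T34} with $g=0$ and choosing $M, R$ large enough yields the differential inequality
\[
\tfrac{d}{dt}\|S_{\mathcal B}(t)f\|_{L^2_x L^2_k}^2 + c_0 \|S_{\mathcal B}(t)f\|_{L^2_x H^s_{k+\gamma/2}}^2 \le -\kappa \|\langle v\rangle^{\gamma/2} S_{\mathcal B}(t)f\|_{L^2_x L^2_k}^2.
\]

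The second step is to read off the decay of $S_{\mathcal B}$ on $L^2_x L^2_k$ from this inequality. For $\gamma=0$ the right-hand side controls $\|f\|_{L^2_x L^2_k}^2$ directly, so one gets exponential decay. For $\gamma<0$ only the weaker norm $\|\langle v\rangle^{\gamma/2} f\|_{L^2_k}$ is controlled, and one closes the estimate via the moment interpolation
\[
\|f\|_{L^2_k}^{1+\varepsilon} \le \|\langle v\rangle^{\gamma/2} f\|_{L^2_k} \cdot \|f\|_{L^2_{k'}}^{\varepsilon}, \qquad \varepsilon = \tfrac{|\gamma|}{2(k'-k)},
\]
applied with the base level $k' = 12$. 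Since the higher moment $\|f\|_{L^2_{12}}$ is propagated uniformly in time by Lemma \ref{L39} (applied with $g=0$, $\epsilon=0$), this produces a nonlinear differential inequality $y'\le -c\, y^{1+\varepsilon}$ whose solution decays like $(1+t)^{-1/\varepsilon} = (1+t)^{-(k-12)/|\gamma|}$, i.e.\ the announced $\theta(t)$.

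The third step transfers the decay from $S_{\mathcal B}$ to $S_L$ via the Duhamel identity
\[
S_L(t) = S_{\mathcal B}(t) + \int_0^t S_L(t-\tau)\,\mathcal A\, S_{\mathcal B}(\tau)\, d\tau,
\]
iterated finitely many times so that the remainder lands entirely in the small Gaussian-weighted space, where the classical Mouhot-Strain spectral gap (resp.\ exponential decay in $\mu^{-1/2}$-weighted space) provides exponential decay. Convolution with $\theta(\tau)$ preserves its rate, so one gets $\|S_L(t)f\|_{L^2_x L^2_v} \lesssim \theta(t)\,\|f\|_{L^2_x L^2_k}$, which is the first bound. For the second bound with the $t^{-1/2}$ gain, I would integrate the coercivity $\int_0^1 \|S_L(\tau)f\|_{L^2_x H^s_{k+\gamma/2}}^2 d\tau \lesssim \|f\|_{L^2_x L^2_k}^2$ and dualize against $Z_k = H^{-s}_{k-\gamma/2}$ (in the sense of Lemma \ref{L71}) to obtain $\|S_L(t)f\|_{L^2_{x,v}} \lesssim t^{-1/2}\|f\|_{L^2_x H^{-s}_{k-\gamma/2}}$ on the short-time interval $(0,1]$; the semigroup composition $S_L(t) = S_L(t-1)S_L(1)$ combined with the first bound then supplies the factor $\theta(t)$ for $t\ge 1$.

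The main obstacle is the $\gamma<0$ case of step two: extracting the sharp exponent $(k-12)/|\gamma|$ requires carefully balancing the weight $k'$ used in the moment interpolation against the uniform control of $\|f\|_{L^2_{k'}}$ along the flow, and relies essentially on the moment-propagation of Lemma \ref{L39} to guarantee that higher moments do not blow up as $t\to\infty$. This is exactly the step where the condition "$k \ge 12$ large" enters, since $k' = 12$ plays the role of the reference moment whose boundedness fuels the polynomial decay of all higher moments.
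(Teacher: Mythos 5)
The paper does not prove this lemma; it is cited verbatim from \cite{CHJ} (Corollary 3.3 and Lemma 5.2), so there is no in-paper proof to compare against. Your GMM-enlargement plus moment-interpolation route is the standard approach and is essentially what \cite{CHJ} does. However, your interpolation step contains an error that breaks the derivation.

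You write $\Vert f\Vert_{L^2_k}^{1+\varepsilon} \le \Vert \langle v\rangle^{\gamma/2}f\Vert_{L^2_k}\,\Vert f\Vert_{L^2_{k'}}^{\varepsilon}$ with $\varepsilon = |\gamma|/(2(k'-k))$ and then set $k' = 12$. For $k\ge 12$ this gives $\varepsilon \le 0$, which is incompatible with the interpolation inequality and with a nonlinear ODE of the form $y'\le -c\,y^{1+\varepsilon}$. You have the roles of $k$ and $k'$ reversed. The initial-data weight $k$ is the \emph{higher} reference moment, which must be shown to stay bounded along the flow (this is the real content of the moment-propagation input; Lemma \ref{L39} alone gives an exponentially growing bound and does not suffice — one needs the non-expansivity coming from the $\mathcal A + \mathcal B$ split). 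The decay itself is run on the \emph{lower} base level $L^2_{12}$, and the correct interpolation is
\[
\Vert h\Vert_{L^2_{12}}^{1+\varepsilon} \le \Vert \langle v\rangle^{\gamma/2} h\Vert_{L^2_{12}} \Vert h\Vert_{L^2_k}^{\varepsilon}, \qquad \varepsilon = \frac{|\gamma|/2}{\,k-12\,},
\]
which, inserted into $\tfrac{d}{dt}\Vert S_L(t)f\Vert_{L^2_{12}}^2 \lesssim -\Vert\langle v\rangle^{\gamma/2}S_L(t)f\Vert_{L^2_{12}}^2$ together with $\sup_t\Vert S_L(t)f\Vert_{L^2_k}\lesssim \Vert f\Vert_{L^2_k}$, gives $y'\lesssim -y^{1+\varepsilon}$ for $y(t)=\Vert S_L(t)f\Vert_{L^2_{12}}^2$ and hence $y(t)\lesssim(1+t)^{-1/\varepsilon}=(1+t)^{-2(k-12)/|\gamma|}$. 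Taking square roots and using $\Vert\cdot\Vert_{L^2_{x,v}}\le\Vert\cdot\Vert_{L^2_{12}}$ recovers the claimed $\theta(t)=(1+t)^{-(k-12)/|\gamma|}$.

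The $t^{-1/2}$ gain is under-articulated: an integrated coercivity bound $\int_0^1\Vert S_L(\tau)f\Vert_{Y_k}^2 d\tau\lesssim\Vert f\Vert_{X_k}^2$ is an $L^2$-in-time statement and does not by itself yield the pointwise smoothing estimate $\Vert S_L(t)f\Vert_{Y_k}\lesssim t^{-1/2}\Vert f\Vert_{X_k}$ that you need before dualizing (the $Y_k$-norm of the trajectory is not monotone, so a naive mean-value selection of a good time $\tau_0\in(t/2,t)$ does not propagate forward). You would need a time-weighted energy estimate (multiply by $t$ before integrating) or an explicit regularization estimate for the linearized semigroup to make this step rigorous.
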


\begin{thm}\label{T73} Suppose $f$ smooth and $\gamma \in (-3, 0], s \in (0, 1), \gamma +2s >-1$. Suppose that $F = \mu +f \ge 0$ is a solution of the Boltzmann equation \eqref{Boltzmann equation}. For any $l  \ge 20$, assume that
\[
\sup_{t, x} \Vert f \Vert_{L^2_{7+|\gamma|}} \le \delta_0, \quad \Vert \langle v \rangle^l f_0\Vert_{L^2_{x, v}} <+\infty.
\]
for some small constant $\delta_0>0$. Denote $X = L^2_xL^2_l, Y= L^2_xH^s_{l+\gamma/2}$. Define the norm $||| f |||_X$ and the associate scalar product on $\Pi X$ by
\[
||| f |||_{X}^2  = \eta \Vert f \Vert_X^2 + \int_0^\infty \Vert S_L(\tau ) f \Vert_{L^2_{x, v}}^2 d \tau, \quad (( f, g ))_{X}  = \eta (f, g )_X + \int_0^\infty (S_L(\tau ) f,  S_L(\tau ) g)_{L^2_{x, v} } d \tau.
\]
Then there exist some $\eta>0$, such that the norm $|||\cdot |||$ is equivalent to $\Vert \cdot \Vert_X$ on $\Pi X$, moreover there exist  some constant $ K>0$ such that any smooth solution to the Boltzmann equation with $Pf_0 =0$ satisfies 
\begin{equation}
\label{estimate equivalent f}
\frac d {dt} ||| f |||_X^2 \le -K \Vert f \Vert_Y^2.
\end{equation}
As a consequence if $\gamma = 0$, we have
\[
\Vert \langle   v \rangle^{l} f (t)\Vert_{L^2_{x, v}} \le Ce^{-\lambda t} \Vert \langle v \rangle^l f_0\Vert_{L^2_{x, v}}, \quad t \in   [0, T],
\]
for some constant $C, \lambda>0$. If $\gamma \in (-3, 0)$ we have
\[
\Vert \langle   v \rangle^{l_1} f (t)\Vert_{L^2_{x, v}} \le Ct^{- \frac {|l - 14 |} { |\gamma| } } \Vert \langle v \rangle^l f_0\Vert_{L^2_{x, v}}, \quad t \in   [0, T], \quad \forall 14 \le l_1 < l .
\]
for some constant $C >0$. Moreover for any $0 < s' < \frac {s }  {2(s+3)}$, we have
\[
\sup_{t \in [0, T] } \Vert \langle v \rangle^{l} f\Vert_{L^2_{x, v}} + \int_{0}^T \Vert (I-\Delta_x)^{s'/2} f \Vert_{L^2_{x, v}}^2 +\int_{0}^T  \Vert \langle v \rangle^{l +\gamma/2} (1-\Delta_v)^{s/2} f  \Vert_{L^2_{x, v}}^2 \le C \Vert \langle v \rangle^{l} f_0 \Vert_{L^2_{x, v}}^2,
\]
where all the constants $C$ are all independent of $T$.
\end{thm}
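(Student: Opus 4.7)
My plan is to follow the semigroup equivalence-of-norms strategy of Gualdani-Mischler-Mouhot \cite{GMM}, using the decay of $S_L$ on $\Pi X$ provided by Lemma \ref{L72}. First I would check that $|||\cdot|||_X\sim\Vert\cdot\Vert_X$ on $\Pi X$: the lower bound is trivial, and for the upper bound I use that conservation of mass, momentum and energy propagates $Pf(t)\equiv 0$, so Lemma \ref{L72} gives $\Vert S_L(\tau)f\Vert_{L^2_{x,v}}\le\theta(\tau)\Vert f\Vert_X$. The assumption $l\ge 20$ (with $|\gamma|<3$) makes $\int_0^\infty\theta(\tau)^2\,d\tau$ finite in both the $\gamma=0$ (exponential) and $\gamma\in(-3,0)$ (polynomial with $2(l-12)/|\gamma|>1$) cases.

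Next I would differentiate $|||f|||_X^2$. Since $S_L(\tau)$ commutes with $L$ and $\partial_tf=Lf+Q(f,f)$, one has $\partial_tS_L(\tau)f=\partial_\tau S_L(\tau)f+S_L(\tau)Q(f,f)$; integrating by parts in $\tau$ (the boundary term at $\tau=\infty$ vanishes by decay on $\Pi X$) yields
\begin{equation*}
\frac{d}{dt}|||f|||_X^2=2\eta(f,Lf+Q(f,f))_X-\Vert f\Vert_{L^2_{x,v}}^2+2\int_0^\infty(S_L(\tau)f,S_L(\tau)Q(f,f))_{L^2_{x,v}}\,d\tau.
\end{equation*}
The transport part of $Lf$ vanishes in $(f,Lf)_X$ by $x$-integration by parts; combining the remaining collisional terms with $Q(f,f)$ through $Q(\mu+f,\mu+f)=Lf+v\cdot\nabla_xf+Q(f,f)$ and invoking the sharpened form of Theorem \ref{T36} produces $2\eta(f,Lf+Q(f,f))_X\le-8\eta c_0\Vert f\Vert_Y^2+2\eta C_l\Vert f\Vert_{L^2_{x,v}}^2+2\eta C_l\delta_0\Vert f\Vert_Y^2$. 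The decisive feature is that the $-\Vert f\Vert_{L^2_{x,v}}^2$ from the semigroup identity absorbs the middle term as soon as $\eta\le 1/(4C_l)$.

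The hard part will be the nonlinear semigroup integral. I plan to combine Cauchy-Schwarz, the smoothing bound $\Vert S_L(\tau)g\Vert_{L^2_{x,v}}\lesssim\tau^{-1/2}\theta(\tau)\Vert g\Vert_{L^2_xZ_l}$ of Lemma \ref{L72}, and the dual estimate $\Vert Q(f,f)\Vert_{L^2_xZ_l}\lesssim(\sup_x\Vert f\Vert_{L^2_l})\Vert f\Vert_{L^2_xH^s_{l+2-\gamma}}$ of Lemma \ref{L71} to arrive at a bound of the form $C\Vert f\Vert_X\cdot(\sup_x\Vert f\Vert_{L^2_l})\cdot\Vert f\Vert_{L^2_xH^s_{l+2-\gamma}}$, with a finite prefactor $\int_0^\infty\tau^{-1/2}\theta(\tau)^2\,d\tau$ by the choice of $l$. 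The genuine difficulty is that $l+2-\gamma>l+\gamma/2$, so $\Vert f\Vert_{L^2_xH^s_{l+2-\gamma}}$ is not directly controlled by $\Vert f\Vert_Y$; my remedy would be to exploit the moment slack built into the hypothesis $\Vert\langle v\rangle^lf_0\Vert_{L^2_{x,v}}<+\infty$ (taking $l$ slightly larger than the weight appearing in $|||\cdot|||$), together with an interpolation between $\sup_{t,x}\Vert f\Vert_{L^2_{7+|\gamma|}}\le\delta_0$ and the higher-moment control, in order to dominate both $\sup_x\Vert f\Vert_{L^2_l}$ and the weight loss by $C\delta_0\Vert f\Vert_Y$. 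The resulting contribution $C\delta_0\Vert f\Vert_Y^2$ is absorbed into the coercive term, yielding $\frac{d}{dt}|||f|||_X^2\le-K\Vert f\Vert_Y^2$.

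For the decay consequences, when $\gamma=0$ we have $Y=L^2_xH^s_l$ and $\Vert f\Vert_Y\gtrsim\Vert f\Vert_X$, so Gr\"onwall produces exponential decay of $|||f|||_X^2$ and hence of $\Vert f\Vert_{L^2_xL^2_l}$. When $\gamma\in(-3,0)$, the weight mismatch gives only $\int_0^\infty\Vert f\Vert_Y^2\,dt<\infty$ together with boundedness of $\Vert f(t)\Vert_X$; I would then interpolate $\Vert f(t)\Vert_{L^2_xL^2_{l_1}}$ for $l_1<l$ between the bounded higher-weight norm and the integrated lower-weight dissipation, recovering the rate $(1+t)^{-(l-l_1)/|\gamma|}$ as in \cite{CHJ}. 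The final spatial regularity claim follows by integrating the differential inequality to get $\int_0^T\Vert f\Vert_Y^2\,dt\lesssim\Vert\langle v\rangle^lf_0\Vert_{L^2_{x,v}}^2$, then applying the time-averaging Lemma \ref{L219} to $\partial_tf+v\cdot\nabla_xf=Q(\mu+f,\mu+f)$ exactly as in Corollary \ref{C310}.
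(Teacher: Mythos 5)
Your overall architecture — equivalence of norms via the GMM-style functional $|||\cdot|||_X$, differentiation, Theorem~\ref{T36} for the coercive first term, the semigroup identity giving $-\Vert f\Vert_{L^2_{x,v}}^2$, and absorption by taking $\eta$ small — is the same as the paper's, and those parts of your sketch are sound. The propagation of $Pf(t)=0$, the equivalence of norms via $\int_0^\infty\theta^2<\infty$, and the final deductions (Gr\"onwall for $\gamma=0$, weight interpolation for $\gamma<0$, time-averaging for the spatial Sobolev bound) all match the paper.

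The gap is in the nonlinear semigroup integral. You correctly identify that applying the dual bound at weight $l$ produces $\Vert f\Vert_{L^2_xH^s_{l+2-\gamma}}$, which is not controlled by $\Vert f\Vert_Y=\Vert f\Vert_{L^2_xH^s_{l+\gamma/2}}$ since $l+2-\gamma>l+\gamma/2$. But your remedy — interpolating using ``moment slack'' and the low-moment $\delta_0$-smallness — does not close: any interpolation of $\Vert f\Vert_{L^2_xH^s_{l+2-\gamma}}$ between $\Vert f\Vert_Y$ and a higher-moment norm yields a power $\Vert f\Vert_Y^{1-\theta}$ rather than the full $\Vert f\Vert_Y$ needed for the final bound $C\delta_0\Vert f\Vert_Y^2$, so the quadratic-in-$Y$ structure is lost. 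The missing observation is that the functional $\int_0^\infty\Vert S_L(\tau)\,\cdot\,\Vert_{L^2_{x,v}}^2\,d\tau$ lives in \emph{unweighted} $L^2_{x,v}$, so both bounds $\Vert S_L(\tau)g\Vert_{L^2_{x,v}}\lesssim\tau^{-1/2}\theta(\tau)\Vert g\Vert_{L^2_xH^{-s}_{k-\gamma/2}}$ and $\Vert S_L(\tau)f\Vert_{L^2_{x,v}}\lesssim\theta(\tau)\Vert f\Vert_{L^2_xL^2_k}$ of Lemma~\ref{L72} may be applied at \emph{any} fixed admissible weight $k$, independent of $l$. Choosing a fixed small $k$ (the paper takes $k\approx 10$--$12$), the dual estimate from Lemma~\ref{L71} produces $\sup_x\Vert f\Vert_{L^2_{k}}\cdot\Vert f\Vert_{L^2_xH^s_{k+2-\gamma}}\cdot\Vert f\Vert_{L^2_xL^2_k}$, and since $l\ge20$ gives $k+2-\gamma\le l+\gamma/2$ and $k\le l+\gamma/2$, both factors are dominated by $\Vert f\Vert_Y$ while $\sup_x\Vert f\Vert_{L^2_k}\lesssim\delta_0$, yielding directly $\delta_0\Vert f\Vert_Y^2$. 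Without this freedom in $k$ your estimate does not close, so as written the differential inequality \eqref{estimate equivalent f} is not established.
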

\begin{proof}
First by Lemma \ref{L72}
\[
\int_0^\infty \Vert S_L(\tau ) f \Vert_{L^2_{x, v}}^2 d \tau \lesssim \Vert f \Vert_X^2 \int_0^\infty \theta^2(\tau) d\tau \lesssim \Vert f \Vert_X^2 . 
\]
The equivalence between two norms is thus proved. Then we easily compute
\[
\frac {d} {dt} ||| f(t) |||^2_X  = \eta (Q(\mu + f, \mu+f) ,f )_X + \int_0^\infty ( S_L(\tau)Lf, S_L(\tau) f )_{L^2_{x, v}} d\tau + \int_0^\infty ( S_L(\tau)Q(f, f ) , S_L(\tau) f )_{L^2_{x, v}} d\tau.
\]
We will estimate the terms separately, first by Theorem \ref{T36} we have
\begin{equation*}
\begin{aligned}
(Q(\mu + f, \mu+f), f)_X = \int_{\T^3} (Q(\mu + f, \mu+f), f \langle v \rangle^{2k}) dx  
\le& \int_{\T^3}  -2c_0 \Vert f \Vert_{H^s_{ l +\gamma/2}}^2 + C_{k}  \Vert f \Vert_{L^2}^2 + C_k\Vert f \Vert_{L^2_{|\gamma| +7 }} \Vert f \Vert_{H^s_{ l + \gamma/2}}^2 dx
\\
\le&  -2c_0 \Vert f \Vert_{Y}^2 + C_{k}  \Vert f \Vert_{L^2_{x, v}}^2 + C_k \sup_{t, x}\Vert f \Vert_{L^2_{7+|\gamma|}} \Vert f \Vert_{Y}^2
\\
\le& -c_0 \Vert f \Vert_{Y}^2 + C_{k}  \Vert f \Vert_{L^2_{x, v}}^2.
\end{aligned}
\end{equation*}
For the second term, by Lemma \ref{L72}, we have
\[
\int_0^\infty ( S_L(\tau)Lf, S_L(\tau) f )_{L^2_{x, v}} d\tau =\int_0^\infty \frac {d} {d\tau} \Vert S_L(\tau) f(t)\Vert_{L^2_{x, v}}^2 d\tau = \lim_{\tau \to \infty}\Vert S_L(\tau ) f(t)\Vert_{L^2_{x, v}}^2 - \Vert f(t) \Vert_{L^2_{x, v}}^2 = -\Vert f(t) \Vert_{L^2_{x, v}}^2.
\]
For the last term, by Lemma \ref{L71} and Lemma \ref{L72} we have
\begin{equation*}
\begin{aligned}
\int_0^\infty ( S_L(\tau)Q(f, f ) , S_L(\tau) f )_{L^2_{x, v}} d\tau \le &\int_0^\infty \Vert S_L(\tau)Q(f, f )\Vert_{L^2_{x, v}}   \Vert S_L(\tau) f \Vert_{L^2_{ x, v}} d\tau 
\\
\lesssim &\Vert Q(f, f )\Vert_{L^2_x H^{-s}_{10-\gamma/2}}    \Vert  f \Vert_{L^2_x L^2_{10}} \int_0^\infty \theta_1(\tau) \theta(\tau) d\tau
\\
\lesssim &\sup_{x} \Vert f \Vert_{L^2_{10}}  \Vert f\Vert_{L^2_xH^s_{10+2-\gamma/2}} \Vert  f \Vert_{Y} 
\lesssim \delta_0 \Vert  f \Vert_{Y}^2.
\end{aligned}
\end{equation*}
Combining all the terms and taking a suitable $\eta>0$ small, \eqref{estimate equivalent f} is thus proved. For the convergence rate, the case $\gamma \ge 0$ can be proved directly by using Gr\"onwall's Lemma and the case $\gamma \in (-3, 0)$ can be proved by interpolation, see \cite{C} Lemma 3.12 for exmaple.  By integrate on $[0, T]$ in \eqref{estimate equivalent f} we have
\[
\sup_{t \in [0, T] } \Vert \langle v \rangle^{l} f\Vert_{L^2_{x, v}} + K\int_{0}^T  \Vert \langle v \rangle^{l + \gamma/2} (1-\Delta_v)^{s/2} f  \Vert_{L^2_{x, v}}^2 d \tau \le C \Vert \langle v \rangle^{l} f \Vert_{L^2_{x, v}}^2,
\]
where the constant $C$ does not depend on $T$. By Lemma \ref{L23} we have
\[
\Vert\langle v \rangle^3 (I-\Delta_v)^{-1} ({Q}(F, F)) \Vert_{L^2_{v}}  \lesssim \Vert\langle v \rangle^3 (I-\Delta_v)^{-s} ({Q}(\mu+f, f) +Q(f, \mu) ) \Vert_{L^2_{v}}  \lesssim \Vert f\Vert_{L^2_{7}} \Vert f\Vert_{L^2_{7}} + \Vert f\Vert_{L^2_{7}},
\]
which implies
\[
\int_{0}^T \Vert\langle v \rangle^3 (I-\Delta_v)^{-1} ({Q}(F, F)) \Vert_{L^2_{x, v}}^2 d\tau  \lesssim (1 + \sup_{t, x} \Vert f \Vert_{L^2_7}) \int_{0}^T \Vert  \langle v \rangle^7 f \Vert_{L^2_{x, v}}^2 d\tau \lesssim \Vert \langle v \rangle^{l} f_0 \Vert_{L^2_{x, v}}^2,
\] 
similarly as \eqref{computation s'} we have
\begin{equation*}
\begin{aligned}
\int_{0}^t \Vert (I-\Delta_x)^{s'/2} f \Vert_{L^2_{x, v}}^2 d\tau  \lesssim& \Vert \langle v \rangle^3 f(0)  \Vert_{L^2_{x, v}}^2 + \Vert \langle v \rangle^3  f(t)  \Vert_{L^2_{x, v}}^2 + \int_{0}^t \Vert (I-\Delta_v)^{s/2} f \Vert_{L^2_{x, v}}^2 d\tau
\\
& + \int_{0}^t \Vert\langle v \rangle^3 (I-\Delta_v)^{-1} ({Q}(F, F)) \Vert_{L^2_{x, v}}^2 d\tau
\\
\lesssim& \Vert \langle v \rangle^{l_0} f_0 \Vert_{L^2_{x, v}}^2 + \int_{0}^t \Vert\langle v \rangle^3 (I-\Delta_v)^{-1} ({Q}(F, F)) \Vert_{L^2_{x, v}}^2 d\tau ,
\end{aligned} 
\end{equation*}
so the theorem is thus proved.
\end{proof}

\begin{thm}\label{T74} (Global Existence) Suppose $f$ smooth and $\gamma \in (-3, 0], s \in (0, 1), \gamma +2s >-1$. Suppose that $F = \mu +f \ge 0$ is a solution of the Boltzmann equation \eqref{Boltzmann equation}. Then for any $k_0 \ge 14$ there exist $\delta_*, \delta_0>0$ small such that if  the initial data$f_0$ satisfies 
\[
u +f_0 \ge 0, \quad \Vert  \langle v \rangle^{k_0} f_0\Vert_{L^\infty_{x, v} \cap L^2_{x, v}} \le \delta_*, \quad \Vert \langle v \rangle^{k_0+l_0(k_0)}f_0 \Vert_{L^2_{x, v}} < + \infty, \quad Pf_0 =0.
\] 
Then the Boltzmann equation has a global solution satisfies 
\[
\Vert  \langle v \rangle^{k_0} f\Vert_{L^\infty ( [0, \infty) \times \T^3\times \R^3 )} \le \frac {\delta_0} 2.
\] 
Moreover,  if $\gamma = 0$, we have
\[
\Vert \langle   v \rangle^{k_0} f (t)\Vert_{L^2_{x, v}} \le Ce^{-\lambda t} \Vert \langle v \rangle^{k_0} f_0\Vert_{L^2_{x, v}}, \quad \forall t \in   [0, +\infty),
\]
for some constant $C, \lambda>0$. If $\gamma \in (-3, 0)$ we have
\[
\Vert \langle   v \rangle^{k_1} f (t)\Vert_{L^2_{x, v}} \le Ct^{- \frac {|k_0 - 14 |} { |\gamma| } }  \Vert \langle v \rangle^{k_0} f_0\Vert_{L^2_{x, v}}, \quad \forall  t \in   [0, +\infty), \quad \forall 14 \le  k_1 < k_0.
\]
\end{thm}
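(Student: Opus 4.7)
The plan is to combine the local existence result from Section 6 with the a priori weighted $L^2$ decay of Theorem 7.3 via a standard continuation/bootstrap argument. The local existence theorem constructs a solution on $[0,1]$ with $\|\langle v\rangle^{k_0} f\|_{L^\infty} \le \delta_1/2 < \delta_0/2$ whenever the initial data is small in $L^2_{x,v}\cap L^\infty_{x,v}$ with weight $\langle v\rangle^{k_0}$ and has the extra $L^2$-moment $\|\langle v\rangle^{k_0+l_0(k_0)} f_0\|_{L^2_{x,v}} < +\infty$. On any interval where such an $L^\infty$ bound holds, the hypothesis $\sup_{t,x}\|f\|_{L^2_{7+|\gamma|}} \le \delta_0$ of Theorem 7.3 is automatic (since $k_0 \ge 14$), conservation of mass/momentum/energy yields $Pf(t)=0$ for all $t$, and therefore Theorem 7.3 provides the quantitative decay in $L^2_{x,v}$ that we need.

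More precisely, I would define the maximal time
\[
T^\ast := \sup\Big\{T \ge 0 : \text{a solution exists on } [0,T] \text{ with } \sup_{[0,T]\times\T^3\times\R^3}|\langle v\rangle^{k_0} f| \le \delta_0\Big\}.
\]
The local existence theorem of Section 6 gives $T^\ast \ge 1$. On $[0,T^\ast)$, Theorem 7.3 yields exponential decay of $\|\langle v\rangle^{k_0} f(t)\|_{L^2_{x,v}}$ when $\gamma = 0$, and algebraic decay of $\|\langle v\rangle^{k_1} f(t)\|_{L^2_{x,v}}$ for any $14 \le k_1 < k_0$ when $\gamma \in (-3,0)$. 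In particular $\|\langle v\rangle^{k_0+l_0(k_0)} f(t)\|_{L^2_{x,v}}$ remains finite and uniformly bounded in $t$ by the $T$-independent energy bound at the end of Theorem 7.3, so the hypothesis required to restart the local theorem at any $t_0 \in [0,T^\ast)$ is preserved. I would then restart from $t_0$: applying the Section 6 theorem to $f(t_0)$ on $[t_0, t_0+1]$, and invoking the nonlinear analogue of the De Giorgi bound in Theorem 4.7 (i.e., Lemma 6.4 in the $\epsilon \to 0$ limit), I obtain
\[
\sup_{t \in [t_0,t_0+1]}\|\langle v\rangle^{k_0} f\|_{L^\infty_{x,v}} \le \max\Big\{2\|\langle v\rangle^{k_0} f(t_0)\|_{L^\infty_{x,v}}\,,\ C\bigl(\|\langle v\rangle^{k_0} f(t_0)\|_{L^2_{x,v}}\bigr)^{\eta_0}\Big\}
\]
for some universal $\eta_0 > 0$. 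Choosing $\delta_*$ small enough so that both $2C_1\delta_*$ and $C(C\delta_*)^{\eta_0}$ are strictly below $\delta_0/2$, the $L^\infty$ bound can never saturate, which forces $T^\ast = +\infty$ and yields $\|\langle v\rangle^{k_0} f\|_{L^\infty([0,\infty)\times\T^3\times\R^3)} \le \delta_0/2$. The stated decay rates then follow at once from Theorem 7.3 applied globally.

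The main obstacle is ensuring that the weight bookkeeping is consistent between the two ingredients: the De Giorgi restart at time $t_0$ needs the higher-order moment $\langle v\rangle^{k_0+l_0(k_0)}$ to be propagated forward in time uniformly, while Theorem 7.3 is formulated for a fixed weight. This is resolved by using the uniform-in-$T$ estimate
\[
\sup_{t \in [0,T]} \|\langle v\rangle^{l} f(t)\|_{L^2_{x,v}}^2 + \int_0^T \|\langle v\rangle^{l+\gamma/2}(1-\Delta_v)^{s/2} f\|_{L^2_{x,v}}^2\, d\tau \le C \|\langle v\rangle^{l} f_0\|_{L^2_{x,v}}^2
\]
at the top weight $l = k_0 + l_0(k_0)$, which is exactly what Theorem 7.3 provides once the smallness hypothesis on $\|f\|_{L^2_{7+|\gamma|}}$ is verified from the $L^\infty_{k_0}$ bound. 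A secondary point is that in the soft potential case $\gamma \in (-3,0)$ the decay is algebraic with rate $|k_0-14|/|\gamma|$, so the restart argument gives $\|f(t)\|_{L^2_{k_0}}$ controlled only by its initial value — but this is sufficient, as the bootstrap only needs a uniform $L^2$ bound to stay below $\delta_*$, not a decaying one.
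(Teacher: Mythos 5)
Your restart argument has a genuine gap at the point where you feed $f(t_0)$ back into the local theorem and invoke the De Giorgi bound. The De Giorgi estimate you quote,
\[
\sup_{t \in [t_0,t_0+1]}\|\langle v\rangle^{k_0} f\|_{L^\infty_{x,v}} \le \max\Big\{\,2\|\langle v\rangle^{k_0} f(t_0)\|_{L^\infty_{x,v}}\,,\; C\big(\|\langle v\rangle^{k_0} f(t_0)\|_{L^2_{x,v}}\big)^{\eta_0}\Big\},
\]
is only useful if you can control $\|\langle v\rangle^{k_0} f(t_0)\|_{L^\infty_{x,v}}$. The bootstrap hypothesis on $[0,T^\ast)$ gives you only $\|\langle v\rangle^{k_0} f(t_0)\|_{L^\infty}\le\delta_0$, and the $L^2$ decay from Theorem \ref{T73} says nothing about the $L^\infty$ norm at intermediate times. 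So at the second restart the $\max$ already reaches $2\delta_0$, and in the worst case the bound doubles on each unit interval — the continuation argument does not close strictly below $\delta_0$, which is exactly what the bootstrap requires. The claim ``choosing $\delta_*$ small enough so that $2C_1\delta_*$\ldots'' only applies to the first interval $[0,1]$ where $f(t_0)=f_0$; on later intervals $C_1\delta_*$ must be replaced by the as-yet-unknown $\|\langle v\rangle^{k_0}f(t_0)\|_{L^\infty}$, and the argument becomes circular.

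The paper's proof avoids restarts entirely. It observes that the $Ce^{CT}$ factor in the zeroth-level energy bound of Lemma \ref{L63} is the \emph{only} obstruction to a global-in-time De Giorgi iteration, and that Theorem \ref{T73} already provides the uniform-in-$T$ integrated estimate
\[
\sup_{t\in[0,T]}\|\langle v\rangle^{l}f\|_{L^2_{x,v}}^2+\int_0^T\|\langle v\rangle^{l+\gamma/2}(1-\Delta_v)^{s/2}f\|_{L^2_{x,v}}^2\,d\tau+\int_0^T\|(I-\Delta_x)^{s'/2}f\|_{L^2_{x,v}}^2\,d\tau\le C\|\langle v\rangle^l f_0\|_{L^2_{x,v}}^2
\]
with $C$ independent of $T$. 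Plugging this into the proof of Lemma \ref{L63} replaces $C_le^{C_lT}$ by a fixed $C_{k_0}$, so $\mathcal{E}_{0,p,s''}$ on $[0,T]$ is bounded uniformly in $T$ by $C_{k_0}\|\langle v\rangle^{k_0}f_0\|_{L^2_{x,v}}^{2/p}$. One then runs the De Giorgi iteration of Theorem \ref{T47} once on $[0,T]$ — keeping $f_0$ as the initial slice, so the $2\|\langle v\rangle^{k_0}f_0\|_{L^\infty}$ term never changes — and obtains a $T$-independent $L^\infty$ bound $\le\delta_0/2$ for $\delta_*$ small. Letting $T\to\infty$ gives global existence, and the decay rates are quoted directly from Theorem \ref{T73}. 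To fix your proof, discard the restart and instead run a single De Giorgi iteration on $[0,T]$ for arbitrary $T$, after upgrading the zeroth-level energy estimate exactly as above; the separate issue of obtaining $L^\infty$ \emph{decay} (not just a uniform bound), which genuinely does require a staggered-in-time De Giorgi iteration from $T/2$, is handled in the paper's subsequent theorem rather than in the proof of Theorem \ref{T74}.
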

\begin{proof}
The reason that Theorem \ref{T65} can only treat a short time existence is because that the bound of $\mathcal{E}_{0, p, s''} $ relies on $T$ (precisely $Ce^{CT}$), which will exceed if $T$ is large (see Lemma \ref{L63}). Now by Theorem \ref{T73}, we know that the term $Ce^{CT}$ can be replaced by a constant $C$ which does not depend on $t$, so the result in Lemma \ref{L63} can be improved to 
\[
\mathcal{E}_{0, p, s''} \le C_{k_0} \max_{ j \in \{ 1/p, p'/p  \} } \Vert \langle v \rangle^{k_0} f_0\Vert_{L^2_{x, v}}^{2j}  \le C_{k_0} \Vert \langle v \rangle^{k_0} f_0\Vert_{L^2_{x, v}}.
\]
As a consequence, there exists $C_{k_0}$ independent of $T$ such that
\[
K_0(\mathcal{E}_{0, p, s''}  ) \le C_{k_0} \Vert \langle v \rangle^{k_0} f_0\Vert_{L^2_{x, v}}^{\eta_0} ,\quad \eta_0 = \min_{1 \le i \le 4 } \frac {\beta_i-1}{a_i}.
\]
Taking $\delta_*$ small, we  conclude
\[
\Vert  \langle v \rangle^{k_0} f\Vert_{L^\infty ( [0, T) \times \T^3\times \R^3 )} \le \frac {\delta_0} 2 <\delta_0 , \quad \forall T>0,
\] 
thus for any $T>0$, the equation can be extended beyond $T$, the global existence is thus proved. The convergence rate follows by Theorem \ref{T73}. 
\end{proof}

\begin{thm}
Suppose $k_0$ and the initial data $f_0$ satisfies the same condition as Theorem \ref{T74} above. Then there exist $C_{k_0}, \eta_0$ such that for any $t>1$ the solution obtained Theorem \ref{T74} satisfies if $\gamma =0$
\[
\Vert \langle v \rangle^{k_0}    \Vert_{L^\infty_{x, v}} \le C_{k_0}  e^{-\lambda t} \Vert \langle v \rangle^{k_0} f_0 \Vert_{L^2_{x, v}}^{2\eta_0/p},
\]
for some constant $\lambda>0$ and if $\gamma \in (-3, 0)$
\[
\Vert \langle v \rangle^{k_0}    \Vert_{L^\infty_{x, v}} \le C_{k_0}  t^{-\alpha} \Vert \langle v \rangle^{k_0} f_0 \Vert_{L^2_{x, v}}^{2\eta_0/p},
\]
for some constant $\alpha>0$. 
\end{thm}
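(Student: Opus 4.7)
Fix $t>1$ and write $T_*:=t-1$. The strategy is to run the De Giorgi / level-set iteration of Section~\ref{section 4} on the sliding window $[T_*,t]$, starting from the datum $f(T_*)$, and to feed into it the $L^2$ decay of Theorem~\ref{T74}. Because the exponent $2\eta_0/p$ on the right-hand side exactly matches the threshold constant $K_0(\mathcal{E}_0)\sim \mathcal{E}_0^{\eta_0}$ with $\mathcal{E}_0\sim \Vert f(T_*)\Vert_{L^2}^{2/p}$ produced by the De Giorgi step, this is essentially the only available reduction.

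First I would observe that by Theorem~\ref{T74} the solution $f$ satisfies $\Vert\langle v\rangle^{k_0}f(s)\Vert_{L^\infty_{x,v}}\le \delta_1/2$ uniformly in $s\ge 0$, and for a suitably high weight $k$ one has
\[
\bigl\Vert\langle v\rangle^{k_0+l_0(k_0)}f(T_*)\bigr\Vert_{L^2_{x,v}}\ \le\ C\,\theta(T_*)\,\Vert\langle v\rangle^{k}f_0\Vert_{L^2_{x,v}},
\]
with $\theta(t)=e^{-\lambda t}$ when $\gamma=0$ and $\theta(t)=t^{-(k-14)/|\gamma|}$ when $\gamma\in(-3,0)$. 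In particular the hypotheses of Lemmas~\ref{L62}--\ref{L63} and of Theorem~\ref{T47} (with $\epsilon=0$, $g=f$) are satisfied on $[T_*,t]$. The equivalent-norm estimate from Theorem~\ref{T73}, applied to $f$ restarted at $T_*$, removes the $e^{C_lT}$ factor from Lemma~\ref{L63} and yields
\[
\mathcal{E}_{0,p,s''}(f,T_*,t)\ \le\ C_{k_0}\,\bigl\Vert\langle v\rangle^{k_0+l_0(k_0)}f(T_*)\bigr\Vert_{L^2_{x,v}}^{2/p}.
\]

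Next I would reproduce the iteration in the proof of Theorem~\ref{T47} on $[T_*,t]$, but in a shrinking-cylinder variant that neutralises the $2\Vert\langle v\rangle^{k_0}f(T_*)\Vert_{L^\infty_{x,v}}$ branch of the $\max$. With $M_k=K_0(1-2^{-k})$ I introduce intermediate times $\tau_k=T_*+(1-2^{-k})/2$ and set $\mathcal{E}_k:=\mathcal{E}_{p,s''}(M_k,\tau_k,t)$. Since $\int_{\tau_{k-1}}^{\tau_k}\Vert f_{M_{k-1},+}^{k_0}(s)\Vert_{L^2_{x,v}}^2\,ds\le \mathcal{E}_{k-1}$, pigeon-holing on $[\tau_{k-1},\tau_k]$ supplies a slice time $s_k$ at which $\Vert f_{M_{k-1},+}^{k_0}(s_k)\Vert_{L^2_{x,v}}^2\le 2^{k+1}\mathcal{E}_{k-1}$; applying Lemma~\ref{L45} on $[s_k,t]$ at the pair of levels $(M_k,M_{k-1})$ then gives a recursion
\[
\mathcal{E}_k\ \le\ C\,2^{k+1}\mathcal{E}_{k-1}\ +\ C\sum_{i=1}^{4}\frac{2^{k(a_i+1)}\,\mathcal{E}_{k-1}^{\beta_i}}{K_0^{a_i}},
\]
which still converges to zero provided $K_0\gtrsim \mathcal{E}_0^{\eta_0}$ with $\eta_0=\min_i(\beta_i-1)/a_i$. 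Taking $K_0$ at this threshold produces $\sup_{[T_*+1/2,t]}\Vert\langle v\rangle^{k_0}f\Vert_{L^\infty_{x,v}}\le C_{k_0}\mathcal{E}_0^{\eta_0}$, and substituting the $L^2$ decay of $f(T_*)$ from the previous paragraph yields exactly the bound $C_{k_0}\theta(T_*)^{2\eta_0/p}\Vert\langle v\rangle^k f_0\Vert_{L^2_{x,v}}^{2\eta_0/p}$; this is the claimed exponential decay for $\gamma=0$ and the polynomial decay $t^{-\alpha}$ with $\alpha=2\eta_0(k-14)/(p|\gamma|)$ for $\gamma\in(-3,0)$.

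The main obstacle will be the rigorous execution of the shrinking-cylinder variant: besides handling the $2^k$ weights from the pigeon-hole (which are easily absorbed by the geometric factor $K_0^{-a_i}2^{k a_i}$ since $\beta_i>1$), one also has to control the auxiliary slice term $\Vert\langle v\rangle^2 f_{M_k,+}^{k_0}(s_k)\Vert_{L^{2p}_{x,v}}^2$ appearing on the right-hand side of Lemma~\ref{L45}, which does not drop out of the $L^2$ pigeon-hole; the natural way to handle it is by interpolating the $L^2$ slice bound against the uniform $L^\infty$ bound $\delta_1/2$ from Theorem~\ref{T74}, at the cost of a larger weight $l_0(k_0)$ in the final estimate.
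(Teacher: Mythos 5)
Your overall strategy is the right one, and it matches the paper at a high level: restart the De~Giorgi machinery on a time window that starts after time $0$, so that the $L^2$ slice data inherits the decay of Theorem~\ref{T74}; then read off $K_0 \sim \mathcal{E}_0^{\eta_0}$ with $\mathcal{E}_0 \lesssim \Vert f(T_*)\Vert_{L^2}^{2/p}$. The paper takes $T_k = T(1-2^{-k})$ and iterates on $[T_k,\infty)$; you take $T_*=t-1$ and iterate on $[\tau_k,t]$. Those choices are essentially equivalent.

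The genuine gap is in how you control the slice terms from Lemma~\ref{L45}. Your pigeonhole gives $\Vert f_{M_{k-1},+}^{l}(s_k)\Vert_{L^2_{x,v}}^2 \le 2^{k+1}\mathcal{E}_{k-1}$, and you then insert a \emph{linear} term $C\,2^{k+1}\mathcal{E}_{k-1}$ into the recursion. That recursion does not close: in a De~Giorgi iteration one seeks a geometric barrier $\mathcal{E}_k^* = \mathcal{E}_0 Q_0^{-k}$, and a linear term with coefficient $2^{k+1}$ violates $C\,2^{k+1}\mathcal{E}_{k-1}^*\le \mathcal{E}_k^*$ for any fixed $Q_0>1$ once $k$ is large, regardless of how small $K_0$ or $\mathcal{E}_0$ is taken. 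The superlinear exponents $\beta_i>1$ are precisely what absorbs the $2^{k(a_i+1)}$ factors; a linear slice term cannot be absorbed. Your proposed fix for the $L^{2p}$ slice — interpolating the $L^2$ pigeonhole bound against the uniform $L^\infty$ bound $\delta_1/2$ — makes matters worse, since it yields a \emph{sublinear} contribution $\mathcal{E}_{k-1}^{1/p}$, and $1/p<1$.

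What is missing is the level-jump step from Lemma~\ref{L44}. The paper integrates the Lemma~\ref{L45} bound over $t_1\in[T_{k-1},T_k]$ (which is the integrated form of your pigeonhole), obtaining $\mathcal{E}_k \le (T_k-T_{k-1})^{-1}\bigl(\int_{T_{k-1}}^{T_k}\Vert\langle v\rangle^2 f_k\Vert_{L^2}^2\,d\tau + \int_{T_{k-1}}^{T_k}\Vert\langle v\rangle^2 f_k\Vert_{L^{2p}}^2\,d\tau\bigr) + C\sum_i 2^{k(a_i+1)}\mathcal{E}_{k-1}^{\beta_i}/K_0^{a_i}$, and then — crucially — bounds both time-integrated slice terms by Lemma~\ref{L44} with the level jump $M_k-M_{k-1}=K_0/2^k$: for the $L^2$ term this gives $C\,2^{k(\xi_*-2)}\mathcal{E}_{k-1}^{r_*}/K_0^{\xi_*-2}$ and for the $L^{2p}$ term, after H\"older in time, $C\,2^{k(\xi_*-2p)/(2p)}\mathcal{E}_{k-1}^{r_*/p}/K_0^{(\xi_*-2p)/(2p)}$. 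Since $r_*>q_*>p>1$, both exponents exceed $1$, so the recursion remains purely superlinear and the standard De~Giorgi barrier argument applies. Once you replace the bare pigeonhole by this averaging-plus-Lemma~\ref{L44} step, the rest of your proposal coincides with the paper's proof.
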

\begin{proof}
For any $K, t_1>0$, let $\mathcal{E}_{p, s''}(K, t_1, +\infty)$ be the energy  functional defined  in  \eqref{energy functional}
\begin{align*}
\mathcal{E}_{p, s''} (K, t_1, +\infty) :=& \sup_{t \ge t_1} \Vert f_{K, +}^l \Vert_{L^2_{x, v}}^2   + \int_{t_1}^{+\infty} \int_{\T^3} \Vert \langle  v \rangle^{\gamma/2} f_{K, +}^l \Vert_{H^s_v}^2 dx d\tau 
\\
&+\frac 1 {C_0} \left( \int_{t_1}^{+\infty} \Vert (1-\Delta_x)^{s''/2} ( \langle v \rangle^{-2+ \gamma/2} f_{K, +}^l)^2\Vert_{L^p_{x, v}}^p \right)^{1/p}.
\end{align*}
Our main goal is to remove the dependence of $L^\infty$ norm of $f_0$ so that we can have a decay in the weighted $L^\infty$ norm, define the levels
\[
M_k := K_0 (1-1/2^k) , \quad k =0, 1, 2,...,
\]
Setting $f_k = f_{M_k, +}^l$ and proceeding as Theorem \ref{T47}, we arrive at
\begin{equation}
\label{E p s'' global computation}
\mathcal{E}_{p, s''}(M_k, t_1, +\infty) \le C \Vert  \langle v \rangle^2 f_k(t_1) \Vert_{L^2_{x, v}}^2 + C \Vert \langle v \rangle^2 f_k(t_1) \Vert_{L^{2p}_{x, v}}^2 + C \sum_{i=1}^4 \frac {2^{k(a_i+1)}} {K_0^{a_i}}   \mathcal{E}_{p, s''}(M_{k-1}, t_1, + \infty)^{\beta_i},
\end{equation}
for $k =1, 2,...$. The parameters $a_i>0$, $\beta_i>0$ are the same as Theorem \ref{T47}. Fix $T>0$ and $T_k$ be the increasing time sequence and denote $E_k$ by
\[
T_{k-1} := T(1- \frac 1 {2^k}), \quad k =0, 1, 2..., \quad \mathcal{E}_k  := \mathcal{E}_{p, s''} (M_k, T_k, +\infty).
\]
Using  the monotonicity $\mathcal{E}_{p, s''}(\cdot, \cdot, \cdot)$ in its first and second variables and integrate \eqref{E p s'' global computation} in $t_1 \in [T_{k-1}, T_k]$,we deduce
\[
\mathcal{E}_k = \mathcal{E}_{p, s''}(M_k, T_k, +\infty) \le (T_k -T_{k-1} )^{-1} \left(\int_{T_{k-1} }^{T_k} \Vert \langle v \rangle^2 f_k(\tau) \Vert_{L^2_{x, v} }^2 d\tau  +  \int_{T_{k-1} }^{T_k} \Vert \langle v \rangle^2 f_k(\tau) \Vert_{L^{2p}_{x, v} }^2 d\tau      \right) +C \sum_{i=1}^{4} \frac {2^{k(a_i+1 )} \mathcal{E}_{K-1}^{\beta_i}} {K_0^{a_i}}.
\]
Similarly as \eqref{estimate f K l E version} we have
\[
\int_{T_{k-1}}^{T_k}     \Vert  \langle v \rangle^{2} f_k(\tau) \Vert_{L^2_{x, v}}^2 d\tau \le C \frac {\mathcal{E}_{p, s''}(M_{k-1}, T_{k-1}, T_k)^{r_*} } { (M_k -M_{k-1})^{\xi_*- 2 } } \le C_0 \frac   {2^{k(\xi_* -2) }  \mathcal{E}_{k-1}^{r*}} {K_0^{\xi_* - 2}}.
\]
By H\"older's inequality and Lemma \ref{L44} we have
\begin{equation*}
\begin{aligned} 
\int_{T_{k-1}}^{T_k}     \Vert  \langle v \rangle^{2} f_k(\tau) \Vert_{L^{2p}_{x, v}}^2 d\tau =&\int_{T_{k-1}}^{T_k}     \Vert  \langle v \rangle^{4} (f_k( \tau ))^2 \Vert_{L^p_{x, v}} d\tau 
\\
&\le (T_k -T_{k-1})^{\frac {p-1} p} \Vert  \langle v \rangle^4 (f_k)^2\Vert_{L^p_{t, x, v}} \le C_0 (T_k -T_{k-1})^{\frac {p-1} p} \frac   {2^{k(\frac {\xi_* -2p} {2p} ) }  \mathcal{E}_{k-1}^{ \frac {r*} {p}}} {K_0^{ \frac {\xi_* - 2p} {2p} }} .
\end{aligned}
\end{equation*}
Assuming $T \ge 1$, the following analogous estimate holds
\[
\mathcal{E}_k \le C_l \sum_{i=1}^4 \frac {2^{k(a_i+1)} \mathcal{E}_{k-1}^{\beta_i} } {K_0^{a_i}}, \quad k=1, 2, 3..., \quad T \ge 1.
\]
Apply the same De Giorgi iteration as Theorem \ref{T47} we conclude that
\[
\sup_{t \ge T} \Vert \langle v \rangle^l f_+(t, \cdot, \cdot)\Vert_{L^\infty_{x, v}} \le K_0 :=K_0(\mathcal{E}_0) = C_l \max_{1 \le i \le 4} \mathcal{E}_{0}^{\frac {\beta_i-1}  {a_i}}, \quad l \le k_0,
\]
where $\mathcal{E}_0 := \mathcal{E}_{p, s''}(0, T/2, +\infty)$. For any $T \ge 0$ and $l \le l_0(k_0) +k_0$, by Lemma \ref{L63} apply for $f(T)$  and Theorem \ref{T74} we have
\begin{equation*}
\begin{aligned}
 \mathcal{E}_P(0, T, +\infty) =& \sup_{t \ge T} \Vert \langle v \rangle^l  f_+\Vert_{L^2_{x, v}}^2   + \int_{T}^{+\infty} \int_{\T^3} \Vert \langle  v \rangle^{l+\gamma/2} f_+ \Vert_{H^s_v}^2 dx d\tau 
 \\
 &+\frac 1 {C_0} \left( \int_{T}^{\infty} \Vert (1-\Delta_x)^{s''/2} ( \langle v \rangle^{ l -2+ \gamma/2} f_+)^2\Vert_{L^p_{x, v}}^p \right)^{1/p}  \le C_l \Vert \langle v \rangle^{l} f(T) \Vert_{L^2_{x, v}}^{2/p}  
\end{aligned}
\end{equation*} 
so the convergence rate follows by the convergence rate in Theorem \ref{T74}. 
\end{proof}

\section{Strong singularity}\label{section 8}
This section is similar as \cite{AMSY2} Section 7. This section we prove the existence  of the solution in the case of strong singularity, the only reason that we have to restrict to the weak singularity is because that in  the construction of the solution, the regularization term $\epsilon L_\alpha$ need to be used to control the $H^{2s}$ norm, while all the priori estimates is performed for the full range $s \in (0, 1)$, we use an additional $\eta$ approximation in the collision kernel, recall that the original collision kernel satisfies
\[
b(\cos \theta)   \sim \frac 1 {\theta^{2+2s}}, \quad s \in (\frac 1 2, 1).
\]
Fix $s_* \in (0, \frac 1 2]$ such that
\[
2s - 2s_* < 1.
\]
For any $\eta \in (0, 1)$, let $Q_\eta$ be the approximate operator with the collision kernel 
\[
\frac {\alpha_0} {\theta^{2+2s_*} }  \le b_\eta(\cos\theta)  = \frac {b(\cos \theta)  \theta^{2+2s} } {\theta^{2+2s^*} (\theta +\eta)^{2s -2s_*} } \le b(\cos \theta), 
\]
for some constant $\alpha_0 >0$. We note that the coefficient $\alpha_0$ is independent of $\eta$ since
\[
\frac {1} {(\theta +\eta)^{2s -2s_*} }  \ge \frac 1 {(\pi +1)^{2s -2s_*}}, \quad \forall \theta \in (0, \pi), \quad \eta \in (0, 1). 
\]
For the $Q_\eta$ term, the upper bound of the collisional kernel does not change, since the proof of Lemma \ref{L23} only use the upper bound of $f$ (which can be seen in \cite{H}), we still have
\begin{lem}\label{L81}
Lemma \ref{L23} is still true when $Q$ is replaced by $Q_\eta$ for all $\eta \in [0, 1]$, moreover the constant is independent of $\eta$.
\end{lem}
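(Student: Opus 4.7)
The plan is to exploit the fact that $b_\eta$ is dominated pointwise by $b$, and to recall that the proof of Lemma \ref{L23} in \cite{H} only ever uses one-sided upper bounds on the angular kernel. First I would verify the pointwise comparison: for every $\theta \in (0,\pi)$ and every $\eta \in [0,1]$,
\[
0 \le b_\eta(\cos\theta) = b(\cos\theta)\left(\frac{\theta}{\theta+\eta}\right)^{2s-2s_*} \le b(\cos\theta),
\]
since $2s - 2s_* > 0$ by our choice of $s_*$. In particular, all angular integrals appearing in the proof of Lemma \ref{L23}, e.g., of the form $\|b(\cos\theta) \sin^{-3/2-\gamma/2}(\theta/2)\|_{L^1_\theta}$ as in Lemma \ref{L22}, or the moment quantities controlling the Bobylev-type decompositions in \cite{H}, are bounded for $b_\eta$ by the same quantities for $b$, with constants independent of $\eta$.

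Second, I would go through the structure of the proof of Lemma \ref{L23} and check that every step uses $b$ only through such upper bounds: the Cauchy-Schwarz splitting combined with regular/singular changes of variables (Lemma 2.1); the dyadic decomposition of the singular integral and the Bobylev identity that reduces the estimate to convolution bounds against angular moments of $b$; and the fractional derivative estimates bounding the bilinear collision operator by weighted $H^a$ norms. Each of these manipulations leaves $b$ only as a nonnegative upper weight inside an integral, so replacing $b$ with $b_\eta \le b$ at any occurrence only decreases the estimated quantity.

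Consequently, for each of the three cases $\gamma+2s>0$, $\gamma+2s=0$, and $\gamma+2s<0$, the inequalities of Lemma \ref{L23} hold verbatim with $Q$ replaced by $Q_\eta$, and the resulting constants depend only on the fixed kernel $b$ (and on the weights $w_1,w_2,a,b,w_3,w_4,\delta$) and not on $\eta \in [0,1]$. The only potential obstacle is a bookkeeping check to confirm that no auxiliary step in \cite{H} secretly invokes a lower bound on $b$; since Lemma \ref{L23} is entirely a collection of upper bounds on $|(Q(g,h),f)_{L^2_v}|$, this verification is a routine read-through rather than a substantive difficulty.
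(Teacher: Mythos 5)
Your proposal is correct and follows the same argument the paper uses: you observe that $b_\eta(\cos\theta) = b(\cos\theta)\bigl(\tfrac{\theta}{\theta+\eta}\bigr)^{2s-2s_*} \le b(\cos\theta)$ uniformly in $\eta \in [0,1]$, and then note that the proof of Lemma \ref{L23} in \cite{H} invokes the angular kernel only through upper bounds, so replacing $b$ by $b_\eta$ can only decrease the estimated quantities, yielding the same inequalities with $\eta$-independent constants. The paper states this in a single sentence (even with a small typo, "upper bound of $f$" where it means "upper bound of $b$"); your write-up spells out the pointwise comparison and the read-through of \cite{H} more explicitly, but there is no substantive difference in the approach.
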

In this section, we will consider the modified  equation 
\begin{equation}
\label{modified equation eta epsilon}
\partial_t f_\eta+ v \cdot \nabla_x f_\eta = \epsilon L_\alpha(\mu +f_\eta) + Q_\eta(\mu + f_\eta \chi(\langle v \rangle^{k_0} f_\eta), \mu + f_\eta), \quad f_\eta(0) =f_0,
\end{equation}
and its linearized version
\begin{equation}
\label{modified equation eta epsilon g}
\partial_t f_\eta+ v \cdot \nabla_x f_\eta = \epsilon L_\alpha(\mu +f_\eta) + Q_\eta(\mu + g \chi(\langle v \rangle^{k_0} f), \mu + f_\eta): =\tilde{Q} _\eta(\mu + g \chi, \mu + f_\eta), \quad f_\eta(0) =f_0.
\end{equation}
Choices of weights remain the same as in the previous sections. We will show the details for the basic energy estimates for the linearized equation to illustrate how to derive uniform-in-$\eta$ bounds. The rest of the steps are parallel to those before and their details will be either sketched or omitted. The regularization $\epsilon L_\alpha$ helps to simplify the estimates, since for each fixed $\epsilon$, the gain of velocity regularity (and subsequently the hypoellipticity) now comes from $\epsilon L_\alpha$ instead of $Q$.

\begin{lem}\label{L82} 
Suppose $\gamma+2s>-1, s \in (0, 1), \gamma \in (-3, 0]$. Fo any smooth function $f, g$, suppose $G= \mu +g\ge 0$ and $\delta_0>0$ in the cutoff function is small enough such that $G_\chi = u +g \chi$ satisfies 
\[
G_\chi  \ge 0,\quad \inf_{t, x} \Vert G_\chi  \Vert_{L^1_v} \ge A, \quad  \sup_{t, x}(\Vert G_\chi  \Vert_{L^1_2} +\Vert G \Vert_{L \log L} )\le B,
\]
for some constant $A< B>0$. If $f_\eta$ is a solution to \eqref{modified equation eta epsilon g}. Then for any $\epsilon \in [0, 1] $,  $12 \le l \le k_0 - 8,\alpha \ge 5$, for any $t \ge 0$  we have
\[
\Vert \langle v \rangle^l f_\eta (t)\Vert_{L^2_{x, v}}^2 + \frac \epsilon 4 \int_0^t \Vert \langle v \rangle^l f_\eta (\tau) \Vert_{L^2_xH^s_{\gamma/2}}^2 d\tau  + \le C_l e^{C_{l, \epsilon} t} (\Vert \langle v \rangle^l f_0\Vert_{L^2_{x, v}}^2  +  t ) ,
\]
for some constant $c_0, C_l>0$.
If in addition we assume $l \ge 3+2\alpha $, then for any $0 < s' < 1/8$, for any $t \ge 0$ we have
\[
\int_{0}^t \Vert (I-\Delta_x)^{s'/2} f_\eta (\tau) \Vert_{L^2_{x, v}} d \tau \le C_l e^{C_{l, \epsilon} t} (\Vert \langle v \rangle^l f_0\Vert_{L^2_{x, v}}^2  +  t),
\]
where the constant $C_{l, \epsilon}$ is independent of $\eta$ but depends on $\epsilon$ while $C_l$ is independent of $\eta$ and $\epsilon$.
\end{lem}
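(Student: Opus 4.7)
The plan is to mirror the proof of Lemma \ref{L39}, but with the regularizing operator $\epsilon L_\alpha$ carrying the full burden of the coercive velocity-regularity estimate instead of $Q_\eta$. This is necessary because the lower bound of $b_\eta$ only provides $H^{s_*}$-coercivity of $Q_\eta$ with $s_*<s$, strictly weaker than what appears in the statement. On the upper-bound side, however, every estimate used in Sections \ref{section 2} and \ref{section 3}---Lemmas \ref{L23}, \ref{L31}, \ref{L32}, the weighted commutator bounds feeding into Theorem \ref{T37}, and the averaging-type control of Lemma \ref{L43}---depends only on the upper bound of $b$. Since $b_\eta\le b$ pointwise, these estimates transfer verbatim to $Q_\eta$ with constants independent of $\eta$; this is exactly the content of Lemma \ref{L81}. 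In addition, the cutoff $\chi(\langle v\rangle^{k_0}g)$ forces $\|g\chi\|_{L^\infty_{k_0}}\le 2\delta_0$, so $G_\chi=\mu+g\chi$ satisfies a uniform weighted $L^\infty$-bound which, together with $G_\chi\ge 0$ and the $L^1_2\cap L\log L$ control, supplies all the input the upper-bound estimates require.

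First I would test the equation against $f_\eta\langle v\rangle^{2l}$ and integrate in $x,v$. The regularizer contributes, via Lemma \ref{L38}, a coercive piece $-\epsilon\|f_\eta\|^2_{L^2_xH^1_{l+\alpha}}$ plus an $\epsilon$-weighted $L^2_{l+\alpha}$ term. The collision bracket splits as $Q_\eta(G_\chi,\mu)+Q_\eta(G_\chi,f_\eta)$: the first piece is controlled by the $\eta$-uniform analog of Lemma \ref{L31}, and the second by the upper-bound portion of Theorem \ref{T37} (with the coercive $-c_0\|f_\eta\|^2_{H^s}$ of $Q_\eta$ simply discarded), producing at worst $C_l\|f_\eta\|^2_{L^2_xH^s_{l+\gamma/2}}$ modulo lower-weight $L^2$-in-$v$ terms. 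I would then use weighted Sobolev interpolation $\|f\|^2_{H^s_{l+\gamma/2}}\lesssim \|f\|^{2s}_{H^1_{l+\alpha}}\|f\|^{2(1-s)}_{L^2_{q_2}}$ with $q_2=l+(\gamma/2-s\alpha)/(1-s)<l$, followed by Young's inequality, to absorb the $H^s$ piece into the $\epsilon H^1_{l+\alpha}$-reservoir at the cost of a constant $C_{l,\epsilon}$ polynomial in $1/\epsilon$ multiplying a weighted $L^2$-norm that is itself dominated by $\|f_\eta\|^2_{L^2_xL^2_l}$ since $q_2\le l$. Gronwall then yields the first assertion with coercive coefficient $\epsilon/4$ and exponential prefactor $e^{C_{l,\epsilon}t}$ whose constant depends on $\epsilon$ but is uniform in $\eta$.

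The hypoelliptic estimate follows the template of the second half of Lemma \ref{L39}: apply the averaging Lemma \ref{L219} with $\beta=s,\,m=2,\,r=0,\,p=2$, giving $s^b<s_-/(2(s+3))$ and hence any $s'<1/8$ uniformly in $s\in(0,1)$. The sole new input is a uniform-in-$\eta$ bound on $\|\langle v\rangle^3(1-\Delta_v)^{-1}\tilde Q_\eta(\mu+g\chi,\mu+f_\eta)\|_{L^2_{x,v}}$, which by Lemma \ref{L81} and the pointwise action of $L_\alpha$ reduces to quantities already controlled by the first part (the hypothesis $l\ge 3+2\alpha$ is exactly what is needed to accommodate the regularizer). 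The main obstacle throughout is the discipline of never invoking the coercivity of $Q_\eta$ itself---only its upper-bound estimates---since that coercivity degrades with $\eta\to 0$ only to the $H^{s_*}$ level; letting $\epsilon L_\alpha$ provide all of the coercivity produces constants depending badly on $\epsilon$ but uniform in $\eta$, which is precisely what is required for the $\eta\to 0$ passage at fixed $\epsilon$ carried out later in this section.
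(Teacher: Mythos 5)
Your proposal follows the same route as the paper: test against $f_\eta\langle v\rangle^{2l}$, let $\epsilon L_\alpha$ (via Lemma \ref{L38}) furnish all coercivity, control the collision bracket using only $\eta$-uniform upper bounds (Lemma \ref{L81}), absorb the resulting $H^s_{l+s+\gamma/2}$-norm into the $\epsilon H^1_{l+\alpha}$ reservoir by weighted interpolation plus Young, and then close with Gr\"onwall; this is exactly the paper's argument. The one place you slip is in the averaging step: you take $\beta=s$ in Lemma \ref{L219}, which gives $s^b<\frac{s_-}{2(s+3)}$, a quantity that is strictly below $1/8$ for every $s\in(0,1)$ and tends to $0$ as $s\to 0$, so it does not cover the stated range $s'<1/8$ uniformly in $s$. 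The correct choice is $\beta=1$ (not $\beta=s$), which is available precisely because the coercive term supplied by $\epsilon L_\alpha$ gives $f_\eta\in L^2_{t,x}H^1_{l+\alpha}$ rather than merely $H^s$; with $\beta=1$, $m=2$, $r=0$, $p=2$ one gets $s^b=\frac{1_-}{2(1+2+1)}=\frac{1_-}{8}$, matching the claimed range. This is exactly what the paper means by saying the hypoelliptic bound is ``proved the same way as Lemma \ref{L39} (taking $s=1$).'' Apart from this one parameter choice, your argument, including the observation that the hypothesis $l\ge 3+2\alpha$ is what makes $\|\langle v\rangle^3(1-\Delta_v)^{-1}L_\alpha(\mu+f_\eta)\|_{L^2_{x,v}}$ controllable, is correct and aligned with the paper.
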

\begin{proof} By Lemma \ref{L38}, the regularization term $\epsilon L_\alpha $ gives
\[
\epsilon \int_{\T^3\times \R^3  }L_\alpha(\mu + f_\eta) f_\eta \langle v \rangle^{2l} dv dx  \le  -\frac \epsilon 2 \Vert \langle v \rangle^{l+\alpha} f_\eta \Vert_{L^2_x H^1_v }^2 + C_{l} \epsilon \Vert \langle v \rangle^l f_\eta \Vert_{L^2_{x, v} }^2  + C_{l} \epsilon \Vert \langle v \rangle^l f_\eta \Vert_{L^2_{x, v} }.
\]
Since $\epsilon L_\alpha$ will provide the dominating term in both the weight and the regularity, by Lemma \ref{L81} we have
\begin{equation*}
\begin{aligned}
&\int_{\T^3} \int_{\R^3} Q_{\eta} (\mu + g \chi (\langle v \rangle^{k_0} ), \mu +f_\eta ) f_\eta \langle v \rangle^{2l} dv dx
\\
\le& C_l \int_{\T^3}  \Vert \mu + g \chi (\langle v \rangle^{k_0})  \Vert_{L^1_{\gamma + 2s + l +2}\cap L^2_{2} } ( \Vert \langle v \rangle^{l + s+\gamma/2 } f_\eta \Vert_{L^2_xH^s_v}^2    +     \Vert \langle v \rangle^{l+2s+\gamma} \mu \Vert_{H^{2s}_v}   \Vert  \langle v \rangle^{l} f_\eta \Vert_{L^2_{x, v}}    )        dx
\\
\le& C_l \Vert \mu + g \chi (\langle v \rangle^{k_0})  \Vert_{L^\infty_{k_0} } ( \Vert  \langle v \rangle^{l} f_\eta \Vert_{L^2_{x, v}}   + \Vert \langle v \rangle^{l + s+\gamma/2 } f_\eta \Vert_{L^2_xH^s_v}^2 )
\\
\le&C_l \Vert  \langle v \rangle^{l} f_\eta \Vert_{L^2_{x, v}}   + C_l \Vert \langle v \rangle^{l + s+\gamma/2 } f_\eta \Vert_{L^2_xH^s_v}^2
\\
\le&C_l \Vert  \langle v \rangle^{l} f_\eta \Vert_{L^2_{x, v}}   + \frac \epsilon 4 \Vert \langle v \rangle^{l +\alpha  } f_\eta \Vert_{L^2_xH^1_v}^2 +C_{l, \epsilon } \Vert \langle v \rangle^l f_\eta \Vert_{L^2_{x, v}}^2, \quad \alpha \ge 5.
\end{aligned}
\end{equation*}
Gathering the two terms we deduce 
\begin{equation*}
\begin{aligned}
\frac {d} {dt} \Vert \langle v \rangle^l f_\eta \Vert_{L^2_{x, v}}^2 \le&  -\frac \epsilon 4 \Vert \langle v \rangle^{l+\alpha} f_\eta \Vert_{L^2_x H^1_v }^2 + C_{l, \epsilon} \Vert \langle v \rangle^l f_\eta \Vert_{L^2_{x, v} }^2  + C_{l, \epsilon} \Vert \langle v \rangle^l f_\eta \Vert_{L^2_{x, v} }
\\
\le& -\frac \epsilon 4 \Vert \langle v \rangle^{l+\alpha} f_\eta \Vert_{L^2_x H^1_v }^2 + C_{l, \epsilon} \Vert \langle v \rangle^l f_\eta \Vert_{L^2_{x, v} }^2  + 1,
\end{aligned}
\end{equation*}
thus the theorem can be proved same way as Lemma \ref{L39} (taking $s =1$ ). 
\end{proof}

The  $L^2$ level set estimate parallel to Lemma \ref{L41} is

\begin{lem}
Suppose $\gamma+2s>-1, s \in (\frac 1 2, 1), \gamma \in (-3, 0]$. Fo any smooth function $f, g$, suppose $G= \mu +g\ge 0$ and $\delta_0>0$ in the cutoff function is small enough such that $G_\chi = u +g \chi$ satisfies 
\[
G_\chi  \ge 0,\quad \inf_{t, x} \Vert G_\chi  \Vert_{L^1_v} \ge A, \quad  \sup_{t, x}(\Vert G_\chi  \Vert_{L^1_2} +\Vert G \Vert_{L \log L} )\le B,
\]
For some constant $A, B >0$. For any constant $12 \le l \le k_0 , K \ge 0, \alpha = 5$ we have
\begin{equation*}
\begin{aligned} 
&\int_{\T^3} \int_{\R^3} Q_\eta (\mu +g \chi, \mu + f) f_{K, +}^l \langle v \rangle^l dv dx  + \int_{\T^3} \int_{\R^3}  \epsilon L_{\alpha} (\mu +f) f_{K ,+}^l \langle v \rangle^l  dvdx
\\
\le& -\frac \epsilon 4 \Vert  \langle v \rangle^\alpha f_{K, +}^l \Vert_{L^2_x H^1_v}^2 +  C_{l, \epsilon} \Vert f_{K,+}^l \Vert_{L^2_{x, v}}^2 
+ C_l (1 + K) \Vert f_{K, +}^l \Vert_{L^1_x L^1_2},
\end{aligned}
\end{equation*}
for some constant $c_1, C_{l, \epsilon>}0$, where the constants are all independent of $\eta$.
\end{lem}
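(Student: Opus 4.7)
The proof will adapt the two-step decomposition of Lemma \ref{L41}, with the coercivity argument (which in Lemma \ref{L41} relied on the singular structure of $Q$) replaced by absorption into the $H^1$-regularity in $v$ provided by $\epsilon L_\alpha$. The key observation is that, since $2s - 2s_* < 1$ by assumption, the upper bound for $Q_\eta$ from Lemma \ref{L81} can always be split as $\Vert \cdot \Vert_{H^{2s_*}} \cdot \Vert \cdot \Vert_{H^{2s - 2s_*}}$ with both exponents strictly less than $1$, hence interpolable between $L^2$ and $H^1$.

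The regularization term is handled directly by Lemma \ref{L41}(2) applied with multiplier $\epsilon$, producing $-\tfrac{\epsilon}{2}\Vert \langle v\rangle^\alpha f_{K,+}^l\Vert_{L^2_x H^1_v}^2$ plus lower-order $L^2$ and $L^1$ contributions with coefficient $\epsilon C_{l,\alpha}$. Half of this negative term will be reserved to absorb the $Q_\eta$ contributions; the other half survives and gives the $-\tfrac{\epsilon}{4}$ term in the conclusion. For the collision integral, decompose it exactly as in the proof of Lemma \ref{L41} into $T_1' + T_2' + T_3'$, where the pieces isolate the $f - K/\langle v\rangle^l$, $K/\langle v\rangle^l$, and $\mu$ components of the second slot of $Q_\eta$.

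The terms $T_2'$ and $T_3'$ rely only on the cancellation lemma and on the weight-commutator estimate \eqref{estimate Gamma 3}; neither uses the singularity of $b$, so since $b_\eta \le b$ uniformly in $\eta$ the same bounds transfer verbatim, yielding $C_l(1+K)(1+\Vert g\Vert_{L^\infty_l})\Vert f_{K,+}^l\Vert_{L^1_2}$, which is the $C_l(1+K)\Vert f_{K,+}^l\Vert_{L^1_x L^1_2}$ term in the statement. For $T_1'$ I apply the Cauchy--Schwarz splitting used in Lemma \ref{L41}, splitting it into a cancellation piece bounded via the cancellation lemma and Lemma \ref{L212} by $C_l\Vert G_\chi\Vert_{L^\infty_{4+|\gamma|}}\Vert f_{K,+}^l\Vert_{L^2_{\gamma/2}}^2 \lesssim C_l\Vert f_{K,+}^l\Vert_{L^2_{x,v}}^2$, plus a weight-commutator piece of the form $\iiint b_\eta |v-v_*|^\gamma G_{\chi *} \langle v\rangle^{-l} f_{K,+}^l(v) f_{K,+}^l(v')(\langle v'\rangle^l - \langle v\rangle^l\cos^l(\theta/2))\, d\sigma dv dv_*$. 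For the latter, apply the $\eta$-uniform upper bound Lemma \ref{L81} with the split $a = 2s_* \le 1$, $b = 2s-2s_* < 1$; this produces a bound $C_l\Vert f_{K,+}^l\Vert_{L^2_x H^{2s_*}_{w_1}}\Vert f_{K,+}^l\Vert_{L^2_x H^{2s-2s_*}_{w_2}}$ where $w_1, w_2$ are controlled by $\alpha = 5$ and the assumption $\sup_{t,x}\Vert g\Vert_{L^\infty_{k_0}} \le C$. Interpolation $\Vert h\Vert_{H^\beta} \le \Vert h\Vert_{H^1}^\beta \Vert h\Vert_{L^2}^{1-\beta}$ for $\beta \in \{2s_*, 2s-2s_*\} \subset [0,1)$ together with Young's inequality gives
\[
|T_1'| \le \tfrac{\epsilon}{4}\Vert \langle v\rangle^\alpha f_{K,+}^l\Vert_{L^2_x H^1_v}^2 + C_{l,\epsilon}\Vert f_{K,+}^l\Vert_{L^2_{x,v}}^2,
\]
and combining with the three estimates above yields the stated inequality, with the $L^1$ coefficient $C_l$ independent of $\epsilon$ (inherited from $T_2', T_3'$ and the $L_\alpha$ $L^1$ contribution, where in the latter the extra $\epsilon \le 1$ factor is absorbed).

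The main obstacle is the weight bookkeeping in the weight-commutator piece of $T_1'$. After applying Lemma \ref{L28} to expand $\langle v'\rangle^l - \langle v\rangle^l\cos^l(\theta/2)$, one must redistribute the resulting powers of $\langle v_*\rangle$ and $\langle v\rangle$ using a regular-change-of-variable argument so that the net weight on each copy of $f_{K,+}^l$ is dominated by $\langle v\rangle^\alpha$ with $\alpha = 5$; verifying that this redistribution is compatible with the splitting $a = 2s_*$, $b = 2s - 2s_*$ (which forces the $H^b$ copy to carry the larger weight) is the only nonroutine point, and it closes cleanly because $l \le k_0$ and $\Vert g\chi\Vert_{L^\infty_{k_0}}$ is bounded.
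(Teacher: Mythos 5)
The proposal contains a genuine gap. You decompose the main collision piece $H_1$ (your $T_1'$) via the Cauchy–Schwarz splitting of Lemma \ref{L41} into a cancellation piece plus a weight-commutator piece of the schematic form $\iiint b_\eta\,|v-v_*|^\gamma G_{\chi*}\,\langle v\rangle^{-l} f_{K,+}^l(v)\, f_{K,+}^l(v')\big(\langle v'\rangle^l - \langle v\rangle^l\cos^l\tfrac\theta2\big)$, and then claim to ``apply the $\eta$-uniform upper bound Lemma \ref{L81} with the split $a=2s_*$, $b=2s-2s_*$'' to that commutator piece. But Lemma \ref{L81} (= Lemma \ref{L23} for $Q_\eta$) only bounds trilinear forms of the shape $(Q_\eta(g,h),\varphi)_{L^2_v}$; the weight commutator carries the extra factor $\langle v'\rangle^l - \langle v\rangle^l\cos^l\tfrac\theta2$ and is not of that shape, so Lemma \ref{L81} simply does not apply. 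What you actually need is a $b_\eta$-version of the commutator estimate Lemma \ref{L32} (built via Lemma \ref{L28} expansion, regular/singular changes of variables, etc.); while such a version is plausible because $b_\eta\le b$, it is not what Lemma \ref{L81} asserts and is not established in the paper. You flag the weight bookkeeping as ``the only nonroutine point'' but the issue is more fundamental than bookkeeping: the tool you invoke does not cover the integral you are estimating.

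The paper's proof avoids this difficulty entirely by not running the Cauchy–Schwarz decomposition at all. Using the sign structure (nonnegativity of $G_\chi$, $f\langle v\rangle^l - K\le f_{K,+}^l$, $f_{K,+}^l(v')\ge 0$) one gets $H_1 \le \big(Q_\eta(G_\chi, f_{K,+}^l/\langle v\rangle^l),\, f_{K,+}^l\langle v\rangle^l\big)_{L^2_{x,v}}$, which \emph{is} of the form to which Lemma \ref{L81} applies. One then takes the balanced split $a=b=s$ — both of which are $<1$ since $s\in(\tfrac12,1)$, so no recourse to $s_*$ is needed — obtaining $C\Vert f_{K,+}^l\Vert_{L^2_x H^s_{\gamma/2+s}}^2$, and absorbs it into $\tfrac\epsilon8\Vert\langle v\rangle^\alpha f_{K,+}^l\Vert_{L^2_x H^1_v}^2 + C_{l,\epsilon}\Vert f_{K,+}^l\Vert_{L^2_{x,v}}^2$ by interpolation and Young's inequality (using $\gamma/2+s<\alpha=5$). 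The remaining $H_2$ (the $K/\langle v\rangle^l$ and $\mu$ pieces) and the $\epsilon L_\alpha$ term are handled exactly as you describe, via the $T_2,T_3$ arguments of Lemma \ref{L41} and Lemma \ref{L41}(2). Your reading of the regularizing term and of $T_2',T_3'$ is correct; the repair for the collision piece is simply to drop the extra decomposition and apply Lemma \ref{L81} to the whole $H_1$ directly. Also note that the ``key observation'' you state — that $2s-2s_*<1$ guarantees both interpolation exponents are $<1$ — is not relevant here (that condition serves only to define $b_\eta$), since $a=b=s$ already satisfies $a,b<1$.
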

\begin{proof}
First we have
\begin{equation*}
\begin{aligned} 
\int_{\T^3}\int_{\R^3} Q_{\eta}(\mu +g \chi, \mu + f) f_{K, +}^l \langle v \rangle^l dv dx  =& \int_{\T^3}\int_{\R^3} Q_{\eta} (\mu +g \chi, f-\frac K {\langle v \rangle^l}) f_{K, +}^l \langle v \rangle^l dv dx
\\
&+ \int_{\T^3 } \int_{\R^3} Q_{\eta}(\mu +g \chi , \mu + \frac {K} {\langle v \rangle^l}) f_{K, +}^l \langle v \rangle^l dv dx:=H_1 +H_2.
\end{aligned}
\end{equation*}
By Lemma \ref{L81} we have
\begin{equation*}
\begin{aligned} 
&\int_{\T^3}\int_{\R^3} Q_{\eta}(\mu +g \chi, f-\frac K {\langle v \rangle^l}) f_{K, +}^l \langle v \rangle^l dv dx
\\
\le &  \int_{\T^3 } \int_{\R^3} \int_{\R^3} \int_{\mathbb{S}^2 } (\mu_* + g_*\chi_*) f_{K, +}^l \langle v \rangle^l (f_{k, +}^l(v') \langle v' \rangle^l - f_{K, +}^l \langle v \rangle^l ) b_\eta( \cos \theta) |v-v_*|^\gamma   dv dv_* d\sigma dx 
\\
\le& \int_{\T^3}\int_{\R^3} Q_{\eta}(\mu +g \chi, f_{K, +}^l \frac 1 {\langle v \rangle^l}) f_{K, +}^l \langle v \rangle^l dv dx
\\
\le & C \int_{\T^3} \Vert \mu +g\chi \Vert_{L^1_{\gamma/2 +s +l +2} \cap L^2_2} \Vert f_{K, +}^l \Vert_{H^s_{\gamma/2 + s }}^2 dx
\\
\le& C  \Vert f_{K, +}^l \Vert_{L^2_xH^s_{\gamma/2 + s }}^2
\\
\le& \frac \epsilon 8 \Vert \langle v \rangle^{\alpha} f_{K, +}^l \Vert_{L^2_x H^1_v}^2 + C_{l, \epsilon}\Vert  f_{K, +}^l \Vert_{L^2_{x, v}}^2 .
\end{aligned}
\end{equation*}
By term $T_2, T_3$ in Lemma \ref{L41} we have
\begin{equation*}
\begin{aligned} 
H_2 \le &  C_{l}(1 + \sup_x \Vert g \chi \Vert_{L^\infty_l}  ) \Vert f_{K,+}^l \Vert_{L^2_x L^2_2}^2 + C_l (1 + K)(l + \sup_x \Vert g \chi \Vert_{L^\infty_l})  \Vert f_{K, +}^l \Vert_{L^1_x L^1_2} 
\\
\le& C_l  \Vert f_{K, +}^l \Vert_{L^2_xH^s_{\gamma/2 + s }}^2+ C_l (1 + K) \Vert f_{K, +}^l \Vert_{L^1_x L^1_2} 
\\
\le& \frac \epsilon 8 \Vert \langle v \rangle^{\alpha} f_{K, +}^l \Vert_{L^2_x H^1_v}^2 + C_{l, \epsilon}\Vert  f_{K, +}^l \Vert_{L^2_{x, v}}^2 + C_l (1 + K) \Vert f_{K, +}^l \Vert_{L^1_x L^1_2} .
\end{aligned}
\end{equation*}
where the constant $C$ is independent of $\eta$, together with Lemma \ref{L41} (2)  the proof is finished.
\end{proof}

Similarly as Lemma \ref{L43}  we have

\begin{lem} 
Suppose $\gamma+2s>-1, s \in (\frac 1 2, 1), \gamma \in (-3, 0]$. For any smooth function $f, g$, suppose $G= \mu +g\ge 0$ and $\delta_0>0$ in the cutoff function is small enough such that $G_\chi = u +g \chi$ satisfies 
\[
G_\chi  \ge 0,\quad \inf_{t, x} \Vert G_\chi  \Vert_{L^1_v} \ge A, \quad  \sup_{t, x}(\Vert G_\chi  \Vert_{L^1_2} +\Vert G \Vert_{L \log L} )\le B,
\]
For some constant $A, B >0$. Then for any
\[
T \ge 0, \quad \epsilon \in [0, 1], \quad j \ge 0,  \quad\tau>2,\quad  K>0,  \quad 12 \le  l \le k_0 ,\quad \alpha \ge  j +2,
\]
we have
\begin{equation*}
\begin{aligned} 
&\int_{0}^{T} \int_{\T^3} \int_{\R^3} \left| \langle v \rangle^j (1-\Delta_v)^{-\tau/2} ( \tilde{Q}_\eta(\mu +g\chi,  u+f )  \langle v \rangle^l f_{K, +}^l  )           \right| dv dx dt
\\
\le& C \Vert \langle v \rangle^{j/2} f_{K, + }^l (0, \cdot, \cdot ) \Vert_{L^2_{x, v}}^2 + C \Vert  \langle v \rangle^{\alpha}  f_{K, +}^l \Vert_{L^2_x H^1_v}^2  + C (1 + K)    \Vert f_{K, +}^l \Vert_{L^1_xL^1_{j+2}},
\end{aligned}
\end{equation*}
where the constant $C$ is independent of $\eta, \epsilon$.  Similar estimate holds for $f_{K, +}^l$ replaced by $h_{K, +}^l$
\end{lem}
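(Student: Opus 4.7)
The plan is to closely parallel the proof of Lemma \ref{L43}, which is the analogous statement in the non-regularized setting, and upgrade it to handle $\tilde{Q}_\eta = Q_\eta + \epsilon L_\alpha$ while ensuring the final constant depends on neither $\eta$ nor $\epsilon$. First I would invoke Lemma \ref{L220} to remove the absolute value, producing a set $A_K$ on which the integrand is nonnegative, and rewrite the resulting integral in the dual formulation against the Bessel-potential test function
\[
W_K(v) := (1-\Delta_v)^{-\tau/2}\bigl(\langle v\rangle^j\, 1_{A_K}\bigr).
\]
Since $\tau>2$, the uniform pointwise bound $|W_K| + |\nabla W_K| + |\nabla^2 W_K| \le C \langle v\rangle^{j}$, independent of $K$, follows exactly as in \cite{AMSY2}. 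This reduces the task to controlling $\int \tilde{Q}_\eta(\mu+g\chi,\mu+f)\,\langle v\rangle^l f_{K,+}^l\, W_K\,dv\,dx\,dt$.

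Next I would split $\tilde{Q}_\eta = Q_\eta + \epsilon L_\alpha$ and treat the two pieces separately. For the regularization piece, integration by parts against $\langle v\rangle^l f_{K,+}^l W_K$ and the pointwise bounds on $W_K$ reproduce the estimate of \cite{AMSY2} Proposition 3.7; since $\epsilon\le 1$, the prefactor may be dropped, producing a bound
\[
\epsilon C_{l,\alpha}\Vert  \langle v \rangle^\alpha f_{K,+}^l\Vert_{L^2_xH^1_v}^2 + C_l \Vert f_{K,+}^l\Vert_{L^2_{x,v}}^2 + C_l(1+K)\Vert f_{K,+}^l\Vert_{L^1_xL^1_j},
\]
with $C_l$ independent of both $\eta$ and $\epsilon$. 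For the collision piece I would use exactly the decomposition of Lemma \ref{L43}, writing $\mu+f = (f-K/\langle v\rangle^l) + K/\langle v\rangle^l + \mu$ and producing three terms $T_1',T_2',T_3'$. The key point is that Lemma \ref{L81} guarantees that all upper-bound estimates from Lemma \ref{L23} apply to $Q_\eta$ with constants independent of $\eta$, so the structural reasoning of Lemma \ref{L43} carries over verbatim.

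The only new ingredient is that, in the strong-singularity regime $s\in(1/2,1)$, the bounds emanating from Lemma \ref{L81} produce weighted $H^s_v$ norms of $f_{K,+}^l$ rather than $H^s$ norms times $\Vert\cdot\Vert_{L^2}$ as in the weak case. Since $s<1$, I would upgrade each such $H^s_v$ factor to an $H^1_v$ factor via the embedding $H^1\hookrightarrow H^s$, absorbing the additional polynomial velocity weight into the parameter $\alpha$; the hypothesis $\alpha\ge j+2$ is exactly what is needed to accommodate the weights $s+\gamma/2+(\text{lower-order})$ appearing in the commutator estimates. The contributions from $T_2'$ and $T_3'$ (the $K/\langle v\rangle^l$ and $\mu$ pieces) are handled by the cancellation lemma together with Lemma \ref{L211} and \eqref{estimate Gamma 3}, producing the $C_l(1+K)\Vert f_{K,+}^l\Vert_{L^1_xL^1_{j+2}}$ term. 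The commutator from the gradient of $W_K$ is controlled via Lemma \ref{L215}, contributing to the same $L^1$ bound.

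The main technical obstacle is the careful weight bookkeeping in the last step: I must verify that after applying $H^1\hookrightarrow H^s$ on each term coming from Lemma \ref{L81}, the combined weight on $f_{K,+}^l$ never exceeds $\alpha$, so that everything fits inside $\Vert \langle v\rangle^\alpha f_{K,+}^l\Vert_{L^2_xH^1_v}^2$. The statement for $h=-f$ follows by symmetry: the equation for $h$ differs only in sign, and both the level-set inequality of Lemma \ref{L220} and the collisional upper bounds used above are insensitive to that sign change, so no further work is required.
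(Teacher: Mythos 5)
Your overall architecture (pass to $W_K$ via Lemma \ref{L220}, split $\tilde{Q}_\eta = Q_\eta + \epsilon L_\alpha$, handle the regularization via \cite{AMSY2} Proposition 3.7, and use Lemma \ref{L81} to make the collision bounds $\eta$-uniform) matches the paper, but the way you treat the main collision term differs from what the paper actually does, and in a way that matters. You say you would ``use exactly the decomposition of Lemma \ref{L43}, producing $T_1',T_2',T_3'$,'' and then you describe the output as ``weighted $H^s_v$ norms of $f_{K,+}^l$ rather than $H^s$ norms times $\Vert\cdot\Vert_{L^2}$.'' These two sentences describe incompatible strategies. Lemma \ref{L43}'s $T_1'$ is further split via Cauchy--Schwarz into $T_{1,1}',T_{1,2}',T_{1,3}'$, and the commutator bounds \eqref{estimate Gamma 2}--\eqref{estimate Gamma 3} used there produce $H^s \times L^2$ pairings (not $H^s\times H^s$); nothing about $s>\tfrac12$ changes that. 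What the paper actually does in Lemma 8.4 is \emph{abandon} the Cauchy--Schwarz cancellation structure of Lemma \ref{L43} entirely: it performs only the two-term split $H_1 = Q_\eta(G_\chi, f - K/\langle v\rangle^l)$, $H_2 = Q_\eta(G_\chi, \mu + K/\langle v\rangle^l)$, drops $f_K^l\mapsto f_{K,+}^l$ by sign, and then hits $H_1$ directly with the $\eta$-uniform upper bound of Lemma \ref{L81} with $a=b=s$, $w_1=w_2=\gamma/2+s$, giving $\Vert f_{K,+}^l\Vert_{H^s_{\gamma/2+s}}\Vert f_{K,+}^l W_K\Vert_{H^s_{\gamma/2+s}}\lesssim\Vert f_{K,+}^l\Vert_{H^s_{\gamma/2+s+j}}^2$, which is then absorbed into $\Vert\langle v\rangle^\alpha f_{K,+}^l\Vert_{L^2_xH^1_v}^2$ under $\alpha\ge j+2$. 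This is the ``brute-force'' route you half-describe; the $\epsilon L_\alpha$ term licenses it because the $H^1$-in-$v$ norm is now controllable.

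The distinction is not merely cosmetic. If you truly replicate Lemma \ref{L43}, you need $\eta$-uniform versions of \eqref{estimate Gamma 2}, \eqref{estimate Gamma 3} and the estimates in Lemma \ref{L32}, which the paper never asserts; Lemma \ref{L81} only guarantees that Lemma \ref{L23} carries over to $Q_\eta$. You would have to argue separately that the change-of-variable manipulations underlying Lemma \ref{L32} also survive the replacement $b\to b_\eta$ with $\eta$-uniform constants (plausible since $b_\eta\le b$, but an extra verification). The paper's direct bound sidesteps this entirely. A second, smaller slip: you assert that the $W_K$-commutator (via Lemma \ref{L215}) contributes to the $L^1$ term. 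In Lemma \ref{L43} the commutator piece $T_{1,3}'$ lands in $\Vert f_{K,+}^l\Vert_{L^2_{j/2+1}}^2$, an $L^2$ contribution; in the Lemma 8.4 proof there is no $W_K$-commutator term at all, precisely because the Cauchy--Schwarz sub-split is not performed. Neither slip breaks the final conclusion—your bound still closes once everything is absorbed into $\Vert\langle v\rangle^\alpha f_{K,+}^l\Vert_{L^2_xH^1_v}^2$—but you should either commit to the paper's simpler direct bound on $H_1$ or explicitly supply the $\eta$-uniform extensions of Lemma \ref{L32} needed to run the Lemma \ref{L43} machinery.
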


\begin{proof}
Recall $W_k$ is defined in \eqref{W k}, we have
\begin{equation*}
\begin{aligned} 
\int_{\T^3}\int_{\R^3} Q_{\eta}(\mu +g \chi, \mu + f) f_{K, +}^l \langle v \rangle^l W_k dv dx  =& \int_{\T^3}\int_{\R^3} Q_{\eta}(\mu +g \chi, f-\frac K {\langle v \rangle^l}) f_{K, +}^l \langle v \rangle^l W_k dv dx
\\
&+ \int_{\T^3 } \int_{\R^3} Q_{\eta} (\mu +g \chi , \mu + \frac {K} {\langle v \rangle^l}) f_{K, +}^l \langle v \rangle^l W_k dv dx := H_1 + H_2.
\end{aligned}
\end{equation*}
By Lemma \ref{L81} we have
\begin{equation*}
\begin{aligned} 
&\int_{\T^3}\int_{\R^3} Q_{\eta} (\mu +g \chi, f-\frac K {\langle v \rangle^l}) f_{K, +}^l W_k \langle v \rangle^l dv dx
\\
\le &  \int_{\T^3 } \int_{\R^3} \int_{\R^3} \int_{\mathbb{S}^2 } (\mu_* + g_*\chi_*) f_{K, +}^l \langle v \rangle^l (f_{k, +}^l(v') \langle v' \rangle^l W_k(v') - f_{K, +}^l \langle v \rangle^l W_k(v)) b_\eta( \cos \theta) |v-v_*|^\gamma dv dv_* d\sigma  dx 
\\
\le& \int_{\T^3}\int_{\R^3} Q_{\eta} (\mu +g \chi, f_{K, +}^l \frac 1 {\langle v \rangle^l}) f_{K, +}^l \langle v \rangle^l W_k dv dx
\\
\le & C \int_{\T^3} \Vert \mu +g\chi \Vert_{L^1_{\gamma/2 +s +l +2} \cap L^2_2} \Vert f_{K, +}^l \Vert_{H^s_{\gamma/2 + s }} \Vert f_{K, +}^l W_k \Vert_{H^s_{\gamma/2 + s }}  dx
\\
\le & C \int_{\T^3} \Vert \mu +g\chi \Vert_{L^\infty_{k_0}} \Vert f_{K, +}^l \Vert_{H^s_{\gamma/2 + s +j}}^2 dx
\\
\le& C  \Vert f_{K, +}^l \Vert_{L^2_xH^s_{\gamma/2 + s +j }}^2
\\
\le& C \Vert \langle v \rangle^{\alpha} f_{K, +}^l \Vert_{L^2_x H^1_v}^2 + C \Vert  f_{K, +}^l \Vert_{L^2_{x, v}}^2 .
\end{aligned}
\end{equation*}
The other terms can be proved by following the proof of Lemma \ref{L42}. 
\end{proof}


Replacing  the use of Lemma \ref{L39} by Lemma \ref{L82} in the proof \ref{L46} we have

\begin{lem}
\label{L85} Suppose $\gamma+2s>-1, s \in ( 1 /2, 1), \gamma \in (-3, 0]$. Fo any smooth function $f, g$, suppose $G= \mu +g\ge 0$ and $\delta_0>0$ in the cutoff function is small enough such that $G_\chi = u +g \chi$ satisfies 
\[
G_\chi  \ge 0,\quad \inf_{t, x} \Vert G_\chi  \Vert_{L^1_v} \ge A, \quad  \sup_{t, x}(\Vert G_\chi  \Vert_{L^1_2} +\Vert G \Vert_{L \log L} )\le B,
\]
for some constant $A< B>0$. If $F_\eta =  \mu +f_\eta$ be a solution to \eqref{modified equation eta epsilon g}. Then for any $\epsilon \in [0, 1] $,  $12 \le l \le k_0 - 8,\alpha = 5, l\ge3+2\alpha$, for any $T \ge 0, 0 < s' <1/8 $, there exist $s" \in 0, (s' \frac  4  {2l+r})$ and $p^b := p^b(l, \gamma, s, s') >1$ such that for any $1< p <p^b$, we have
\[
\mathcal{E}_{0, p, s''} \le C_l e^{C_{l, \epsilon} T} \max_{ j \in \{ 1/p, p'/p  \} } \left( \Vert \langle v \rangle^l f_0\Vert_{L^2_{x, v}}^{2j}  +  T^j  \right), \quad p' = p /(2- p) ,
\]
where $C_{l, \epsilon} $ is independent of $\eta$ and $C_l$ is independent of both $\eta$ and $\epsilon$. The same estimates holds for $(-f)_{+}^l$ and its associated $\mathcal{E}_{0, p, s''}$.
\end{lem}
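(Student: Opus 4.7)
The plan is to replay the proof of Lemma \ref{L46} for $f_\eta$, replacing Lemma \ref{L39} by Lemma \ref{L82}, and tracking at each step that the constants produced depend on $\epsilon$ but not on $\eta$. The structural reason this works is that Lemma \ref{L81} gives, uniformly in $\eta \in (0,1)$, the same upper bounds for $Q_\eta$ as Lemma \ref{L23} gives for $Q$; and the proof of Lemma \ref{L46} only ever invokes upper bounds on the collision operator (coercivity being obtained indirectly through time-integrated estimates rather than from a spectral gap of $Q$). In the strong singularity regime one can no longer rely on any $H^s_v$ coercivity of $Q_\eta$, but since $l \ge 3 + 2\alpha$ with $\alpha = 5$, the regularizing operator $\epsilon L_\alpha$ furnishes an $H^1_v$ gain at weight $l + \alpha$ that dominates the $H^s_v$ dissipation at weight $l + \gamma/2$ required by the downstream interpolations.

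First I would invoke Lemma \ref{L82} on the interval $[0, T]$, which simultaneously controls the basic energy and the hypoelliptic spatial gain:
\[
\sup_{[0,T]} \|\langle v \rangle^l f_\eta\|_{L^2_{x,v}}^2 + \frac{\epsilon}{4} \int_0^T \|\langle v \rangle^{l+\alpha} f_\eta\|_{L^2_x H^1_v}^2 d\tau + \int_0^T \|(I-\Delta_x)^{s'/2} f_\eta\|_{L^2_{x,v}}^2 d\tau \le D,
\]
where $D := C_l e^{C_{l,\epsilon} T} (\|\langle v \rangle^l f_0\|_{L^2_{x,v}}^2 + T)$ and $0 < s' < 1/8$. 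The first two pieces of $\mathcal{E}_{0,p,s''}$ are then bounded immediately by $D$, since the $L^2_x H^s_v$-norm at weight $l + \gamma/2$ is absorbed by the $L^2_x H^1_v$-norm at weight $l + \alpha$.

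Next, to control the $L^p_{t,x,v}$ norm of $(1-\Delta_x)^{s''/2} (\langle v \rangle^{-2+\gamma/2} f_+^l)^2$, I would apply Lemma \ref{L218} with $p' = p/(2-p)$ to distribute the fractional derivative, and then interpolate the resulting $L^{2p}_{x,v}$ and $L^{2p'}_{x,v}$ norms of $f_+^l \langle v \rangle^{-2+\gamma/2}$ between $L^2_{x,v}$ and the mixed $H^s_v / H^{s'}_x$ Sobolev space through Lemma \ref{L216}, exactly as in the proof of Lemma \ref{L46}. The weight-exchange step
\[
\|(I-\Delta_x)^{\beta/2} (f_+^l \langle v \rangle^{-2+\gamma/2})\|_{L^2_{x,v}}^2 \le C_{l,\gamma} \big(\|f_+^l \langle v \rangle^{\gamma/2}\|_{L^2_{x,v}}^2 + \|(I-\Delta_x)^{s'/2} f_+\|_{L^2_{x,v}}^2\big),
\]
valid when $\beta < s' \cdot 4/(2l + \gamma)$, is identical to the weak-singularity computation and identifies the same admissible range $p \in (1, p^b(l, \gamma, s, s'))$, with $\eta$-independent constants.

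The main obstacle will be carrying out the hypoelliptic averaging step inside Lemma \ref{L82} uniformly in $\eta$. Applying the averaging lemma Lemma \ref{L219} to $\partial_t f_\eta + v \cdot \nabla_x f_\eta = \tilde{Q}_\eta(\mu + g\chi, \mu + f_\eta)$ requires a negative-order velocity-Sobolev bound on the right-hand side. Because $b_\eta \le b$ pointwise, Lemma \ref{L81} reproduces the upper bounds of Lemma \ref{L23} for $Q_\eta$ with an $\eta$-independent constant; the $\epsilon L_\alpha$ contribution is already $\eta$-independent and handled as in Lemma \ref{L39}. This secures the bound $\int_0^T \|(I-\Delta_x)^{s'/2} f_\eta\|_{L^2_{x,v}}^2 d\tau \le D$ and closes the argument. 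The estimate for $(-f_\eta)_+^l$ follows by applying the same argument to $-f_\eta$, whose positive part is controlled by $|f_\eta|$, so the entire chain of interpolations applies verbatim.
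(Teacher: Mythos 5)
Your proposal is correct and mirrors the paper's own (one-line) proof, which simply instructs the reader to re-run the argument of Lemma \ref{L46} with Lemma \ref{L39} replaced by Lemma \ref{L82}; you have spelled out the steps of that replacement faithfully, and your observation that the $\eta$-uniformity comes from $b_\eta \le b$ (Lemma \ref{L81}) together with the $\epsilon L_\alpha$ dissipation is exactly the mechanism the paper relies on. One minor overstatement: the $H^s_v$ coercivity of $Q_\eta$ does not actually fail in the strong-singularity regime (the lower bound $b_\eta \gtrsim \theta^{-2-2s_*}$ still yields coercivity via Lemma \ref{L24}); the paper's reason for leaning on $\epsilon L_\alpha$ is that the fixed-point construction needs control of an $H^{2s}_v$ norm, and using $\epsilon L_\alpha$ simplifies the bookkeeping — but this distinction does not affect the validity of your argument.
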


Similarly as Theorem \ref{T47} we have

\begin{thm}\label{T86}
Suppose $\gamma+2s>-1, s \in ( 1 /2, 1), \gamma \in (-3, 0]$. Fo any smooth function $f, g$, suppose $G= \mu +g\ge 0$ and $\delta_0>0$ in the cutoff function is small enough such that $G_\chi = u +g \chi$ satisfies 
\[
G_\chi  \ge 0,\quad \inf_{t, x} \Vert G_\chi  \Vert_{L^1_v} \ge A, \quad  \sup_{t, x}(\Vert G_\chi  \Vert_{L^1_2} +\Vert G \Vert_{L \log L} )\le B,
\]
for some constant $A< B>0$. If $F_\eta =  \mu +f_\eta$ be a solution to \eqref{modified equation eta epsilon g}. Then for any $\epsilon \in [0, 1] $,  $12 \le l \le k_0 - 8,\alpha = 5, l\ge3+2\alpha$, for any $T \ge 0 $, assume that the initial data $f_0$ satisfies
\[
 \Vert \langle  v \rangle^{l} f_0 \Vert_{L^2_{x, v}}  < + \infty, \quad\Vert \langle  v \rangle^{l} f_0 \Vert_{L^\infty_{x, v}}  < +\infty,
\]
there exist a constant $l_0>0$ which depends on $l$ such that if we additionally suppose that
\[
\sup_{t}  \Vert \langle  v \rangle^{l+l_0} f  \Vert_{L^2_{x, v} } \le C.
\]
Then it follows that
\[
\sup_{t \in [0, T]}\Vert  \langle v \rangle^l f\Vert_{L^\infty_{x, v}}  \le \max \{ 2\Vert  \langle v \rangle^l  f_0 \Vert_{L^\infty_{x, v}} , K_0^1 \},
\]
where 
\[
K_0^1 := C_l e^{C_{l, \epsilon} T} \max_{ 1 \le i \le 4} \max_{j \in \{ 1/p, p'/p \} }  \Vert \langle v \rangle^l f_0\Vert_{L^2_{x, v}}^{2j} +  T^j )^{\frac {\beta_i -1} {a_i}} , \quad p' = \frac p {2-p},
\]
where $C_{l, \epsilon} $ is independent of $\eta$ and $C_l$ is independent of both $\eta$ and $\epsilon$.
\end{thm}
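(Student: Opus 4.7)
The plan is to mirror the De Giorgi iteration argument of Theorem \ref{T47}, but substituting the $\eta$-uniform estimates of this section (the $L^2$ level-set estimate, the weighted time-averaging lemma, and Lemma \ref{L85}) for their weak-singularity analogues. Concretely, fix the admissible parameters $(p,s'',\alpha=5)$ as in the proof of Theorem \ref{T47}, take $K_0>0$ to be determined, and define the increasing level sequence
\[
M_k:=K_0\Big(1-\tfrac{1}{2^k}\Big),\qquad f_k:=f_{M_k,+}^l,\qquad \mathcal{E}_k:=\mathcal{E}_{p,s''}(M_k,0,T).
\]

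The first step is to derive the $\eta$-uniform analogue of Lemma \ref{L45}: for any $0\le M<K\le K_0$ and $0\le T_1<T_2\le T$,
\[
\mathcal{E}_{p,s''}(K,T_1,T_2)\le C\Vert \langle v\rangle^2 f_{K,+}^l(T_1)\Vert_{L^2_{x,v}}^2+C\Vert\langle v\rangle^2 f_{K,+}^l(T_1)\Vert_{L^{2p}_{x,v}}^2+\frac{CK}{K-M}\sum_{i=1}^{4}\frac{\mathcal{E}_{p,s''}(M,T_1,T_2)^{\beta_i}}{(K-M)^{a_i}},
\]
with $\beta_i>1$, $a_i>0$ as in the weak-singularity case. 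For the $\sup_t \Vert f_{K,+}^l\Vert_{L^2_{x,v}}^2$ and the $\int\Vert\langle v\rangle^{\gamma/2}f_{K,+}^l\Vert_{H^s_v}^2$ pieces of $\mathcal{E}_{p,s''}$ I would test the equation \eqref{modified equation eta epsilon g} against $\langle v\rangle^l f_{K,+}^l$ and apply the new level-set lemma of this section (in which $\epsilon L_\alpha$ absorbs the $H^s_{\gamma/2+s}$ norm uniformly in $\eta$ thanks to $\alpha\ge 5$). For the $\Vert(1-\Delta_x)^{s''/2}(\langle v\rangle^{-2+\gamma/2}f_{K,+}^l)^2\Vert_{L^p_{x,v}}^p$ piece I apply the time-averaging Lemma \ref{L219} together with the $\eta$-uniform Lemma on $\tilde{Q}_\eta$ just proved. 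The interpolation lemmas \ref{L216}--\ref{L218} are kernel-free and therefore untouched.

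The second step is the iteration. Taking $K_0\ge 2\Vert\langle v\rangle^l f_0\Vert_{L^\infty_{x,v}}$, the first two ``data'' terms vanish for $k\ge 1$ and
\[
\mathcal{E}_k\le C\sum_{i=1}^{4}\frac{2^{k(a_i+1)}\,\mathcal{E}_{k-1}^{\beta_i}}{K_0^{a_i}}.
\]
As in Theorem \ref{T47}, setting $Q_0:=\max_i 2^{(a_i+1)/(\beta_i-1)}$ and
\[
K_0\ge K_0(\mathcal{E}_0):=\max_{1\le i\le 4}\Big\{4^{1/a_i}\,C^{1/a_i}\,\mathcal{E}_0^{(\beta_i-1)/a_i}\,Q_0^{\beta_i/a_i}\Big\},
\]
a comparison argument gives $\mathcal{E}_k\le\mathcal{E}_0(1/Q_0)^k\to 0$, whence $\sup_{t\in[0,T]}\Vert f_{K_0,+}^l(t)\Vert_{L^2_{x,v}}=0$, i.e.\ $\langle v\rangle^l f_+\le K_0$ pointwise. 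The same argument, applied to $-f$ via the $h$-version of the level-set and time-averaging lemmas, controls $\langle v\rangle^l f_-$. Finally, Lemma \ref{L85} bounds $\mathcal{E}_0$ by $C_l e^{C_{l,\epsilon}T}\max_{j\in\{1/p,p'/p\}}(\Vert\langle v\rangle^l f_0\Vert_{L^2_{x,v}}^{2j}+T^j)$, producing exactly the stated $K_0^1$.

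The main obstacle is ensuring that every constant in the iteration is independent of $\eta$ (while only some depend on $\epsilon$). The kinetic-regularity mechanism of the weak-singularity proof relied on $Q$ itself delivering an $H^s$ coercivity, which here would introduce an $\eta$-dependent constant through the lower bound of $b_\eta$; instead, the coercivity must be sourced entirely from $\epsilon L_\alpha$, which costs powers of $\epsilon^{-1}$ but is $\eta$-independent. Concretely, the collision contribution to $(Q_\eta(\mu+g\chi,\mu+f_\eta),f_{K,+}^l\langle v\rangle^l)_{L^2_{x,v}}$ is estimated by the upper-bound Lemma \ref{L81} as $C\Vert f_{K,+}^l\Vert_{L^2_xH^s_{s+\gamma/2}}^2$ and then dominated by $\tfrac{\epsilon}{8}\Vert\langle v\rangle^\alpha f_{K,+}^l\Vert_{L^2_xH^1_v}^2+C_{l,\epsilon}\Vert f_{K,+}^l\Vert_{L^2_{x,v}}^2$ using $\alpha\ge 5>s$ and $2s-2s_*<1$; an identical absorption governs the $W_K$-weighted version entering the time-averaging estimate. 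Verifying that $p^a,p^b,s''$ can still be chosen and that all iteration exponents $(a_i,\beta_i)$ remain $\eta$-free is then a bookkeeping check parallel to Section \ref{section 4}.
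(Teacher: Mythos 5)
Your proof proposal is correct and takes essentially the same approach as the paper, which states Theorem \ref{T86} with only the remark ``Similarly as Theorem \ref{T47} we have'': the De Giorgi iteration you lay out (the $\eta$-uniform analogue of Lemma \ref{L45}, the comparison argument, and the invocation of Lemma \ref{L85} for $\mathcal{E}_0$) is exactly what that remark presupposes. Your observation that coercivity must now be sourced entirely from $\epsilon L_\alpha$ via the $\eta$-uniform upper bound in Lemma \ref{L81}, at the price of $\epsilon$-dependent constants, is precisely the organizing principle of the intervening Lemmas \ref{L82}--\ref{L85}.
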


It is clear that form Theorem \ref{T86}, for each $\epsilon >0$ if we let $T$ be small enough (with smallness depend on $\epsilon $, $\delta_0$ only) and $\Vert \langle  v \rangle^{l} f_0 \Vert_{L^2_{x, v} \cap L^\infty_{x, v}}$ small enough (with smallness independent of both $\epsilon$ and $\eta$), then 
\[
\sup_{t \in [0, T] } \Vert \langle v \rangle^{l} f   \Vert_{L^\infty_{x, v} } \le \delta_0,
\]
we can  now move to the existence of local solutions.

\begin{lem}\label{L87}  Suppose $g, f$ smooth and $\gamma \in (-3, 0], s \in  (1/2, 1), \gamma +2s >-1, \alpha =5$. Assume that $f$ is a solution to \eqref{modified equation eta epsilon}. Suppose $k_0$ and the initial data $f_0$ satisfies 
\[
k_0 -l_0(14) \ge 14 + 8 ,  \quad \Vert \langle v \rangle^{14+l_0 (14 )} f_0 \Vert_{L^2_{x, v}} < +\infty,\quad  \mu +f_0 \ge 0,  \quad \Vert\langle v \rangle^{14}  f_0  \Vert_{L^\infty_{x, v} \cap L^2_{x, v}} \le \delta_1, 
\]
where $l_0$ is defined in Theorem \ref{T86}. For any $\delta_0>0$ small enough, there exist  $\delta_1,\epsilon_* >0$  such that  for any $\epsilon \in (0, \epsilon_*]$, there exist a time $T_{\epsilon}$ (independent of $\eta$) such \eqref{modified equation} has a solution $f \in L^\infty([0, T] , L^2_xL^2_{14} (\T^3 \times \R^3)) $. Moreover the solution  $f$ satisfies
\[
\Vert  \langle v \rangle^{14}  f \Vert_{L^\infty( [0, T_\epsilon ] \times \T^3\times \R^3) }  \le \delta_0.
\]
\end{lem}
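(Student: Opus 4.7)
The plan is to mirror the contraction argument of Lemma \ref{L61}, replacing $Q$ by $Q_\eta$ throughout, and to exploit the fact that for $s\in(1/2,1)$ the full regularization $\epsilon L_\alpha$ with $\alpha=5$ supplies $H^1_v$-coercivity which dominates the $H^s_v$ gain produced by $Q_\eta$. The whole point of the $\eta$-approximation is that all upper bounds on $Q_\eta$ are inherited from those on $Q$ via Lemma \ref{L81}, while the coercivity that closes the estimates now comes from $\epsilon L_\alpha$ and not from the (potentially $\eta$-degenerate) gain of $Q_\eta$; this is what will give uniformity in $\eta$ on the time of existence.

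First I would establish, for fixed $g\in H_k$, unique solvability of the linearized problem
\[
\partial_t f + v\cdot\nabla_x f = \epsilon L_\alpha(\mu+f) + Q_\eta\bigl(\mu+g\chi(\langle v\rangle^{k_0}g),\,\mu+f\bigr),\qquad f(0)=f_0,
\]
by the Hahn--Banach duality scheme used in Lemma \ref{L52}. The only nontrivial input is the a priori estimate on the backward test equation: using Lemma \ref{L81} to replace Lemma \ref{L23} and the computation of Lemma \ref{L82}, the coercive piece $-\tfrac{\epsilon}{2}\|\langle v\rangle^{l+\alpha}f\|_{L^2_xH^1_v}^2$ produced by $\epsilon L_\alpha$ absorbs the term $C_l\|f\|_{L^2_xH^s_{\gamma/2+s}}^2$ coming from $Q_\eta$ whenever $\alpha\ge s$, which holds since $\alpha=5$. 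Propagation of nonnegativity of $\mu+f$ goes through verbatim, because the collisional cancellation used for $Q$ applies equally to $Q_\eta$.

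Next I would set $k=14$ and define $\Gamma:H_k\to H_k$ by $\Gamma g=f_g$. To show $\Gamma$ is a contraction on $L^\infty([0,T_\epsilon],L^2_xL^2_{14})$ for $T_\epsilon$ small, I would subtract the equations for $f_g$ and $f_h$, multiply by $(f_g-f_h)\langle v\rangle^{2k}$, and integrate. The three source terms are precisely those in \eqref{f g f h} with $Q$ replaced by $Q_\eta$, and Lemma \ref{L81} gives bounds identical to those used in Lemma \ref{L61}; Lemma \ref{L51} controls $\|g\chi-h\chi\|$ by $\|g-h\|$. The $H^{2s}$ norm on the difference which appears in the Lipschitz estimate of $Q_\eta$ is now absorbed into $\tfrac{\epsilon}{8}\|\langle v\rangle^{k+\alpha}(f_g-f_h)\|_{L^2_xH^1_v}^2$, again using $\alpha=5>2s$. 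Gr\"onwall on a sufficiently short interval (whose length depends on $\epsilon,\delta_0$ but not on $\eta$) closes the contraction.

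Once a local fixed point $f_\eta$ is produced, the $L^\infty$ bound $\|\langle v\rangle^{14}f_\eta\|_{L^\infty}\le\delta_0$ would be obtained from Theorem \ref{T86} applied with $l=14$, exactly as in Lemma \ref{L52}(2): the choice $\delta_1,\epsilon_*,T_\epsilon$ is made so that the right-hand side $K_0^1$ stays below $\delta_0$. The subtle point I expect to be the main obstacle is tracking the dependence of all constants on $\eta$: the Hahn--Banach step, the contraction step, and the De Giorgi step each involve a host of constants, and I must verify in each place that $\eta$ enters only through upper bounds of $b_\eta$, which are controlled by $b$ uniformly in $\eta$, so that $T_\epsilon$ and the smallness thresholds $\delta_1,\epsilon_*$ depend only on $r,s,k_0,\delta_0$ but not on $\eta$. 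This is the structural feature that will allow passage to the limit $\eta\to 0$ in the subsequent global construction.
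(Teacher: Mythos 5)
There is a genuine gap in your contraction step, and it is precisely the gap the $\eta$-regularization is designed to handle. You assert that the term $\Vert (f_g-f_h)\Vert_{H^{2s}_{k+\alpha}}$ appearing in the Lipschitz estimate for $Q_\eta(g\chi-h\chi, f_h)$ can be ``absorbed into $\tfrac{\epsilon}{8}\Vert\langle v\rangle^{k+\alpha}(f_g-f_h)\Vert_{L^2_xH^1_v}^2$, again using $\alpha=5>2s$.'' But the condition $\alpha>2s$ concerns the polynomial weight, not the Sobolev regularity. The regularization $\epsilon L_\alpha$ supplies only an $H^1_v$ gain; for $s\in(1/2,1)$ one has $2s>1$, hence $H^{2s}_v\not\subset H^1_v$, and the absorption fails. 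This is exactly why Lemma \ref{L61} carries the hypothesis $s\in(0,1/2]$, and exactly what Section \ref{section 8} must circumvent.

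What the paper actually does is different from what you propose in a crucial way. Writing $Q_\eta$ in terms of its kernel $b_\eta\lesssim \eta^{-(2s-2s_*)}\theta^{-2-2s_*}$ reduces the derivative count in Lemma \ref{L23} from $2s$ to $2s_*<1$, so that $H^{2s_*}_{k+\alpha}\subset H^1_{k+\alpha}$ and the absorption into $\epsilon L_\alpha$ works --- but at the price of a Lipschitz constant $C_{\epsilon,\eta}\sim \eta^{-(2s-2s_*)}$. Consequently the contraction closes only on a time interval $T_{\epsilon,\eta}$ that \emph{does} depend on $\eta$, contrary to your claim. The $\eta$-independence of the final time of existence $T_\epsilon$ is then recovered in a second, separate step: the a priori estimates of Lemma \ref{L82}, Lemma \ref{L85} and Theorem \ref{T86} (which use $b_\eta\le b$ and thus are uniform in $\eta$) yield $\eta$-uniform bounds on $\Vert\langle v\rangle^{14}f_\eta\Vert_{L^\infty}$ and $\Vert\langle v\rangle^l f_\eta\Vert_{L^2_{x,v}}$, so the local solution extends by a continuation argument from $[0,T_{\epsilon,\eta}]$ to an interval $[0,T_\epsilon]$ with $T_\epsilon$ depending on $\epsilon$ and $\delta_0$ but not on $\eta$. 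Your proposal collapses these two logically distinct steps into one and in doing so hides the very obstruction the lemma is engineered to overcome.
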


\begin{proof}
The proof is similar as  Lemma \ref{L52}.  when applying the fixed point argument, the coefficient obtained will depend on $\eta$, specifically, we will have
\begin{equation*}
\begin{aligned} 
&\int_{\T^3 }\int_{\R^3} Q_\eta (g \chi(\langle v \rangle^{k_0} g) - h \chi(\langle v \rangle^{k_0} h),  f_h ) (f_g -f_h)  \langle v \rangle^{2k} dvdx  
\\
\le& \frac C {\eta^{2s-2s_*}} \int_{\T^3 } \Vert g \chi(\langle v \rangle^{k_0} g) - h \chi(\langle v \rangle^{k_0} h) \Vert_{L^1_{\gamma+2s_* +k -\alpha} \cap L^2_2}  \Vert f_h  \Vert_{L^2_{\gamma +2s_* +k -\alpha }}  \Vert  (f_g -f_h) \Vert_{H^{2s_*}_{k+\alpha}} dx
\\
\le&C_\eta(\sup_x \Vert f_h\Vert_{L^2_{\gamma+2s_* +k -\alpha }} ) \Vert g- h\Vert_{L^2_xL^2_k}  \Vert  (f_g -f_h) \Vert_{L^2_x  H^{2s_*}_{k+\alpha}}
\\
\le & \frac {\epsilon} {8} \Vert \langle v \rangle^{\alpha+ k }  ( f_g- f_h) \Vert_{L^2_xH^1_v }^2  + C_{\epsilon, \eta} \Vert g- h\Vert_{L^2_xL^2_k}^2,
\end{aligned}
\end{equation*}
and other terms can be estimated the same way as Lemma \ref{L52}. First  we can deuce the existence in $T_{\epsilon, \eta}$ depends on both $\eta$ and $\epsilon$.  However, since all the priori estimates are independent of $\eta$, such a solution can be extend to $T_{\epsilon}$ which is independent of $\eta$.

\end{proof}

Once the existence of $f_\eta$ is shown, we can pass to the limit and return to the original operator $Q$ with $\chi$

\begin{lem} Suppose $g, f$ smooth and $\gamma \in (-3, 0], s \in  (1/2, 1), \gamma +2s >-1, \alpha =5$. Assume that $f$ is a solution to \eqref{modified equation}. Suppose $k_0$ and the initial data $f_0$ satisfies 
\[
k_0 -l_0(14) \ge 14 + 8 ,  \quad \Vert \langle v \rangle^{14+l_0 (14 )} f_0 \Vert_{L^2_{x, v}} < +\infty,\quad  \mu +f_0 \ge 0,  \quad \Vert\langle v \rangle^{14}  f_0  \Vert_{L^\infty_{x, v} \cap L^2_{x, v}} \le \delta_1, 
\]
where $l_0$ is defined in Theorem \ref{T86}.  For any $\delta_0>0$ small enough, there exist  $\delta_1,\epsilon_* >0$  such that  for any $\epsilon \in (0, \epsilon_*]$, there exist a time $T_{\epsilon}$ (independent of $\eta$) such \eqref{modified equation} has a solution $f \in L^\infty([0, T] , L^2_xL^2_{14} (\T^3 \times \R^3)) $. Moreover the solution  $f$ satisfies
\[
\Vert  \langle v \rangle^{14}  f \Vert_{L^\infty( [0, T_\epsilon ] \times \T^3\times \R^3) }  \le \delta_0.
\]
\end{lem}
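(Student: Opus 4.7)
The plan is to obtain the stated solution $f$ to \eqref{modified equation} as a limit $\eta \to 0^+$ of the sequence $\{f_\eta\}$ produced by Lemma \ref{L87}. The starting point is that Lemma \ref{L87} already gives, for each fixed $\epsilon \in (0, \epsilon_*]$, a common existence time $T_\epsilon > 0$ (independent of $\eta$) and a solution $f_\eta$ of \eqref{modified equation eta epsilon} on $[0, T_\epsilon]$ satisfying $\Vert \langle v\rangle^{14} f_\eta\Vert_{L^\infty([0,T_\epsilon] \times \T^3 \times \R^3)} \le \delta_0$.

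First I would collect uniform-in-$\eta$ bounds on $f_\eta$. The $L^\infty_{x,v}$ bound above follows directly from Theorem \ref{T86}. Applying Lemma \ref{L82} with $l$ replaced by the higher weight $14+l_0(14)$ (which is propagated by the hypothesis on $f_0$) gives, uniformly in $\eta$,
\[
\sup_{t \in [0, T_\epsilon]} \Vert \langle v \rangle^{14+l_0(14)} f_\eta(t) \Vert_{L^2_{x,v}}^2 + \frac{\epsilon}{4} \int_0^{T_\epsilon} \Vert \langle v \rangle^{14+l_0(14)} f_\eta \Vert_{L^2_x H^s_{\gamma/2}}^2 \, d\tau \le C_\epsilon,
\]
together with the hypoellipticity bound on $\int_0^{T_\epsilon} \Vert (I-\Delta_x)^{s'/2} f_\eta \Vert_{L^2_{x,v}}^2 d\tau$ for some $s' \in (0, 1/8)$. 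Since the regularization $\epsilon L_\alpha$ is active, one additionally gets $\sqrt{\epsilon}$-weighted control of $\Vert \langle v \rangle^{14+\alpha+l_0(14)} \nabla_v f_\eta \Vert_{L^2_{t,x,v}}$. These three regularity controls (velocity $H^1$, spatial $H^{s'}$, and polynomial decay) together with the time equation $\partial_t f_\eta \in L^2_tH^{-N}_{x,v}$ (for some large $N$, derived from bounding $v \cdot \nabla_x$, $\epsilon L_\alpha$, and $Q_\eta$ by Lemma \ref{L81}) set up a standard Aubin--Lions compactness argument.

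Next I would extract via Aubin--Lions a subsequence, still denoted $f_\eta$, converging strongly in $L^2([0,T_\epsilon] \times \T^3; L^2_{k_0+l_0(k_0)-\delta}(\R^3))$ for any small $\delta > 0$, and weakly-$*$ in $L^\infty([0,T_\epsilon]\times\T^3\times\R^3)$ with weight $\langle v \rangle^{14}$, to a limit $f$. The strong convergence preserves the pointwise bound $\Vert \langle v \rangle^{14} f\Vert_{L^\infty} \le \delta_0$ and the constraint $\mu + f \ge 0$. The linear terms $\partial_t f_\eta$, $v \cdot \nabla_x f_\eta$, $\epsilon L_\alpha(\mu + f_\eta)$ pass to their limits in the sense of distributions in the usual way. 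The truncation $f_\eta \chi(\langle v \rangle^{k_0} f_\eta)$ converges strongly to $f \chi(\langle v \rangle^{k_0} f)$ by Lemma \ref{L51} and the strong convergence of $f_\eta$.

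The main obstacle, and the only delicate point, is passing to the limit in the nonlinear term $Q_\eta(\mu + f_\eta\chi_\eta, \mu+f_\eta) \to Q(\mu+f\chi, \mu+f)$ as distributions. I would split this into $[Q_\eta - Q](\cdots)$ plus $Q$ of the differences. For the first piece, I would test against a smooth, compactly supported $\phi(v)$ and apply the cancellation-type bound of Lemma \ref{L215}, which gives
\[
\left| \int (Q_\eta - Q)(G_\eta, F_\eta)\, \phi \, dv \right| \lesssim \Vert \phi \Vert_{W^{2,\infty}} \iint (b - b_\eta)(\cos\theta)\, \theta^2 \sin\theta\, |v-v_*|^{2+\gamma}\, |G_\eta F_\eta|\, dv\, dv_*\, d\theta,
\]
and since $(b-b_\eta)\theta^2 \sin\theta \le C \theta^{1-2s}$ is integrable on $[0,\pi/2]$ and tends to $0$ pointwise, dominated convergence (combined with the uniform $L^\infty_{14}$ bound on $f_\eta$) forces this term to $0$. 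For the second piece, strong convergence of $f_\eta \to f$ in weighted $L^2_{t,x,v}$, combined with the decomposition $Q(G_\eta, F_\eta) - Q(G, F) = Q(G_\eta - G, F_\eta) + Q(G, F_\eta - F)$ and the bilinear estimate of Lemma \ref{L23} tested against $\phi$, yields convergence in $\mathcal{D}'$. Thus $f$ solves \eqref{modified equation} on $[0, T_\epsilon]$ with the stated $L^\infty$ bound, which completes the proof. Uniqueness and continuity in $t$ follow from a stability argument identical to the one used in Lemma \ref{L87}, now applied directly to the limit equation since we are at $\eta = 0$.
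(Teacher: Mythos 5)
Your proposal is essentially the paper's argument, with a small twist in the decomposition of the collision-operator difference. You write
$Q_\eta(G_\eta,F_\eta)-Q(G,F)=[Q_\eta-Q](G_\eta,F_\eta)+[Q(G_\eta,F_\eta)-Q(G,F)]$,
keeping the kernel difference $b_\eta-b$ on the $\eta$-dependent products and then handling the second piece by the bilinear estimate of Lemma~\ref{L23} after writing $Q(G_\eta,F_\eta)-Q(G,F)=Q(G_\eta-G,F_\eta)+Q(G,F_\eta-F)$. The paper instead writes $E_1+E_2$ where $E_1$ keeps $b_\eta$ on the \emph{difference} of the nonlinear products $f_{\eta,*}\chi_{\eta,*}f_\eta-f_*\chi_* f$ and $E_2$ keeps $b_\eta-b$ on the \emph{limiting} product $f_*\chi_* f$, so that in $E_2$ the $v$-integral is $\eta$-independent and dominated convergence in $\theta$ is immediate. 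Both decompositions work. Your version requires slightly more care in the $[Q_\eta-Q]$ piece because the $v$-integrand depends on $\eta$; the quickest fix is exactly what you indicate: since $|b_\eta-b|\le b$ and $b_\eta\to b$ pointwise with $b\,\theta^2\sin\theta\in L^1(d\theta)$, the angular integral $\int|b_\eta-b|\theta^2\sin\theta\,d\theta\to 0$ by dominated convergence while $\iint|v-v_*|^{2+\gamma}|G_{\eta,*}F_\eta|\,dv\,dv_*$ stays uniformly bounded via the $L^\infty_{14}$ control, so the product vanishes without needing a.e.\ convergence of $f_\eta$. Your use of Lemma~\ref{L23} for the second piece, with the derivatives loaded onto the test function ($a=0$, $b=2s$), is a clean substitute for the Taylor/cancellation estimate the paper performs by hand via Lemma~\ref{L215}; both give the same conclusion.

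One small overreach at the end: you assert that uniqueness ``follows from a stability argument identical to the one used in Lemma~\ref{L87}, now applied directly to the limit equation since we are at $\eta=0$.'' This cannot be done verbatim. The contraction argument in Lemma~\ref{L87} produces a constant $C_{\epsilon,\eta}$ that degenerates as $\eta\to0$; the $\eta$-truncation is there precisely because for $s>1/2$ the difference estimate requires $H^{2s}$ control of $f_g-f_h$, which the $\epsilon L_\alpha$ regularization (only $H^1$) does not supply. The lemma as stated claims only existence, and the paper's proof correspondingly stops at existence. Drop the uniqueness remark or defend it by a different route; otherwise the argument is sound.
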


\begin{proof}
By Lemma \ref{L87}, we know there exist a solution $f_\eta$ to \eqref{modified equation eta epsilon} satisfies
\[
\Vert  \langle v \rangle^{14}  f_\eta \Vert_{L^\infty_{t, x ,v} }  \le \delta_0, \quad \Vert f_\eta \Vert_{H^{s'}_{t, x} H^1_{k + \alpha }} \le C_0 < +\infty, \quad s' <1/8.
\]
Given the uniform polynomial decay and a diagonal argument, we can extract a subsequence, still denote as $f_\eta$ such that
\[
f_\eta \to f, \quad \hbox{strongly in } L^2_{t, x, v}([0, T] \times \T^3 \times \R^3).
\]
Our goal is to show that 
\[
Q_\eta (\mu + f_\eta \chi(\langle v \rangle^{k_0}  f_\eta), \mu + f_\eta)  \to Q (\mu + f \chi(\langle v \rangle^{k_0}  f), \mu + f),
\]
is distribution.  Using a test function $\phi$ we consider the difference 
\begin{equation*}
\begin{aligned} 
(Q_\eta(f_\eta \chi_\eta, f_\eta ) -Q(f\chi, f), \phi)
= & \int_{\R^3} \int_{\R^3} \int_{\mathbb{S}^2 } b_\eta(\cos \theta) |v-v_*|^\gamma f_{\eta, *} \chi_{\eta, *} f_\eta    (\phi(v') -\phi(v) ) dv dv_* d\sigma 
\\
&- \int_{\R^3} \int_{\R^3} \int_{\mathbb{S}^2 } b(\cos \theta) |v-v_*|^\gamma f_{ *} \chi_{ *} f    (\phi(v') -\phi(v) ) dv dv_* d\sigma 
\\
 =& \int_{\R^3} \int_{\R^3} \int_{\mathbb{S}^2 } b_\eta(\cos \theta) |v-v_*|^\gamma( f_{\eta, *} \chi_{\eta, *} f_\eta   -f_{ *} \chi_{ *} f    )   (\phi(v') -\phi(v) ) dv dv_* d\sigma 
 \\
& +\int_{\R^3} \int_{\R^3} \int_{\mathbb{S}^2 } (b_\eta (\cos \theta) - b(\cos \theta)) |v-v_*|^\gamma f_{ *} \chi_{ *} f      (\phi(v') -\phi(v) ) dv dv_* d\sigma 
\\
: =& E_1+E_2.
\end{aligned}
\end{equation*}
For the $E_1$ term by Lemma \ref{L215} we have
\begin{equation*}
\begin{aligned} 
E_1 \le& \left|\int_{\R^3} \int_{\R^3} \int_{\mathbb{S}^2 } b_\eta(\cos \theta) |v-v_*|^\gamma( f_{\eta, *} \chi_{\eta, *} f_\eta   -f_{ *} \chi_{ *} f    )   (\phi(v') -\phi(v) ) dv dv_* d\sigma  \right|
\\
\le&\int_{\R^3} \int_{\R^3}  |v-v_*|^\gamma  | f_{\eta, *} \chi_{\eta, *} f_\eta   -f_{ *} \chi_{ *} f    |  \left| \int_{\mathbb{S}^2 }  b_\eta(\cos \theta)(\phi(v') -\phi(v) )d\sigma \right|   dv dv_* 
\\
\le& C \Vert \phi \Vert_{W^{2, \infty}} \int_{\R^3} \int_{\R^3}  |v-v_*|^{2+\gamma}  | f_{\eta, *} \chi_{\eta, *} f_\eta   -f_{ *} \chi_{ *} f  | dv dv_* 
\\
\le &  C \Vert \phi \Vert_{W^{2, \infty}} \int_{\R^3} \int_{\R^3}  |v-v_*|^{2+\gamma} ( | f_{\eta, *} \chi_{\eta, *}  -f_{ *} \chi_{ *}  |  |f_\eta|  + |f_* \chi_*| |f_\eta - f | )dv dv_* 
\\
\le &  C \Vert \phi \Vert_{W^{2, \infty}} \int_{\R^3} \int_{\R^3}  |v-v_*|^{2+\gamma} ( | f_{\eta, *} -f_{ *}  |  |f_\eta|  + |f_* | |f_\eta - f | )dv dv_*,
\end{aligned}
\end{equation*}
which implies
\[
\Vert E_1\Vert_{L^2_{t, x}} \le  C \Vert \phi \Vert_{W^{2, \infty}} \Vert f_\eta \Vert_{L^2_{x}L^2_4} \Vert f_\eta  - f \Vert_{L^2_{x}L^2_4}  \to 0, \quad \eta \to 0.
\]
To estimate $E_2$, by symmetry (precisely antisymmetry) and Taylor expansion we have
\begin{equation*}
\begin{aligned}  
& \left| \int_{\R^3} \int_{\R^3} \int_{\mathbb{S}^2 } (b (\cos \theta) - b_\eta(\cos \theta)) |v-v_*|^\gamma f_{ *} \chi_{ *} f      (\phi(v') -\phi(v) ) dv dv_*  d\sigma\right|
\\
\le& \left| \int_{\R^3} \int_{\R^3} \int_{\mathbb{S}^2 } (b (\cos \theta) - b_\eta(\cos \theta)) |v-v_*|^\gamma f_{ *} \chi_{ *} f (v- v') \cdot \nabla_v \phi(v) dv dv_* d\sigma  \right|
\\
& + \frac 1 2  \left| \int_{\R^3} \int_{\R^3} \int_{\mathbb{S}^2 } (b (\cos \theta) - b_\eta(\cos \theta)) |v-v_*|^\gamma f_{ *} \chi_{ *} f (v-v') \otimes     ( v-v' ) \nabla_v^2 \phi(\bar{v})  dv dv_* d\sigma  \right|
\\
\le& \left| \int_{\R^3} \int_{\R^3} \int_{\mathbb{S}^2 }(1- \cos \theta) | b (\cos \theta) - b_\eta(\cos \theta) |  |v-v_*|^{1+\gamma} |f_{ *} \chi_{ *} | |f| |\nabla_v \phi(v) |  dv dv_* dv\sigma \right|
\\
& + \frac 1 2  \left| \int_{\R^3} \int_{\R^3} \int_{\mathbb{S}^2 } \sin^2 \theta  | b (\cos \theta) - b_\eta(\cos \theta)  | |v-v_*|^{2+\gamma} |f_{ *} \chi_{ *} |  |f|  |\nabla_v^2 \phi(\bar{v})|  dv dv_* d\sigma  \right|.
\end{aligned}
\end{equation*}
The integrands of the two last terms satisfies the uniform in $\eta$ bound
\[
(1- \cos \theta) |b (\cos \theta) - b_\eta(\cos \theta)| |v-v_*|^{1+\gamma} |f_{ *} \chi_{ *} | |f| |\nabla_v \phi(v) | \le 2\Vert \phi \Vert_{W^{1,\infty}} (1-\cos \theta) b (\cos \theta) |f_*| |f| |v-v_*|^{1+\gamma},
\]
and
\[
\sin^2 \theta |b (\cos \theta) - b_\eta(\cos \theta)| |v-v_*|^{2+\gamma} |f_{ *} \chi_{ *} |  |f|  |\nabla_v^2 \phi(\bar{v})|  \le 2\Vert \phi \Vert_{W^{2,\infty}}   \sin^2 \theta b (\cos \theta) |f_*| |f| |v-v_*|^{2 + \gamma}.
\]
Since the right hand of the inequalities above are integrable, we can apply the Lebesgue dominated convergence theorem and obtain that $E_2 \to 0$ as $\eta \to 0$, hence the theorem is proved.

\end{proof}

Recall that the only place that the restriction of a weak singularity enters is when we apply the fixed-point argument to obtain an approximate solution to equation. Once such restriction is bypassed, the rest of the results in Section \ref{section 6} and Section \ref{section 7}  all hold, since they are all proved for $s \in (0, 1)$. Theorem \ref{T11} is thus proved.

\end{document}